\newtheorem{theorem}{Theorem}[section]
\newtheorem{lemma}[theorem]{Lemma}
\newtheorem{definition}[theorem]{Definition}
\newtheorem{proposition}[theorem]{Proposition}
\newtheorem{corollary}[theorem]{Corollary}
\newtheorem{conjecture}[theorem]{Conjecture}
\theoremstyle{remark}
\newtheorem{remark}[theorem]{Remark}
\newtheorem{example}[theorem]{Example}
\begin{document}

\newcommand{\C}{\mathbb{C}}
\newcommand{\R}{\mathbb{R}}
\newcommand{\Z}{\mathbb{Z}}
\newcommand{\cell}{C}
\newcommand{\Comp}{\mathfrak{C}}
\newcommand{\xvec}{\mathsf{x}}
\newcommand{\xbasis}{x}
\newcommand{\OO}{\mathcal{O}}
\newcommand{\bs}{\backslash}
\newcommand{\yy}{\mathfrak{g}}
\newcommand{\xx}{\mathfrak{h}}
\newcommand{\tr}{\operatorname{tr}}
\newcommand{\pr}{\operatorname{pr}}
\newcommand{\gen}{\operatorname{gen}}
\newcommand{\sll}{\mathfrak{sl}}
\newcommand{\desl}{D}
\newcommand{\rmv}{\Omega}                                      
\newcommand{\CO}{\mathcal{O}}                                
\newcommand{\Matk}{\mathcal{M}_k}                              
\newcommand{\Graph}{\mathcal{G}}
\newcommand{\IrrS}{\Irr^\square}
\newcommand{\spltng}{splitting}
\newcommand{\wtness}{double socle}
\newcommand{\id}{\operatorname{id}}                            
\newcommand{\JH}{\operatorname{JH}}                            
\newcommand{\GH}{\mathbb{H}}                                   
\newcommand{\IH}{\mathcal{H}}                                  
\newcommand{\End}{\operatorname{End}}
\newcommand{\smth}{\operatorname{sm}}
\newcommand{\Ker}{\operatorname{Ker}}
\newcommand{\commvar}{\mathfrak{X}}
\newcommand{\Lieg}{\mathfrak{g}}
\newcommand{\GLS}{(GLS)}
\newcommand{\zz}{\mathfrak{z}}
\newcommand{\Cusp}{\Irr_c}
\newcommand{\somele}{\unlhd}
\newcommand{\cspline}{\mathcal{L}}
\newcommand{\Hom}{\operatorname{Hom}}
\newcommand{\rk}{r}
\newcommand{\X}{X}
\newcommand{\Y}{\tilde X}
\newcommand{\Reps}{\mathcal{C}}
\newcommand{\Gr}{\mathcal{R}}
\newcommand{\rflx}{\mathcal{T}}
\newcommand{\rsig}{\mathcal{I}}
\newcommand{\asig}{\mathcal{J}}
\newcommand{\rltn}{\leadsto}                                   
\newcommand{\lrltn}[1]{\overset{#1}\rltn}
\newcommand{\word}{\mathfrak{w}}
\newcommand{\tseq}{\mathcal{A}}                                
\newcommand{\biseq}{bi-sequence}                               
\newcommand{\bitmplt}{\begin{pmatrix}a_1&\dots&a_k\\
b_1&\dots&b_k\end{pmatrix}}
\newcommand{\permat}{{\mathbf P}_}                                       
\newcommand{\LC}{LC}                                           
\newcommand{\clsf}{\mathfrak{c}}                               
\newcommand{\std}{\zeta}                                       
\newcommand{\bss}{\mathcal{B}}
\newcommand{\lderiv}{\mathcal{D}^{\operatorname{l}}}
\newcommand{\rderiv}{\mathcal{D}^{\operatorname{r}}}
\newcommand{\jac}{J}
\newcommand{\Mult}{\mathfrak{M}}
\newcommand{\lmlt}{\mu^l}
\newcommand{\rmlt}{\mu^r}
\newcommand{\sgn}{\operatorname{sgn}}
\newcommand{\Nrd}{\operatorname{Nrd}}
\newcommand{\abs}[1]{\left|{#1}\right|}
\newcommand{\supp}{\operatorname{supp}}                        
\newcommand{\rest}{\big|}
\newcommand{\GL}{\operatorname{GL}}
\newcommand{\Irr}{\operatorname{Irr}}
\newcommand{\m}{\mathfrak{m}}
\newcommand{\n}{\mathfrak{n}}
\newcommand{\soc}{\operatorname{soc}}                          
\newcommand{\coss}{\operatorname{cos}}                          
\newcommand{\SI}{SI}                                           
\newcommand{\zele}[1]{\operatorname{Z}( #1 )}                  
\newcommand{\lshft}[1]{\overset{\leftarrow}{#1}}               
\newcommand{\rshft}[1]{\overset{\rightarrow}{#1}}              
\newcommand{\LI}{LI}                                          
\newcommand{\RI}{RI}                                          
\newcommand{\LM}{$\square$-irreducible}                      
\newcommand{\cmplx}{\mathfrak{c}}                             
\newcommand{\depth}{\mathfrak{d}}                             
\newcommand{\APU}{APU}                                        
\newcommand{\adj}{\,\vdash\,}                                 
\newcommand{\obt}{\,\models\,}                                
\newcommand{\grimg}[1]{\left\langle#1\right\rangle}                                  
\newcommand{\del}{\Delta}
\newcommand{\lnrset}{\mathfrak{S}^l}
\newcommand{\rnrset}{\mathfrak{S}^r}
\newcommand{\speh}[2]{{#1}^{(#2)}}
\newcommand{\cstar}{(*)}

\author{Erez Lapid}
\thanks{E.L. was partially supported by Grant \# 711733 from the Minerva Foundation.}
\address{Department of Mathematics, Weizmann Institute of Science, Rehovot 7610001, Israel}
\email{erez.m.lapid@gmail.com}
\author{Alberto M{\'{\i}}nguez}
\thanks{A.M. was partially supported by P12-FQM-2696}
\address{Institut de Math\'ematiques de Jussieu, Universit\'e Paris VI, Paris, France}
\email{alberto.minguez@imj-prg.fr}
\title[Geometric conditions for $\square$-irreducibility]{Geometric conditions for $\square$-irreducibility of certain representations
of the general linear group over a non-archimedean local field}

\begin{abstract}
Let $\pi$ be an irreducible, complex, smooth representation of $\GL_n$ over a local non-archimedean (skew) field.
Assuming $\pi$ has regular Zelevinsky parameters, we give a geometric necessary and sufficient criterion for the irreducibility
of the parabolic induction of $\pi\otimes\pi$ to $\GL_{2n}$.
The latter irreducibility property is the $p$-adic analogue of a special case of the notion of ``real representations'' introduced by Leclerc and studied recently by
Kang--Kashiwara--Kim--Oh (in the context of KLR or quantum affine algebras).
Our criterion is in terms of singularities of Schubert varieties of type $A$ and admits a simple combinatorial description.
It is also equivalent to a condition studied by Geiss--Leclerc--Schr\"oer.
\end{abstract}

\maketitle

\setcounter{tocdepth}{1}
\tableofcontents

\section{Introduction}

In this paper, which is a sequel to \cite{MR3573961}, we study several questions arising from the problem of characterizing reducibility of parabolic induction
for smooth, complex representations of the general linear group over a non-archimedean local field $F$.\footnote{In the body of the paper, we also consider
skew fields but for the introduction we stick to the commutative case.}
We connect this representation-theoretic question to combinatorics and geometry.

As customary, we consider all groups $\GL_n(F)$, $n\ge0$ at once and denote simply by $\Irr$ the set of equivalence classes of irreducible representations of $\GL_n(F)$, $n\ge0$.
By the Zelevinsky classification \cite{MR584084}, $\Irr$ is in one-to-one correspondence with the monoid of multisegments, which are certain essentially combinatorial objects.
We write $\zele{\m}$ for the irreducible representation corresponding to the multisegment $\m$ and denote by $\times$ normalized parabolic induction.
Then $\zele{\m+\n}$ occurs with multiplicity one in the Jordan--H\"older sequence of $\zele{\m}\times\zele{\n}$. Consequently,
\[
\zele{\m}\times\zele{\n}\text{ is irreducible }\iff\LI(\m,\n)\text{ and }\LI(\n,\m)
\]
where $\LI(\m,\n)$ is the condition $\soc(\zele{\m}\times\zele{\n})=\zele{\m+\n}$ and $\soc$ denotes the socle.
This was the point of departure of \cite{MR3573961} which led us to study the property $\LI(\m,\n)$ and characterize it purely combinatorially
in special cases.

In general, $\soc(\pi\times\sigma)$ is not necessarily irreducible for $\pi,\sigma\in\Irr$.
In fact, as was shown by Leclerc \cite{MR1959765}, there are examples of $\pi\in\Irr$ for which $\pi\times\pi$ is semisimple of length $2$.
However, it turns out that if $\pi\times\pi$ is irreducible (in which case we say that $\pi$ is \LM)\footnote{In general, an object $M$ in a ring category
is called ``real'' if $M\otimes M$ is simple. In the $p$-adic case at hand we opted for a different terminology, for obvious reasons.}
then for any $\sigma\in\Irr$, $\soc(\pi\times\sigma)$ is irreducible and
occurs with multiplicity one in the Jordan--H\"older sequence of $\pi\times\sigma$.
This is an analogue of a recent result of Kang--Kashiwara--Kim--Oh, originally proved in the context of finite-dimensional modules
of either quiver Hecke (a.k.a. KLR) algebras or quantum affine algebras \cite{MR3314831}.
The argument can be adapted to the $p$-adic setting without much difficulty -- see \S\ref{sec: KKKO}.

Granted this result, two natural interrelated problems arise. The first is to characterize (combinatorially or otherwise) the
$\square$-irreducibility of $\pi$. The second is to characterize the condition $\LI(\m,\n)$ or more generally determine
$\soc(\pi\times\sigma)$, at least when $\pi$ is \LM.
We focus on the first question in this paper, leaving the second one for a future work.

Let us briefly recall the geometry pertaining to the Zelevinsky classification \cite{MR617466, MR783619, MR863522}.
Consider pairs $(V,A)$ where $V=\oplus_{n\in\Z}V_n$ is a finite-dimensional $\Z$-graded $\C$-vector space
and $A$ is in the space $E_+(V)$ of $\C$-linear (nilpotent) endomorphisms of $V$ such that $A(V_n)\subset V_{n+1}$ for all $n$.
The isomorphism types of such pairs are parameterized by (certain) multisegments in a simple way.
Similarly if $E_+(V)$ is replaced by $E_-(V)$, with the obvious meaning.
Given $V$ as before, the group $\GL(V)$ of grading preserving linear automorphisms of $V$ acts with finitely many orbits on each of the spaces $E_\pm(V)$,
which are in duality with respect to the $\GL(V)$-invariant pairing $A,B\mapsto\tr AB=\tr BA$.
Consider the algebraic set
\[
\commvar(V)=\{(A,B)\in E_+(V)\times E_-(V):AB=BA\}.
\]
By a well-known result of Pyasetskii \cite{MR0390138}, the set of $\GL(V)$-orbits in $E_+(V)$ (or $E_-(V)$) is in canonical bijection with the
set of irreducible components of $\commvar(V)$.

The work of Geiss--Leclerc--Schr\"oer (in a more general context) highlighted the property that an irreducible component of $\commvar(V)$
admits an open $\GL(V)$-orbit. The following is a stronger variant of a special case of their beautiful conjecture. \nocite{MR2242628}
\begin{conjecture}(cf. \cite[Conjecture 18.1]{MR2822235}, \cite{LecChev}) \label{conj: GLSi}
Let $\Comp_\m$ be the irreducible component in $\commvar(V_\m)$ (for suitable $V_\m$) corresponding to a multisegment $\m$.
Then $Z(\m)$ is \LM\ if and only if $\Comp_\m$ admits an open $\GL(V_\m)$-orbit.
\end{conjecture}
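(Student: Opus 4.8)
The plan is to prove the conjecture for multisegments $\m$ with regular Zelevinsky parameters --- the setting in which Schubert varieties of type $A$ enter --- by interposing a single explicit combinatorial condition $(\clsf)$ attached to $\m$ between the two sides of the asserted equivalence: the goal is to show that $\zele{\m}$ is \LM\ if and only if $(\clsf)$ holds, and that $\Comp_\m$ admits an open $\GL(V_\m)$-orbit if and only if $(\clsf)$ holds. The condition $(\clsf)$ will be formulated, equivalently, as a pattern-avoidance property of a permutation $w_\m$ encoding the regular multisegment $\m$, or as the statement that the associated type-$A$ Schubert variety has a sufficiently mild singularity; it is this intermediate condition that should coincide with the one isolated by Geiss--Leclerc--Schr\"oer.

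For the representation-theoretic half, take $\n=\m$ in the equivalence recorded in the introduction: since $\LI(\m,\n)$ and $\LI(\n,\m)$ then coincide, $\zele{\m}$ is \LM\ if and only if $\LI(\m,\m)$ holds. I would then analyse $\LI(\m,\m)$ with the socle-and-derivative machinery of \cite{MR3573961}: by repeatedly peeling off extremal segments using the left and right derivatives, and by tracking the behaviour of $\soc(\cdot\times\cdot)$ under parabolic induction, one reduces --- by induction on the number of segments of $\m$ --- the validity of $\LI(\m,\m)$ to that of a short, explicit list of base configurations. The regularity hypothesis is exactly what keeps the recursion inside the regular world at every step and rules out the periodic phenomena responsible for Leclerc's length-two squares; the conclusion should be that, for regular $\m$, $\LI(\m,\m)$ holds if and only if $\m$ contains no sub-configuration of a forbidden shape, i.e.\ if and only if $(\clsf)$ holds.

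For the geometric half, recall the standard facts that $\commvar(V_\m)$ --- the representation variety, at the dimension vector of $V_\m$, of the preprojective algebra $\Lambda$ of the relevant equioriented type-$A$ quiver --- is pure of dimension $\dim E_+(V_\m)$, and that (Pyasetskii, Lusztig) its irreducible components are the closures $\overline{T^*_{\CO_\m}E_+(V_\m)}$ of the conormal bundles of the orbit closures $\overline{\CO_\m}$. By the Geiss--Leclerc--Schr\"oer theory of rigid modules, $\Comp_\m$ carries an open $\GL(V_\m)$-orbit if and only if the generic $\Lambda$-module it parametrizes is rigid (has no self-extensions over $\Lambda$); equivalently, for generic commuting $(A,B)$ with $A\in\CO_\m$, the group $\mathrm{Stab}_{\GL(V_\m)}(A)\cap\mathrm{Stab}_{\GL(V_\m)}(B)$ has dimension $\dim\GL(V_\m)-\dim E_+(V_\m)$. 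In the regular range, the pointed variety $(\overline{\CO_\m},\CO_\m)$ is, up to a smooth factor, a type-$A$ Schubert variety together with its open cell, and the existence of an open orbit in the conormal variety of a Schubert variety should translate --- through the geometry of the cotangent bundle of the ambient flag variety --- into the same pattern-avoidance condition $(\clsf)$ on $w_\m$, which is also the Geiss--Leclerc--Schr\"oer condition.

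Combining the two halves, $\zele{\m}$ is \LM\ if and only if $(\clsf)$ holds, if and only if $\Comp_\m$ admits an open $\GL(V_\m)$-orbit; this is the conjecture for regular $\m$. I expect the principal obstacle to be the matching of the two halves --- one must check that the combinatorial criterion produced by the derivative recursion on the representation side is literally the one extracted from the Schubert analysis on the geometric side. This splits into (i) making the dictionary ``multisegment $\leftrightarrow$ permutation $\leftrightarrow$ $\GL(V_\m)$-orbit in $E_+(V_\m)$'' fully explicit in the regular range, and (ii) the non-trivial direction of the geometric translation --- that a forbidden pattern in $w_\m$ forces the generic module of $\Comp_\m$ to carry a non-split self-extension --- for which one exhibits, inside the offending configuration, an explicit positive-dimensional family of commuting pairs through the generic point which does not reduce to a single $\GL(V_\m)$-orbit. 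Controlling $\soc(\zele{\m}\times\zele{\m})$ uniformly in $\m$ is the other delicate point on the representation side, and it is precisely there that the regularity hypothesis is indispensable.
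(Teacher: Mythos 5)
Your overall architecture (interpose a pattern-avoidance condition between the two sides, in the regular case) is indeed the paper's architecture, but both halves of your plan rest on steps that are either missing or would not go through as described. The more serious one is the geometric half: you assert that $\Comp_\m$ is essentially a conormal variety of a type $A$ Schubert variety and that the open-orbit condition "should translate, through the geometry of the cotangent bundle of the ambient flag variety," into the same pattern-avoidance condition. That translation is precisely the striking equivalence whose geometric explanation is unknown — the paper states explicitly that it has no geometric insight into why the open-orbit condition on $\Comp_\m$ matches the smooth-pair condition for $(\sigma,\sigma_0)$, and proves it combinatorially instead. Concretely, the paper linearizes the open-orbit condition into the explicit criterion \GLS\ (linear independence of the vectors $\xvec_{i,j}(\lambda)$ in $\C^{\Y_\m}$), proves "balanced $\Rightarrow$ \GLS" by exhibiting an explicit strong $\rltn$-matching inside the same induction that handles the representation side, and proves the converse by checking a short list of basic unbalanced configurations directly (e.g.\ by counting irreducible pairs, or by explicit parabolic-orbit computations) together with reduction steps showing \GLS\ is inherited under derivatives, $\rho$-contraction, removal of detachable segments and the Zelevinsky involution. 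Your proposal gives no mechanism replacing this; note also that $\Comp_\m$ is the conormal-type variety over the single orbit $\rshft\OO_\m$ in $E_\rightarrow(V_\m)$, whereas the Schubert datum involves the pair $(\sigma,\sigma_0)$ coming from the bi-sequence, so even the identification you start from is not the one that feeds the smoothness criterion.

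On the representation-theoretic half, reducing to $\LI(\m,\m)$ is fine, but "peeling off extremal segments by derivatives until one reaches base configurations" does not by itself decide either direction. For the positive direction the paper does not peel: it embeds $\pi\hookrightarrow\pi_1\times\pi_2$ with $\pi_1$ a carefully chosen ladder, verifies that $\pi\times\pi_1$ is irreducible via the $\LC$-matching criterion of \cite{MR3573961}, and invokes the recursion of Lemma \ref{lem: albertoidea}; the balancedness hypothesis enters exactly in ruling out sub-multisegments of type $4231$/$3412$ during that verification. For the negative direction, the genuinely hard missing idea in your plan is how to show a base configuration is \emph{not} \LM: one must exhibit an irreducible submodule of $\pi\times\pi$ other than $\zele{\m+\m}$, and the paper does this with the double-socle construction $\Pi=\soc(\pi_1\times\soc(\pi_2\times\pi))\hookrightarrow\pi_1\times\pi_2\times\pi$, then excludes all $\omega\ne\pi$ with $\Pi\hookrightarrow\omega\times\pi$ by descent-set and Bruhat-order constraints. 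As the paper notes, this produces the first nontrivial infinite family of non-\LM\ representations, so "the recursion reduces to base cases which one then checks" cannot be taken for granted; without an analogue of the double-socle argument (and of the reduction statements for detachable segments, derivatives, contraction and the involution, the last of which does not even preserve regularity), your induction has no way to certify failure of $\LI(\m,\m)$.
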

The pertinent geometric condition admits an even more down-to-earth interpretation (see \S\ref{sec: GLSconj}).

Our main result is a proof of this conjecture in the so-called regular case, where we link the above condition to another geometric criterion.
Before stating it, let us introduce some more notation.
\begin{itemize}
\item For any integers $a\le b+1$ let $Z([a,b])$ be the character $\abs{\det\cdot}^{(a+b)/2}$ of $\GL_{b-a+1}(F)$.

\item For any permutation $\sigma\in S_k$, $k>0$ we denote by $\cell_\sigma$ (resp., $X_\sigma$) the corresponding Schubert cell (resp., variety)
in the flag variety of type $A_{k-1}$.
Thus, $\cell_\sigma$ is Zariski open in its closure $X_\sigma$ and $X_\sigma\supset\cell_{\sigma'}$ if and only if $\sigma'\le\sigma$ in the Bruhat order.

\item For $N>1$ let $U_q(\widehat{\sll}_N)$ be the quantum affine algebra pertaining to the affine Lie algebra $\widehat{\sll}_N$ where $q\in\C^*$ is not a root of unity.
The finite-dimensional simple modules of $U_q(\widehat{\sll}_N)$ are parameterized by Drinfeld polynomials,
or what amounts to the same, by monomials in the formal variables $Y_{i,a}$, $i=1,\dots,N-1$, $a\in\C^*$ (e.g., \cite{MR2642561}).
\end{itemize}

\begin{theorem} \label{thm: maini}
Let $\m=[a_1,b_1]+\dots+[a_k,b_k]$ where $a_1,\dots,a_k,b_1,\dots,b_k$ are integers such that $a_i\le b_i$ for all $i$.
Assume that $b_1>\dots>b_k$ and that $a_1,\dots,a_k$ are distinct.
Then Conjecture \ref{conj: GLSi} holds for
\[
\pi=Z(\m)=\soc(Z([a_1,b_1])\times\dots\times Z([a_k,b_k])).
\]
Moreover, let $\sigma,\sigma_0\in S_k$ be the permutations such that $a_{\sigma(1)}<\dots<a_{\sigma(k)}$ and for all $i$
\[
\sigma_0^{-1}(i)=\max\{j\le x_i:j\notin\sigma_0^{-1}(\{i+1,\dots,k\})\}\text{ where }x_i=\#\{j:a_j\le b_i+1\}.
\]
For $N>1+\max_i(b_i-a_i)$ let $L_N$ be the finite-dimensional simple module of $U_q(\widehat{\sll}_N)$ corresponding to the monomial $\prod_{i=1}^kY_{b_i-a_i+1,q^{a_i+b_i}}$.
Then $\sigma_0\le\sigma$ and the following conditions are equivalent.
\begin{enumerate}
\item \label{cond: pisqrirred} $\pi$ is \LM.
\item \label{cond: Lreal} $L_N$ is real, i.e., $L_N\otimes L_N$ is irreducible, for $N\gg1$.
\item \label{cond: GLS} $\Comp_\m$ admits an open $\GL(V_\m)$-orbit. (See Conjecture \ref{conj: GLSi}.)
\addtocounter{enumi}{1}
\begin{enumerate}[leftmargin=0pt, label*=\alph*]
\item \label{cond: smlocus} The smooth locus of $X_\sigma$ contains $\cell_{\sigma_0}$.
\item \label{cond: rsmlocus} $X_\sigma$ is rationally smooth at any point of $\cell_{\sigma_0}$.
\item \label{cond: ntrans} The number of transpositions $\tau\in S_k$ such that $\sigma_0\tau\le\sigma$ is equal to the length of $\sigma$.
\item \label{cond: KL} The Kazhdan--Lusztig polynomial $P_{\sigma_0,\sigma}$ with respect to $S_k$ is $1$.
\item \label{cond: KLall} $P_{\sigma',\sigma}\equiv1$ for every $\sigma'\in S_k$ such that $\sigma_0\le\sigma'\le\sigma$.
\end{enumerate}
\item \label{cond: detform} In the Grothendieck group we have
\[
\pi=\sum_{\sigma'\in S_k:\sigma_0\le\sigma'\le\sigma}\sgn\sigma\sigma'
\ Z([a_{\sigma(1)},b_{\sigma'(1)}])\times\dots\times Z([a_{\sigma(k)},b_{\sigma'(k)}]).
\]
\item \label{cond: sseq} There does not exist a sequence $1\le n_1<\dots<n_r\le k$, $r\ge4$ such that if $a'_i=a_{n_i}$ and $b'_i=b_{n_i}$ then either
\[
a'_{i+1}<a'_i\le b'_{i+1}+1,\ i=3,\dots,r-1,\ a'_3<a'_1\le b'_3+1\text{ and }a'_r<a'_2<a'_{r-1}
\]
or
\[
a'_{i+1}<a'_i\le b'_{i+1}+1,\ i=4,\dots,r-1,\ a'_4<a'_2\le b'_4+1\text{ and }a'_3<a'_r<a'_1<a'_l
\]
where $l=2$ if $r=4$ and $l=r-1$ otherwise.
\end{enumerate}
\end{theorem}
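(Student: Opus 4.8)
The plan is to establish the long list of equivalences by organizing them into three packets connected by a small number of "bridge" implications, rather than proving a full cycle. First, the purely geometric/combinatorial conditions \eqref{cond: smlocus}--\eqref{cond: KLall} are all classically equivalent for Schubert varieties of type $A$: rational smoothness equals smoothness in type $A$ (Deodhar), rational smoothness at a point is detected by the Kazhdan--Lusztig polynomial being trivial (Kazhdan--Lusztig, Deodhar), the "number of transpositions below" criterion is Carrell--Peterson, and the equivalence of \eqref{cond: KL} with \eqref{cond: KLall} uses the fact that on the interval $[\sigma_0,\sigma]$ rational smoothness propagates downward. So this packet costs essentially only bookkeeping, provided one has first pinned down $\sigma$ and $\sigma_0$ and checked $\sigma_0\le\sigma$. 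That last point, together with the combinatorial reformulation \eqref{cond: sseq}, is where the real combinatorial work lives: one must show that the pattern-avoidance description in \eqref{cond: sseq} is exactly the Lakshmibai--Sandhya-type criterion for smoothness of $X_\sigma$ at $\cell_{\sigma_0}$ — i.e., translate "$X_\sigma$ is singular along $\cell_{\sigma_0}$" into the existence of an appropriate $3412$- or $4231$-type configuration inside the multisegment data $(a_i,b_i)$, via the explicit formulas for $\sigma,\sigma_0$.

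Second, the representation-theoretic conditions: \eqref{cond: pisqrirred} $\Leftrightarrow$ \eqref{cond: Lreal} should follow from the Chari--Hernandez--type dictionary between Zelevinsky-induced representations of $\GL_n(F)$ and finite-dimensional modules of $U_q(\widehat{\sll}_N)$ for $N$ large (this is an exact monoidal comparison on the relevant subcategories, cf.\ the references cited for the quantum affine side), so that $\square$-irreducibility transports to reality of $L_N$. The equivalence \eqref{cond: pisqrirred} $\Leftrightarrow$ \eqref{cond: GLS} is precisely Conjecture \ref{conj: GLSi} in this regular case, which we must prove here; and the det-formula condition \eqref{cond: detform} is the representation-theoretic shadow of $P_{\sigma_0,\sigma}=1$: when all Kazhdan--Lusztig polynomials on the interval are trivial, the Zelevinsky resolution collapses to the alternating sum over $[\sigma_0,\sigma]$, and conversely such a clean determinantal identity in the Grothendieck group forces the multiplicities (which are KL polynomial values at $1$) to be $1$. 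So \eqref{cond: detform} $\Leftrightarrow$ \eqref{cond: KLall}, closing that packet into the geometric one.

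Third, the bridge from geometry of $X_\sigma$ (Schubert varieties in type $A$) to geometry of $\Comp_\m\subset\commvar(V_\m)$: here the plan is to use the known description of the irreducible component $\Comp_\m$ and its generic behavior in terms of the Zelevinsky/Lusztig picture — in the regular case the fiber structure of $\Comp_\m$ over the relevant Schubert cell is linear, so that $\Comp_\m$ has an open $\GL(V_\m)$-orbit if and only if the total space is "as small as possible", which is exactly the Carrell--Peterson count $\#\{\tau:\sigma_0\tau\le\sigma\}=\ell(\sigma)$, i.e.\ condition \eqref{cond: ntrans}. This is the conceptual heart and, I expect, the main obstacle: making the correspondence between $\GL(V_\m)$-orbits in $\Comp_\m$ and $B$-orbits (Schubert cells) in a neighbourhood of $\cell_{\sigma_0}$ in $X_\sigma$ precise enough that "open orbit" matches "smooth point", rather than merely "rationally smooth point" — though in type $A$ these coincide, so rational smoothness \eqref{cond: rsmlocus} is the natural intermediary. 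Once \eqref{cond: GLS} $\Leftrightarrow$ \eqref{cond: ntrans} is in hand, the whole theorem follows by assembling: \eqref{cond: pisqrirred} $\Leftrightarrow$ \eqref{cond: Lreal} via the quantum affine dictionary, \eqref{cond: pisqrirred} $\Leftrightarrow$ \eqref{cond: detform} via multiplicity-one and the KL interpretation of multiplicities, \eqref{cond: detform} $\Leftrightarrow$ the geometric packet via the collapsed resolution, the geometric packet self-equivalent by Deodhar--Carrell--Peterson, that packet $\Leftrightarrow$ \eqref{cond: GLS} via the component-fibration argument, and \eqref{cond: smlocus} $\Leftrightarrow$ \eqref{cond: sseq} via the pattern-avoidance translation.
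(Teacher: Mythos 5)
There is a genuine gap, and it sits exactly where the paper's real work lies. Your plan disposes of the classical packet (smoothness/rational smoothness/Carrell--Peterson/KL conditions), the pattern-avoidance translation via the maximal singular locus results, the quantum Schur--Weyl transfer, and the Arakawa--Suzuki derivation of the determinantal formula --- all of which do match the paper's treatment. But the two remaining bridges are asserted rather than proved. First, your ``component-fibration argument'' claiming that $\Comp_\m$ has an open $\GL(V_\m)$-orbit if and only if the Carrell--Peterson count holds has no substance as stated: there is no known direct geometric correspondence between $\GL(V_\m)$-orbits in $p_\rightarrow^{-1}(\rshft\OO_\m)$ and Schubert cells near $\cell_{\sigma_0}$ in the flag variety of $\GL_k$ that would make ``open orbit'' match the tangent-space count. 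The authors say explicitly that they have no geometric insight for this equivalence and instead prove it combinatorially: the ``if'' direction by constructing an explicit (extra-strong) matching inductively, and the ``only if'' direction by exhibiting, for each minimal unbalanced configuration, $k$ ``irreducible pairs'' forcing linear dependence, together with a chain of reductions (detachable segments, derivatives, contraction, Zelevinsky involution).

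Second, and more seriously, your route to the $\square$-irreducibility of $\pi$ --- ``the determinantal identity forces the multiplicities to be $1$, hence (1)'' --- is not a proof. Knowing the expansion of $\pi$ in standard modules (equivalently, triviality of the KL polynomials on $[\sigma_0,\sigma]$) says nothing a priori about whether $\pi\times\pi$ is irreducible; that is precisely the content of the theorem, not a formal consequence of multiplicity one. The paper's actual argument for ``smooth pair $\implies$ $\square$-irreducible'' is an induction on $k$ using the criterion that $\pi\in\IrrS$ whenever $\pi\hookrightarrow\pi_1\times\pi_2$ with $\pi\times\pi_1$ irreducible and $\pi_2\in\IrrS$, where $\pi_1$ is a carefully chosen ladder representation and the irreducibility of $\pi\times\pi_1$ is checked through the combinatorial criterion for ladders; and the converse ``not balanced $\implies$ not $\square$-irreducible'' is obtained by reducing to explicit basic families ($4{*}23{*}1$, $3{*}41{*}2$, $34{*}12$) and, in each, constructing by the double-socle strategy an irreducible $\Pi\hookrightarrow\pi\times\pi$ different from $Z(\m+\m)$. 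None of this machinery appears in your proposal, so the two central equivalences of the theorem remain unestablished.
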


The equivalence of conditions \ref{cond: pisqrirred} and \ref{cond: Lreal} follows from the quantum Schur--Weyl duality \cite{MR1405590}.
The equivalence of conditions \ref{cond: smlocus}, \ref{cond: rsmlocus}, \ref{cond: ntrans}, \ref{cond: KL} and \ref{cond: KLall}
(for any $\sigma,\sigma_0\in S_k$) is well known (\cite{MR788771}).
The equivalence of conditions \ref{cond: KLall} and \ref{cond: detform} follows from the properties of the Arakawa--Suzuki functor \cite{MR1652134}
and the Kazhdan--Lusztig conjecture \cite{MR610137, MR632980} -- see \S\ref{sec: KLid}.
If $\max_ia_i\le b_k+1$ then $\sigma_0$ is the identity and condition \ref{cond: smlocus} simply becomes the smoothness of $X_\sigma$.
In this case condition \ref{cond: sseq} is tantamount to the well-known smoothness criterion of Lakshmibai--Sandhya \cite{MR1051089} that $\sigma$ avoids the patterns $3412$ and $4231$.
In the general case, the equivalence of conditions \ref{cond: smlocus} and \ref{cond: sseq} follows from the description of the maximal singular loci of $X_\sigma$ due (independently) to
Billey--Warrington, Cortez, Kassel--Lascoux--Reutenauer and Manivel \cite{MR1990570, MR1994224, MR2015302, MR1853139}, as explained in \S\ref{sec: smth pairs}
and \S\ref{sec: combi}. Incidentally, $\sigma_0$ is a stack-sortable permutation in the sense of Knuth. (Roughly speaking, it encodes how the sets
$\{a_1,\dots,a_k\}$ and $\{b_1,\dots,b_k\}$ are interleaved.)

Thus, the main innovative part of the paper is the equivalence of the conditions \ref{cond: pisqrirred}, \ref{cond: GLS} and \ref{cond: sseq}.
(See Theorem \ref{thm: main}.)
The ensuing equivalence of conditions \ref{cond: GLS} and \ref{cond: smlocus} is striking since at first glance, the two geometric conditions are seemingly of a different nature.
Indeed, at present we do not have a good geometric insight for this equivalence. Instead, we prove it combinatorially.

The case where $a_1>\dots>a_k$ (i.e., where $\sigma$ is the longest element of $S_k$) is especially important.
It was considered in \cite{MR3573961} under the name ``ladder representations''. In other contexts it has been known under different names.

Let us say a few words about the proof.
The implication \eqref{cond: sseq}$\implies$\eqref{cond: pisqrirred} is proved in \S\ref{sec: main} by induction on $k$.
For the induction step we use the simple observation
that if $\pi\hookrightarrow\pi_1\times\pi_2$ and $\pi\times\pi_1$ is irreducible then $\pi$ is \LM\ provided that $\pi_2$ is \LM.
(See Lemma \ref{lem: albertoidea}.)
In the case at hand we take $\pi_1$ to be a ladder representation and use the results of \cite{MR3573961}
to check the required properties combinatorially.
A parallel argument yields the implication \eqref{cond: sseq}$\implies$\eqref{cond: GLS}.

For the inverse direction $\lnot$\eqref{cond: sseq}$\implies\lnot$\eqref{cond: pisqrirred}, i.e., to prove non-$\square$-irreducibility,
we make several reductions to certain basic cases for which we use the following ``double socle'' strategy.
Given $\pi=\zele{\m}$, we construct $\square$-irreducible representations $\pi_1,\pi_2$ such that $\pi\hookrightarrow\pi_1\times\pi_2$.
Then $\Pi:=\soc(\pi_1\times\soc(\pi_2\times\pi))\hookrightarrow\pi_1\times\pi_2\times\pi$ is irreducible and hence
$\Pi\hookrightarrow\omega\times\pi$ for some irreducible subquotient $\omega$ of $\pi_1\times\pi_2$.
We then show that this is not possible unless $\omega=\pi$. This ensures that $\Pi\hookrightarrow\pi\times\pi$ and hence that $\pi$ is not \LM\
provided that $\Pi\ne\zele{\m+\m}$.
The proof is rather technical and once again, uses the results of \cite{MR3573961}.
The same reductions apply to the implication $\lnot$\eqref{cond: sseq}$\implies\lnot$\eqref{cond: GLS}, for which the basic cases are easy to verify.

As far as we know, Theorem \ref{thm: maini} is the first instance where a non-trivial infinite family of non-\LM\ representations is exhibited.
We remark that Theorem \ref{thm: maini} is proved more generally for Zelevinsky's segment representations
$Z([a,b])=\soc(\rho\abs{\det\cdot}^a\times\rho\abs{\det\cdot}^{a+1}\times\dots\times\rho\abs{\det\cdot}^b)$
for any fixed supercuspidal $\rho\in\Irr$.
Theorem \ref{thm: maini} implies the following curious identity of Kazhdan--Lusztig polynomials with respect to $S_{2k}$.
\begin{theorem}[Corollary \ref{cor: KLidnt}]
Let $\sigma,\sigma_0\in S_k$ be such that the equivalent conditions \ref{cond: smlocus}--\ref{cond: KLall} of Theorem \ref{thm: maini} are satisfied and
$\sigma_0$ is $213$-avoiding.
Let $\tilde\sigma\in S_{2k}$ be given by $\tilde\sigma(2i-j)=2\sigma(i)-j$, $i=1,\dots,k$, $j=0,1$ and
let $H\simeq S_k\times S_k$ be the parabolic subgroup of $S_{2k}$ of type $(k,k)$. Then
\[
\sum_{w\in H}\sgn w\ P_{\widetilde{\sigma'}w,\widetilde{\sigma}}(1)=1
\]
for any $\sigma'$ such that $\sigma_0\le\sigma'\le\sigma$.
\end{theorem}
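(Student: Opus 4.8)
The plan is to derive this identity from Theorem \ref{thm: maini} by transporting it, through the Arakawa--Suzuki functor and the Kazhdan--Lusztig conjecture, to a multiplicity computation in category $\OO$ for $\mathfrak{gl}_{2k}$ --- the very dictionary that, in \S\ref{sec: KLid}, turns condition \eqref{cond: KLall} into the determinantal identity \eqref{cond: detform}. Under the stated hypotheses Theorem \ref{thm: maini} provides two inputs: $\pi=Z(\m)$ is $\square$-irreducible, so $\pi\times\pi=Z(\m+\m)$ is irreducible; and \eqref{cond: detform} holds, i.e.\ in the Grothendieck group $\pi=\sum_{\sigma_0\le\sigma'\le\sigma}\sgn(\sigma\sigma')\,\zeta_{\sigma'}$ with $\zeta_{\sigma'}=Z([a_{\sigma(1)},b_{\sigma'(1)}])\times\cdots\times Z([a_{\sigma(k)},b_{\sigma'(k)}])$, all the underlying regular multisegments $\m_{\sigma'}=\sum_j[a_{\sigma(j)},b_{\sigma'(j)}]$ sharing the supercuspidal support of $\m$.

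The first thing I would pin down is the correct block of $\OO$ for $\mathfrak{gl}_{2k}$ on the other side. The squaring operation $\tau\mapsto\tau\times\tau$ is parabolic induction from the Levi $\GL_n\times\GL_n\subset\GL_{2n}$, which under the dictionary corresponds to the Levi $\mathfrak{gl}_k\times\mathfrak{gl}_k\subset\mathfrak{gl}_{2k}$, i.e.\ to the standard parabolic subgroup $H\cong S_k\times S_k$ of type $(k,k)$. The multisegment $\m+\m$, all of whose support multiplicities are doubled, should correspond to the $2$-block inflation $\tilde\sigma\in S_{2k}$ of $\sigma$ --- it is here that the hypothesis that $\sigma_0$ is $213$-avoiding enters, guaranteeing that $\m+\m$ is attached to the interleaved permutation $\tilde\sigma$ rather than to a longer representative of its $H$-coset. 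Because of the doubling, the relevant block of $\OO$ for $\mathfrak{gl}_{2k}$ is singular, of stabilizer type $2^k=(2,\dots,2)$; via the standard relation between singular blocks of type $\lambda$ and parabolic blocks of type $\lambda'$ (here $\lambda'=(k,k)$), its Kazhdan--Lusztig combinatorics is governed by parabolic Kazhdan--Lusztig polynomials of $S_{2k}$ relative to $H$, so that the multiplicity $[\Delta(x):L_{\tilde\sigma}]$ of the simple $L_{\tilde\sigma}$ attached to $\m+\m$ in the generalized Verma module $\Delta(x)$ (for the $(k,k)$-parabolic) is read off as the value at $1$ of an alternating sum of ordinary Kazhdan--Lusztig polynomials over an $H$-coset.

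Granting this dictionary, the determinantal identity \eqref{cond: detform} --- together with the irreducibility of $\pi\times\pi$, which keeps the ``square'' of the simple $Z(\m)$ simple in the singular block --- transports to a family of parabolic-Verma determinantal identities for $L_{\tilde\sigma}$, one for each $\sigma'\in[\sigma_0,\sigma]$: in the Grothendieck group of the singular block,
\[
L_{\tilde\sigma}=\sum_{w\in H}\sgn(w)\,\Delta(\widetilde{\sigma'}w),
\]
the extra parameter $\sigma'$ reflecting the freedom, in a singular block, in choosing a parabolic-Verma resolution, all choices encoding the single identity \eqref{cond: detform}. Computing the multiplicity of $L_{\tilde\sigma}$ on both sides and using $[\Delta(x):L_{\tilde\sigma}]=P_{x,\tilde\sigma}(1)$ then yields $\sum_{w\in H}\sgn(w)\,P_{\widetilde{\sigma'}w,\tilde\sigma}(1)=1$ for every such $\sigma'$.

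I expect the main obstacle to be the combinatorial and homological bookkeeping in the middle paragraph: making explicit the correspondence between doubled (hence non-regular) multisegments and permutations of $S_{2k}$, pinning down $\m+\m\leftrightarrow\tilde\sigma$ and the identification of the relevant generalized Verma modules with the $\widetilde{\sigma'}w$, $w\in H$, with the correct signs, and establishing the transported determinantal identity --- for which both \eqref{cond: detform} and the irreducibility of $\pi\times\pi$ are needed. Once that identification is in place the remaining content is purely the multiplicity-one statement, which is exactly what $\square$-irreducibility of $\pi$ supplies.
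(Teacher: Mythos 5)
Your overall route is the paper's: transport everything through the Arakawa--Suzuki functor, use $\square$-irreducibility to write $\zele{\m}\times\zele{\m}=\zele{\m+\m}$, expand $\zele{\m}$ by the determinantal formula \eqref{cond: detform} and $\zele{\m+\m}$ by the Kazhdan--Lusztig expansion \eqref{eq: invgen} for the doubled (singular) data, and compare. But the step that actually produces the identity is missing. Squaring \eqref{cond: detform} gives one identity, a signed sum over \emph{pairs} $(\sigma_1,\sigma_2)\in[\sigma_0,\sigma]^2$ of standard modules $\std(\m_{\sigma_1}(\tseq)+\m_{\sigma_2}(\tseq))$, not a family of identities ``$L_{\tilde\sigma}=\sum_{w\in H}\sgn(w)\Delta(\widetilde{\sigma'}w)$, one for each $\sigma'$''. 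The expansion of a simple object in the relevant standard basis is unique, so no ``freedom in choosing a parabolic-Verma resolution'' can generate such a family; and under the natural reading (the parabolic attached to the stabilizer of the duplicated data, which with your interleaved $\tilde\sigma$ is of type $(2,\dots,2)\simeq S_2^k$, as in Corollary \ref{cor: KLidnt}) the proposed right-hand side is a signed sum of \emph{coinciding} standard/Verma modules and vanishes, e.g. already for $k=2$. What the paper does instead is compare, for each fixed $\sigma'$, the coefficient of the single standard module $\std(\m_{\sigma'}(\tseq)+\m_{\sigma'}(\tseq))$ in the two expansions: grouping \eqref{eq: invgen} over the coset $\widetilde{\sigma'}H$ (which $\widetilde{\sigma'}$ normalizes) gives $\sum_{w\in H}\sgn w\,P_{\widetilde{\sigma'}w,\tilde\sigma}(1)$, while on the squared-determinant side the crucial point --- absent from your write-up --- is that $\permat{\sigma_1}+\permat{\sigma_2}=2\permat{\sigma'}$ forces $\sigma_1=\sigma_2=\sigma'$ (cf.\ Lemma \ref{lem: RHS} and \eqref{eq: intprtirred}), so the coefficient equals $(\sgn\sigma\sigma')^2=1$. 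Without this counting, the multiplicity-one conclusion is asserted rather than proved; note also that your formula $[\Delta(x):L_{\tilde\sigma}]=P_{x,\tilde\sigma}(1)$ mixes conventions (ordinary KL polynomials compute Verma multiplicities in the singular block only for the maximal-length coset representative, and not multiplicities in generalized Verma modules).

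Two further points. The hypothesis that $\sigma_0$ is $213$-avoiding is not needed to ensure that $\m+\m$ is ``attached to $\tilde\sigma$'' --- with the interleaved convention that maximality in the double coset is automatic and is only what legitimizes \eqref{eq: invgen}. Its real role is via Lemma \ref{lem: comb12}: it guarantees the existence of a regular \biseq\ $\tseq$ with $\sigma_0(\tseq)=\sigma_0$, hence of a regular multisegment $\m=\m_\sigma(\tseq)$ to which Theorem \ref{thm: main} (equivalently the representation-theoretic parts of Theorem \ref{thm: maini}) and Corollary \ref{cor: detsmth} apply; without it the inputs you invoke need not exist, and the paper's remark shows the general coset identity \eqref{eq: allxrltn} can indeed fail. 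Relatedly, the group $H$ over which one sums arises from the stabilizer of the doubled weights (type $(2,\dots,2)$ in the body's Corollary \ref{cor: KLidnt}), not from the $(k,k)$-Levi of the parabolic induction defining $\times$; your identification of the singular/parabolic dictionary (``$\lambda'=(k,k)$'') rests on this conflation and should be redone consistently with the definition of $\tilde\sigma$.
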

See also Theorem \ref{thm: higherKL} for a generalization and \cite{1705.06517} for a follow-up conjecture.
It would be interesting to have a geometric interpretation of these identities.

At the moment, it is not clear what would replace the smoothness condition \ref{cond: smlocus} of Theorem \ref{thm: maini} in the non-regular case.
At any rate, it seems that in order to attack Conjecture \ref{conj: GLSi} in the general case with our approach, it is imperative to generalize
the results of \cite{MR3573961} to a broader class of representations. We hope to pursue this in a forthcoming work.

The determination of $\soc(\pi\times\sigma)$ is a very useful tool in representation theory.
Already in the case where $\pi$ is supercuspidal, partial results in this direction were used by M\oe glin--Waldspurger
to explicate the Zelevinsky involution \cite{MR863522}. The analysis in this case was completed independently in \cite{MR2306606} and \cite{MR2527415}
and was a key ingredient for explicating the theta correspondence for dual pairs of type II \cite{MR2504432}.
These results were extended in \cite{MR3573961} to ladder representations and yielded
a new and simplified proof of the classification of the unitary dual of $\GL_m(F)$ and its inner forms.

The problem makes sense for classical groups as well.
This approach was used by Gan--Takeda \cite{MR3454380} in their new proof of the theta correspondence for dual pairs of type I.
A better understanding in this case is a prerequisite for determining the (still unknown) unitary dual of classical groups.
(See \cite{1703.09475} for some preliminary results in this direction.)

As alluded to above, the real modules of quantum affine algebras and KLR algebras play a crucial role in the monoidal categorification of
certain (quantum) cluster algebras \cite{MR3077685, MR2682185, 1412.8106, 1502.06714}.
In particular, they are expected to represent the cluster monomials.
It remains to be seen whether our results shed any light on this procedure, or whether they can be extended beyond type $A$.
We caution however that although the notions of quantum affine algebras and KLR algebras make sense for any Cartan datum,
the link to representation theory of $p$-adic groups works well only for type $A$.
Thus, the study of reducibility questions of parabolic induction (say, for classical groups) may lead to different questions.

The contents of this paper is as follows.
In \S\ref{sec: KKKO} we translate some of the results and proofs of Kang--Kashiwara--Kim--Oh into the language of $p$-adic groups (mostly $\GL(n,F))$.
Essentially, the role of $R$-matrices is played by the usual intertwining operators.
We then recall in \S\ref{sec: classification} the Zelevinsky classification (extended to division algebras) and the irreducibility criteria of \cite{MR3573961},
which are the principal tools for the proof of the main result.
In \S\ref{sec: GLSconj} we explicate the openness criterion of Geiss--Leclerc--Schr\"oer in the case at hand and state equivalent forms
of Conjecture \ref{conj: GLSi}.
We also give some consistency checks which will be used in the proof of the main result.
This concludes the first part of the paper.

In the second part we focus on the case of irreducible representations with regular parameters, for which our main result applies.
In \S\ref{sec: smth pairs} we recall some well-known facts about singularities of Schubert varieties of type $A$.
In \S\ref{sec: combi} we introduce the main combinatorial criterion for multisegments and reinterpret it using the results of
\cite{MR1990570, MR1994224, MR2015302, MR1853139}.
The recent thesis of Deng Taiwang \cite{1603.06387} sheds more light on some of the material of this section as well as on \S\ref{sec: KLid}.
The main result is stated in \S\ref{sec: main} where the $\square$-irreducibility part is proved.
Exemplars of non-\LM\ representations are constructed in \S\ref{sec: basicases}.
The reduction to these special cases is accomplished in \S\ref{sec: comproof} where the proof of the main result is completed.
Finally, in \S\ref{sec: KLid} we interpret the main result in terms of an identity of Kazhdan--Lusztig polynomials via the Arakawa--Suzuki functor.

\subsection*{Acknowledgement}
We are grateful to a number of mathematicians and institutes for their help in preparing this manuscript.
First and foremost we are grateful to Guy Henniart for suggesting the formulation of Lemma \ref{lem: Henniart} and for several other suggestions.
We thank Max Gurevich for his input on Lemma \ref{lem: LM}.
We are thankful to David Hernandez and Bernard Leclerc for very useful correspondence and to Pascal Boyer for communicating to us the thesis of Taiwang Deng.
We are indebted to Greg Warrington for clarifying some points in \cite{MR1990570} and most importantly for
his C code for computing the Kazhdan--Lusztig polynomials for the symmetric group which was pivotal for coming up with
the formulation of our main result.
Finally, we thank Karim Adiprasito, Arkady Berenstein, Joseph Bernstein, Michel Brion, Tobias Finis, Masaki Kashiwara,
David Kazhdan, Eric Opdam, Gordan Savin and Toshiyuki Tanisaki for useful correspondence and discussions at various stages of this project.

Part of this work was done while the first-named author was visiting the Institute for Mathematical Sciences, National University of Singapore in spring 2016.
He thanks the IMS for their kind support.
The second-named author would like to thank the CNRS for the one year of ``d\'el\'egation'' where part of this work was completed and the
University of Seville for its hospitality during the academic year 2017-2018.
Both authors would like to thank Instituto de Matem\'aticas Universidad de Sevilla for providing good working conditions during
a visit of the first-named author.

\subsection*{Notation}
Throughout we fix a non-archimedean local division algebra $D$ with center $F$.
We denote by $\abs{\cdot}$ the normalized absolute value on $F$ and by $\# A$ the cardinality of a finite set $A$.
For any reductive group $G$ over $F$ we denote by $\Reps(G)$ the category of complex, smooth representations
of $G(F)$ of finite length (hence admissible) and by $\Irr G$ the set of irreducible objects of $\Reps(G)$ up to equivalence.
We have a well-known decomposition $\Reps(G)=\oplus_{\mathcal{D}}\Reps_{\mathcal{D}}(G)$ according to supercuspidal data
(i.e., pairs $(M,\sigma)$ where $M$ is a Levi subgroup of $G$ and $\sigma$ is a supercuspidal representation of $M(F)$, up to conjugation by $G(F)$).
We will mostly consider the groups $G_n=\GL_n(D)$, $n=0,1,2,\dots$, the multiplicative group of the ring of $n\times n$ matrices over $D$.
If $\pi_i\in\Reps(G_{n_i})$ $i=1,2$, we denote by $\pi_1\times\pi_2\in\Reps(G_{n_1+n_2})$ the representation
parabolically induced from $\pi_1\otimes\pi_2$ (normalized induction).
This functor (and the isomorphism of induction by stages) endow $\Reps=\oplus_{n\ge0}\Reps(G_n)$ with the structure of a ring category\footnote{i.e.,
a locally finite $\C$-linear abelian monoidal category in which $\End(1)=\C$ and the tensor product bifunctor is bilinear and biexact -- cf. \cite[Ch. 4]{MR3242743}.}
where the identity (which we denote by $1$) is the one-dimensional representation of $G_0$.
Let $\Gr_n$ (resp., $\Gr$) be the Grothendieck group of $\Reps(G_n)$ (resp., $\Reps$).
Even though $\times$ is not commutative in $\Reps$, $\Gr=\oplus_{n\ge0}\Gr_n$ is nevertheless a commutative graded ring under $\times$.
Set $\Irr=\cup_{n\ge0}\Irr G_n$ and let $\Cusp\subset\Irr$ be the subset of supercuspidal representations of $G_n$, $n>0$.
(Note that by convention we exclude $1\in\Irr G_0$ from $\Cusp$.)

Let $\pi,\pi'\in\Reps(G_n)$ and $\chi$ a character of $F^*$. We use the following notation and terminology.
\begin{itemize}
\item $\deg(\pi)=n$.
\item $\pi^\vee\in\Reps(G_n)$ is the contragredient of $\pi$.
\item $\pi\chi\in\Reps(G_n)$ is the representation obtained from $\pi$ by twisting by the character $\chi\circ\Nrd$ where $\Nrd$ is the reduced norm on $G_n$.
\item $\JH(\pi)$ is the Jordan--H\"older sequence of $\pi$ (i.e., the image of $\pi$ in $\Gr_n$), viewed as a finite multiset of $\Irr G_n$.
\item We write $\pi'\le\pi$ if $\JH(\pi')\subset\JH(\pi)$ (including multiplicities).
\item $\soc(\pi)$ (resp., $\coss(\pi)$) is the socle (resp., cosocle) of $\pi$, i.e., the largest semisimple subrepresentation (resp., quotient) of $\pi$.
\item We say that $\pi$ is \SI\ if $\soc(\pi)$ is irreducible and occurs with multiplicity one in $\JH(\pi)$.
\item $\supp\pi$ is the supercuspidal support of $\pi$ considered as a finite subset of $\Cusp$ (without multiplicity).
\item $\jac_{(m,n-m)}(\pi)\in\Reps(G_m\times G_{n-m})$ is the (normalized) Jacquet module of $\pi$ with respect to the standard (upper triangular)
parabolic subgroup of $G_n$ of type $(m,n-m)$, $0\le m\le n$. Often we simply write $\jac(\pi)$ if $m$ and $n$ are clear from the context.
\item For $\rho_1,\dots,\rho_k\in\Cusp$ with $m=\deg\rho_1+\dots+\deg\rho_k\le n$, $\jac_{(m,n-m)}(\pi)_{\rho_1+\dots+\rho_k;*}$
denotes the maximal subrepresentation $\sigma$ of $\jac_{(m,n-m)}(\pi)$
with the property that any supercuspidal irreducible subquotient of a Jacquet module of $\sigma$ is of the form
$\rho_{\tau(1)}\otimes\dots\otimes\rho_{\tau(k)}\otimes\rho'_1\otimes\dots\otimes\rho'_l$ for some permutation $\tau$ of $\{1,\dots,k\}$
and $\rho'_i\in\Cusp$. Similarly for $\jac_{(n-m,m)}(\pi)_{*;\rho_1+\dots+\rho_k}$.
\end{itemize}

\part{}
\section{Some results of Kang--Kashiwara--Kim--Oh} \label{sec: KKKO}
The purpose of this section is to translate some of the results of \cite{MR3314831} and \cite{1412.8106} to the language of
representations of reductive groups (mostly, $G_n$) over $F$.

\subsection{}
For the next lemma let $G$ be a (connected) reductive group defined over $F$ and let $P,Q$ be parabolic subgroups of $G$
(defined over $F$) such that $R=P\cap Q$ is also a parabolic subgroup. We denote (normalized) parabolic induction by $I_P^G$.
It is a functor from $\Reps(M_P)$ to $\Reps(G)$ where $M_P$ denotes the Levi part of $P$. Similarly for $I_R^P$ and $I_R^Q$.

We are very grateful to Guy Henniart for suggesting to us the following neat formulation.

\begin{lemma} \label{lem: Henniart}
Let $\pi\in\Reps(M_R)$ and let $\sigma$ (resp., $\tau$) be a subrepresentation of $I_R^P(\pi)$ (resp., $I_R^Q(\pi)$).
Assume that $I_P^G(\sigma)\subset I_Q^G(\tau)$ (as subrepresentations of $I_R^G(\pi)$).
Then there exists a subrepresentation $\kappa$ of $\pi$ such that $\sigma\subset I_R^P(\kappa)$ and $I_R^Q(\kappa)\subset\tau$.
\end{lemma}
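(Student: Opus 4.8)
The plan is to reduce everything to a statement about Jacquet modules via Frobenius reciprocity, and then to play off the two parabolics $P$ and $Q$ against each other using the geometric (Bruhat-type) decomposition of the double coset space. First I would recall the second adjunction (Bernstein): the functor $I_P^G$ is right adjoint to the Jacquet functor $r_{\bar P}^G$ attached to the opposite parabolic, while it is left adjoint to $r_P^G$. The hypothesis $I_P^G(\sigma)\subset I_Q^G(\tau)$ inside $I_R^G(\pi)$ should first be transported into a statement purely about the Levi $M_R$. Concretely, I would apply $r_{\bar R}^G$ (Jacquet with respect to the parabolic opposite to $R$) to the containment $I_P^G(\sigma)\subset I_Q^G(\tau)$; by the geometric lemma of Bernstein--Zelevinsky, $r_{\bar R}^G I_P^G(\sigma)$ and $r_{\bar R}^G I_Q^G(\tau)$ each carry a filtration indexed by the relevant double cosets $R\backslash G/P$ and $R\backslash G/Q$, and the open/identity double coset contributes $r_{\bar R}^P(\sigma)$, respectively $r_{\bar R}^Q(\tau)$, as a canonical subquotient.

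The key step is then to extract the identity-coset piece cleanly. Since $R = P\cap Q$ is a parabolic, the identity double coset in $R\backslash G/P$ is closed (it is just $P$), so $r_{\bar R}^P(\sigma)$ sits as an honest \emph{sub}object, not merely a subquotient, of $r_{\bar R}^G I_P^G(\sigma)$; dually, using that $R$-orbit it appears as a \emph{quotient} on the $Q$-side if one instead applies $r_R^G$. The cleanest route is: apply $r_R^G$ to both sides. Then $r_R^G I_P^G(\sigma)$ has $r_R^P(\sigma)$ as a natural quotient (coming from the open $R$-orbit on $G/P$, which, because $R\subset P$, is the big cell), and $r_R^G I_Q^G(\tau)$ has $r_R^Q(\tau)$ as a natural \emph{sub}object (coming from the closed $R$-orbit). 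Chasing the inclusion $I_P^G(\sigma)\subset I_Q^G(\tau)$ through, together with the adjunction counit $r_R^P(\sigma)$ inside $r_R^G I_P^G(\sigma)$ and the unit $\sigma \hookrightarrow I_R^P(r_R^P(\sigma))$, produces a canonical $M_R$-map $r_R^P(\sigma)\to r_R^Q(\tau)$. I would then let $\kappa\subset\pi$ be built from the image of $r_R^P(\sigma)$ inside $\pi$ — more precisely, $\kappa$ is the image of the composite $r_R^P(\sigma)\hookrightarrow r_R^P(I_R^P(\pi)) = \pi$ (the last equality is $r_R^P\circ I_R^P = \mathrm{id}$, since $P/R$ is a point). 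The two required containments $\sigma\subset I_R^P(\kappa)$ and $I_R^Q(\kappa)\subset\tau$ should then follow: the first from the unit $\sigma\hookrightarrow I_R^P(r_R^P(\sigma))$ factoring through $I_R^P(\kappa)$, the second by applying $I_R^Q$ to $\kappa\hookrightarrow r_R^Q(\tau)$ (after identifying $\kappa$ with its image there) and using the counit $I_R^Q(r_R^Q(\tau))\to\tau$ together with biexactness.

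The main obstacle I anticipate is bookkeeping the two different adjunctions simultaneously and making sure the ``identity coset'' piece really is a sub on one side and a quotient on the other, so that the maps compose in the right direction without needing semisimplicity. One has to be careful that $\kappa$ — defined as a subobject of $\pi$ via $r_R^P$ — is genuinely the same object that maps into $\tau$ via $r_R^Q$; this is where the hypothesis $R = P\cap Q$ is essential, because it forces $G/R \to G/P$ and $G/R\to G/Q$ to both have the diagonal $G/R$-orbit as the distinguished (closed, resp. open) one, so that the Jacquet functors $r_R^P$ and $r_R^Q$ are compatible restrictions of $r_R^G$. A secondary technical point is exactness: parabolic induction and Jacquet functors are exact, so images and subquotients behave well, but I would double-check that the geometric filtration's relevant graded piece is split off as needed (it is, because the pertinent orbit is a single point in $G/R$). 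Once these compatibilities are pinned down, the conclusion is a short diagram chase, and no serious computation is involved.
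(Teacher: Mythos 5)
Your construction of $\kappa$ can be repaired, but not by the route you state, and the second half of your argument has a genuine gap. First, the identity $r_R^P\circ I_R^P=\mathrm{id}$ is false: $P/R$ is a point only if $P=R$ (in the intended application $P,Q,R$ have types $(n_1+n_2,n_3)$, $(n_1,n_2+n_3)$, $(n_1,n_2,n_3)$, so $P\ne R$), and likewise the $R$-orbit of the base point of $G/P$ is $RP/P=\{eP\}$, the closed point orbit, not the big cell. What does work is to apply Frobenius reciprocity directly to the inclusion $\sigma\subset I_R^P(\pi)$: the resulting map $r_R^P(\sigma)\to\pi$ is evaluation at $e$, its image is $\kappa=\{f(e):f\in\sigma\}$ (exactly the paper's choice), and $\sigma\subset I_R^P(\kappa)$ follows since the inclusion of $\sigma$ factors through $I_R^P(\kappa)\subset I_R^P(\pi)$. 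So the easy containment survives, with the same $\kappa$ as in the paper.

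The decisive gap is the containment $I_R^Q(\kappa)\subset\tau$. Your plan is to produce a map $\kappa\to r_R^Q(\tau)$, apply $I_R^Q$, and compose with a counit $I_R^Q(r_R^Q\tau)\to\tau$; but $I_R^Q$ is \emph{right} adjoint to $r_R^Q$, so only the unit $\tau\to I_R^Q(r_R^Q\tau)$ exists, while a counit of that shape belongs to the second adjunction and involves the opposite Jacquet functor $r_{\bar R}^Q$, not $r_R^Q$ -- mixing the two adjunctions is precisely where the argument breaks. Moreover, the intermediate map $r_R^P(\sigma)\to r_R^Q(\tau)$ is not supplied by the geometric lemma: with your own identifications the identity-coset piece is a quotient of one filtered object and a subobject of the other, and an inclusion of filtered modules induces no canonical map from a quotient of the smaller to a subobject of the larger; in any case the lemma asserts an inclusion of actual subspaces of $I_R^G(\pi)$, which control of Jacquet-module subquotients cannot by itself recover. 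The paper's proof is instead a direct function-theoretic argument: one chooses a basis of compact open normal subgroups $K$ of some $K_0$ with $K=(K\cap\bar V)(K\cap Q)$, whence $PK\cap Q=R(K\cap Q)$; one shows that the functions in $I_R^Q(\pi)$ supported on $R(K\cap Q)$ with value $f(e)$, $f\in\sigma^{P\cap K}$, lie in $\tau$ by evaluating at $e$ the hypothesis $I_P^G(\sigma)\subset I_Q^G(\tau)$ on sections supported on $PK$; and one then writes an arbitrary element of $I_R^Q(\kappa)$ as a finite sum of translates of such functions. Nothing in your outline replaces this step, so as written the proof does not go through.
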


\begin{proof}
Let $\kappa$ be the space $\{f(e):f\in\sigma\}\subset\pi$ where we view $\sigma$ as a subrepresentation of $I_R^P(\pi)$.
It is clear that $\kappa$ is a subrepresentation of $\pi$ and $\sigma\subset I_R^P(\kappa)$
(and in fact, $\kappa$ is the smallest subrepresentation with this property).
It remains to show that $I_R^Q(\kappa)\subset\tau$.

We first recall that there exists a compact open subgroup $K_0$ of $G$ and basis $\bss$ of neighborhoods of $1$ in $G$ consisting of
normal subgroups of $K_0$ such that $PK\cap Q=R(K\cap Q)$ for every $K\in\bss$.
Indeed, fix an opposite parabolic $\bar Q$ to $Q$ with unipotent radical $\bar V$.
By \cite[p. 16]{MR771671} $G$ admits a compact open subgroup $K_0$ and a basis of neighborhoods of $1$
consisting of normal subgroups of $K_0$ satisfying $K=(K\cap \bar V)(K\cap Q)$.
On the other hand, we have $P\bar Q\cap Q=R$.
Hence, $PK\cap Q=(P(K\cap\bar V)\cap Q)(K\cap Q)\subset (P\bar V\cap Q)(K\cap Q)=R(K\cap Q)$ as required.

For any $v\in\pi$ and a compact open subgroup $K$ of $G$ such that $v\in\pi^{K\cap R}$ denote by $\varphi_{v;K}$
the element of $I_R^Q(\pi)$ which is supported in $R(K\cap Q)$ and takes the value $v$ on $K\cap Q$.
We claim that for any $K\in\bss$, $\tau$ contains $\{\varphi_{f(e);K}:f\in\sigma^{P\cap K}\}$.
Indeed, let $f\in\sigma^{P\cap K}$ and consider the element $\varphi'_f\in I_P^G(\sigma)$
which is supported in $PK$ and has constant value $f$ on $K$. We can view $\varphi'_f$ as an element of
$I_R^G(\pi)\simeq I_Q^G(I_R^Q(\pi))$. Let $\phi_f$ (resp., $\psi_f$) be the image of $\varphi'_f$ in $I_R^G(\pi)$ (resp., $I_Q^G(I_R^Q(\pi))$).
Then for any $g\in G$, $\phi_f(g)=\varphi'_f(g)(1)$ and $\psi_f(g)\in I_R^Q(\pi)$ is given by $q\mapsto\varphi'_f(qg)(1)$.
Since $I_P^G(\sigma)\subset I_Q^G(\tau)$ we infer that $\psi_f(g)\in\tau$ for all $g\in G$ and in particular, $\psi_f(e)\in\tau$.
On the other hand, since $\varphi'_f\rest_Q$ is supported in $PK\cap Q = R(K\cap Q)$, $\psi_f(e)$ coincides with $\varphi_{f(e);K}$. Our claim follows.

Fix a compact set $\Omega\subset Q$ such that $R\Omega=Q$ and let $\psi\in I_R^Q(\kappa)$. We need to show that $\psi\in\tau$.
Since $\psi$ takes only finitely many values on $\Omega$, there exists $K\in\bss$ and for any $g\in\Omega$ there exists
$f_g\in\sigma^{P\cap K}$ such that $f_g(e)=\psi(g)$ (and in particular $\psi(g)\in\pi^{K\cap R}$). By the discussion above we therefore have
\begin{equation} \label{eq: intau}
\varphi_{\psi(g);K_1}\in\tau\text{ for any $g\in\Omega$ and $K_1\in\bss$ contained in $K$}.
\end{equation}
Let $K_2=\cap_{g\in\Omega}g^{-1}Kg$. Then
\[
\psi=\sum_{\eta}I_R^Q(\pi,\eta^{-1})\varphi_{\psi(\eta);K_2^\eta}
\]
where $\eta$ ranges over a set of representatives of $R\bs Q/(K'\cap Q)$ contained in $\Omega$ and $K_2^\eta=\eta K_2\eta^{-1}$.
It remains to show that $\varphi_{\psi(\eta);K_2^\eta}\in\tau$ for all $\eta\in\Omega$.
However, if $K_1\in\bss$ is any subgroup of $K_2^\eta$ then
\[
\varphi_{\psi(\eta);K_2^\eta}=\sum_{\gamma\in(K_2^\eta\cap R)(K_1\cap Q)\bs K_2^\eta\cap Q}I_R^Q(\pi,\gamma^{-1})\varphi_{\psi(\eta);K_1}
\]
and the claim therefore follows from \eqref{eq: intau}.
\end{proof}

Specializing to the general linear groups we obtain

\begin{corollary}(cf. \cite[Lemma 3.1]{MR3314831}) \label{cor: threetensor}
Let $\pi_i\in\Reps(G_{n_i})$, $i=1,2,3$.
Let $\sigma$ (resp., $\tau$) be a subrepresentation of $\pi_1\times\pi_2$ (resp., $\pi_2\times\pi_3$).
Assume that $\sigma\times\pi_3\subset\pi_1\times\tau$. Then there exists a subrepresentation $\omega$ of $\pi_2$ such that
$\sigma\subset\pi_1\times\omega$ and $\omega\times\pi_3\subset\tau$.
In particular, if $\pi_2$ is irreducible and $\sigma\ne0$ then $\tau=\pi_2\times\pi_3$.
\end{corollary}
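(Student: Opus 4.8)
The plan is to deduce Corollary \ref{cor: threetensor} directly from Lemma \ref{lem: Henniart} by choosing the right groups and parabolics. Take $G = G_n$ with $n = n_1 + n_2 + n_3$, let $P$ be the standard parabolic of type $(n_1+n_2, n_3)$, let $Q$ be the standard parabolic of type $(n_1, n_2+n_3)$, and observe that $R = P \cap Q$ is the standard parabolic of type $(n_1, n_2, n_3)$, so the hypothesis of Lemma \ref{lem: Henniart} that $R$ be parabolic is satisfied. With $\pi = \pi_1 \otimes \pi_2 \otimes \pi_3 \in \Reps(M_R)$, induction by stages gives canonical identifications $I_R^P(\pi) \simeq (\pi_1 \times \pi_2) \otimes \pi_3$ and $I_R^Q(\pi) \simeq \pi_1 \otimes (\pi_2 \times \pi_3)$, and $I_P^G(\sigma) = \sigma \times \pi_3$, $I_Q^G(\tau) = \pi_1 \times \tau$, all as subrepresentations of $I_R^G(\pi) \simeq \pi_1 \times \pi_2 \times \pi_3$.

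First I would spell out these identifications carefully, since the whole point is that the abstract statement of Lemma \ref{lem: Henniart} becomes exactly the statement of the corollary under the dictionary $\sigma \leftrightarrow \sigma$, $\tau \leftrightarrow \tau$. The hypothesis $\sigma \times \pi_3 \subset \pi_1 \times \tau$ is precisely $I_P^G(\sigma) \subset I_Q^G(\tau)$. Then Lemma \ref{lem: Henniart} produces a subrepresentation $\kappa$ of $\pi = \pi_1 \otimes \pi_2 \otimes \pi_3$ with $\sigma \subset I_R^P(\kappa)$ and $I_R^Q(\kappa) \subset \tau$. The only thing that is not immediate is that this $\kappa$ has the product form $\pi_1 \otimes \omega \otimes \pi_3$ for some subrepresentation $\omega \subset \pi_2$; the lemma gives an arbitrary subrepresentation of the tensor product. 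Here I would invoke the fact that subrepresentations of an (outer) tensor product $\pi_1 \otimes \pi_2 \otimes \pi_3$ of representations of a product of groups are themselves tensor products of subrepresentations of the factors — or, more economically, use the explicit description of $\kappa$ from the proof of Lemma \ref{lem: Henniart} as $\{f(e) : f \in \sigma\}$, which manifestly is $\pi_1$-stable in the first variable and $\pi_3$-stable in the third because $\sigma$ is a full $\pi_1 \times \pi_2$-subrepresentation restricted along $R$, so the values $f(e)$ already range over all of $\pi_1$ (tensored) in the outer slots. Either way, set $\omega \subset \pi_2$ so that $\kappa = \pi_1 \otimes \omega \otimes \pi_3$; then $\sigma \subset I_R^P(\kappa) = \pi_1 \times \omega$ translated back, wait — I should be careful: $I_R^P(\pi_1 \otimes \omega \otimes \pi_3) = (\pi_1 \times \omega) \otimes \pi_3$, and $\sigma \subset I_R^P(\kappa)$ inside $(\pi_1 \times \pi_2) \otimes \pi_3$ reads, after projecting, as $\sigma \subset \pi_1 \times \omega$; likewise $I_R^Q(\kappa) = \pi_1 \otimes (\omega \times \pi_3) \subset \pi_1 \otimes (\pi_2 \times \pi_3)$ gives $\omega \times \pi_3 \subset \tau$.

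I expect the main obstacle to be the bookkeeping in this last reduction from a general subrepresentation $\kappa$ to the product form, and making the identifications of induced spaces fully rigorous (e.g. checking that the isomorphism of induction by stages identifies the relevant subobjects compatibly with the inclusion into $I_R^G(\pi)$). This is essentially a formality but needs care because the functors $I_R^P$, $I_R^Q$ are exact and faithful, so inclusions are preserved, and the tensor-product subrepresentation claim for representations of products of $p$-adic groups follows from the fact that an irreducible smooth representation of $M_R = G_{n_1} \times G_{n_2} \times G_{n_3}$ is an outer tensor product of irreducibles (Jacquet--Langlands / general theory), applied socle-by-socle. For the final ``in particular'' assertion: if $\pi_2$ is irreducible, then the only subrepresentations $\omega \subset \pi_2$ are $0$ and $\pi_2$; if $\sigma \ne 0$ then $\sigma \subset \pi_1 \times \omega$ forces $\omega \ne 0$ (as $\pi_1 \times 0 = 0$), hence $\omega = \pi_2$, and then $\pi_2 \times \pi_3 = \omega \times \pi_3 \subset \tau \subset \pi_2 \times \pi_3$, so $\tau = \pi_2 \times \pi_3$.
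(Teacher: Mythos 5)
Your setup is exactly the paper's: apply Lemma \ref{lem: Henniart} with $P,Q,R$ the standard parabolics of types $(n_1+n_2,n_3)$, $(n_1,n_2+n_3)$, $(n_1,n_2,n_3)$, so that the hypothesis $\sigma\times\pi_3\subset\pi_1\times\tau$ becomes $I_P^G(\sigma)\subset I_Q^G(\tau)$ and the lemma yields a subrepresentation $\kappa\subset\pi_1\otimes\pi_2\otimes\pi_3$ with $\sigma\otimes\pi_3\subset I_R^P(\kappa)$ and $I_R^Q(\kappa)\subset\pi_1\otimes\tau$. The gap is in the step where you pass from $\kappa$ to the product form $\pi_1\otimes\omega\otimes\pi_3$. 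The general ``fact'' you invoke is false: a subrepresentation of an outer tensor product need not be a tensor product of subrepresentations. For instance, if $\rho_1\not\simeq\rho_2$ and $\sigma_1\not\simeq\sigma_2$ are irreducible, then $(\rho_1\otimes\sigma_1)\oplus(\rho_2\otimes\sigma_2)$ is a subrepresentation of $(\rho_1\oplus\rho_2)\otimes(\sigma_1\oplus\sigma_2)$ which is not of the form $V\otimes W$; knowing that irreducibles of $M_R$ are outer tensor products does not help, precisely because of such ``diagonal'' subobjects. Your fallback, using the explicit $\kappa=\{f(e):f\in\sigma\otimes\pi_3\}$ from the proof of the lemma, only gives $\kappa=\kappa_{12}\otimes\pi_3$ with $\kappa_{12}=\{f(e):f\in\sigma\}\subset\pi_1\otimes\pi_2$; there is no reason why $\kappa_{12}$ should be of the form $\pi_1\otimes\omega$ (the values $f(e)$ need not ``range over all of $\pi_1$'' in any useful sense), so the assertion $\kappa=\pi_1\otimes\omega\otimes\pi_3$ remains unproved.

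This is exactly the point where the paper does real work, for an arbitrary $\kappa$ furnished by the lemma: it takes $\alpha\subset\pi_2\otimes\pi_3$ to be the smallest subrepresentation with $\kappa\subset\pi_1\otimes\alpha$, realized as the sum of the images $\alpha_\lambda=(\lambda\otimes\id)(\kappa)$ over $\lambda\in\pi_1^*$; the inclusion $I_R^Q(\kappa)\subset\pi_1\otimes\tau$ then gives $I^{2,3}(\alpha_\lambda)=(\lambda\otimes\id)(I_R^Q(\kappa))\subset\tau$, hence $I^{2,3}(\alpha)\subset\tau$. Next it sets $\omega_w=\{v\in\pi_2:v\otimes w\in\alpha\}$ and $\omega=\cap_{w\in\pi_3}\omega_w$, which yields $\omega\times\pi_3\subset\tau$, and finally deduces $\sigma\subset\pi_1\times\omega_w$ for every $w$ from the computation
\[
I^{1,2}(\pi_1\otimes\alpha)\cap(\pi_1\times\pi_2\otimes\C w)=I^{1,2}(\pi_1\otimes(\alpha\cap\pi_2\otimes\C w))=\pi_1\times\omega_w\otimes\C w .
\]
None of this is in your proposal, and without it (or an equivalent argument extracting $\omega$ from the non-product $\kappa$) the proof does not go through; the identifications of induced spaces and the final ``in particular'' step are fine.
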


\begin{proof}
Indeed, let $P$ (resp., $Q$, $R$) be the standard parabolic subgroup of $G_{n_1+n_2+n_3}$ of type $(n_1+n_2,n_3)$ (resp., $(n_1,n_2+n_3)$, $(n_1,n_2,n_3)$)
so that $R=P\cap Q$. For brevity we denote by $I^{1,2}$ the functor of parabolic induction from $G_{n_1+n_2}\cap R$ to $G_{n_1+n_2}$;
similarly for $I^{2,3}$.
By Lemma \ref{lem: Henniart}, there exists a subrepresentation $\kappa$ of $\pi_1\otimes\pi_2\otimes\pi_3$ such that
$\sigma\otimes\pi_3\subset I_R^P(\kappa)$ and $I_R^Q(\kappa)\subset\pi_1\otimes\tau$.
Let $\alpha$ be the smallest subrepresentation of $\pi_2\otimes\pi_3$ such that $\kappa\subset\pi_1\otimes\alpha$.
Namely, $\alpha$ is the sum of the subrepresentations $\alpha_\lambda:=\lambda^*(\kappa)$
where $\lambda$ varies in $\pi_1^*$ and $\lambda^*$ is the map $\lambda\otimes\id_{\pi_2\otimes\pi_3}:\pi_1\otimes\pi_2\otimes\pi_3\rightarrow\pi_2\otimes\pi_3$.
Note that $I^{2,3}(\alpha)\subset\tau$ since for any $\lambda\in\pi_1^*$ we have
\[
I^{2,3}(\alpha_\lambda)=(\lambda\otimes\id_{\pi_2\times\pi_3})(I_R^Q(\kappa))\subset\tau.
\]
Now for any $w\in\pi_3$ let $\omega_w$ be the subrepresentation
\[
\omega_w=\{v\in\pi_2:v\otimes w\in\alpha\}
\]
of $\pi_2$ and let $\omega=\cap_{w\in\pi_3}\omega_w$.
Since $I^{2,3}(\alpha)\subset\tau$ we have $\omega\times\pi_3\subset\tau$. It remains to show that $\sigma\subset\pi_1\times\omega$.
By assumption $\sigma\otimes\pi_3\subset I_R^P(\kappa)\subset I_R^P(\pi_1\otimes\alpha)$.
Thus, for any $w\in\pi_3$ we have
\[
\sigma\otimes\C w\subset I_R^P(\pi_1\otimes\alpha)\cap (\pi_1\times\pi_2\otimes\C w).
\]
As a representation of $G_{n_1+n_2}$ the latter is
\begin{multline*}
I^{1,2}(\pi_1\otimes\alpha)\cap I^{1,2}(\pi_1\otimes\pi_2\otimes\C w)
=\\I^{1,2}(\pi_1\otimes(\alpha\cap\pi_2\otimes\C w))=
I^{1,2}(\pi_1\otimes\omega_w\otimes\C w)=\pi_1\times\omega_w\otimes\C w.
\end{multline*}
It follows that $\sigma\subset\pi_1\times\omega_w$ for all $w$ and hence $\sigma\subset\pi_1\times\omega$ as required.
\end{proof}

\subsection{}
Let $\pi_i\in\Reps(G_{n_i})$, $i=1,2$.
We write $M_{\pi_1,\pi_2}(s)$ for the standard intertwining operator
\[
M_{\pi_1,\pi_2}(s):\pi_1\abs{\cdot}^s\times\pi_2\rightarrow\pi_2\times\pi_1\abs{\cdot}^s
\]
(see e.g. \cite[\S IV]{MR1989693}). (It depends on a choice of a Haar measure, but this will be immaterial for us.)
If $\pi_1,\pi_2\ne0$ let $r_{\pi_1,\pi_2}\ge0$ be the order of the pole of $M_{\pi_1,\pi_2}(s)$ at $s=0$ and let
\[
R_{\pi_1,\pi_2}=\lim_{s\rightarrow0}s^{r_{\pi_1,\pi_2}}M_{\pi_1,\pi_2}(s).
\]
Thus, $R_{\pi_1,\pi_2}$ is a non-zero intertwining operator from $\pi_1\times\pi_2$ to $\pi_2\times\pi_1$.
The following result is standard.
\begin{lemma} \label{lem: elemM}
Let $0\ne\pi_i\in\Reps(G_{n_i})$, $i=1,2,3$ and
\[
0\rightarrow\tau\rightarrow\pi_1\rightarrow\tau'\rightarrow0
\]
a short exact sequence. Then
\begin{enumerate}
\setcounter{enumi}{-1}
\item $r_{1,\pi_1}=r_{\pi_1,1}=0$ and $R_{1,\pi_1}=R_{\pi_1,1}=\id_{\pi_1}$.
\item We have a commutative diagram of short exact sequences
\[
\xymatrix{
0\ar[r] & \tau\abs{\cdot}^s\times\pi_2 \ar[r] \ar[d]^{M_{\tau,\pi_2}(s)} & \pi_1\abs{\cdot}^s\times\pi_2\ar[r] \ar[d]^{M_{\pi_1,\pi_2}(s)} &
\tau'\abs{\cdot}^s\times\pi_2\ar[r] \ar[d]^{M_{\tau',\pi_2}(s)} & 0\\
0\ar[r] & \pi_2\times\tau\abs{\cdot}^s \ar[r] & \pi_2\times\pi_1\abs{\cdot}^s \ar[r] & \pi_2\times\tau'\abs{\cdot}^s\ar[r] & 0}
\]
\item \label{part: sqle} $r_{\tau,\pi_2}\le r_{\pi_1,\pi_2}$ if $\tau\ne0$; $r_{\tau',\pi_2}\le r_{\pi_1,\pi_2}$ if $\tau'\ne0$.
\item \label{part: nonzeroR} $R_{\pi_1,\pi_2}$ restricts to an intertwining operator $\tau\times\pi_2\rightarrow\pi_2\times\tau$. More precisely,
\[
R_{\pi_1,\pi_2}\rest_{\tau\times\pi_2}=\begin{cases}R_{\tau,\pi_2}&\text{if }\tau\ne0\text{ and }r_{\pi_1,\pi_2}=r_{\tau,\pi_2},\\
0&\text{otherwise.}\end{cases}
\]
\item $M_{\pi_1\times\pi_2,\pi_3}(s)=(M_{\pi_1,\pi_3}(s)\times\id_{\pi_2\abs{\cdot}^s})\circ(\id_{\pi_1\abs{\cdot}^s}\times M_{\pi_2,\pi_3}(s))$.
\item \label{part: ifisom} $r_{\pi_1\times\pi_2,\pi_3}\le r_{\pi_1,\pi_3}+r_{\pi_2,\pi_3}$ and
\[
(R_{\pi_1,\pi_3}\times\id_{\pi_2})\circ(\id_{\pi_1}\times R_{\pi_2,\pi_3})=
\begin{cases}R_{\pi_1\times\pi_2,\pi_3}&\text{if }r_{\pi_1\times\pi_2,\pi_3}=r_{\pi_1,\pi_3}+r_{\pi_2,\pi_3}\\0&\text{otherwise.}\end{cases}
\]
Moreover, $r_{\pi_1\times\pi_2,\pi_3}=r_{\pi_1,\pi_3}+r_{\pi_2,\pi_3}$ if at least one of $R_{\pi_i,\pi_3}$, $i=1,2$ is an isomorphism or if $\pi_3$ is irreducible
\cite[Lemma 2.8]{1412.8106}.
\item $M_{\pi_2\abs{\cdot}^s,\pi_1\abs{\cdot}^s}(-s)\circ M_{\pi_1,\pi_2}(s)=c(s)\id_{\pi_1\abs{\cdot}^s\times\pi_1}$ for some meromorphic function $c(s)$.
\item \label{part: compR} Suppose that $R_{\pi_1,\pi_2}$ is an isomorphism. Then $R_{\pi_2,\pi_1}\circ R_{\pi_1,\pi_2}$ is a non-zero scalar.
\end{enumerate}
\end{lemma}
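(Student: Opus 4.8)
The plan is to obtain \ref{part: compR} from the functional equation
\[
M_{\pi_2\abs{\cdot}^s,\pi_1\abs{\cdot}^s}(-s)\circ M_{\pi_1,\pi_2}(s)=c(s)\,\id_{\pi_1\abs{\cdot}^s\times\pi_2}
\]
of part (6) by passing to leading Laurent coefficients at $s=0$. First I would record two auxiliary facts: $R_{\pi_2,\pi_1}\ne0$ (it is, by construction, the leading Laurent coefficient of the nonzero meromorphic family $M_{\pi_2,\pi_1}(s)$ at $s=0$), and — using the hypothesis that $R_{\pi_1,\pi_2}$ is an isomorphism — the composite $R_{\pi_2,\pi_1}\circ R_{\pi_1,\pi_2}\colon\pi_1\times\pi_2\to\pi_1\times\pi_2$ is nonzero; indeed it equals $R_{\pi_2,\pi_1}$ precomposed with the surjection $R_{\pi_1,\pi_2}$, so its vanishing would force $R_{\pi_2,\pi_1}=0$.

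Next I would analyse $M_{\pi_2\abs{\cdot}^s,\pi_1\abs{\cdot}^s}(-s)$ near $s=0$. Since the standard intertwining operator is given by an integral over a unipotent radical, it is insensitive to twisting the inducing data by an unramified character of the ambient general linear group; hence, under the obvious identification of the induced spaces, $M_{\pi_2\abs{\cdot}^s,\pi_1\abs{\cdot}^s}(u)$ is just $M_{\pi_2,\pi_1}(u)$ twisted by the character $\abs{\cdot}^s\circ\Nrd$. In particular, as a function of $u$ it has a pole at $u=0$ of order $r_{\pi_2,\pi_1}$; substituting $u=-s$, using $s^{r}=(-1)^{r}(-s)^{r}$, and noting that the extra twist becomes trivial at $s=0$, one gets
\[
\lim_{s\to0}s^{r_{\pi_2,\pi_1}}M_{\pi_2\abs{\cdot}^s,\pi_1\abs{\cdot}^s}(-s)=(-1)^{r_{\pi_2,\pi_1}}R_{\pi_2,\pi_1}
\]
as operators $\pi_2\times\pi_1\to\pi_1\times\pi_2$.

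Finally I would multiply the functional equation by $s^{r_{\pi_1,\pi_2}+r_{\pi_2,\pi_1}}$ and let $s\to0$. The left-hand side is the composite of $s^{r_{\pi_1,\pi_2}}M_{\pi_1,\pi_2}(s)\to R_{\pi_1,\pi_2}$ with $s^{r_{\pi_2,\pi_1}}M_{\pi_2\abs{\cdot}^s,\pi_1\abs{\cdot}^s}(-s)\to(-1)^{r_{\pi_2,\pi_1}}R_{\pi_2,\pi_1}$, so it converges to $(-1)^{r_{\pi_2,\pi_1}}R_{\pi_2,\pi_1}\circ R_{\pi_1,\pi_2}$; the right-hand side is $\big(s^{r_{\pi_1,\pi_2}+r_{\pi_2,\pi_1}}c(s)\big)\id$, a scalar operator for every $s$. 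By the first paragraph the limit is nonzero, so $s^{r_{\pi_1,\pi_2}+r_{\pi_2,\pi_1}}c(s)$ tends to a finite nonzero scalar $c_0$, whence $R_{\pi_2,\pi_1}\circ R_{\pi_1,\pi_2}=(-1)^{r_{\pi_2,\pi_1}}c_0\,\id$ — a nonzero scalar, as claimed. (Equivalently: $c(s)$ has a pole of order exactly $r_{\pi_1,\pi_2}+r_{\pi_2,\pi_1}$ at $s=0$, and $R_{\pi_2,\pi_1}\circ R_{\pi_1,\pi_2}$ is $(-1)^{r_{\pi_2,\pi_1}}$ times its leading Laurent coefficient, times the identity.) The only point requiring genuine care is the middle step — pinning down the pole order and leading term of $M_{\pi_2\abs{\cdot}^s,\pi_1\abs{\cdot}^s}(-s)$ at $s=0$, and observing that composing the two Laurent expansions multiplies their leading terms (automatic once both pole orders are exact). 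Everything else is formal and uses only part (6).
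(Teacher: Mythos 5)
Your Laurent-expansion argument for part \ref{part: compR} is correct: under the compact-picture identification (the twist $\abs{\cdot}^s\circ\Nrd$ is trivial on the maximal compact), $M_{\pi_2\abs{\cdot}^s,\pi_1\abs{\cdot}^s}(-s)$ is indeed $M_{\pi_2,\pi_1}(-s)$, so its pole at $s=0$ has exact order $r_{\pi_2,\pi_1}$ with leading term $(-1)^{r_{\pi_2,\pi_1}}R_{\pi_2,\pi_1}$; multiplying the relation of part (6) by $s^{r_{\pi_1,\pi_2}+r_{\pi_2,\pi_1}}$ and using that $R_{\pi_2,\pi_1}\circ R_{\pi_1,\pi_2}\ne0$ (surjectivity of $R_{\pi_1,\pi_2}$ plus $R_{\pi_2,\pi_1}\ne0$) gives the nonzero scalar. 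This is exactly the kind of formal verification the paper has in mind when it declares these items evident.

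The gap is that you have proved only one of the eight assertions, and not the one that carries the actual content. The paper's proof states explicitly that the only non-evident part is the final claim of part \ref{part: ifisom}: if $\pi_3$ is irreducible then $r_{\pi_1\times\pi_2,\pi_3}=r_{\pi_1,\pi_3}+r_{\pi_2,\pi_3}$. This cannot be obtained by Laurent bookkeeping alone, because a priori the composite $(R_{\pi_1,\pi_3}\times\id_{\pi_2})\circ(\id_{\pi_1}\times R_{\pi_2,\pi_3})$ could vanish, in which case the pole order of $M_{\pi_1\times\pi_2,\pi_3}(s)$ drops below $r_{\pi_1,\pi_3}+r_{\pi_2,\pi_3}$. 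The paper rules this out representation-theoretically: vanishing of the composite would give $\pi_1\times\operatorname{Im}R_{\pi_2,\pi_3}\subset\Ker R_{\pi_1,\pi_3}\times\pi_2$, and since both $R_{\pi_2,\pi_3}$ and $R_{\pi_1,\pi_3}$ are nonzero and $\pi_3$ is irreducible, this contradicts (the obvious analogue of) Corollary \ref{cor: threetensor}, i.e.\ ultimately Lemma \ref{lem: Henniart}. Your proposal never touches this step (nor, for that matter, parts (0)--(6), which you take as given), so as written it does not establish the lemma; to complete it you would need to supply precisely this non-vanishing argument, which is where the geometric/module-theoretic input enters.
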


\begin{proof}
The only non-evident part is that if $\pi_3$ is irreducible then $r_{\pi_1\times\pi_2,\pi_3}=r_{\pi_1,\pi_3}+r_{\pi_2,\pi_3}$.
Suppose that this is not the case. Then $(R_{\pi_1,\pi_3}\times\id_{\pi_2})\circ(\id_{\pi_1}\times R_{\pi_2,\pi_3})=0$
and therefore $\pi_1\times\operatorname{Im} R_{\pi_2,\pi_3}\subset\Ker R_{\pi_1,\pi_3}\times\pi_2$.
This contradicts an obvious analogue of Corollary \ref{cor: threetensor} since both $R_{\pi_2,\pi_3}$ and $R_{\pi_1,\pi_3}$ are non-zero.
\end{proof}

\begin{corollary}(cf. \cite[Theorem 3.2]{MR3314831}) \label{cor: maincor}
Let $0\ne\pi\in\Reps(G_n)$ be such that $R_{\pi,\pi}$ is a (non-zero) scalar.
Then for any $\sigma\in\Irr G_m$, $\soc(\pi\times\sigma)$ is irreducible and is equal to the image of $R_{\sigma,\pi}$.
In particular, $\pi$ and $\pi\times\pi$ are irreducible.
Similarly, $\soc(\sigma\times\pi)$ is irreducible and is equal to the image of $R_{\pi,\sigma}$.
Finally, $\soc(\pi\times\sigma)\simeq\coss(\sigma\times\pi)$ and $\soc(\sigma\times\pi)\simeq\coss(\sigma\times\pi)$.
\end{corollary}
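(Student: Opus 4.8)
The plan is to transcribe, essentially verbatim, the argument of Kang--Kashiwara--Kim--Oh \cite[Theorem 3.2]{MR3314831}: its two inputs, their Lemma 3.1 and the formal calculus of renormalized intertwining operators, are now available as Corollary \ref{cor: threetensor} and Lemma \ref{lem: elemM}. Write $c = R_{\pi,\pi}$ for the non-zero scalar of the hypothesis; note $r_{\pi,\pi} = 0$. \emph{First I would show $\pi$ is irreducible.} If $0\ne\tau\subsetneq\pi$ were a proper subrepresentation, then by Lemma \ref{lem: elemM}\eqref{part: nonzeroR} the operator $R_{\pi,\pi}$ restricts on the subrepresentation $\tau\times\pi$ of $\pi\times\pi$ to an intertwining operator into $\pi\times\tau$; since $R_{\pi,\pi}$ is multiplication by $c\ne0$ this restriction is injective with image $\tau\times\pi$, so $\tau\times\pi\subseteq\pi\times\tau$ as subspaces of $\pi\times\pi$. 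Evaluating functions at the identity gives $\tau\otimes\pi\subseteq\pi\otimes\tau$ inside $\pi\otimes\pi$, which forces $\pi\subseteq\tau$, a contradiction. Hence $\pi\in\Irr G_n$, and consequently $\pi^\vee\in\Irr G_n$.

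\emph{Next, the socle of $\pi\times\sigma$.} Fix $\sigma\in\Irr G_m$ and put $S = \operatorname{Im}(R_{\sigma,\pi})\subseteq\pi\times\sigma$, a non-zero subrepresentation. I claim $S$ is contained in every non-zero subrepresentation of $\pi\times\sigma$. Granting this, every non-zero subrepresentation of $S$ is a non-zero subrepresentation of $\pi\times\sigma$ and hence contains $S$, so $S$ is irreducible; it is then the unique irreducible subrepresentation, so $S = \soc(\pi\times\sigma)$, which proves the assertion. To establish the claim one first propagates the realness of $\pi$: since $\pi$ and $\sigma$ are irreducible the relevant orders of poles add, by Lemma \ref{lem: elemM}\eqref{part: ifisom} and its mirror, and using $r_{\pi,\pi} = 0$ one gets
\[
R_{\pi\times\sigma,\pi} = c\,(\id_\pi\times R_{\sigma,\pi}),\qquad R_{\pi,\pi\times\sigma} = c\,(\id_\pi\times R_{\pi,\sigma}),
\]
along with the companion identities in $\pi\times\pi\times\sigma$ and $\sigma\times\pi\times\pi$; in particular $\operatorname{Im}(R_{\pi\times\sigma,\pi}) = \pi\times S$. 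Now, given $0\ne Y\subseteq\pi\times\sigma$, one transports $Y\times\pi$ through these operators inside a suitable threefold product and applies Corollary \ref{cor: threetensor}, with the three modules taken to be $\pi$, the irreducible $\sigma$, and $\pi$ (using Lemma \ref{lem: elemM}\eqref{part: nonzeroR} to control the relevant restrictions), to conclude $\pi\times S\subseteq\pi\times Y$, hence $S\subseteq Y$. \emph{This manipulation of submodules inside a threefold product---carried out exactly as in the proof of \cite[Theorem 3.2]{MR3314831}---is the technical core and the step I expect to cost the most effort;} the rest is bookkeeping.

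\emph{Finally, the remaining assertions.} Taking $\sigma = \pi$ (legitimate by the previous step) gives $\soc(\pi\times\pi) = \operatorname{Im}(R_{\pi,\pi}) = \pi\times\pi$, so $\pi\times\pi$ is semisimple, equal to its own socle, which is irreducible; hence $\pi\times\pi$, and with it $\pi^\vee\times\pi^\vee = (\pi\times\pi)^\vee$, is irreducible. By Schur's lemma $R_{\pi^\vee,\pi^\vee}\in\End(\pi^\vee\times\pi^\vee)$ is a non-zero scalar, so $\pi^\vee$ also satisfies the hypothesis of the corollary. Running the left--right mirror of the argument (Corollary \ref{cor: threetensor} and Lemma \ref{lem: elemM} having obvious mirror forms) shows that $\soc(\sigma\times\pi)$ is irreducible and equals $\operatorname{Im}(R_{\pi,\sigma})$. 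For the cosocles: $\operatorname{Im}(R_{\pi,\sigma}) = \soc(\sigma\times\pi)$ is an irreducible quotient of $\pi\times\sigma$, while $\coss(\pi\times\sigma) = \soc((\pi\times\sigma)^\vee)^\vee = \soc(\pi^\vee\times\sigma^\vee)^\vee$ is irreducible by the second step applied to $\pi^\vee$ and $\sigma^\vee$; since an irreducible cosocle is the unique irreducible quotient, $\coss(\pi\times\sigma)\simeq\operatorname{Im}(R_{\pi,\sigma})\simeq\soc(\sigma\times\pi)$. Symmetrically, using the mirror of the second step for $\pi^\vee$, $\coss(\sigma\times\pi)\simeq\operatorname{Im}(R_{\sigma,\pi})\simeq\soc(\pi\times\sigma)$. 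Assembling these statements gives the corollary.
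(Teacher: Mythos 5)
Your plan is essentially the paper's own proof: restrict $R_{\pi\times\sigma,\pi}$ to $\tau\times\pi$ for a nonzero subrepresentation $\tau$ of $\pi\times\sigma$, use the factorization $R_{\pi\times\sigma,\pi}=c\,(\id_\pi\times R_{\sigma,\pi})$ coming from Lemma \ref{lem: elemM} part \ref{part: ifisom}, and feed the resulting containment into Corollary \ref{cor: threetensor} with the irreducible $\sigma$ in the middle slot to get $\operatorname{Im}R_{\sigma,\pi}\subset\tau$; your separate direct proof that $\pi$ is irreducible (the paper simply takes $\sigma=1$) and your use of contragredients for the cosocle statements (the paper argues ``analogously'') are only cosmetic variations. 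One small correction: the aside that $r_{\pi,\pi}=0$ is neither needed nor true in general (e.g.\ for supercuspidal $\pi$ the operator $M_{\pi,\pi}(s)$ does have a pole at $s=0$); the additivity of pole orders and the displayed factorizations already follow from Lemma \ref{lem: elemM} part \ref{part: ifisom} because $R_{\pi,\pi}$ is an isomorphism (or because $\pi$ is irreducible).
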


\begin{proof}
By assumption, $R_{\pi,\pi}=\lambda^{-1}\id_{\pi\times\pi}$ for some non-zero scalar $\lambda$.
Therefore, by Lemma \ref{lem: elemM} part \ref{part: ifisom}
\begin{equation} \label{eq: radd}
r_{\pi\times\sigma,\pi}=r_{\pi,\pi}+r_{\sigma,\pi}\text{ and }\lambda R_{\pi\times\sigma,\pi}=\id_{\pi}\times R_{\sigma,\pi}.
\end{equation}
Let $\tau$ be a non-zero subrepresentation of $\pi\times\sigma$.
Thus, we have a commutative diagram
\[
\xymatrix{
\tau\times\pi \ar[rr]^{\lambda R_{\pi\times\sigma,\pi}} \ar@{^{(}->}[d] &&\pi\times\tau \ar@{^{(}->}[d]\\
\pi\times\sigma\times\pi \ar[rr]^{\id_{\pi}\times R_{\sigma,\pi}} && \pi\times\pi\times\sigma}
\]
Hence, $\tau\times\pi\subset\pi\times R_{\sigma,\pi}^{-1}(\tau)$.
It follows from Corollary \ref{cor: threetensor} that $R_{\sigma,\pi}^{-1}(\tau)=\sigma\times\pi$, i.e., the image of $R_{\sigma,\pi}$ is contained in $\tau$.
Since $\tau$ was arbitrary, we conclude that the image of $R_{\sigma,\pi}$ is irreducible and is equal to $\soc(\pi\times\sigma)$.

Applying this with $\sigma=1$ we obtain that $\pi$ is irreducible. Taking $\sigma=\pi$ we conclude that $\pi\times\pi$ is irreducible as well.

The other part is proved in a similar way. Analogously for the irreducibility of $\coss(\pi\times\sigma)$ and $\coss(\sigma\times\pi)$.
Finally, since both $\coss(\sigma\times\pi)$ and the image of $R_{\sigma,\pi}$ are irreducible, they coincide.
\end{proof}

\begin{corollary} (Cf. \cite[Corollary 3.3]{MR3314831}) \label{cor: LMconds}
The following conditions are equivalent for $0\ne\pi\in\Reps(G_n)$.
\begin{enumerate}
\item $\pi\times\pi$ is \SI.
\item $\pi\times\pi$ is irreducible.
\item $\End_{G_{2n}}(\pi\times\pi)=\C$.
\item $R_{\pi,\pi}$ is a scalar.
\end{enumerate}
\end{corollary}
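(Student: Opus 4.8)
The plan is to close the cycle of implications $(2)\Rightarrow(1)\Rightarrow(3)\Rightarrow(4)\Rightarrow(2)$, the point being that three of the four arrows are essentially formal and all of the representation-theoretic substance has already been packaged into Corollary~\ref{cor: maincor}.

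First, $(2)\Rightarrow(1)$ is trivial: an irreducible representation is its own socle and appears exactly once in its own Jordan--H\"older sequence, so it is \SI. Next, $(3)\Rightarrow(4)$ is immediate once one observes that $R_{\pi,\pi}$ is, by its very construction, a \emph{nonzero} intertwining operator from $\pi\times\pi$ to $\pi\times\pi$, hence a nonzero element of $\End_{G_{2n}}(\pi\times\pi)$; if that space equals $\C$ then $R_{\pi,\pi}$ is a (nonzero) scalar. Finally, $(4)\Rightarrow(2)$ is precisely Corollary~\ref{cor: maincor} applied with $\sigma=\pi$, which asserts in particular that $\pi$ and $\pi\times\pi$ are irreducible whenever $R_{\pi,\pi}$ is a nonzero scalar.

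The one arrow that requires an argument is $(1)\Rightarrow(3)$, and this is where I would do the (modest) work. Set $M=\pi\times\pi$ and assume $M$ is \SI, with simple socle $S$ occurring with multiplicity one in $\JH(M)$. Since $M$ has finite length, every nonzero submodule of $M$ contains a simple submodule, which must coincide with $S$; thus $S$ is the unique simple submodule of $M$, and in particular $\Hom_{G_{2n}}(S,M)=\C\iota$ for a fixed inclusion $\iota\colon S\hookrightarrow M$ (using Schur's lemma $\End_{G_{2n}}(S)=\C$). I would first show that every nonzero $\phi\in\End_{G_{2n}}(M)$ is injective: if $\Ker\phi\neq0$ then $S\subseteq\Ker\phi$, while $\operatorname{Im}\phi\cong M/\Ker\phi$ is a nonzero submodule of $M$ and hence also contains $S$; counting composition factors in $\JH(M)=\JH(\Ker\phi)\sqcup\JH(M/\Ker\phi)$ would then force $S$ to have multiplicity $\geq2$ in $\JH(M)$, contrary to $M$ being \SI. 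Now, for $\phi\in\End_{G_{2n}}(M)$ write $\phi\circ\iota=c_\phi\,\iota$ with $c_\phi\in\C$; the assignment $\phi\mapsto c_\phi$ is $\C$-linear, sends $\id$ to $1$, and is injective because $c_\phi=0$ means $S\subseteq\Ker\phi$, forcing $\phi=0$ by the injectivity just established. Hence $\End_{G_{2n}}(M)=\C$.

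I do not anticipate any real obstacle here: the hard analytic input — the pole structure of the standard intertwining operators and the three-factor statement (Corollary~\ref{cor: threetensor}, Lemma~\ref{lem: elemM}) — has already been absorbed into Corollary~\ref{cor: maincor}, and what remains is the elementary fact that an \SI\ module has trivial endomorphism ring. The only subtlety worth flagging is that $(1)\Rightarrow(2)$ is \emph{false} in general (an \SI\ module need not be semisimple), so it really is necessary to route the argument through conditions $(3)$ and $(4)$ and to invoke Corollary~\ref{cor: maincor}.
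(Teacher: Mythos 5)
Your proposal is correct and follows essentially the same route as the paper: the only nontrivial arrow is (1)$\implies$(3), proved by the same socle/multiplicity-one argument (the paper subtracts the scalar by which an endomorphism acts on the socle, which is the same computation as your injectivity-plus-$\Hom(S,M)=\C\iota$ bookkeeping), while (4)$\implies$(2) is delegated to Corollary \ref{cor: maincor} exactly as the paper does. The remaining implications are handled identically (the paper additionally notes the redundant (2)$\implies$(3) via Schur's lemma), so there is nothing to add.
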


\begin{proof}
Trivially, 2$\implies$1 and 3$\implies$4. By Schur's lemma 2$\implies$3. By Corollary \ref{cor: maincor} 4$\implies$2. It remains to show that 1$\implies$3.
Suppose that $\pi\times\pi$ is \SI\ and let $\pi_0=\soc(\pi\times\pi)$.
Let $A\in\End(\pi\times\pi)$. Since $\pi_0$ is irreducible, $A$ acts as a scalar $\lambda\in\C$ on $\pi_0$. Let $A'=A-\lambda\id_{\pi\times\pi}$.
Then $\Ker A'\supset\pi_0$. On the other hand $\operatorname{Im}(A')$, if non-zero, must contain $\pi_0$.
This would contradict the assumption that $\pi_0$ occurs with multiplicity one in $\JH(\pi\times\pi)$.
Hence $A\equiv\lambda\id_{\pi\times\pi}$ as required.
\end{proof}

We say that $\pi$ is \LM\ if it satisfies the conditions of Corollary \ref{cor: LMconds}.
We denote by $\IrrS\subset\Irr$ the set of \LM\ representations.
Note that $\pi\in\IrrS$ if and only if $\pi^\vee\in\IrrS$.

\begin{remark}
We do not know whether $\pi\times\pi$ is semisimple for every $\pi\in\Irr$, or even whether
$R_{\pi,\pi}$ is always an isomorphism (or equivalently, whether $R_{\pi,\pi}\circ R_{\pi,\pi}$ is a non-zero scalar).
\end{remark}

We write $\pi^{\times n}=\pi\times\dots\times\pi$ ($n$ times).

\begin{corollary} (cf. \cite[Corollary 3.4 and p. 391]{MR3314831}) \label{cor: pi1pi2LM}
Suppose that $\pi_1,\pi_2\in\IrrS$ and $R_{\pi_1,\pi_2}$ is an isomorphism. Then $\pi_1\times\pi_2\in\IrrS$ (and in particular $\pi_1\times\pi_2\in\Irr$).
If $\pi\in\IrrS$ then $\pi^{\times n}\in\IrrS$ for all $n\ge1$.
\end{corollary}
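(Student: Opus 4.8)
The plan is to show that $R_{\pi_1\times\pi_2,\pi_1\times\pi_2}$ is a non-zero scalar; the corollary then follows from Corollary~\ref{cor: LMconds}, the parenthetical irreducibility being part of Corollary~\ref{cor: maincor}. Write $\Pi=\pi_1\times\pi_2$. Since $\pi_i\in\IrrS$, Corollary~\ref{cor: LMconds} gives $R_{\pi_i,\pi_i}=\lambda_i^{-1}\,\id$ with $\lambda_i\in\C^*$. First I would apply Lemma~\ref{lem: elemM} part~\ref{part: compR} to the pair $(\pi_1,\pi_2)$: as $R_{\pi_2,\pi_1}\circ R_{\pi_1,\pi_2}$ is a non-zero scalar and $R_{\pi_1,\pi_2}$ is bijective, $R_{\pi_2,\pi_1}$ is also an isomorphism; applying the same part to $(\pi_2,\pi_1)$ then gives $R_{\pi_1,\pi_2}\circ R_{\pi_2,\pi_1}=\mu\,\id_{\pi_2\times\pi_1}$ with $\mu\in\C^*$.

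Next I would use transitivity of the normalized intertwining operators, in the form of Lemma~\ref{lem: elemM} part~\ref{part: ifisom} and its analogue for the second variable; since $\pi_1$ and $\pi_2$ are irreducible, the pole orders involved are additive, so these apply and yield
\[
R_{\pi_1,\Pi}=\lambda_1^{-1}\bigl(\id_{\pi_1}\times R_{\pi_1,\pi_2}\bigr),\qquad
R_{\pi_2,\Pi}=\lambda_2^{-1}\bigl(R_{\pi_2,\pi_1}\times\id_{\pi_2}\bigr),
\]
both isomorphisms. Hence Lemma~\ref{lem: elemM} part~\ref{part: ifisom} applies once more in the first variable and, together with the bifunctoriality of $\times$ and the relation $R_{\pi_1,\pi_2}\circ R_{\pi_2,\pi_1}=\mu\,\id$, gives
\[
R_{\Pi,\Pi}=\bigl(R_{\pi_1,\Pi}\times\id_{\pi_2}\bigr)\circ\bigl(\id_{\pi_1}\times R_{\pi_2,\Pi}\bigr)
=\lambda_1^{-1}\lambda_2^{-1}\bigl(\id_{\pi_1}\times(R_{\pi_1,\pi_2}\circ R_{\pi_2,\pi_1})\times\id_{\pi_2}\bigr)
=\lambda_1^{-1}\lambda_2^{-1}\mu\cdot\id_{\Pi\times\Pi}.
\]
Thus $R_{\Pi,\Pi}$ is a non-zero scalar, so $\Pi=\pi_1\times\pi_2\in\IrrS$ by Corollary~\ref{cor: LMconds}, and $\Pi$ is irreducible by Corollary~\ref{cor: maincor}. (If one prefers to invoke only statements already proved above, the formulas for $R_{\pi_i,\Pi}$ can instead be obtained by applying Lemma~\ref{lem: elemM} part~\ref{part: ifisom} in the first variable to $R_{\Pi,\pi_i}=R_{\pi_1\times\pi_2,\pi_i}$ and combining with Lemma~\ref{lem: elemM} part~\ref{part: compR}.)

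For the remaining assertion I would argue by induction on $n$, the case $n=1$ being the hypothesis. For $n\ge2$, the operator $R_{\pi^{\times(n-1)},\pi}$ is a non-zero scalar: iterating Lemma~\ref{lem: elemM} part~\ref{part: ifisom} (permissible because $\pi$ is irreducible) writes it as a composition of operators of the form $R_{\pi,\pi}\times\id$, each of which is a non-zero scalar since $\pi$ is \LM. As $\pi^{\times(n-1)}\in\IrrS$ by the inductive hypothesis and $\pi\in\IrrS$, the first part applied to $(\pi^{\times(n-1)},\pi)$ yields $\pi^{\times n}=\pi^{\times(n-1)}\times\pi\in\IrrS$.

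The one subtlety will be the bookkeeping in the composition formulas: in each use of Lemma~\ref{lem: elemM} part~\ref{part: ifisom} (and its second-variable analogue) one must be in the case where the pole orders add rather than in the degenerate case where the operator vanishes, and this is arranged precisely by the irreducibility of $\pi_1,\pi_2$ and by first checking that the one-variable operators $R_{\pi_i,\Pi}$ are isomorphisms. Beyond that the argument is purely formal, with no deeper obstacle.
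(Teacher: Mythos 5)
Your argument is correct and is essentially the paper's own: both proofs factor the self-intertwining operator of $\pi_1\times\pi_2$ using the multiplicativity statements of Lemma \ref{lem: elemM} (part \ref{part: ifisom} and its second-variable counterpart, which the paper uses implicitly in its displayed factorization of $M_{\pi_3,\pi_3}(s)$), reduce everything to $R_{\pi_1,\pi_2}\circ R_{\pi_2,\pi_1}$ being a non-zero scalar via part \ref{part: compR}, and conclude with Corollary \ref{cor: LMconds}. The only cosmetic difference is that you assemble the factorization in two stages through $R_{\pi_i,\pi_1\times\pi_2}$ and spell out the induction for $\pi^{\times n}$, which the paper leaves as a one-line remark.
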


\begin{proof}
Let $\pi_3=\pi_1\times\pi_2$. We have
\[
M_{\pi_3,\pi_3}(s)=(\id_{\pi_1}\times M_{\pi_1,\pi_2}(s)\times\id_{\pi_2\abs{\cdot}^s})\circ(M_{\pi_1,\pi_1}(s)\times M_{\pi_2,\pi_2}(s))
\circ(\id_{\pi_1\abs{\cdot}^s}\times M_{\pi_2,\pi_1}(s)\times\id_{\pi_2}).
\]
By assumption, $R_{\pi_1,\pi_1}$, $R_{\pi_2,\pi_2}$ and $R_{\pi_1,\pi_2}$ are isomorphisms and therefore
(cf. Lemma \ref{lem: elemM} part \ref{part: ifisom}) $r_{\pi_3,\pi_3}=\sum_{i,j=1,2}r_{\pi_i,\pi_j}$ and
\[
R_{\pi_3,\pi_3}=(\id_{\pi_1}\times R_{\pi_1,\pi_2}\times\id_{\pi_2})\circ(R_{\pi_1,\pi_1}\times R_{\pi_2,\pi_2})
\circ(\id_{\pi_1}\times R_{\pi_2,\pi_1}\times\id_{\pi_2}).
\]
By assumption, this is proportional to $\id_{\pi_1}\times (R_{\pi_1,\pi_2}\circ R_{\pi_2,\pi_1})\times\id_{\pi_2}$
which is a scalar by Lemma \ref{lem: elemM} part \ref{part: compR}. The first part follows.

Similarly, the second part follows from Corollary \ref{cor: LMconds} and the fact that $R_{\pi^{\times n},\pi^{\times n}}$ is a non-zero scalar if $R_{\pi,\pi}$ is.
\end{proof}

We can slightly strengthen Corollary \ref{cor: maincor}. We are grateful to Max Gurevich for this observation.
\begin{lemma} (cf. \cite[Theorem 3.1]{1412.8106}) \label{lem: LM}
Suppose that $\pi\in\IrrS$.
Then for any $\sigma\in\Irr$, $\pi\times\sigma$ and $\sigma\times\pi$ are \SI.
\end{lemma}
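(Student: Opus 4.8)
By Corollary~\ref{cor: maincor} we already know that $\soc(\pi\times\sigma)$ and $\soc(\sigma\times\pi)$ are irreducible (and so are the respective cosocles), with $\pi_0:=\soc(\pi\times\sigma)=\operatorname{Im}R_{\sigma,\pi}$; so the only missing ingredient in the \SI\ property is the multiplicity statement, namely that $\pi_0$ occurs exactly once in $\JH(\pi\times\sigma)$, and the analogous statement for $\sigma\times\pi$. The plan is to prove, for \emph{every} \LM\ representation $M$ and every irreducible $\sigma$, that $\soc(M\times\sigma)$ occurs once in $M\times\sigma$; the statement with the factors in the opposite order is obtained by the mirror-image argument (the results of \S\ref{sec: KKKO} are all set up symmetrically in left and right parabolic induction, with the operators $R_{\bullet,\bullet}$ available in either direction). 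This is the content of \cite[Theorem~3.1]{1412.8106}, and I would adapt that proof, the available tools being the order-additivity and factorization of the operators $R_{\bullet,\bullet}$ (Lemma~\ref{lem: elemM}), Corollary~\ref{cor: threetensor}, and the fact (Corollary~\ref{cor: pi1pi2LM}) that $\pi\times\pi$ is again \LM.

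Here is the shape of the argument I expect. By Corollary~\ref{cor: pi1pi2LM} the representation $\pi\times\pi$ is \LM, and it is irreducible, so Corollary~\ref{cor: maincor} applies to it: $\Pi:=\soc(\pi\times\pi\times\sigma)$ is irreducible. Since $\pi_0\hookrightarrow\pi\times\sigma$ and parabolic induction is exact, $\pi\times\pi_0\hookrightarrow\pi\times\pi\times\sigma$ is a nonzero subrepresentation of a module with irreducible socle $\Pi$, hence $\Pi\subset\pi\times\pi_0$ and in particular $[\pi\times\pi_0:\Pi]\ge1$. Counting in the Grothendieck group, $[\pi\times\pi\times\sigma]=\sum_{X\in\JH(\pi\times\sigma)}[\pi\times X]$, so $[\pi\times\pi\times\sigma:\Pi]\ge[\pi\times\sigma:\pi_0]\cdot[\pi\times\pi_0:\Pi]\ge[\pi\times\sigma:\pi_0]$. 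Thus $[\pi\times\sigma:\pi_0]\ge2$ would force $\Pi$ to appear at least twice in $\pi\times\pi\times\sigma$. To exploit this one uses that $\Pi\hookrightarrow\pi\times(\pi\times\sigma)$ with $\Pi$ irreducible, so by Corollary~\ref{cor: threetensor} $\Pi\hookrightarrow\pi\times\omega$ for some irreducible constituent $\omega$ of $\pi\times\sigma$, and (since $\pi$ is \LM) $\Pi=\soc(\pi\times\omega)$; the factorization and order-additivity of the operators $R_{\bullet,\bullet}$ from Lemma~\ref{lem: elemM} are what control how $R_{\sigma,\pi\times\pi}$ degenerates and allow one to peel a single copy of $\pi$ off $\Pi$.

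The delicate point -- which I expect to be the main obstacle -- is to turn the displayed inequality into a genuine contradiction, because the statement one wants at that stage, that $\Pi=\soc(\pi\times\pi\times\sigma)$ occurs with multiplicity one, is exactly the assertion being proved, now for $\pi\times\pi$ in place of $\pi$; a bare induction on $\deg\pi$ collapses. Following Kang--Kashiwara--Kim--Oh, the resolution is to run the induction not on $\deg\pi$ but on an invariant of the pair that strictly drops under the reduction -- for instance the length $\ell(\pi\times\sigma)$, or the ``$\mathfrak{d}$''-invariant built from the orders $r_{\sigma,\pi}$ and $r_{\pi,\sigma}$ -- so that replacing $\sigma$ by a proper constituent of $\pi\times\sigma$ (obtained by the peeling step above) is legitimate progress, the necessary monotonicity coming from Lemma~\ref{lem: elemM}(\ref{part: ifisom}). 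Once the induction variable and the base case (the reducibility threshold, where $R_{\sigma,\pi}$ becomes an isomorphism and $\pi\times\sigma$ is irreducible, so multiplicity one is trivial) are chosen correctly, the remaining work is the routine bookkeeping of intertwining operators collected in Lemma~\ref{lem: elemM} together with repeated application of Corollary~\ref{cor: threetensor}; I would expect the bulk of the proof to lie in arranging that bookkeeping rather than in any single computation.
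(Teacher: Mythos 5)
Your reduction is correct as far as it goes: by Corollary \ref{cor: maincor} only the multiplicity-one statement is missing. But the proposal does not actually prove it. At the decisive moment you declare the ``delicate point'' to be turning the counting inequality $[\pi\times\pi\times\sigma:\Pi]\ge[\pi\times\sigma:\pi_0]$ into a contradiction, and you only speculate that an induction on some invariant (the length of $\pi\times\sigma$, or an unspecified quantity built from $r_{\sigma,\pi}$ and $r_{\pi,\sigma}$) should close it. No such scheme is set up: the invariant is never defined, it is not shown to drop under the peeling step $\Pi\hookrightarrow\pi\times\omega$, and -- as you yourself observe -- the counting inequality is only a contradiction if one already knows multiplicity one for the pair $(\pi\times\pi,\sigma)$, which is the statement being proved. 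So the essential step is left open, and the double-socle construction $\Pi=\soc(\pi\times\pi\times\sigma)$ is in any case not needed here (in the paper that device is reserved for proving \emph{non}-$\square$-irreducibility in \S\ref{sec: basicases}).

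The paper's proof is direct, with no induction. Put $\tau=\soc(\pi\times\sigma)=\operatorname{Im}R_{\sigma,\pi}$ and $\tau'=\Ker R_{\sigma,\pi}\subset\sigma\times\pi$. Since $(\sigma\times\pi)/\tau'\simeq\tau$ and $\JH(\pi\times\sigma)=\JH(\sigma\times\pi)$, multiplicity one amounts to $\tau\notin\JH(\tau')$, and this is obtained by comparing pole orders -- exactly the invariants you gesture at, but used directly rather than as an induction variable. First, $r_{\pi\times\sigma,\pi}=r_{\tau,\pi}$: otherwise $R_{\pi\times\sigma,\pi}$, which by \eqref{eq: radd} is proportional to $\id_\pi\times R_{\sigma,\pi}$, would vanish on $\tau\times\pi$, i.e. $\tau\times\pi\subset\pi\times\tau'$, contradicting Corollary \ref{cor: threetensor} since $\tau'\subsetneq\sigma\times\pi$. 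Second, $\lambda R_{\sigma\times\pi,\pi}=R_{\sigma,\pi}\times\id_\pi$ kills $\tau'\times\pi$, so by Lemma \ref{lem: elemM} parts \ref{part: nonzeroR} and \ref{part: ifisom},
\[
r_{\tau',\pi}<r_{\sigma\times\pi,\pi}=r_{\sigma,\pi}+r_{\pi,\pi}=r_{\pi\times\sigma,\pi}=r_{\tau,\pi}.
\]
By the monotonicity of $r$ under nonzero subquotients (Lemma \ref{lem: elemM} part \ref{part: sqle}), $\tau$ cannot occur in $\JH(\tau')$, which is what was needed; the statement for $\sigma\times\pi$ is the mirror-image argument. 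This is also the shape of the argument being adapted from \cite[Theorem 3.1]{1412.8106}: a degree/order comparison for the renormalized intertwiners, not the induction you anticipate. If you want to salvage your write-up, replace the entire second half by this comparison.
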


\begin{proof}
We prove that $\pi\times\sigma$ is \SI. The other assertion is proved similarly.
Let $\tau=\soc(\pi\times\sigma)$ and $\tau'=\Ker R_{\sigma,\pi}\subset\sigma\times\pi$. We already know that $\tau$ is the image of $R_{\sigma,\pi}$
and $\tau$ is irreducible. Since $\JH(\sigma\times\pi)=\JH(\pi\times\sigma)$, it remains to show that $\tau$ does not occur in $\JH(\tau')$.

First note that $R_{\pi\times\sigma,\pi}\rest_{\tau\times\pi}\ne 0$, that is (Lemma \ref{lem: elemM} part \ref{part: nonzeroR}),
$r_{\pi\times\sigma,\pi}=r_{\tau,\pi}$. For otherwise, we would have (by \eqref{eq: radd}) $\tau\times\pi\subset\Ker(R_{\pi\times\sigma,\pi})=\pi\times\tau'$
which contradicts Corollary \ref{cor: threetensor} since $\tau'\subsetneq\sigma\times\pi$.

Since $\lambda R_{\sigma\times\pi,\pi}=R_{\sigma,\pi}\times\id_{\pi}$
(where as before $R_{\pi,\pi}=\lambda^{-1}\id_{\pi\times\pi}$),
the restriction of $R_{\sigma\times\pi,\pi}$ to $\tau'\times\pi$ vanishes.
Hence, by Lemma \ref{lem: elemM} parts \ref{part: nonzeroR} and \ref{part: ifisom},
\[
r_{\tau',\pi}<r_{\sigma\times\pi,\pi}=r_{\sigma,\pi}+r_{\pi,\pi}=r_{\pi\times\sigma,\pi}=r_{\tau,\pi}
\]
by the above. Thus, by Lemma \ref{lem: elemM} part \ref{part: sqle} $\tau$ cannot occur in $\JH(\tau')$ as required.
\end{proof}

\begin{remark}
In \cite{MR3573961} we defined a ``left multiplier'' to be an irreducible representation such that $\pi\times\sigma$ is \SI\ for any irreducible $\sigma$.
In view of Lemma \ref{lem: LM} this is equivalent to the conditions of Corollary \ref{cor: LMconds}.
\end{remark}

The following result gives a recursive way to deduce $\square$-irreducibility.
\begin{lemma} \label{lem: albertoidea}
Suppose that $\pi\hookrightarrow\pi_1\times\pi_2$ and $\pi\times\pi_1$ is irreducible.
Then $\pi\in\IrrS$ provided that $\pi_2\in\IrrS$.
\end{lemma}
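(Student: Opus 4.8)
The plan is to prove that $\pi\times\pi$ is \SI; by the equivalence of conditions (1) and (4) in Corollary \ref{cor: LMconds} together with Corollary \ref{cor: maincor} this forces $R_{\pi,\pi}$ to be a scalar and hence $\pi$ to be \LM\ (in particular $\pi\in\Irr$, so that $\pi\in\IrrS$ makes sense). We may of course assume $\pi\ne0$.

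First I would set $\rho=\pi\times\pi_1$, which by hypothesis lies in $\Irr$. Applying the functor $\pi\times(-)$ to the embedding $\pi\hookrightarrow\pi_1\times\pi_2$ yields $\pi\times\pi\hookrightarrow\pi\times\pi_1\times\pi_2=\rho\times\pi_2$. Now the key input: since $\pi_2\in\IrrS$, Lemma \ref{lem: LM} applied to the irreducible representation $\rho$ shows that $\rho\times\pi_2$ is \SI. In particular $\soc(\rho\times\pi_2)$ is irreducible and occurs with multiplicity one in $\JH(\rho\times\pi_2)$.

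Finally I would extract the conclusion formally. The embedding $\pi\times\pi\hookrightarrow\rho\times\pi_2$ carries the nonzero semisimple subrepresentation $\soc(\pi\times\pi)$ into $\soc(\rho\times\pi_2)$, which is irreducible; hence $\soc(\pi\times\pi)\simeq\soc(\rho\times\pi_2)$ is irreducible. Moreover $\JH(\pi\times\pi)\subseteq\JH(\rho\times\pi_2)$ as multisets, so $\soc(\pi\times\pi)$ occurs at most once, hence exactly once, in $\JH(\pi\times\pi)$. Thus $\pi\times\pi$ is \SI, and we are done. There is no serious obstacle here: the only substantive ingredient is the one-sided strengthening Lemma \ref{lem: LM} of Corollary \ref{cor: maincor}, which lets us bypass the fact that nothing is assumed about $\pi_1$ on its own — the product $\rho\times\pi_2$ is automatically \SI\ once $\pi_2\in\IrrS$. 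Everything else is routine bookkeeping with socles and Jordan--Hölder multiplicities.
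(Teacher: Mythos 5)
Your proposal is correct and follows essentially the same route as the paper: embed $\pi\times\pi\hookrightarrow(\pi\times\pi_1)\times\pi_2$, apply Lemma \ref{lem: LM} (using $\pi\times\pi_1\in\Irr$ and $\pi_2\in\IrrS$) to see this product is \SI, conclude $\pi\times\pi$ is \SI, and finish with Corollary \ref{cor: LMconds}. Your extra remarks spelling out the socle/multiplicity bookkeeping and the irreducibility of $\pi$ via Corollary \ref{cor: maincor} are just a more explicit version of the paper's one-line conclusion.
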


\begin{proof}
Since $\pi\times\pi_1\in\Irr$ it follow from Lemma \ref{lem: LM} that $\Pi:=\pi\times\pi_1\times\pi_2$ is \SI\ provided that $\pi_2\in\IrrS$.
Since $\pi\times\pi\hookrightarrow\Pi$ we infer that $\pi\times\pi$ is \SI.
Hence $\pi\in\IrrS$ by Corollary \ref{cor: LMconds}.
\end{proof}

\subsection{}
One of the most fundamental facts in the representation theory of the groups $G_n$ is that $\Cusp\subset\IrrS$ \cite{MR0499010}.
For any $\pi\in\Reps(G_n)$ and $\rho\in\Cusp$ with $d=\deg(\rho)$ define
\begin{gather*}
\lmlt_\rho(\pi):=\max\{i\ge0:\jac_{di,n-di}(\pi)_{i\rho;*}\ne0\}=
\max\{i\ge0:\rho^{\times i}\otimes\sigma\le\jac_{di,n-di}(\pi)\text{ for some }\sigma\ne0\},\\
\rmlt_\rho(\pi):=\max\{i\ge0:\jac_{n-di,di}(\pi)_{*;i\rho}\ne0\}=\max\{i\ge0:\sigma\otimes\rho^{\times i}\le\jac_{n-di,di}(\pi)\text{ for some }\sigma\ne0\},
\end{gather*}
and let
\[
\lnrset(\pi)=\{\rho\in\Cusp:\lmlt_\rho(\pi)>0\},
\rnrset(\pi)=\{\rho\in\Cusp:\rmlt_\rho(\pi)>0\}
\subset\supp\pi.
\]
If $\pi\in\Irr$ then $\rho\in\lnrset(\pi)$ (resp., $\rho\in\rnrset(\pi)$) if and only if there exists $\pi'\in\Irr$, necessarily unique,
such that $\pi\hookrightarrow\rho\times\pi'$ (resp., $\pi\hookrightarrow\pi'\times\rho$).
More generally, for  $d=\deg\rho$ and $m=\lmlt_\rho(\pi)$ (resp., $m=\rmlt_\rho(\pi)$) the representation
$\jac_{(md,n-md)}(\pi)_{m\cdot\rho;*}$ (resp., $\jac_{(n-md,md)}(\pi)_{*;m\cdot\rho}$) is irreducible, i.e.
\[
\jac_{(md,n-md)}(\pi)_{m\cdot\rho;*}=\rho^{\times m}\otimes\pi'
\text{  (resp., $\jac_{(n-md,md)}(\pi)_{*;m\cdot\rho}=\pi'\otimes\rho^{\times m}$)}
\]
where $\pi'\in\Irr G_{n-md}$ (\cite{MR2306606}). In particular,
\[
\pi\hookrightarrow\rho^{\times m}\times\pi'
\text{  (resp., $\pi\hookrightarrow\pi'\times\rho^{\times m}$)}.
\]
Moreover, $\lmlt_\rho(\pi')=0$ (resp., $\rmlt_\rho(\pi')=0$). We write $\lderiv_\rho(\pi)=\pi'$ (resp., $\rderiv_\rho(\pi)=\pi'$).

The following result easily follows from the geometric lemma of Bernstein--Zelevinsky \cite{MR0579172} and Frobenius reciprocity.
\begin{lemma} \label{lem: lnrprop}
Let $\pi,\pi'\in\Reps(G_n)$, $\pi_i\in\Irr$, $i=1,2$ and $\rho\in\Cusp$. Then
\begin{enumerate}
\item If $\pi'\le\pi$ then $\lnrset(\pi')\subset\lnrset(\pi)$ and $\rnrset(\pi')\subset\rnrset(\pi)$.
\item $\lnrset(\pi_1\times\pi_2)=\lnrset(\pi_1)\cup\lnrset(\pi_2)$ and $\rnrset(\pi_1\times\pi_2)=\rnrset(\pi_1)\cup\rnrset(\pi_2)$.
\item $\lmlt_\rho(\pi_1\times\pi_2)=\lmlt_\rho(\pi_1)+\lmlt_\rho(\pi_2)$ and similarly for $\rmlt_\rho$.
\item $\rho^{\times \lmlt_\rho(\pi_1\times\pi_2)}\otimes\lderiv_\rho(\pi_1)\times\lderiv_\rho(\pi_2)\le\jac(\pi_1\times\pi_2)$ and
$\rderiv_\rho(\pi_1)\times\rderiv_\rho(\pi_2)\otimes\rho^{\times \rmlt_\rho(\pi_1\times\pi_2)}\le\jac(\pi_1\times\pi_2)$.
\item If $\pi\hookrightarrow\pi_1\times\pi_2$ then $\lnrset(\pi)\supset\lnrset(\pi_1)$ and $\rnrset(\pi)\supset\rnrset(\pi_2)$.
\end{enumerate}
\end{lemma}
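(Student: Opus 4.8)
The plan is to deduce everything from three standard inputs, treating part (1) first via exactness of Jacquet functors, then parts (2)--(4) together via the geometric lemma, and finally part (5) via Frobenius reciprocity. The inputs are: exactness of the Jacquet functors $\jac_{(m,n-m)}$; the Bernstein--Zelevinsky geometric lemma, which for $\pi_i\in\Reps(G_{n_i})$ presents $\jac_{(r,N-r)}(\pi_1\times\pi_2)$ (with $N=n_1+n_2$) as an iterated extension of pieces indexed by pairs $(a_1,a_2)$ with $a_1+a_2=r$ and $0\le a_j\le n_j$, the piece for $(a_1,a_2)$ having the same Jordan--H\"older content as the sum of $(\alpha_1\times\alpha_2)\otimes(\beta_1\times\beta_2)$ over all constituents $\alpha_j\otimes\beta_j\le\jac_{(a_j,n_j-a_j)}(\pi_j)$; and Frobenius reciprocity, which together with the description of $\lnrset,\rnrset$ recalled above identifies, for $\pi\in\Irr$, the condition $\rho\in\lnrset(\pi)$ with the existence of an embedding $\pi\hookrightarrow\rho\times\pi'$. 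I use throughout the description $\lmlt_\rho(\pi)=\max\{i\ge0:\rho^{\times i}\otimes\sigma\le\jac_{(di,n-di)}(\pi)\text{ for some }\sigma\ne0\}$ (here $d=\deg\rho$), the fact that $\rho^{\times i}$ is the unique irreducible representation of $G_{di}$ whose supercuspidal support is $i$ copies of $\rho$ (hence $\rho^{\times i}$ is irreducible, which also uses $\Cusp\subset\IrrS$), and the additivity of supercuspidal support under $\times$. The statements for $\rnrset,\rmlt_\rho,\rderiv_\rho$ follow by the same arguments applied to the opposite parabolic, or by applying $\pi\mapsto\pi^\vee$ and using $(\pi_1\times\pi_2)^\vee\cong\pi_2^\vee\times\pi_1^\vee$; so I discuss only the left-hand versions.

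Part (1) is immediate: if $\pi'\le\pi$, exactness gives $\JH(\jac_{(di,n-di)}(\pi'))\subseteq\JH(\jac_{(di,n-di)}(\pi))$, so $\rho^{\times i}\otimes\sigma\le\jac_{(di,n-di)}(\pi')$ implies $\rho^{\times i}\otimes\sigma\le\jac_{(di,n-di)}(\pi)$; hence $\lmlt_\rho(\pi')\le\lmlt_\rho(\pi)$ and a fortiori $\lnrset(\pi')\subseteq\lnrset(\pi)$.

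For part (3) (which contains (2), since $\lmlt_\rho\ge0$ always, so $\rho\in\lnrset(\pi_1\times\pi_2)$ precisely when $\lmlt_\rho(\pi_1)>0$ or $\lmlt_\rho(\pi_2)>0$) and for part (4): apply the geometric lemma to $\jac_{(di,N-di)}(\pi_1\times\pi_2)$. If $\rho^{\times i}\otimes\sigma$ with $\sigma\ne0$ occurs in it, it occurs in some piece, namely in a constituent of $(\alpha_1\times\alpha_2)\otimes(\beta_1\times\beta_2)$ with $\rho^{\times i}\le\alpha_1\times\alpha_2$; comparing degrees and supercuspidal supports (with multiplicity) forces $a_j=di_j$ with $i_1+i_2=i$ and $\alpha_j=\rho^{\times i_j}$, whence $\rho^{\times i_j}\otimes\beta_j\le\jac_{(di_j,n_j-di_j)}(\pi_j)$ with $\beta_j\ne0$, so $i_j\le\lmlt_\rho(\pi_j)$. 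Therefore $\lmlt_\rho(\pi_1\times\pi_2)\le\lmlt_\rho(\pi_1)+\lmlt_\rho(\pi_2)$. For the reverse inequality and for (4), put $m_j=\lmlt_\rho(\pi_j)$; the identity $\jac_{(dm_j,n_j-dm_j)}(\pi_j)_{m_j\rho;*}=\rho^{\times m_j}\otimes\lderiv_\rho(\pi_j)$ shows $\rho^{\times m_j}\otimes\lderiv_\rho(\pi_j)\le\jac_{(dm_j,n_j-dm_j)}(\pi_j)$, so the piece of the geometric lemma indexed by $(dm_1,dm_2)$ has among its constituents those of $(\rho^{\times m_1}\times\rho^{\times m_2})\otimes(\lderiv_\rho(\pi_1)\times\lderiv_\rho(\pi_2))=\rho^{\times(m_1+m_2)}\otimes(\lderiv_\rho(\pi_1)\times\lderiv_\rho(\pi_2))$; these are therefore constituents of $\jac_{(d(m_1+m_2),\cdot)}(\pi_1\times\pi_2)$, and since $\lderiv_\rho(\pi_1)\times\lderiv_\rho(\pi_2)\ne0$ this gives both (4) and $\lmlt_\rho(\pi_1\times\pi_2)\ge m_1+m_2$, completing (3).

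For part (5): if $\rho\in\lnrset(\pi_1)$ then $\pi_1\hookrightarrow\rho\times\pi_1'$ (as $\pi_1\in\Irr$), and composing with $\pi\hookrightarrow\pi_1\times\pi_2$ and exactness of $-\times\pi_2$ yields $\pi\hookrightarrow\rho\times(\pi_1'\times\pi_2)$; by Frobenius reciprocity $\jac_{(d,n-d)}(\pi)$ admits a nonzero quotient $\rho\otimes\mu$, so $\rho^{\times1}\otimes\mu\le\jac_{(d,n-d)}(\pi)$ and $\rho\in\lnrset(\pi)$; the assertion for $\rnrset$ and $\pi_2$ is symmetric. The step demanding real care is the bookkeeping underlying (3)--(4): from the explicit recombination maps of the geometric lemma and the supercuspidality of $\rho$ one must verify that a constituent of $\jac_{(di,N-di)}(\pi_1\times\pi_2)$ carrying $i$ copies of $\rho$ in its first tensor factor necessarily arises from a clean split $i=i_1+i_2$ between the two factors with $\alpha_j=\rho^{\times i_j}$ (and hence $i_j\le\lmlt_\rho(\pi_j)$), as well as the dual statement that the $(dm_1,dm_2)$-piece genuinely contributes the needed constituents; everything else is formal.
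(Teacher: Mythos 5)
Your proof is correct and follows exactly the route the paper intends: the paper gives no details beyond asserting that the lemma "easily follows from the geometric lemma of Bernstein--Zelevinsky and Frobenius reciprocity," and your write-up is precisely that argument (exactness of Jacquet functors for (1), the geometric lemma plus uniqueness of the irreducible representation with supercuspidal support $i\cdot\rho$ for (2)--(4), Frobenius reciprocity for (5)). One cosmetic slip in your optional duality remark: $(\pi_1\times\pi_2)^\vee\simeq\pi_1^\vee\times\pi_2^\vee$ (not $\pi_2^\vee\times\pi_1^\vee$ in general), but since the right-hand statements only use Jordan--H\"older-level information there, and you in any case give the direct opposite-parabolic argument, this does not affect the proof.
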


\begin{corollary} (cf. \cite[Proposition 4.20]{1502.06714})
Suppose that $\pi_1\times\pi_2$ is irreducible. Then for any $\rho\in\Cusp$ we have
\[
\lderiv_\rho(\pi_1\times\pi_2)=\lderiv_\rho(\pi_1)\times\lderiv_\rho(\pi_2).
\]
In particular, $\lderiv_\rho(\pi_1)\times\lderiv_\rho(\pi_2)$ is irreducible. Similarly for $\rderiv_\rho$.
\end{corollary}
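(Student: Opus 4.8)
The plan is to identify both $\lderiv_\rho(\pi_1\times\pi_2)$ and $\lderiv_\rho(\pi_1)\times\lderiv_\rho(\pi_2)$ inside a single Jacquet module of $\pi_1\times\pi_2$, and then exploit the fact that the first of these sits there inside an \emph{irreducible} piece. Write $d=\deg\rho$, $n=\deg\pi_1+\deg\pi_2$, $m_i=\lmlt_\rho(\pi_i)$, $\pi_i'=\lderiv_\rho(\pi_i)$ and $m=m_1+m_2$; note first that $\pi_1\times\pi_2\in\Irr$ forces $\pi_1,\pi_2\in\Irr$ (a proper nonzero subrepresentation of $\pi_i$ would produce one of $\pi_1\times\pi_2$), so these derivatives are defined and Lemma \ref{lem: lnrprop} applies. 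By Lemma \ref{lem: lnrprop}(3) we have $\lmlt_\rho(\pi_1\times\pi_2)=m$, hence
\[
\jac_{(md,n-md)}(\pi_1\times\pi_2)_{m\cdot\rho;*}=\rho^{\times m}\otimes\lderiv_\rho(\pi_1\times\pi_2),
\]
and this is an \emph{irreducible} representation of $G_{md}\times G_{n-md}$ -- here I also use that $\rho^{\times m}$ is irreducible, which follows from $\Cusp\subset\IrrS$ via Corollary \ref{cor: pi1pi2LM}.

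Next, Lemma \ref{lem: lnrprop}(4) gives $\rho^{\times m}\otimes(\pi_1'\times\pi_2')\le\jac_{(md,n-md)}(\pi_1\times\pi_2)$. I would then check that this subquotient in fact lies in the $m\cdot\rho;*$-part: every irreducible subquotient of $\rho^{\times m}\otimes(\pi_1'\times\pi_2')$ has the shape $\rho^{\times m}\otimes\zeta$ with $\zeta$ an irreducible subquotient of $\pi_1'\times\pi_2'$, so its $G_{md}$-component is $\rho^{\times m}$ and has supercuspidal support $m\cdot\rho$; consequently it belongs to the block direct summand of $\jac_{(md,n-md)}(\pi_1\times\pi_2)$ that constitutes $\jac_{(md,n-md)}(\pi_1\times\pi_2)_{m\cdot\rho;*}$. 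Therefore $\rho^{\times m}\otimes(\pi_1'\times\pi_2')\le\rho^{\times m}\otimes\lderiv_\rho(\pi_1\times\pi_2)$. Since the right-hand side is irreducible and the left-hand side is nonzero, comparing Jordan--H\"older series shows that $\pi_1'\times\pi_2'$ has a single irreducible constituent, occurring with multiplicity one, namely $\lderiv_\rho(\pi_1\times\pi_2)$; hence $\pi_1'\times\pi_2'=\lderiv_\rho(\pi_1\times\pi_2)$ and in particular it is irreducible. The assertion for $\rderiv_\rho$ follows by the mirror-image argument, working with $\jac_{(n-md,md)}(\pi_1\times\pi_2)_{*;m\cdot\rho}$ (or by applying the left-hand statement to contragredients).

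The step requiring the most care is the identification of $\jac_{(md,n-md)}(\pi_1\times\pi_2)_{m\cdot\rho;*}$ with the summand on which the $G_{md}$-component has supercuspidal support exactly $m\cdot\rho$: this rests on the fact that $\rho^{\times m}$ is the unique irreducible representation of $G_{md}$ with that support, combined with the decomposition $\Reps=\oplus_{\mathcal{D}}\Reps_{\mathcal{D}}$, which realises this part as a genuine direct summand and so lets one pass from an $\le$-relation for the full Jacquet module to the corresponding relation for the summands. Granting that, the only real idea in the argument is that the target $\rho^{\times m}\otimes\lderiv_\rho(\pi_1\times\pi_2)$ is irreducible, which promotes ``$\le$'' to ``$=$'' and yields both the irreducibility of $\lderiv_\rho(\pi_1)\times\lderiv_\rho(\pi_2)$ and the desired identity at one stroke.
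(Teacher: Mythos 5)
Your argument is correct and is exactly the one the paper leaves implicit: the corollary is stated as a consequence of Lemma \ref{lem: lnrprop} (parts (3) and (4)) together with the fact, recalled just before that lemma, that $\jac_{(md,n-md)}(\pi)_{m\cdot\rho;*}=\rho^{\times m}\otimes\lderiv_\rho(\pi)$ is irreducible for $\pi\in\Irr$, and your block-summand observation is the standard justification for passing from the full Jacquet module to that summand. Nothing further is needed.
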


\begin{corollary} (cf. \cite[Corollary 4.21]{1502.06714})
If $\pi\in\IrrS$ then so are $\lderiv_{\rho}(\pi)$ and $\rderiv_\rho(\pi)$.
\end{corollary}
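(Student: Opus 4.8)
The plan is to deduce the statement directly from the preceding corollary, with essentially no extra work. Since $\pi\in\IrrS$, Corollary \ref{cor: LMconds} tells us that $\pi\times\pi$ is irreducible. First I would apply the preceding corollary with $\pi_1=\pi_2=\pi$: because $\pi\times\pi\in\Irr$, it yields
\[
\lderiv_\rho(\pi\times\pi)=\lderiv_\rho(\pi)\times\lderiv_\rho(\pi),
\]
and in particular the right-hand side is irreducible. Now $\pi\in\IrrS\subset\Irr$, so $\lderiv_\rho(\pi)$ is a well-defined element of $\Irr$ by the discussion preceding Lemma \ref{lem: lnrprop}. Hence the irreducibility of $\lderiv_\rho(\pi)\times\lderiv_\rho(\pi)$ is exactly condition (2) of Corollary \ref{cor: LMconds} for the representation $\lderiv_\rho(\pi)$, so $\lderiv_\rho(\pi)\in\IrrS$.

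The argument for $\rderiv_\rho(\pi)$ is word-for-word the same, invoking the ``Similarly for $\rderiv_\rho$'' clause of the preceding corollary. There is no genuine obstacle here: all the substance is already contained in the previous corollary (which itself rests on Lemma \ref{lem: lnrprop}, and ultimately on the geometric lemma of Bernstein--Zelevinsky), together with the characterization of $\IrrS$ in Corollary \ref{cor: LMconds}. The only point worth a moment's care is that $\lderiv_\rho(-)$ and $\rderiv_\rho(-)$ are only defined on irreducible representations, which is why one must first record that $\pi$, and hence $\pi\times\pi$, is irreducible before forming the derivatives.
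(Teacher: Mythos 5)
Your proposal is correct and is exactly the argument the paper intends (the corollary is stated without proof as an immediate consequence of the preceding corollary applied with $\pi_1=\pi_2=\pi$, combined with Corollary \ref{cor: LMconds}). Nothing further is needed.
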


We say that $\pi'\in\Irr$ is a descendant of $\pi\in\Irr$ if there exists a sequence $\pi_0,\pi_1,\dots,\pi_n\in\Irr$, $n>0$
such that $\pi_0=\pi$, $\pi_n=\pi'$ and for all $i=1,\dots,n$, $\pi_i=\lderiv_{\rho_i}(\pi_{i-1})$ or $\pi_i=\rderiv_{\rho_i}(\pi_{i-1})$
for some $\rho_i\in\lnrset(\pi)$ (resp. $\rho_i\in\rnrset(\pi)$).

\begin{corollary} \label{cor: derisLM}
If $\pi\in\IrrS$ then so is any descendant of $\pi$.
\end{corollary}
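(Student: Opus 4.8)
The plan is to prove the statement by a straightforward induction on the length $n$ of a sequence $\pi_0,\pi_1,\dots,\pi_n\in\Irr$ realizing $\pi'=\pi_n$ as a descendant of $\pi=\pi_0$, using only the immediately preceding corollary, which asserts that $\lderiv_\rho(\tau),\rderiv_\rho(\tau)\in\IrrS$ whenever $\tau\in\IrrS$ and $\rho\in\Cusp$. Recall that with $m=\lmlt_\rho(\tau)$ (resp. $m=\rmlt_\rho(\tau)$) one has $\lderiv_\rho(\tau)=\tau$ (resp. $\rderiv_\rho(\tau)=\tau$) whenever $\rho\notin\lnrset(\tau)$ (resp. $\rho\notin\rnrset(\tau)$), so the preceding corollary is available for an arbitrary $\rho$, trivially so in this degenerate case.

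In the base case $n=1$ we have $\pi_1=\lderiv_{\rho_1}(\pi)$ or $\pi_1=\rderiv_{\rho_1}(\pi)$ for some $\rho_1\in\Cusp$, and since $\pi\in\IrrS$ by hypothesis the preceding corollary gives $\pi_1\in\IrrS$. For $n\ge2$, I would observe that the truncated sequence $\pi_0,\dots,\pi_{n-1}$ exhibits $\pi_{n-1}$ as a descendant of $\pi$, whence $\pi_{n-1}\in\IrrS$ by the induction hypothesis; applying the preceding corollary once more to $\tau=\pi_{n-1}$ and the relevant $\rho_n$ yields $\pi_n=\pi'\in\IrrS$, completing the induction.

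I do not expect any real obstacle here: the assertion is a formal iteration of the one-step result, and all the representation-theoretic content sits in the preceding two corollaries (ultimately in Lemma \ref{lem: lnrprop} and Corollary \ref{cor: threetensor}). I would only flag, for the reader's reassurance, that the constraint $\rho_i\in\lnrset(\pi)$ (resp. $\rho_i\in\rnrset(\pi)$) in the definition of a descendant — which refers to the non-reducibility sets of the original $\pi$ rather than of the intermediate $\pi_{i-1}$ — is never invoked in the argument; it serves only to make the descendant relation a sensible, terminating operation and does not interfere with the induction.
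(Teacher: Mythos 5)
Your proposal is correct and is exactly the argument the paper intends: the corollary is stated without proof precisely because it follows from the preceding one-step corollary by the obvious induction on the length of the defining sequence, which is what you carry out. Your side remark that the condition $\rho_i\in\lnrset(\pi)$ (resp. $\rho_i\in\rnrset(\pi)$) in the definition of descendant plays no role in the argument is also accurate and harmless.
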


By \cite[Lemma 2.5]{MR3573961} we also conclude

\begin{corollary} \label{cor: extractrho}
Suppose that $\pi_1\in\IrrS$, $\pi_2\in\Irr$ and $\rho\in\Cusp$. Let $m_i=\lmlt_\rho(\pi_i)$, $\pi'_i=\lderiv_\rho(\pi_i)$, $i=1,2$ and $m=m_1+m_2$.
Then $\rho^{\times m}\times\pi'_1\times\pi'_2$ is \SI. Hence, if moreover $m_2=0$ or $\rho\times\pi_1$ is irreducible, so that
\[
\pi_1\times\pi_2\hookrightarrow\rho^{\times m_2}\times\pi_1\times\pi_2'\hookrightarrow\rho^{\times m}\times\pi_1'\times\pi'_2,
\]
then
\[
\soc(\pi_1\times\pi_2)=\soc(\rho^{\times m}\times\soc(\pi'_1\times\pi'_2)).
\]
\end{corollary}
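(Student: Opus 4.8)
The plan is to deduce the \SI-ness of $\rho^{\times m}\times\pi'_1\times\pi'_2$ from the results already established, and then to pin down the socle by a chain of inclusions. First I would observe that $\rho\in\Cusp\subset\IrrS$ by \cite{MR0499010}, and that $\lmlt_\rho(\pi'_i)=0$ by the definition of $\lderiv_\rho$; in particular, since $\pi_1\in\IrrS$, Corollary \ref{cor: derisLM} gives $\pi'_1\in\IrrS$. The key input is \cite[Lemma 2.5]{MR3573961}, which is cited precisely for this step: it should say (in the form we need) that when $\pi_1\in\IrrS$ and $\lmlt_\rho(\pi'_1)=0$, the representation $\rho^{\times m}\times\pi'_1\times\pi'_2$ is \SI\ with $m=\lmlt_\rho(\pi_1\times\pi_2)=m_1+m_2$ by Lemma \ref{lem: lnrprop}(3). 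So the first assertion is essentially an invocation of that lemma; the only thing to check is that the hypotheses match, i.e.\ that $\rho$ does not reappear on the left of $\pi'_1$ or $\pi'_2$ and that $\pi'_1$ retains \LM-ness.

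For the socle identity, I would first justify the displayed chain of embeddings. Since $m_2=\lmlt_\rho(\pi_2)$, Frobenius reciprocity gives $\pi_2\hookrightarrow\rho^{\times m_2}\times\pi'_2$, hence $\pi_1\times\pi_2\hookrightarrow\rho^{\times m_2}\times\pi_1\times\pi'_2$. For the second embedding there are two cases. If $m_2=0$ then $\pi'_2=\pi_2$ and we need $\pi_1\times\pi'_2\hookrightarrow\rho^{\times m_1}\times\pi'_1\times\pi'_2$, which follows from $\pi_1\hookrightarrow\rho^{\times m_1}\times\pi'_1$ (again Frobenius reciprocity, using $m_1=\lmlt_\rho(\pi_1)$). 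If instead $\rho\times\pi_1$ is irreducible, then $\rho^{\times m_2}\times\pi_1=\pi_1\times\rho^{\times m_2}$ up to isomorphism (by Corollary \ref{cor: pi1pi2LM}, $\rho\times\pi_1$ irreducible forces $R_{\rho,\pi_1}$ to be an isomorphism and one commutes the factors past each other), and then $\rho^{\times m_2}\times\pi_1\times\pi'_2\hookrightarrow\rho^{\times m_2}\times\rho^{\times m_1}\times\pi'_1\times\pi'_2=\rho^{\times m}\times\pi'_1\times\pi'_2$. Either way the chain is legitimate.

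Granting the chain, the argument is formal: $\Pi:=\rho^{\times m}\times\pi'_1\times\pi'_2$ is \SI\ by the first part, so it has a unique irreducible subrepresentation $\soc(\Pi)$ occurring with multiplicity one in $\JH(\Pi)$. The embedding $\pi_1\times\pi_2\hookrightarrow\Pi$ then forces $\soc(\pi_1\times\pi_2)=\soc(\Pi)$. On the other hand, $\rho^{\times m}\in\IrrS$ (Corollary \ref{cor: pi1pi2LM}, since $\rho\in\IrrS$), so by Corollary \ref{cor: maincor} applied to $\pi=\rho^{\times m}$ and $\sigma=\soc(\pi'_1\times\pi'_2)$ — the latter is irreducible because $\pi'_1\in\IrrS$ and Lemma \ref{lem: LM} makes $\pi'_1\times\pi'_2$ \SI\ — the representation $\soc(\rho^{\times m}\times\soc(\pi'_1\times\pi'_2))$ is irreducible and embeds into $\rho^{\times m}\times\pi'_1\times\pi'_2=\Pi$. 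Since $\Pi$ is \SI, this irreducible subrepresentation must equal $\soc(\Pi)$, giving $\soc(\pi_1\times\pi_2)=\soc(\rho^{\times m}\times\soc(\pi'_1\times\pi'_2))$.

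The main obstacle I anticipate is purely bookkeeping: verifying that the hypotheses of \cite[Lemma 2.5]{MR3573961} are met in the stated generality (in particular that one may extract \emph{all} $m=m_1+m_2$ copies of $\rho$ at once while preserving \SI-ness), and handling cleanly the commutation $\rho^{\times m_2}\times\pi_1\cong\pi_1\times\rho^{\times m_2}$ in the case $\rho\times\pi_1$ irreducible — one must be careful that ``irreducible'' for $\rho\times\pi_1$ is exactly the hypothesis that lets the $\rho$'s slide past $\pi_1$ via $R$-matrices without changing the isomorphism class. Once those two points are secured, the rest is a routine application of Corollaries \ref{cor: maincor}, \ref{cor: pi1pi2LM}, \ref{cor: derisLM} and Lemma \ref{lem: LM}.
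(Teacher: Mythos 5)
Your proposal is correct and follows essentially the same route as the paper, which likewise derives the \SI\ statement directly from \cite[Lemma 2.5]{MR3573961} together with Corollary \ref{cor: derisLM} and Lemma \ref{lem: LM}, and then obtains the socle identity by the formal chain of embeddings (the hypothesis ``$m_2=0$ or $\rho\times\pi_1$ irreducible'' being exactly what allows the $\rho$'s to be moved to the left of $\pi_1$). The only quibbles are cosmetic: the commutation $\pi_1\times\rho^{\times m_2}\simeq\rho^{\times m_2}\times\pi_1$ is already needed for the first displayed embedding rather than the second, and the fact that irreducibility of $\rho\times\pi_1$ yields this isomorphism is more naturally read off from Corollary \ref{cor: maincor} (or equality in the Grothendieck group) than from Corollary \ref{cor: pi1pi2LM}.
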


\section{Classification} \label{sec: classification}
We recall the classification of $\Irr$ which goes back to Bernstein--Zelevinsky and Zelevinsky in the case where $D=F$ \cite{MR0579172, MR584084}.
We refer the reader to \cite{MR3573961} and the references therein for more details and the history. Here we just record the facts and set the notation.

\subsection{}
For any $\rho\in\Cusp$ there exists a unique positive real number\footnote{In fact, $s_\rho$ is an integer, but this will not play any role here.} $s_\rho$ such that
$\rho\abs{\cdot}^{s_\rho}\times\rho$ is reducible.
(If $D=F$ then $s_\rho=1$.)
We write $\nu_\rho=\abs{\cdot}^{s_\rho}$, $\rshft{\rho}=\rho\nu_\rho$, $\lshft{\rho}=\rho\nu_\rho^{-1}$.
Note that $\nu_{\rho^\vee}=\nu_\rho$ and $\nu_{\rho\chi}=\nu_\rho$ for any character $\chi$ of $F^*$.

Moreover, if $\rho_1, \rho_2\in\Cusp$ then $\rho_1\times\rho_2$ is reducible if and only if $\rho_2$ is equal
to either $\rshft\rho_1$ or $\lshft\rho_1$.

A \emph{segment} is a finite non-empty subset of $\Cusp$ of the form $\Delta=\{\rho_1,\dots,\rho_k\}$ where $\rho_{i+1}=\rshft{\rho}_i$, $i=1,\dots,k-1$.
We write $b(\Delta)=\rho_1$, $e(\Delta)=\rho_k$ and $\deg\Delta=\sum_{i=1}^k\deg\rho_i=k\cdot\deg\rho_1$.
Since $\Delta$ is determined by $b(\Delta)$ and $e(\Delta)$ we often write $\Delta$ as $[b(\Delta),e(\Delta)]$.

Let $\Delta=\{\rho_1,\dots,\rho_k\}$ be a segment as before.
Then the representation $\rho_1\times\dots\times\rho_k\in\Reps(G_{\deg\Delta})$ is \SI.
We denote $\zele{\Delta}=\soc(\rho_1\times\dots\times\rho_k)\in\Irr G_{\deg\Delta}$.
For convenience, we also set $\zele{\emptyset}=1$.
We have
\[
\jac_{(i\deg\rho_1,(k-i)\deg\rho_1)}(\zele{\Delta})=\zele{\{\rho_1,\dots,\rho_i\}}\otimes\zele{\{\rho_{i+1},\dots,\rho_k\}},\ \ 0\le i\le k.
\]
Also, $\zele{\Delta}^\vee=\zele{\Delta^\vee}$ where $\Delta^\vee=\{\rho_k^\vee,\dots,\rho_1^\vee\}$.
We set
\begin{gather*}
\lshft{\Delta}=\{\lshft{\rho}_1,\dots,\lshft{\rho}_k\},\ \ \rshft{\Delta}=\{\rshft{\rho}_1,\dots,\rshft{\rho}_k\},\\
\Delta^+=[b(\Delta),e(\rshft{\Delta})],\ ^+\Delta=[b(\lshft\Delta),e(\Delta)],\ \Delta^-=[b(\Delta),e(\lshft{\Delta})],\ ^-\Delta=[b(\rshft\Delta),e(\Delta)].
\end{gather*}
Note that $\Delta^-$ or $^-\Delta$ can be empty.

Given two segments $\Delta_1$, $\Delta_2$ we write $\Delta_1\prec\Delta_2$ if $b(\Delta_1)\notin\Delta_2$, $b(\lshft{\Delta}_2)\in\Delta_1$ and
$e(\Delta_2)\notin\Delta_1$. In this case $\soc(\zele{\Delta_1}\times\zele{\Delta_2})=\zele{\Delta_1'}\times\zele{\Delta'_2}$ where
$\Delta_1'=\Delta_1\cup\Delta_2$, $\Delta_2'=\Delta_1\cap\Delta_2$ (the latter is possibly empty).
If either $\Delta_1\prec\Delta_2$ or $\Delta_2\prec\Delta_1$ then we say that $\Delta_1$ and $\Delta_2$ are linked.
In this case we say that $(\Delta'_1,\Delta'_2)$ as above is the offspring of $(\Delta_1,\Delta_2)$.
Note that $\{b(\Delta_1'),b(\Delta'_2)\}=\{b(\Delta_1),b(\Delta_2)\}$ and $\{e(\Delta_1'),e(\Delta'_2)\}=\{e(\Delta_1),e(\Delta_2)\}$.
(By convention, if $\Delta'_2=\emptyset$ in the case at hand, we write $b(\Delta_2')=b(\Delta_{3-j})$ and $e(\Delta_2')=e(\Delta_j)$ if $\Delta_j\prec\Delta_{3-j}$.)

A multisegment is a formal sum $\m=\Delta_1+\dots+\Delta_k$ of segments.
(We omit empty segments from this sum.)
In other words, the set $\Mult$ of multisegments is the free commutative monoid generated by all segments.
Write $\supp\m=\cup_{i=1}^k\Delta_i$ and $\deg\m=\sum_{i=1}^k\deg\Delta_i$.
Assume that $\Delta_1,\dots,\Delta_k$ is a sequence of segments such that $\Delta_i\not\prec\Delta_j$ for all $i<j$.
(Any multisegment can be ordered this way.)
Then the representation
\[
\std(\m):=\zele{\Delta_1}\times\dots\times\zele{\Delta_k}\in\Reps(G_{\deg\m})
\]
is \SI\ and depends only on $\m=\Delta_1+\dots+\Delta_k$.
The main result of the classification is that the map
\[
\m\in\Mult\mapsto\zele{\m}:=\soc(\std(\m))\in\Irr G_{\deg\m}
\]
defines a bijection between $\Mult$ and $\Irr$.
We write the inverse bijection as $\pi\mapsto\m(\pi)$.

Following Zelevinsky, we write $\m\adj\n$ if $\m$ is obtained from $\n$ by replacing a pair of linked segments in $\n$ by its offspring.
The transitive closure of this relation is denoted by $\obt$. (In particular, $\m\obt\m$.)

We recall some basic properties of the Zelevinsky classification. Let $\m,\n\in\Mult$. Then
\begin{enumerate}
\setcounter{enumi}{-1}
\item $\zele{0}=1$.
\item $\supp\zele{\m}=\supp\m$.
\item $\zele{\m}\le\std(\n)$ if and only if $\m\obt\n$.
\item $\std(\m)$ is irreducible, i.e. $\zele{\m}=\std(\m)$, if and only if $\m$ is \emph{pairwise unlinked}, that is,
no two segments in $\m$ are linked.
\item $\zele{\m+\n}$ occurs with multiplicity one in $\JH(\zele{\m}\times\zele{\n})$.
\item In particular, if $\zele{\m}\times\zele{\n}$ is irreducible then $\zele{\m}\times\zele{\n}=\zele{\m+\n}$.
\item We write $\LI(\zele{\m},\zele{\n})$ (resp., $\RI(\zele{\m},\zele{\n})$) for the condition
$\soc(\zele{\m}\times\zele{\n})=\zele{\m+\n}$ (resp., $\coss(\zele{\m}\times\zele{\n})=\zele{\m+\n}$).
Thus, $\zele{\m}\times\zele{\n}$ is irreducible if and only if both $\LI(\zele{\m},\zele{\n})$ and $\RI(\zele{\m},\zele{\n})$.
\item The condition $\LI(\zele{\m},\zele{\n})$ is satisfied if $\Delta\not\prec\Delta'$ for any segment $\Delta$ of $\m$ and $\Delta'$ of $\n$.
\end{enumerate}

As a ring, $\Gr$ is freely generated by $\zele{\Delta}$ as $\Delta$ ranges over all segments.
Equivalently, $\Gr$ is freely generated as an abelian group by $\std(\m)$, $\m\in\Mult$ (as well as by $\zele{\m}$, $\m\in\Mult$).
The change of basis matrix is unitriangular with respect to $\obt$ and its coefficients are given by values of Kazhdan--Luzstig polynomials
with respect to the symmetric group -- see \S\ref{sec: KLid}.

\subsection{Auxiliary results}
\begin{definition} \label{def: detachable}
Let $\m=\Delta_1+\dots+\Delta_k\in\Mult$.
We say that $\Delta_i$ is a detachable segment of $\m$ if at least one of the following conditions holds:
\begin{subequations}
\begin{equation} \label{eq: deltafirst}
\Delta_i\not\prec\Delta_j\text{ and }\lshft{\Delta}_i\not\prec\Delta_j\text{ for all }j\ne i
\end{equation}
or,
\begin{equation} \label{eq: deltalast}
\Delta_j\not\prec\Delta_i\text{ and }\lshft{\Delta}_j\not\prec\Delta_i\text{ for all }j\ne i.
\end{equation}
\end{subequations}
\end{definition}

\begin{lemma} \label{lem: 1stred}
Suppose that $\Delta$ is a detachable segment of $\m\in\Mult$ and let $\m'=\m-\Delta$.
Assume that $\zele{\m}\in\IrrS$. Then $\zele{\m'}\in\IrrS$.
\end{lemma}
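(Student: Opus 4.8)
The plan is to reduce to the first clause \eqref{eq: deltafirst} and then extract the segment $\Delta$ from the left of $\zele{\m}$. First I would eliminate clause \eqref{eq: deltalast} by passing to contragredients. Using the elementary facts that $\Delta_1\prec\Delta_2$ if and only if $\Delta_2^\vee\prec\Delta_1^\vee$, and that $A\prec\rshft{B}$ if and only if $\lshft{A}\prec B$ for any two segments $A,B$, one checks that $\Delta$ satisfies \eqref{eq: deltalast} for $\m$ precisely when $\Delta^\vee$ satisfies \eqref{eq: deltafirst} for $\m^\vee$. Since $\zele{\m}^\vee=\zele{\m^\vee}$ and $\IrrS$ is stable under contragredient, it suffices to treat \eqref{eq: deltafirst}, so from now on $\Delta\not\prec\Delta_j$ and $\lshft{\Delta}\not\prec\Delta_j$ for every segment $\Delta_j$ of $\m'$.

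\emph{The embedding.} The relation $\Delta\not\prec\Delta_j$ for all $j$, combined with the last recorded property of the Zelevinsky classification, gives $\LI(\zele{\Delta},\zele{\m'})$, hence $\zele{\m}=\soc(\zele{\Delta}\times\zele{\m'})\hookrightarrow\zele{\Delta}\times\zele{\m'}$; dually the same relation yields $\RI(\zele{\m'},\zele{\Delta})$, so $\zele{\m'}\times\zele{\Delta}\twoheadrightarrow\zele{\m}$. I would use throughout that $\zele{\Delta}\in\IrrS$ (indeed $\zele{\Delta}\times\zele{\Delta}$ is irreducible). When $\zele{\Delta}\times\zele{\m'}$ is actually irreducible — equivalently $\zele{\m}=\zele{\m'}\times\zele{\Delta}$ — the conclusion is immediate: rearranging factors (licit since $\zele{\m'}\times\zele{\Delta}$ and $\zele{\Delta}\times\zele{\Delta}$ are irreducible) gives $\zele{\m}^{\times2}\simeq\zele{\m'}^{\times2}\times\zele{\Delta}^{\times2}$, and irreducibility of the left-hand side forces irreducibility of $\zele{\m'}^{\times2}$, i.e. $\zele{\m'}\in\IrrS$ by Corollary \ref{cor: LMconds}.

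\emph{The general case, and the main obstacle.} In general $\zele{\Delta}\times\zele{\m'}$ is not irreducible, which happens exactly when some $\Delta_j$ in $\m'$ satisfies $\Delta_j\prec\Delta$; this is where the second clause $\lshft{\Delta}\not\prec\Delta_j$ of \eqref{eq: deltafirst} is used. I would then induct on $\deg\Delta$, the cases $\deg\Delta=0$ (so $\m'=\m$) and $\m=\Delta$ (so $\zele{\m'}=\zele{0}=1\in\IrrS$) being trivial. For the inductive step, peel the cuspidal $\rho=b(\Delta)$ off the left of $\zele{\m}$: relying on the description of the left $\rho$-derivative of $\zele{\m}$ in terms of its linked initial sub-segments from \cite{MR3573961}, the two clauses of \eqref{eq: deltafirst} guarantee that $\lderiv_{\rho}(\zele{\m})=\zele{\m_1}$, where $\m_1$ is obtained from $\m$ by replacing $\Delta$ by ${}^{-}\Delta$ and shifting forward the (at most finitely many) sub-segments of $\m'$ that begin at $\rho$, and that ${}^{-}\Delta$ is again detachable from $\m_1$ via \eqref{eq: deltafirst}. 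Since $\zele{\m_1}=\lderiv_{\rho}(\zele{\m})$ is a descendant of $\zele{\m}\in\IrrS$, Corollary \ref{cor: derisLM} gives $\zele{\m_1}\in\IrrS$; the inductive hypothesis applied to $(\m_1,{}^{-}\Delta)$ shows $\zele{\m_1-{}^{-}\Delta}\in\IrrS$, and one recovers $\zele{\m'}$ from it by re-inserting the shifted copies of $\rho$ via Corollary \ref{cor: extractrho} (or Lemma \ref{lem: albertoidea}). The delicate point — and what I expect to be the main obstacle — is exactly this bookkeeping: checking that the appropriate clause of \eqref{eq: deltafirst} survives the passage from $\m$ to $\m_1$, and that the extra copies of $\rho$ created on the left of $\zele{\m_1}$ can be re-absorbed without destroying $\square$-irreducibility. (Already for $\m=[0,2]+[1,1]$ one sees that $\zele{\m'}$ is not reachable from $\zele{\m}$ by the naive chain of derivatives, so the order of the removals — governed by the linked-segment combinatorics of \cite{MR3573961} — must be chosen with care.) The remaining steps are formal.
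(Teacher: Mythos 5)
Your reduction of \eqref{eq: deltalast} to \eqref{eq: deltafirst} by contragredients, the embedding $\zele{\m}\hookrightarrow\zele{\Delta}\times\zele{\m'}$, and the easy case where $\zele{\Delta}\times\zele{\m'}$ is irreducible are all fine. The problem is the general case: your induction on $\deg\Delta$ via left derivatives cannot be closed with the tools you cite. The key defect is the ``re-insertion'' step. $\square$-irreducibility passes \emph{down} to descendants (Corollary \ref{cor: derisLM}), but not back up: knowing $\zele{\m_1-{}^-\Delta}\in\IrrS$ does not give $\zele{\m'}\in\IrrS$. Corollary \ref{cor: extractrho} only computes socles, and Lemma \ref{lem: albertoidea} applied to $\pi=\zele{\m'}$, $\pi_1=\rho^{\times m}$ would require the irreducibility of $\zele{\m'}\times\rho^{\times m}$, which is neither assumed nor a consequence of the detachability of $\Delta$ (the clauses of \eqref{eq: deltafirst} constrain $\Delta$, not the other segments of $\m'$ beginning at $\rho$). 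There is also a structural mismatch you gloss over: $\lderiv_\rho$ truncates the segments of $\m'$ in the set $J$ of Lemma \ref{lem: desclder} as well, so even after $\Delta$ is fully consumed you are not holding $\m'$ but a truncated version of it, and every undo is again an ``upward'' transfer of $\square$-irreducibility. Finally, the claims that $\lderiv_\rho(\m)$ has the shape you describe and that ${}^-\Delta$ stays detachable in it are exactly the unverified bookkeeping you flag; and your parenthetical ``irreducible exactly when some $\Delta_j\prec\Delta$'' is also not accurate (reducibility of $\zele{\m'}\times\zele{\Delta}$ is governed by the matching condition of Theorem \ref{thm: laddercomb}, not by the mere existence of a linked $\Delta_j$).

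The paper's proof is much more direct and never leaves the level of Jacquet modules. From $\pi=\zele{\m}\hookrightarrow\zele{\Delta}\times\pi'$ (first clause of \eqref{eq: deltafirst}) and Frobenius reciprocity one gets $\jac(\pi)\twoheadrightarrow\zele{\Delta}\otimes\pi'$, and then the geometric lemma gives $\zele{\Delta+\Delta}\otimes\pi'\times\pi'\le\jac(\pi\times\pi)$. If $\pi$ is \LM\ then $\pi\times\pi=\zele{\m+\m}\hookrightarrow\zele{\Delta+\Delta}\times\zele{\m'+\m'}$, and the second clause $\lshft{\Delta}\not\prec\Delta'$ is used precisely here: it forces $\jac(\zele{\Delta+\Delta}\times\zele{\m'+\m'})_{\Delta+\Delta;*}=\zele{\Delta+\Delta}\otimes\zele{\m'+\m'}$, so the subquotient exhibited above can only be $\zele{\Delta+\Delta}\otimes\zele{\m'+\m'}$, i.e.\ $\pi'\times\pi'=\zele{\m'+\m'}$ is irreducible. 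If you want to salvage your approach you would need a genuinely new argument for transferring $\square$-irreducibility from a derivative back to the original representation; as written, that step fails.
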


\begin{proof}
Suppose that \eqref{eq: deltafirst} holds.
Let $\pi=\zele{\m}$ and $\pi'=\zele{\m'}$. Then $\pi\hookrightarrow\zele{\Delta}\times\pi'$ by the first condition on $\Delta$.
Thus, by Frobenius reciprocity $\jac(\pi)\twoheadrightarrow\zele{\Delta}\otimes\pi'$.
Hence, by the geometric lemma
\begin{equation} \label{eq: occurs}
\zele{\Delta+\Delta}\otimes\pi'\times\pi'=\zele{\Delta}\times\zele{\Delta}\otimes\pi'\times\pi'\le\jac(\pi\times\pi).
\end{equation}
Assume that $\pi$ is \LM. Then $\pi\times\pi=\zele{\m+\m}\hookrightarrow\zele{\Delta+\Delta}\times\zele{\m'+\m'}$.
On the other hand, it is easy to see using the geometric lemma that the condition $\lshft{\Delta}\not\prec\Delta'$ for any segment $\Delta'$ of $\m'$ guarantees that
\[
\jac(\zele{\Delta+\Delta}\times\zele{\m'+\m'})_{\Delta+\Delta;*}=\zele{\Delta+\Delta}\otimes\zele{\m'+\m'}.
\]
It follows from \eqref{eq: occurs} that $\pi'\times\pi'=\zele{\m'+\m'}$, i.e., that $\pi'$ is \LM\ as required.

The argument with the condition \eqref{eq: deltalast} is similar.
\end{proof}

For $\rho\in\Cusp$ let $f_\rho:\Cusp\rightarrow\Cusp$ be the function given by
\[
f_\rho(\rho')=\begin{cases}\lshft{\rho'}&\text{if }\rho'=\rho\nu_\rho^l\text{ for some }l\in\Z_{>0},\\\rho'&\text{otherwise.}\end{cases}
\]
Thus, for any segment $\Delta$, $f_\rho(\Delta)$ is either $\Delta$, $\Delta^-$ or $\lshft{\Delta}$.

\begin{definition} \label{def: contractible}
Let $\m=\Delta_1+\dots+\Delta_k\in\Mult$ and $\rho\in\Cusp$. We say that $\m$ is $\rho$-contractible if for every $i$,
$\#(\Delta_i\cap\{\rho,\rshft\rho\})\ne1$, i.e., either $\{\rho,\rshft{\rho}\}\subset\Delta_i$ or $\Delta_i\cap\{\rho,\rshft{\rho}\}=\emptyset$.
In this case, we say that the $\rho$-contraction of $\m$ is $f_\rho(\Delta_1)+\dots+f_\rho(\Delta_k)$.

We call $\m$ contractible if it is $\rho$-contractible for some $\rho\in\supp\m$.
\end{definition}

The following assertion follows from Corollary \ref{cor: fzele} of \S\ref{sec: KLid}.

\begin{proposition} \label{prop: contract}
Suppose that $\m$ is $\rho$-contractible for some $\rho\in\Cusp$ and let $\m'$ be the $\rho$-contraction of $\m$.
Then $\zele{\m}\in\IrrS$ if and only if $\zele{\m'}\in\IrrS$.
\end{proposition}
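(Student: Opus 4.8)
The plan is to recast the $\square$-irreducibility of $\zele{\m}$ as an identity in the Grothendieck ring $\Gr$ and then transport that identity along the contraction. By Corollary~\ref{cor: LMconds}, $\zele{\m}\in\IrrS$ if and only if $\zele{\m}\times\zele{\m}$ is irreducible; since $\zele{\m+\m}$ occurs with multiplicity one in $\JH(\zele{\m}\times\zele{\m})$, and a finite length representation whose Jordan--H\"older multiset is a single element with multiplicity one is that representation, this holds if and only if $[\zele{\m}]\cdot[\zele{\m}]=[\zele{\m+\m}]$ in $\Gr$. Observe that $\m+\m$ is again $\rho$-contractible and its $\rho$-contraction is $\m'+\m'$. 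Hence it suffices to prove that $[\zele{\m}]^2=[\zele{\m+\m}]$ if and only if $[\zele{\m'}]^2=[\zele{\m'+\m'}]$.

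Next I would reduce to the case where $\supp\m$ is contained in a single cuspidal line $\mathcal{L}=\{\rho\nu_\rho^k:k\in\Z\}$. Split $\m=\m_1+\dots+\m_r$ according to the cuspidal lines $\mathcal{L}_1,\dots,\mathcal{L}_r$ meeting $\supp\m$, with $\rho$ lying on $\mathcal{L}_1=\mathcal{L}$. Segments on distinct lines are never linked, so $\zele{\m}=\zele{\m_1}\times\dots\times\zele{\m_r}$ is irreducible, the classes $[\zele{\m}],[\zele{\m+\m}],[\zele{\m'}],[\zele{\m'+\m'}]$ all lie in the subring $\Gr_{\mathcal{L}_1}\otimes\dots\otimes\Gr_{\mathcal{L}_r}$ of $\Gr$ (a tensor product of polynomial rings, where $\Gr_{\mathcal{L}_i}=\Z[\,[\zele{\Delta}]:\Delta\subset\mathcal{L}_i\,]$), and $[\zele{\m}]=\prod_i[\zele{\m_i}]$. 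The required equivalence for $\m$ therefore follows from the one for $\m_1$, which only involves $\mathcal{L}$. So assume from now on that $\m$ is supported on $\mathcal{L}$; contraction merges the positions of $\rho$ and $\rshft\rho$ and produces a line $\mathcal{L}'$.

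The combinatorial core is then as follows. A short case analysis on the positions of $b(\Delta)$ and $e(\Delta)$ relative to those of $\rho$ and $\rshft\rho$ shows that $f_\rho$ carries the set of $\rho$-compatible segments of $\mathcal{L}$ bijectively onto the set of all segments of $\mathcal{L}'$. Since $\Gr_{\mathcal{L}}$ (resp.\ $\Gr_{\mathcal{L}'}$) is the polynomial ring over $\Z$ on the classes of the segment representations of $\mathcal{L}$ (resp.\ $\mathcal{L}'$), the rule $[\zele{\Delta}]\mapsto[\zele{f_\rho(\Delta)}]$ extends to a ring isomorphism $\phi$ from the subring $R_c\subset\Gr_{\mathcal{L}}$ generated by the classes of the $\rho$-compatible segments onto $\Gr_{\mathcal{L}'}$, and $\phi([\std(\n)])=[\std(f_\rho(\n))]$ for every $\rho$-compatible $\n$ (the class of a standard module being the monomial over its segments). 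It remains to check: (a) for $\rho$-compatible $\m$ one has $[\zele{\m}]\in R_c$, and (b) $\phi([\zele{\m}])=[\zele{f_\rho(\m)}]$. For (a) I would use the unitriangular expansion $[\zele{\m}]=\sum_{\n:\,\m\obt\n}c_{\m,\n}[\std(\n)]$ together with the fact that resolving a pair of linked segments into its offspring preserves $\rho$-compatibility in both directions (again an endpoint case analysis); this forces every $\n$ with $\m\obt\n$ to be $\rho$-compatible, so $[\zele{\m}]\in R_c$. Granting (a) and (b), apply $\phi$ to $[\zele{\m}]^2=[\zele{\m+\m}]$ (using $\phi([\zele{\m}])=[\zele{\m'}]$ and $\phi([\zele{\m+\m}])=[\zele{\m'+\m'}]$), and $\phi^{-1}$ to the converse, to conclude.

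The hard part is (b): that the transition coefficients are unchanged by the contraction, i.e.\ $c_{\m,\n}=c_{f_\rho(\m),f_\rho(\n)}$ for $\rho$-compatible $\m$ with $\m\obt\n$. These coefficients are (signed) values of Kazhdan--Lusztig polynomials of a symmetric group, and (b) is a stability statement for them under the merging $f_\rho$; this is precisely the forward-referenced Corollary~\ref{cor: fzele}, established in \S\ref{sec: KLid} via the Arakawa--Suzuki functor and the Kazhdan--Lusztig conjecture. Conceptually one expects this to come from an equivalence between the category of $\mathcal{L}$-supported representations and finite-dimensional modules over an appropriate affine Hecke or KLR algebra, under which $f_\rho$ is realized by a truncation functor compatible with the monoidal structure; but in any event the Kazhdan--Lusztig stability is the substantive input, everything else above being elementary combinatorics of multisegments.
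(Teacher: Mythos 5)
Your overall architecture is the same as the paper's: reformulate $\square$-irreducibility as the Grothendieck-group identity $\grimg{\zele{\m}}^2=\grimg{\zele{\m+\m}}$, reduce to a single cuspidal line, and transport the identity along a ring homomorphism whose compatibility with the irreducible basis is exactly Corollary~\ref{cor: fzele}, proved in \S\ref{sec: KLid} via the Arakawa--Suzuki functor and the Kazhdan--Lusztig character formula; like the paper, you defer that substantive input. The problem is your elementary justification of step (a). You claim that replacing a linked pair by its offspring preserves $\rho$-compatibility \emph{in both directions}, so that every $\n$ with $\m\obt\n$ is $\rho$-compatible. The reverse direction is false: $\n=\{\rho\}+\{\rshft{\rho}\}$ is a linked pair of incompatible segments whose offspring is the compatible segment $[\rho,\rshft{\rho}]$ (together with the empty segment), so already for $\m$ a single segment containing $\{\rho,\rshft{\rho}\}$ there are incompatible $\n$ with $\m\obt\n$. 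Hence the range of the unitriangular expansion of $\grimg{\zele{\m}}$ does contain standard modules $\std(\n)$ with $\n$ incompatible, and the assertion that all those coefficients vanish --- i.e.\ that $\grimg{\zele{\m}}$ lies in your subring $R_c$ --- is not an elementary consequence of the combinatorics of offsprings; it is essentially part of what Corollary~\ref{cor: fzele} asserts, so it cannot serve as an independent stepping stone towards it.

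The gap is repairable, and the repair is precisely how the paper sets things up: orient the map as an expansion rather than a contraction. After re-indexing the cuspidal line so that $\rho$ sits at $0$, the map $f$ of \S\ref{sec: KLid} stretches segments across the breakpoint, its image on segments is exactly the set of $\rho$-compatible segments, and $\phi:\Gr_\rho\to\Gr_\rho$, $\grimg{\zele{\Delta}}\mapsto\grimg{\zele{f(\Delta)}}$, is an injective ring homomorphism. Corollary~\ref{cor: fzele} then gives directly $\phi(\grimg{\zele{\m'}})=\grimg{\zele{\m}}$ and $\phi(\grimg{\zele{\m'+\m'}})=\grimg{\zele{\m+\m}}$, and injectivity of $\phi$ lets your identity $\grimg{\zele{\m}}^2=\grimg{\zele{\m+\m}}$ pass back and forth with no need for the subring $R_c$, for step (a), or for the bijectivity discussion; this is exactly the paper's one-line deduction of Proposition~\ref{prop: contract} from the ``in particular'' clause of Corollary~\ref{cor: fzele} with $\m_1=\m_2=\m'$. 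Your preliminary reductions (the Grothendieck-group reformulation and the splitting along cuspidal lines) are correct.
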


For any $\m\in\Mult$ we write $\lnrset(\m)=\lnrset(\zele{\m})\subset\supp\m$ and for any $\rho\in\Cusp$ we write
$\lderiv_\rho(\m)=\m(\lderiv_\rho(\zele{\m}))$ and similarly for $\rderiv_\rho(\m)$.
We recall the following combinatorial recipe for $\lnrset(\m)$, $\lderiv_\rho(\m)$ and $\m(\soc(\rho\times\zele{\m}))$.

\begin{lemma} (\cite{MR2306606, MR2527415}) \label{lem: desclder}
For $\rho'\in\Cusp$ let $I_{\rho'}=\{i:b(\Delta_i)=\rho'\}$. Then there exists a subset $I\subset I_\rho$ and an injective function
$f:I\rightarrow I_{\rshft\rho}$ such that if $J=I_\rho\setminus I$ then we have the following properties.
\begin{enumerate}
\item $\Delta_i\prec\Delta_{f(i)}$ for all $i\in I$.
\item If $\Delta_i\prec\Delta_j$ with $i\in I$ and $j\notin f(I)$ then $^+\Delta_j\not\prec\Delta_{f(i)}$.
\item If $\Delta_j\prec\Delta_{j'}$ with $j\in J$ and $j'\in I_{\rshft\rho}$ then $j'\in f(I)$ and $\Delta_{f^{-1}(j')}\not\prec\,^-\Delta_j$.
\end{enumerate}
Moreover, we have the following.
\begin{enumerate}
\item $\rho\in\lnrset(\m)$ if and only if $J\ne\emptyset$.
\item $\lderiv_\rho(\m)=\m+\sum_{j\in J}(^-\Delta_j-\Delta_j)$. In particular, $\sum_{i\in I}\Delta_i$
is independent of $I$ and $f$.
\item $\soc(\rho\times\zele{\m})=\zele{\m+\{\rho\}}$ if $f(I)=I_{\rshft\rho}$ and otherwise,
$\soc(\rho\times\zele{\m})=\zele{\m-\Delta_j+\,^+\Delta_j}$ where $j\in I_{\rshft\rho}\setminus f(I)$ is such that
$\Delta_j\not\prec\,^+\Delta_r$ for all $r\in I_{\rshft\rho}\setminus f(I)$.
\end{enumerate}
\end{lemma}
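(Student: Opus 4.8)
The statement is the main combinatorial consequence of the analysis of $\soc(\rho\times\pi)$ carried out in \cite{MR2306606, MR2527415}, and the plan is to reprove it within the present setup. The first step is purely combinatorial: the existence of a pair $(I,f)$ satisfying (1)--(3). I would build $f$ greedily, processing the elements of $I_{\rshft\rho}$ in a suitable order (e.g. by non-increasing degree, ties broken arbitrarily) and assigning to the current index, whenever possible, a $\prec$-maximal still-unused index in $I_\rho$ linked to it; then $I$ is the set of $\rho$-indices that get used and $J=I_\rho\setminus I$. Property (1) holds by construction, while (2) and (3) are local optimality conditions: if either failed one could either enlarge $I$ or replace a matched pair by a better one, contradicting greediness, and both are checked by a short swapping argument. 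The independence of $\sum_{i\in I}\Delta_i$ on the choices I would not prove directly, but deduce \emph{a posteriori} from the (intrinsic) formula for $\lderiv_\rho$ obtained below.

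The key easy input is: if $J=\emptyset$, so that $f$ is a bijection of $I_\rho$ onto a subset of $I_{\rshft\rho}$ pairing each $\rho$-segment with a strictly longer linked $\rshft\rho$-segment, then $\rho\notin\lnrset(\m)$, i.e. $\lmlt_\rho(\zele\m)=0$. This follows from Frobenius reciprocity and the geometric lemma of Bernstein--Zelevinsky \cite{MR0579172} applied to $\jac_{\deg\rho,*}(\zele\m)$ via a suitable admissible ordering in the embedding $\zele\m\hookrightarrow\std(\m)$: a copy of $\rho$ can be detached on the left only along a segment beginning with $\rho$, and the linkage $\Delta_i\prec\Delta_{f(i)}$ blocks this, so nothing of the form $\rho\otimes(\cdots)$ survives in the relevant Jacquet module. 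Since moreover $\rho\in\Cusp\subset\IrrS$, Lemma \ref{lem: LM} shows $\rho\times\zele\m$ is \SI; hence $\soc(\rho\times\zele\m)=\zele\n$ for a unique $\n$ with $\n\obt\m+\{\rho\}$.

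To identify $\n$ I would exhibit the embedding $\zele\n\hookrightarrow\rho\times\zele\m$ directly, ordering the segments of $\m$ so that the distinguished unmatched $\rshft\rho$-segment $\Delta_j$ of the statement (or, when $f(I)=I_{\rshft\rho}$, an arbitrary leftmost segment) comes first, using $\soc(\rho\times\zele{\Delta_j})=\zele{{}^+\Delta_j}$ and the $\prec$-offspring relations recalled in \S\ref{sec: classification} to propagate the embedding through the product. The reverse bound — that no strictly larger $\n$ occurs — is obtained by Frobenius reciprocity: $\zele{\n'}\hookrightarrow\rho\times\zele\m$ iff $\rho\otimes\zele\m$ is a quotient of $\jac(\zele{\n'})$, and the geometric lemma combined with the matching $(I,f)$ forces $\n'\obt\n$; this is where conditions (2) and (3) are used in their precise form to rule out all other candidates.

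The statements on $\lnrset(\m)$ and $\lderiv_\rho(\m)$ then follow. Putting $\m'=\m+\sum_{j\in J}({}^-\Delta_j-\Delta_j)$, the $\rho$-segments of $\m'$ are exactly the $\Delta_i$, $i\in I$, and $f$ is again a bijective matching of them into the $\rshft\rho$-segments of $\m'$, so $\lmlt_\rho(\zele{\m'})=0$ by the easy direction. Iterating the socle formula — equivalently, repeatedly extracting $\rho$ on the left as recalled before Lemma \ref{lem: lnrprop} — one descends through exactly $\#J$ steps before reaching a representation with $\lmlt_\rho=0$, and tracking how the pair $(I,f)$ transforms under one step (each step removes precisely one index from $J$) shows that this terminal representation is $\zele{\m'}$ and that $\zele\m\hookrightarrow\rho^{\times\#J}\times\zele{\m'}$; by the uniqueness of such a factorization this yields $\lmlt_\rho(\zele\m)=\#J$ (hence $\rho\in\lnrset(\m)\iff J\ne\emptyset$), $\lderiv_\rho(\m)=\m'$, and the asserted independence of $\sum_{i\in I}\Delta_i$. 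The main obstacle is precisely this last bookkeeping: verifying that the data $(I,f)$ for $\m$ and for $\m-\Delta_j+{}^+\Delta_j$ are compatible, i.e. that one application of the socle formula removes exactly one element of $J$ and leaves the rest of the matching essentially intact. Carrying this out requires a careful case analysis of how (1)--(3) behave under the surgery $\Delta_j\mapsto{}^+\Delta_j$ — elementary but delicate, and the real content of the proof.
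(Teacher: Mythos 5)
A preliminary remark on the comparison itself: the paper does not prove this lemma — it is recalled from Jantzen \cite{MR2306606} and M{\'{\i}}nguez \cite{MR2527415} — so your proposal has to stand as a self-contained reproof of those results. As written it does not: it is an outline whose weight-bearing steps are asserted rather than carried out, and one of them is wrong as specified.

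Concretely: (a) your greedy construction of $(I,f)$ fails condition (2). Take $\m=[1,2]+[1,1]+[0,0]$, so $\Delta_1=[1,2]$, $\Delta_2=[1,1]$, $\Delta_3=[0,0]$, $I_\rho=\{3\}$, $I_{\rshft\rho}=\{1,2\}$. Processing $I_{\rshft\rho}$ by non-increasing degree matches $3$ to $1$ and leaves $2$ unmatched; then $\Delta_3\prec\Delta_2$ while ${}^+\Delta_2\prec\Delta_1=\Delta_{f(3)}$, so (2) is violated, and your claim that a violation would contradict greediness is false (no enlargement or replacement is available under your processing order). This is not cosmetic: with that matching the socle recipe of the lemma would output $\zele{[1,2]+[0,1]+[0,0]}$, whereas the correct answer is $\zele{[0,2]+[1,1]+[0,0]}$ (compare Lemma \ref{lem: soctimesrho}). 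The matching must be tight in the sense of (2) — pair each $\rho$-segment with a shortest available linked $\rshft\rho$-segment rather than grabbing the longest first — and the ``short swapping argument'' you invoke is exactly the verification that needs to be written. (b) The key representation-theoretic input, that $J=\emptyset$ forces $\lmlt_\rho(\zele{\m})=0$, cannot be obtained by applying the geometric lemma to the standard module: $\jac(\std(\m))$ does contain constituents of the form $\rho\otimes(\cdots)$ whenever some segment begins with $\rho$, and the whole difficulty is to show that none of these occurs in $\jac(\zele{\m})$. Jantzen does this by computing the relevant Jacquet modules explicitly, and M{\'{\i}}nguez via the M{\oe}glin--Waldspurger/Zelevinsky-involution algorithm; your sentence ``the linkage blocks this'' supplies neither. (c) Finally, you yourself defer the inductive bookkeeping — that one application of the socle formula removes exactly one element of $J$ and transforms $(I,f)$ compatibly — and call it ``the real content of the proof''; without it neither the formula for $\lderiv_\rho(\m)$ nor the identification of $\soc(\rho\times\zele{\m})$ is established. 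The overall architecture is reasonable and close in spirit to the cited sources, but at present it is a plan, not a proof.
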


For convenience we record the following special cases.
\begin{lemma} \label{lem: spclcaseextrho}
Let $\m=\Delta_1+\dots+\Delta_k$ and $\rho\in\Cusp$. Let $n_\rho=\#\{i:b(\Delta_i)=\rho\}$ and similarly for $n_{\rshft\rho}$.
\begin{enumerate}
\item If $n_\rho=0$ then $\rho\notin\lnrset(\zele{\m})$.
\item If $n_\rho>n_{\rshft\rho}$ then $\rho\in\lnrset(\zele{\m})$.
\item Suppose that $n_\rho=1$ and let $\Delta$ be the segment of $\m$ such that $b(\Delta)=\rho$.
Then $\rho\in\lnrset(\zele{\m})$ if and only if there does not exist $\Delta'$ in $\m$ such that $b(\Delta')=\rshft\rho$ and $\Delta\prec\Delta'$.
In this case $\lderiv_\rho(\m)=\m-\Delta+\,^-\Delta$.
\item Suppose that $n_\rho=2$ and $n_{\rshft\rho}=1$. Let $s$ and $l$ be the indices such that $b(\Delta_s)=b(\Delta_l)=\rho$ with
$\Delta_s\subset\Delta_l$ and let $j$ be such $b(\Delta_j)=\rshft\rho$. Assume that $\Delta_l\prec\Delta_j$.
Then $\rho\in\lnrset(\m)$ and $\lderiv_\rho(\m)=\m-\Delta_s+\,^-\Delta_s$.
\end{enumerate}
\end{lemma}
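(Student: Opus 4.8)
The plan is to read off all four assertions from Lemma~\ref{lem: desclder}. Fix $\m=\Delta_1+\dots+\Delta_k$ and $\rho$, and let $I\subset I_\rho$, the injection $f\colon I\to I_{\rshft\rho}$ and $J=I_\rho\setminus I$ be as furnished by that lemma. I will use that $n_\rho=\#I_\rho$ and $n_{\rshft\rho}=\#I_{\rshft\rho}$, that $\rho\in\lnrset(\m)$ if and only if $J\ne\emptyset$, and that $\lderiv_\rho(\m)=\m+\sum_{j\in J}({}^-\Delta_j-\Delta_j)$, together with the first property in that lemma (that $\Delta_i\prec\Delta_{f(i)}$ for $i\in I$) and its third property (that if $\Delta_j\prec\Delta_{j'}$ with $j\in J$ and $j'\in I_{\rshft\rho}$, then $j'\in f(I)$ and $\Delta_{f^{-1}(j')}\not\prec{}^-\Delta_j$).

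Parts (1) and (2) are pure counting. If $n_\rho=0$ then $I_\rho=\emptyset$, so $J=\emptyset$ and $\rho\notin\lnrset(\m)$. If $n_\rho>n_{\rshft\rho}$, then injectivity of $f$ gives $\#I\le\#I_{\rshft\rho}=n_{\rshft\rho}$, whence $\#J=n_\rho-\#I\ge n_\rho-n_{\rshft\rho}>0$ and $\rho\in\lnrset(\m)$. For part (3) we have $I_\rho=\{i_0\}$ with $\Delta_{i_0}=\Delta$, so $I$ is $\emptyset$ or $\{i_0\}$, and $\rho\in\lnrset(\m)$ iff $I=\emptyset$. If $I=\{i_0\}$ the first property produces $\Delta':=\Delta_{f(i_0)}$ with $b(\Delta')=\rshft\rho$ and $\Delta\prec\Delta'$; conversely, if some segment $\Delta'=\Delta_{j'}$ of $\m$ has $b(\Delta')=\rshft\rho$ and $\Delta\prec\Delta'$, then $I=\emptyset$ is impossible, since the third property (applied to $i_0\in J$ and $j'\in I_{\rshft\rho}$) would give $j'\in f(\emptyset)=\emptyset$. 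Hence $\rho\in\lnrset(\m)$ exactly when no such $\Delta'$ exists, and then $J=\{i_0\}$, so $\lderiv_\rho(\m)=\m-\Delta+{}^-\Delta$.

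Part (4) is the only one needing a small computation. Here $I_\rho=\{s,l\}$ and $I_{\rshft\rho}=\{j\}$, so $\#I\le1$ by injectivity; and $I=\emptyset$ is impossible, since then $l\in J$ and $\Delta_l\prec\Delta_j$ with $j\in I_{\rshft\rho}$, while the third property would force $j\in f(\emptyset)=\emptyset$. Thus $\#I=1$, $\#J=1$, and already $\rho\in\lnrset(\m)$. To decide which of $s,l$ lies in $J$, I would assume $\Delta_s\subsetneq\Delta_l$ (if $\Delta_s=\Delta_l$ the asserted formula is the same for either choice). Then $\Delta_l$ has at least two elements, so ${}^-\Delta_l\ne\emptyset$; moreover $b(\Delta_s)=\rho\notin{}^-\Delta_l$, $b(\lshft{({}^-\Delta_l)})=\rho\in\Delta_s$, and $e({}^-\Delta_l)=e(\Delta_l)\notin\Delta_s$ because $\Delta_s\subsetneq\Delta_l$, so $\Delta_s\prec{}^-\Delta_l$. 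If $J$ were $\{l\}$, i.e.\ $I=\{s\}$ and $f(s)=j$, the third property applied to $l\in J$ and $j$, using $\Delta_l\prec\Delta_j$, would force $\Delta_{f^{-1}(j)}=\Delta_s\not\prec{}^-\Delta_l$, a contradiction. Hence $J=\{s\}$, and $\lderiv_\rho(\m)=\m-\Delta_s+{}^-\Delta_s$.

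I expect no genuine obstacle: everything is bookkeeping with Lemma~\ref{lem: desclder}, the only non-mechanical point being the one-line check $\Delta_s\prec{}^-\Delta_l$ in part (4), which rests on the observation that $\Delta_s\subsetneq\Delta_l$ forces $\Delta_l$ to contain at least two elements.
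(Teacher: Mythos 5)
Your proof is correct and matches the paper's intended route: the paper records this lemma without proof as a direct specialization of Lemma~\ref{lem: desclder}, and your bookkeeping with $I$, $J$, $f$ and properties (1) and (3) of that lemma is precisely the omitted argument. The one non-trivial verification, $\Delta_s\prec{}^-\Delta_l$ when $\Delta_s\subsetneq\Delta_l$ in part (4), is carried out correctly (including the observation that $^-\Delta_l\ne\emptyset$).
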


\begin{lemma} \label{lem: soctimesrho}
Let $\m=\Delta_1+\dots+\Delta_k$ and $\rho\in\Cusp$. Let $n_\rho$ and $n_{\rshft\rho}$ be as before.
\begin{enumerate}
\item Suppose that $n_{\rshft\rho}=1$ and let $\Delta$ be the segment of $\m$ such that $b(\Delta)=\rshft\rho$.
Then
\[
\soc(\rho\times\zele{\m})=\begin{cases}\zele{\m+\{\rho\}}&\text{if $\exists\Delta'$ in $\m$ such that $b(\Delta')=\rho$ and }\Delta'\prec\Delta,\\
\zele{\m-\Delta+\,^+\Delta}&\text{otherwise.}\end{cases}
\]
\item Suppose that $n_\rho\le1$ and $n_{\rshft\rho}=2$. Let $s$ and $l$ be the indices such that $b(\Delta_s)=b(\Delta_l)=\rshft\rho$ with
$\Delta_s\subset\Delta_l$. If there exists $j$ such $b(\Delta_j)=\rho$ then assume that $\Delta_j\prec\Delta_s$.
Then $\soc(\rho\times\zele{\m})=\zele{\m-\Delta_l+\,^+\Delta_l}$.
\end{enumerate}
\end{lemma}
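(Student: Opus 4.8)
The plan is to read off both statements from the recipe for $\soc(\rho\times\zele{\m})$ in Lemma~\ref{lem: desclder}(3). Concretely, I will, in each configuration, pin down the only features of the combinatorial data $(I,f,J)$ attached to $\m$ and $\rho$ that enter the recipe: whether $f(I)=I_{\rshft\rho}$, and, if not, which index of $I_{\rshft\rho}\setminus f(I)$ is the distinguished one. Here $j_0$ denotes the index of $\Delta$ in part~(1) and $s,l$ the indices in part~(2). I use throughout that \emph{some} valid triple $(I,f,J)$ exists (part of Lemma~\ref{lem: desclder}), that $\soc(\rho\times\zele{\m})$ is independent of the choice of such a triple, and that the relation $\prec$ and the operations $\lshft{(\cdot)},\rshft{(\cdot)},\,^{\pm}(\cdot),(\cdot)^{\pm}$ are just manipulations of positions along the $\nu_\rho$-line, so that assertions like ``$\,^+\Delta_1\prec\Delta_2$'' are verified by unwinding the definitions.

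\emph{Part (1).} Since $\#I_{\rshft\rho}=1$, injectivity of $f$ forces $\#I\le 1$, so $f(I)=I_{\rshft\rho}$ precisely when $I\ne\emptyset$. Suppose first that no segment $\Delta'$ of $\m$ with $b(\Delta')=\rho$ satisfies $\Delta'\prec\Delta$. Then in every valid triple $I=\emptyset$: an index $i\in I\subset I_\rho$ would give $\Delta_i\prec\Delta_{f(i)}=\Delta$ by property~(1) of Lemma~\ref{lem: desclder}, producing a forbidden $\Delta'$. Hence $I_{\rshft\rho}\setminus f(I)=\{j_0\}$ is a singleton and the recipe gives $\soc(\rho\times\zele{\m})=\zele{\m-\Delta+\,^+\Delta}$. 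If instead some $\Delta'=\Delta_{i_0}$ with $b(\Delta_{i_0})=\rho$ and $\Delta_{i_0}\prec\Delta$ exists, then in every valid triple $I\ne\emptyset$: were $I$ empty, we would have $i_0\in J$, and property~(3) applied to $\Delta_{i_0}\prec\Delta_{j_0}$ with $j_0\in I_{\rshft\rho}$ would force $j_0\in f(I)=\emptyset$. So $f(I)=I_{\rshft\rho}$, and the recipe gives $\soc(\rho\times\zele{\m})=\zele{\m+\{\rho\}}$.

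\emph{Part (2).} Now $I_{\rshft\rho}=\{s,l\}$ with $\Delta_s\subset\Delta_l$ (same left endpoint $\rshft\rho$) and $\#I_\rho\le1$. If there is an index $i_0$ with $b(\Delta_{i_0})=\rho$, so $\Delta_{i_0}\prec\Delta_s$ by hypothesis, then as in part~(1) every valid triple has $I\ne\emptyset$ (otherwise property~(3) applied to $\Delta_{i_0}\prec\Delta_s$ forces $s\in f(I)=\emptyset$), hence $I=\{i_0\}$; moreover $f(i_0)=l$ is impossible, for then $s\notin f(I)$ and property~(2) applied to $\Delta_{i_0}\prec\Delta_s$ would require $\,^+\Delta_s\not\prec\Delta_l$, while a direct check shows $\,^+\Delta_s\prec\Delta_l$ once $\Delta_s\subsetneq\Delta_l$ (and the case $\Delta_s=\Delta_l$ is harmless, both choices giving the stated answer). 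Thus $I_{\rshft\rho}\setminus f(I)=\{l\}$ and the recipe gives $\soc(\rho\times\zele{\m})=\zele{\m-\Delta_l+\,^+\Delta_l}$. If instead $I_\rho=\emptyset$, then $I=J=\emptyset$ and $I_{\rshft\rho}\setminus f(I)=\{s,l\}$; the distinguished index of Lemma~\ref{lem: desclder}(3) is then $l$, since $\,^+\Delta_s\prec\Delta_l$ disqualifies $s$; so again $\soc(\rho\times\zele{\m})=\zele{\m-\Delta_l+\,^+\Delta_l}$.

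There is no conceptual difficulty here; the whole argument is the bookkeeping of the three defining properties of the triple in Lemma~\ref{lem: desclder}. The one point requiring care — and where I expect the only real obstacle — is the verification that the distinguished index in $I_{\rshft\rho}\setminus f(I)$ is always the one carrying the longer segment $\Delta_l$, equivalently the elementary fact that $\,^+\Delta_s\prec\Delta_l$, together with the check that properties~(2) and~(3) of Lemma~\ref{lem: desclder} really do force $f$ to be as described; these reduce to translating $\prec$ and $\,^+(\cdot)$ into inequalities between positions on the $\nu_\rho$-line.
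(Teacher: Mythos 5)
Your proposal is correct and takes exactly the route the paper intends: Lemma \ref{lem: soctimesrho} is recorded there without proof, as an immediate special case of the socle recipe of Lemma \ref{lem: desclder}, and your case analysis of the possible data $(I,f,J)$ is precisely that unwinding (including the correct use of property (2) to rule out $f(i_0)=l$, and the observation that ${}^+\Delta_s\prec\Delta_l$ exactly when $\Delta_s\subsetneq\Delta_l$). The one caveat concerns the subcase $I_\rho=\emptyset$ of part (2): your reason for discarding $s$ (namely ${}^+\Delta_s\prec\Delta_l$) is not the condition as literally printed in Lemma \ref{lem: desclder}, which requires $\Delta_j\not\prec{}^+\Delta_r$ and is satisfied by both $s$ and $l$ since both segments begin at $\rshft\rho$; the choice $j=l$ that you make is nevertheless the correct one (for instance $\soc(\zele{[1,1]}\times\zele{[2,2]+[2,3]})=\zele{[1,3]+[2,2]}$, not $\zele{[1,2]+[2,3]}$), so your reading amounts to the intended form of the cited recipe (selecting the longer unmatched segment, equivalently requiring ${}^+\Delta_j\not\prec\Delta_r$) rather than a flaw in your argument.
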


\subsection{Reduction to cuspidal lines} \label{sec: fixrho}
An equivalence class for the equivalence relation on $\Cusp$ generated by $\rho\sim\rshft\rho$ is called a \emph{cuspidal line}.
Thus, the cuspidal line containing $\rho\in\Cusp$ is $\Z_\rho:=\{\rho\nu_\rho^n:n\in\Z\}$.
For any cuspidal line $\cspline$ consider the Serre ring subcategory $\Reps_\cspline$ of $\Reps$ consisting of the representations whose supercuspidal support
is contained in $\cspline$. Let $\Gr_\cspline$ be the Grothendieck ring of $\Reps_\cspline$.
The following assertions are consequences of Zelevinsky classification:
\begin{enumerate}
\item As a commutative ring, $\Gr$ (resp., $\Gr_\cspline$) is freely generated (over $\Z$) by the images of $\zele{\Delta}$,
where $\Delta$ varies over all segments (resp., the segments contained in $\cspline$).
\item If $\pi_i\in\Irr\Reps_{\cspline_i}$ with $\cspline_1,\dots,\cspline_r$ distinct then $\pi_1\times\dots\times\pi_r$ is irreducible.
\item Conversely, any $\pi\in\Irr$ can be written uniquely (up to permutation) as $\pi=\pi_1\times\dots\times\pi_r$ where $\pi_i\in\Irr\Reps_{\cspline_i}$ and
$\cspline_1,\dots,\cspline_r$ distinct.
\item $\Gr$ is the coproduct (in the category of commutative rings) over all cuspidal lines of $\Gr_\cspline$, i.e.,
$\Gr$ is the inductive limit over finite sets $\{\cspline_1,\dots,\cspline_r\}$ of $\otimes_{i=1}^r\Gr_{\cspline_i}$.
\end{enumerate}
In practice, this enables us to reduce questions about $\Irr$ to $\Irr\Reps_\cspline$.
For instance, if $\pi=\pi_1\times\dots\times\pi_r$ as above then $\pi\in\IrrS$ if and only if $\pi_i\in\IrrS$ for all $i$.

Let $\rho\in\Cusp$ and denote by $\Mult_\rho$ the submonoid of multisegments supported in $\Z_\rho$.
Let $D'$ be another local non-archimiedean division algebra (not necessarily with center $F$) and let $\rho'$ be an irreducible supercuspidal
representation of some $\GL_m(D')$, $m>0$.
Define $\phi_{\rho,\rho'}:\Z_\rho\rightarrow\Z_{\rho'}$ by $\phi_{\rho,\rho'}(\rho\nu_\rho^n)=\rho'\nu_{\rho'}^n$.
(Thus, $\phi_{\rho,\rho'}$ is the unique bijection between $\Z_\rho$ and $\Z_{\rho'}$ which commutes with $\rshft{}$ and which maps $\rho$ to $\rho'$.)
It induces a bijection $\phi_{\rho,\rho'}:\Mult_\rho\rightarrow\Mult_{\rho'}$.
Sometimes it will be convenient to use the following fact.
\begin{theorem} \label{thm: indepcspline}
There is an equivalence of ring categories between $\Reps_\rho$ and $\Reps_{\rho'}$ taking
$\rho$ to $\rho'$ and $\rshft\rho$ to $\rshft{\rho'}$, hence taking
$\zele{\m}$ to $\zele{\phi_{\rho,\rho'}(\m)}$ and $\std(\m)$ to $\std(\phi_{\rho,\rho'}(\m))$ for any $\m\in\Mult_\rho$.
\end{theorem}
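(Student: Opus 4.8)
The plan is to reduce both categories --- compatibly with parabolic induction --- to module categories over affine Hecke algebras of type $A$, and then to exploit that the latter do not depend on the Hecke parameter. The crucial simplification is that, since $\nu_\rho=\abs{\cdot}^{s_\rho}$ is unramified, $\Reps_\rho$ consists of a single Bernstein component in each degree: $\Reps_\rho(G_m)=0$ unless $d:=\deg\rho$ divides $m$, and for $m=kd$ the block $\Reps_\rho(G_{kd})$ is the one attached to the inertial class of $(\GL_d(D)^k,\rho\otimes\dots\otimes\rho)$, because every element of $\Z_\rho$ is an unramified twist of $\rho$. By the theory of simple types --- Bushnell--Kutzko for $D=F$, S\'echerre--Stevens in general --- this block is equivalent to the category of finite-length modules over an affine Hecke algebra $\IH_k(q_\rho)$ of type $\widetilde A_{k-1}$, where $q_\rho$ is a power of the residue characteristic (in particular not a root of unity); moreover the types for the various $k$ form a compatible system of $G$-covers, so that under the equivalences the product $\times$ corresponds to the natural parabolic induction functor $\IH_{k_1}(q_\rho)\text{-mod}\times\IH_{k_2}(q_\rho)\text{-mod}\to\IH_{k_1+k_2}(q_\rho)\text{-mod}$. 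Thus $\Reps_\rho$ is monoidally equivalent to the ring category $\bigoplus_{k\ge0}\IH_k(q_\rho)\text{-mod}$, where throughout we restrict to the modules with integral central character (those supported on $q_\rho^\Z$), and likewise $\Reps_{\rho'}$ to the same category built from $q_{\rho'}$; under the first equivalence $\rho$ and $\rshft\rho$ go to the one-dimensional modules of $\IH_1(q_\rho)=\C[X^{\pm1}]$ on which $X$ acts by two adjacent powers of $q_\rho$.

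Next I would establish that the ring category $\bigoplus_{k\ge0}\IH_k(q)\text{-mod}$ (integral central character), together with its distinguished family of $\IH_1$-modules indexed by $\Z$, is canonically independent of $q$ as long as $q$ is not a root of unity. On Grothendieck rings this is already contained in the material recalled in this section: the simple modules are parameterised by multisegments supported on $q^\Z\cong\Z$ (Rogawski, Zelevinsky), $\Gr$ is the polynomial ring on the segment classes, and all structure constants --- composition factors of standard modules, socles of products --- are values of Kazhdan--Lusztig polynomials for symmetric groups and so do not involve $q$. To lift this to an equivalence of categories I would pass to Lusztig's graded affine Hecke algebra $\GH_k$, which in type $A$ carries no parameter: Lusztig's comparison functor between the integral block of $\IH_k(q)\text{-mod}$ and $\GH_k\text{-mod}$ is an equivalence and is compatible with parabolic induction, and one normalises the identification of $q^\Z$ with the spectrum of the polynomial part of $\GH_k$ so that $\rho$ and $\rshft\rho$ are sent to $0$ and $1$. (Equivalently, one may invoke the Khovanov--Lauda--Rouquier presentation: the integral block of $\IH_k(q)$ is a completion of the KLR algebra of the quiver $\Gamma$ with vertex set $\Z$ and an arrow $i\to i+1$ for each $i$, and $\Gamma$ depends only on the fact that $q$ is not a root of unity.) Composing gives a monoidal equivalence $\bigoplus_k\IH_k(q_\rho)\text{-mod}\simeq\bigoplus_k\GH_k\text{-mod}\simeq\bigoplus_k\IH_k(q_{\rho'})\text{-mod}$ matching up the distinguished $\IH_1$-modules.

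Splicing the three equivalences yields a monoidal equivalence $\Reps_\rho\simeq\Reps_{\rho'}$ carrying $\rho$ to $\rho'$ and $\rshft\rho$ to $\rshft{\rho'}$. Since within $\Reps_\rho$ the supercuspidal representations are exactly the $\rho\nu_\rho^n$, $n\in\Z$, and the neighbour relation $\rho_1=\rshft{\rho}_2$ is recovered categorically (two cuspidal objects $\rho_1,\rho_2$ satisfy: $\rho_1\times\rho_2$ is reducible iff they are adjacent on a cuspidal line), such an equivalence necessarily induces the bijection $\phi_{\rho,\rho'}$ on cuspidal lines. Finally, $\zele{\Delta}$ is the socle of the product of the supercuspidals in $\Delta$, and $\zele{\m}$ and $\std(\m)$ are built from the $\zele{\Delta}$ only through $\times$ and the formation of socles --- operations preserved by any monoidal equivalence --- so the equivalence automatically sends $\zele{\m}$ to $\zele{\phi_{\rho,\rho'}(\m)}$ and $\std(\m)$ to $\std(\phi_{\rho,\rho'}(\m))$ for every $\m\in\Mult_\rho$, which is the ``hence'' of the statement. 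I expect the chief difficulty to be bookkeeping rather than ideas: checking that the monoidal structure (parabolic induction) survives each step --- the $G$-cover property of the types on the $p$-adic side, and the compatibility of Lusztig's graded-Hecke comparison (or the KLR presentation) with parabolic induction --- especially as the two Hecke parameters $q_\rho,q_{\rho'}$ may be powers of different residue characteristics, a discrepancy that is entirely absorbed by the parameter-independence step. Should a reference establishing this equivalence of blocks directly be available, it can of course be cited in place of the above argument.
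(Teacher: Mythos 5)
Your plan is essentially the paper's own argument: the paper proves this theorem simply by invoking the type-theoretic (Bushnell--Kutzko, S\'echerre--Stevens) or Heiermann identification of the block $\Reps_\rho$ with finite-length modules over an affine Hecke algebra of type $A$, with the parameter-independence (for parameters that are not roots of unity) absorbed exactly as you describe. Your write-up merely fills in the bookkeeping the paper delegates to those references (the cover/monoidality compatibility and the Lusztig graded-Hecke or KLR comparison), so it is the same route, not a different one.
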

This follows from the explication of the Bernstein components of $\Reps$ as categories of finite-dimensional representations of Hecke algebras which in turn follows
either by the results of \cite{MR2827179} or by type theory \cite{MR1204652, MR2081220, MR2188448, MR2216835, MR2427423, MR2889743, MR2946230}.

In principle, one can circumvent the use of Theorem \ref{thm: indepcspline} for the purpose of the paper.
However, we will use it sporadically in \S\ref{sec: basicases} in order to simplify some inessential aspects of the argument.

\begin{remark}
Let $I$ be a finite set of cuspidal lines and let $\Reps_I$ be the Serre ring subcategory of $\Reps$ consisting of the representations whose supercuspidal support
is contained in $\cup I$. Clearly, $\Reps$ is the inductive limit of the $\Reps_I$'s as $I$ varies over the directed set of finite sets of cuspidal lines
(with respect to inclusion). Once can show that if $I=\{\cspline_1,\dots,\cspline_r\}$ then parabolic induction
gives rise to an equivalence of categories of the tensor product of $\Reps_{\cspline_1},\dots,\Reps_{\cspline_r}$ in the sense of \cite[\S5]{MR1106898}
with $\Reps_I$. We will not use this fact here.
\end{remark}

From now on we fix $\rho\in\Cusp$ and for simplicity write $\Reps_\rho=\Reps_{\Z_\rho}$, $\Gr_\rho=\Gr_{\Z_{\rho}}$, $\Irr_\rho=\Irr\Reps_\rho$.
We will only consider multisegments in $\Mult_\rho$.
We identify segments supported in $\Z_\rho$ with sets of integers of the form
$[a,b]=\{n\in\Z:a\le n\le b\}$ (with $a,b\in\Z$) by $[a,b]_\rho=\{\rho\nu_\rho^n:n\in [a,b]\}$.
We will also write $[a]=[a,a]$ for brevity.
If $\rho$ is clear from the context (which will often be the case) then we suppress it from the notation.

It will be convenient to use the convention that
\begin{equation} \label{eq: convention}
\zele{[a_1,b_1]+\dots+[a_k,b_k]}=\std([a_1,b_1]+\dots+[a_k,b_k])=0\text{ if $a_i>b_i+1$ for some $i$}.
\end{equation}



We order the segments supported in $\Z_\rho$ right-lexicographically, namely we write $[a_1,b_1]<_e[a_2,b_2]$ if either
$b_1<b_2$ or $b_1=b_1$ and $a_1<a_2$. Similarly for the left-lexicographic relation $<_b$.

Given $\m,\n\in\Mult_\rho$ we write $\n<_b\m$ if $\Delta'<_b\Delta$ for any segment $\Delta$ of $\m$ and $\Delta'$ of $\n$.
This implies that $\soc(\zele{\m}\times\zele{\n})=\zele{\m+\n}$.

For later use, we mention the following result which follows from \cite[Lemma 4.11]{MR3573961}.
\begin{lemma} \label{lem: extractsegment}
Let $\m_1,\m_2\in\Mult_\rho$ and $\pi_i=\zele{\m_i}$, $i=1,2$. Assume that the maximal segment $\Delta$ of $\m_1$
with respect to $<_b$ occurs with multiplicity one in $\m_1$ and $\Delta'<_b\Delta$ for any segment $\Delta'$ of $\m_2$.
Assume that $\pi'_1\times\pi_2$ is \SI\ where $\pi'_1=\zele{\m_1-\Delta}$. Then $\zele{\Delta}\times\pi_1'\times\pi_2$ is \SI\  and hence
\[
\m(\soc(\pi_1\times\pi_2))=\Delta+\m(\soc(\pi'_1\times\pi_2)).
\]
The same holds if $<_b$ is replaced by $<_e$. Dually, suppose that the minimal segment $\Delta$ of $\m_2$
with respect to $<_b$ occurs with multiplicity one in $\m_2$ and that $\Delta<_b\Delta'$ for any segment $\Delta'$ of $\m_1$.
Assume that $\pi_1\times\pi'_2$ is \SI\ where $\pi'_2=\zele{\m_2-\Delta}$. Then $\pi_1\times\pi_2'\times\zele{\Delta}$ is \SI\ and hence
\[
\m(\soc(\pi_1\times\pi_2))=\Delta+\m(\soc(\pi_1\times\pi'_2)).
\]
Similarly if $<_b$ is replaced by $<_e$.
\end{lemma}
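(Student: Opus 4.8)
The assertion splits into two parts, as the wording ``and hence'' indicates: first, that $\zele{\Delta}\times\pi'_1\times\pi_2$ is \SI, and second, the socle identity, which will follow formally. The plan for the first — and only substantial — part is to invoke \cite[Lemma 4.11]{MR3573961}; what must be verified is that its combinatorial hypothesis holds here, and for that I would record the following observation, uniform in the orderings $<_b$ and $<_e$: \emph{if $\Delta$ is the $<_b$-maximal (resp.\ $<_e$-maximal) segment of $\m_1$ and every segment $\Delta'$ of $\m_2$ satisfies $\Delta'<_b\Delta$ (resp.\ $\Delta'<_e\Delta$), then $\Delta\not\prec\Delta''$ for every segment $\Delta''$ occurring in any multisegment $\n$ with $\n\obt\m'_1+\m_2$.} To see this, one uses that a single offspring replacement preserves both the multiset of left endpoints $b(\cdot)$ and the multiset of right endpoints $e(\cdot)$ of the segments present; hence every segment $\Delta''$ of such an $\n$ satisfies $b(\Delta'')\le b(\Delta)$ (resp.\ $e(\Delta'')\le e(\Delta)$), whereas $\Delta\prec\Delta''$ forces $b(\Delta'')>b(\Delta)$ (resp.\ $e(\Delta'')>e(\Delta)$), directly from the definition of $\prec$ — a contradiction. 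This applies in particular to $\n=\m'_1$ and to $\n=\m(\soc(\pi'_1\times\pi_2))$; for the latter I would note that $\zele{\m'_1}\le\std(\m'_1)$, so every irreducible subquotient of $\pi'_1\times\pi_2$ is an irreducible subquotient of $\std(\m'_1)\times\std(\m_2)$, which equals $\std(\m'_1+\m_2)$ in the (commutative) ring $\Gr$, whence its multisegment is $\obt\m'_1+\m_2$.

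Granting that $\Pi:=\zele{\Delta}\times\pi'_1\times\pi_2$ is \SI, I would deduce the socle identity as follows. By the observation with $\n=\m'_1$ together with the standard sufficient condition for $\LI$, $\soc(\zele{\Delta}\times\pi'_1)=\zele{\Delta+\m'_1}=\zele{\m_1}=\pi_1$, so $\pi_1\hookrightarrow\zele{\Delta}\times\pi'_1$ and therefore $\pi_1\times\pi_2\hookrightarrow\Pi$. Since $\Pi$ is \SI, $\soc(\Pi)$ is irreducible and of multiplicity one in $\JH(\Pi)$, so the nonzero subrepresentation $\soc(\pi_1\times\pi_2)\subset\Pi$ must equal $\soc(\Pi)$. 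Writing $\sigma=\soc(\pi'_1\times\pi_2)$ (irreducible, by the \SI\ hypothesis on $\pi'_1\times\pi_2$), the embedding $\sigma\hookrightarrow\pi'_1\times\pi_2$ yields $\zele{\Delta}\times\sigma\hookrightarrow\Pi$, hence $\soc(\zele{\Delta}\times\sigma)\hookrightarrow\soc(\Pi)$ and, by irreducibility, $\soc(\Pi)=\soc(\zele{\Delta}\times\sigma)=\zele{\Delta+\m(\sigma)}$ — the last equality once more from the sufficient condition for $\LI$, applied with $\n=\m(\sigma)$. Altogether $\m(\soc(\pi_1\times\pi_2))=\Delta+\m(\sigma)=\Delta+\m(\soc(\pi'_1\times\pi_2))$. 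The version with $<_e$ in place of $<_b$ is the identical argument, the combinatorial observation having been made for both orderings.

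For the dual statements — the $<_b$- (resp.\ $<_e$-) minimal segment $\Delta$ of $\m_2$, appended on the right — the plan is to run the mirror-image argument: the right-hand analogue of \cite[Lemma 4.11]{MR3573961} gives that $\pi_1\times\pi'_2\times\zele{\Delta}$ is \SI, and then one deduces $\pi_2\hookrightarrow\pi'_2\times\zele{\Delta}$ and the socle identity verbatim, now invoking the sufficient condition for $\RI$ instead of $\LI$. Alternatively, one can obtain these from the cases already treated by applying the contragredient functor together with the orientation-reversing relabelling of the cuspidal line $\Z_\rho$, which interchanges $\soc$ with $\coss$, $\LI$ with $\RI$, and the roles of $b(\cdot)$ and $e(\cdot)$. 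In every case the only genuinely nontrivial ingredient is the \SI\ assertion for $\zele{\Delta}\times\pi'_1\times\pi_2$; all the rest is routine bookkeeping with socles, Frobenius reciprocity and the $\LI$-criterion, so I expect the check of the hypotheses of \cite[Lemma 4.11]{MR3573961} to be the main — indeed essentially the only — obstacle.
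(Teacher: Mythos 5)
Your proposal is correct and takes essentially the paper's route: the paper proves this lemma simply by invoking \cite[Lemma 4.11]{MR3573961}, and your supplementary checks — that the multisets of endpoints are preserved under $\adj$, so no segment of any $\n\obt\m_1'+\m_2$ (in particular of $\m(\soc(\pi_1'\times\pi_2))$) can satisfy $\Delta\prec\Delta''$, followed by the socle bookkeeping via the sufficient condition for $\LI$ — are exactly the routine verifications the citation leaves implicit. One cosmetic slip: in the dual case the displayed identity still concerns socles, so what you need is the sufficient condition for $\LI(\soc(\pi_1\times\pi_2'),\zele{\Delta})$ (no segment of the left factor precedes $\Delta$, which the $<_b$-minimality of $\Delta$ gives), not $\RI$; the argument is otherwise as you say.
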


Recall that a \emph{ladder} is a multisegment of the form $\m=[a_1,b_1]+\dots+[a_k,b_k]$ where
$a_1>\dots>a_k$ and $b_1>\dots>b_k$. The corresponding irreducible representation $\zele{\m}$ is called a ladder representation.
It is known that a ladder representation is \LM\ \cite{MR3573961}.

We will also need the following result which follows from Frobenius reciprocity and the description of the Jacquet modules
of a ladder representation \cite{MR2996769}.
\begin{lemma} \label{lem: chopladders}
Let $\m$ be a ladder as above and let $c_1,\dots,c_k\in\Z$ be such that $a_i\le c_i\le b_i+1$ for all $i$ and $c_1>\dots>c_k$. Then
\[
\zele{\m}=\soc(\zele{\sum_i[a_i,c_i-1]}\times\zele{\sum_i[c_i,b_i]}).
\]
\end{lemma}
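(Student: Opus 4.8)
The plan is to realize $\zele{\m}$ as the socle of the product $\zele{\sum_i[a_i,c_i-1]}\times\zele{\sum_i[c_i,b_i]}$ by combining three ingredients: (i) the fact (recorded in \S\ref{sec: classification}) that a ladder representation is \LM\ and hence \SI, so that the product in question has an irreducible socle; (ii) Frobenius reciprocity, which reduces the identification of this socle to a statement about Jacquet modules; and (iii) the explicit description of the Jacquet modules of a ladder representation from \cite{MR2996769}. First I would note that $\n_1:=\sum_i[a_i,c_i-1]$ and $\n_2:=\sum_i[c_i,b_i]$ are themselves ladders (dropping any empty segments, i.e. indices with $c_i=a_i$ or $c_i=b_i+1$, which is consistent with the convention \eqref{eq: convention}), since the $a_i$, the $b_i$ and the $c_i$ are each strictly decreasing. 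In particular $\zele{\n_1}$ and $\zele{\n_2}$ are ladder representations and $\zele{\n_1}\times\zele{\n_2}$ is \SI\ because ladder representations are \LM\ and Lemma \ref{lem: LM} applies (or more simply because $\n_1+\n_2=\m$ and $\soc(\zele{\n_1}\times\zele{\n_2})$ contains $\zele{\m}$ with multiplicity one); so it suffices to exhibit a nonzero map $\zele{\m}\hookrightarrow\zele{\n_1}\times\zele{\n_2}$.

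By Frobenius reciprocity such an embedding is equivalent to the assertion that $\zele{\n_1}\otimes\zele{\n_2}$ is a quotient of the Jacquet module $\jac_{(\deg\n_1,\deg\n_2)}(\zele{\m})$, equivalently a subrepresentation after dualizing, equivalently (since these are semisimplifications we are comparing in the Grothendieck group after localizing at the relevant summand) that $\zele{\n_1}\otimes\zele{\n_2}$ occurs in $\jac_{(\deg\n_1,\deg\n_2)}(\zele{\m})$. So the core of the argument is: $\zele{\n_1}\otimes\zele{\n_2}\le\jac_{(\deg\n_1,\deg\n_2)}(\zele{\m})$. This is exactly the kind of statement the Jacquet module formula for ladders in \cite{MR2996769} is built to answer: the Jacquet modules of a ladder $\zele{\m}$ with $\m=[a_1,b_1]+\dots+[a_k,b_k]$ decompose, up to semisimplification, as a sum over tuples $(c_1,\dots,c_k)$ with $a_i\le c_i\le b_i+1$ and $c_1>\dots>c_k$ of the terms $\zele{\sum_i[a_i,c_i-1]}\otimes\zele{\sum_i[c_i,b_i]}$, each with multiplicity one. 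The hypothesis $c_1>\dots>c_k$ together with $a_i\le c_i\le b_i+1$ is precisely the admissibility condition in that formula, so the desired term genuinely appears, and it appears with multiplicity one.

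Granting the embedding $\zele{\m}\hookrightarrow\zele{\n_1}\times\zele{\n_2}$, irreducibility of the socle follows since $\zele{\n_1}\times\zele{\n_2}$ is \SI\ (as $\zele{\n_1}$, a ladder representation, is \LM, so Lemma \ref{lem: LM} gives that $\zele{\n_1}\times\zele{\n_2}$ is \SI); and $\zele{\m}$ is the unique irreducible constituent of $\zele{\n_1}\times\zele{\n_2}$ with the correct supercuspidal support realizing the socle, since $\zele{\n_1+\n_2}=\zele{\m}$ occurs with multiplicity one (Zelevinsky property (4) of \S\ref{sec: classification}) and any socle constituent must be $\zele{\m}$ because $\m(\soc(\zele{\n_1}\times\zele{\n_2}))$ must majorize $\n_1+\n_2$ in the relevant order. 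I expect the main obstacle to be purely bookkeeping: matching the exact indexing and admissibility conventions of the Jacquet-module formula in \cite{MR2996769} (which of the two ladders is the ``left'' factor, and the precise range $a_i\le c_i\le b_i+1$ versus strict inequalities) against the statement here, and handling the boundary cases $c_i=a_i$ or $c_i=b_i+1$ where a segment of $\n_1$ or $\n_2$ degenerates to the empty segment, via the convention \eqref{eq: convention}. There is no deep difficulty: once the formula is invoked with the correct conventions, both the existence of the embedding and the multiplicity-one statement are immediate.
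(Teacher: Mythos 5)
Your proposal is correct and follows essentially the same route as the paper, which deduces the lemma from Frobenius reciprocity together with the description of Jacquet modules of ladder representations in \cite{MR2996769}, using that ladders are \LM\ so the product is \SI. The only point to tighten is the passage from ``$\zele{\n_1}\otimes\zele{\n_2}$ occurs in $\jac(\zele{\m})$'' to ``it is a quotient of $\jac(\zele{\m})$'': this is justified not by a Grothendieck-group remark but by the semisimplicity of the Jacquet module of a ladder representation, which is part of the description in \cite{MR2996769}.
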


One of the main results of \cite{MR3573961} is the description of $\soc(\pi\times\sigma)$ when $\pi$ is a ladder representation and $\sigma$ is irreducible.
We will recall an important consequence of this description but we first make a definition which makes sense
for any pair of multisegments and which we will revisit in the next section.

\begin{definition} \label{def: XandY}
Let $\m=\Delta_1+\dots+\Delta_k$ and $\n=\Delta'_1+\dots+\Delta'_l$ be two multisegments.
Let $\X_{\m;\n}=\{(i,j):\Delta_i\prec\Delta'_j\}$, $\Y_{\m;\n}=\{(i,j):\lshft{\Delta}_i\prec\Delta'_j\}$
and let $\rltn$ be the relation between $\X_{\m;\n}$ and $\Y_{\m;\n}$ given by
\[
(i_1,j_1)\rltn(i_2,j_2)\text{ if either }\begin{cases}i_1=i_2\text{ and }\Delta'_{j_2}\prec\Delta'_{j_1},\text{ or }\\
j_1=j_2\text{ and }\Delta_{i_1}\prec\Delta_{i_2}.\end{cases}
\]
A $\rltn$-matching is an injective function $f:\X_{\m;\n}\rightarrow \Y_{\m;\n}$ such that $x\rltn f(x)$ for all $x\in \X_{\m;\n}$.
We write $\LC(\m,\n)$ for the condition that there exists a $\rltn$-matching from $\X_{\m;\n}$ to $\Y_{\m;\n}$.
\end{definition}

\begin{theorem}\cite{MR3573961} \label{thm: laddercomb}
Suppose that $\pi=\zele{\m}$ is a ladder and $\sigma=\zele{\n}\in\Irr$.
Then $\LI(\pi,\sigma)$ if and only if $\LC(\m,\n)$. Similarly, $\RI(\pi,\sigma)$ if and only if $\LC(\n,\m)$.
Thus $\pi\times\sigma$ is irreducible if and only if both $\LC(\m,\n)$ and $\LC(\n,\m)$.
\end{theorem}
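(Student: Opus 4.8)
The final statement is the combinatorial reflection of the explicit description of $\m(\soc(\zele{\m}\times\zele{\n}))$ for $\zele{\m}$ a ladder obtained in \cite{MR3573961}; my plan is to record the formal reductions and then indicate how that description, and hence the theorem, is reached. By definition $\LI(\pi,\sigma)$ (resp.\ $\RI(\pi,\sigma)$) is the condition $\soc(\zele{\m}\times\zele{\n})=\zele{\m+\n}$ (resp.\ $\coss(\zele{\m}\times\zele{\n})=\zele{\m+\n}$). Since a ladder representation is \LM, Lemma~\ref{lem: LM} and Corollary~\ref{cor: maincor} (applied also to the contragredient $\zele{\m}^\vee$, again a ladder) show that both $\soc(\zele{\m}\times\zele{\n})$ and $\coss(\zele{\m}\times\zele{\n})$ are irreducible and occur with multiplicity one in $\JH(\zele{\m}\times\zele{\n})$. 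Hence $\zele{\m}\times\zele{\n}$ is irreducible if and only if $\soc(\zele{\m}\times\zele{\n})=\coss(\zele{\m}\times\zele{\n})=\zele{\m+\n}$, that is, if and only if both $\LI(\pi,\sigma)$ and $\RI(\pi,\sigma)$ hold; so the third assertion follows once the first two are proved, and it remains to establish $\LI(\pi,\sigma)\iff\LC(\m,\n)$ and $\RI(\pi,\sigma)\iff\LC(\n,\m)$.

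For the $\LI$ equivalence --- which amounts to asking when the socle description of \cite{MR3573961} returns $\zele{\m+\n}$ --- I would argue by induction on $\deg\m+\deg\n$. If $\Delta\not\prec\Delta'$ for every segment $\Delta$ of $\m$ and $\Delta'$ of $\n$, i.e.\ $\X_{\m;\n}=\emptyset$, then $\soc(\zele{\m}\times\zele{\n})=\zele{\m+\n}$ by the basic property that $\LI$ holds when no segment of $\m$ precedes a segment of $\n$, while $\LC(\m,\n)$ holds via the empty $\rltn$-matching; this is the base case. Otherwise one selects a cuspidal $\rho\in\lnrset(\zele{\m})\cup\lnrset(\zele{\n})$ with $\lmlt_\rho(\zele{\n})=0$ or $\rho\times\zele{\m}$ irreducible, so that Corollary~\ref{cor: extractrho} gives $\soc(\zele{\m}\times\zele{\n})=\soc(\rho^{\times m}\times\soc(\lderiv_\rho(\zele{\m})\times\lderiv_\rho(\zele{\n})))$ with the two derived multisegments of strictly smaller total degree, and by Lemma~\ref{lem: desclder} $\lderiv_\rho$ of a ladder is again a ladder (it shrinks at most one segment from the left). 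The crux is then to show, purely combinatorially, that $\LC(\m,\n)$ holds if and only if the corresponding condition for the reduced pair holds, together with a vanishing condition contributed by the $\rho^{\times m}$-factor: this is a Hall's-marriage-theorem analysis of the relation $\rltn$, in which passing to $\lderiv_\rho(\m),\lderiv_\rho(\n)$ deletes an explicitly describable set of elements from $\X_{\m;\n}$ and $\Y_{\m;\n}$, and one verifies that this operation preserves the existence (or non-existence) of a $\rltn$-matching; the recipes of Lemmas~\ref{lem: spclcaseextrho} and \ref{lem: soctimesrho} make the bookkeeping explicit. Combined with the inductive hypothesis this gives $\LI(\pi,\sigma)\iff\LC(\m,\n)$. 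The $\RI$ equivalence follows from the mirror-image argument, using $\rderiv_\rho$, cosocles and the right-hand versions of the above lemmas and producing $\LC(\n,\m)$; equivalently, since $\zele{\m}$ is \LM, Corollary~\ref{cor: maincor} rewrites it as the statement $\soc(\zele{\n}\times\zele{\m})=\zele{\m+\n}$ with the ladder on the right, to which the symmetric induction applies.

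The principal obstacle is the inductive step, on two counts. First, when $\X_{\m;\n}\ne\emptyset$ one must exhibit a legitimate reducing cuspidal $\rho$ --- one with $m>0$ and with $\lmlt_\rho(\zele{\n})=0$ or $\rho\times\zele{\m}$ irreducible --- which requires exploiting the ladder shape of $\m$ together with the criterion of Lemma~\ref{lem: spclcaseextrho}. Second, and more delicate, $\lderiv_\rho$ in general acts on $\m$ and on $\n$ simultaneously (Lemma~\ref{lem: lnrprop}(3)--(4)), so the effect on $\X_{\m;\n}$, $\Y_{\m;\n}$ and on a chosen $\rltn$-matching, as well as the interaction with the extracted $\rho^{\times m}$-factor, must be traced through the several cases of Lemma~\ref{lem: desclder} and kept consistent; this is where the real work lies. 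Once both equivalences are available, the theorem is immediate: $\zele{\m}\times\zele{\n}$ is irreducible $\iff\soc(\zele{\m}\times\zele{\n})=\coss(\zele{\m}\times\zele{\n})=\zele{\m+\n}\iff\LI(\pi,\sigma)\text{ and }\RI(\pi,\sigma)\iff\LC(\m,\n)\text{ and }\LC(\n,\m)$.
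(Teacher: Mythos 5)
There is a genuine gap, and it is worth noting first that the paper you are comparing against does not prove this statement at all: Theorem~\ref{thm: laddercomb} is quoted from \cite{MR3573961}, where its proof is one of the main results and occupies a substantial argument of its own. Your first paragraph is fine as far as it goes: since a ladder is \LM, Lemma~\ref{lem: LM} (applied also to the contragredient, which is again a ladder) gives that both socle and cosocle of $\zele{\m}\times\zele{\n}$ are irreducible and occur with multiplicity one, so irreducibility is indeed equivalent to $\LI(\pi,\sigma)$ together with $\RI(\pi,\sigma)$, and the third assertion reduces to the first two. But this reduction is the easy, formal part.

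The substance of the theorem is precisely the equivalence $\LI(\pi,\sigma)\iff\LC(\m,\n)$, and your proposal does not establish it; it names the two hard points and defers them. Concretely: (i) in the inductive step you must produce, whenever $\X_{\m;\n}\ne\emptyset$, a cuspidal $\rho$ with $m=\lmlt_\rho(\zele{\m}\times\zele{\n})>0$ satisfying the hypotheses of Corollary~\ref{cor: extractrho} (either $\lmlt_\rho(\zele{\n})=0$ or $\rho\times\zele{\m}$ irreducible) \emph{and} such that the reduction makes progress; no construction is given, and the natural candidates (extremal elements of the support) need not satisfy these constraints. (ii) Even granting such a $\rho$, the statement that passing to $(\lderiv_\rho(\m),\lderiv_\rho(\n))$ plus the reattachment of the $\rho^{\times m}$ factor ``preserves the existence or non-existence of a $\rltn$-matching'' is exactly the combinatorial heart of the theorem; the ``vanishing condition contributed by the $\rho^{\times m}$-factor'' is left unspecified, and Lemmas~\ref{lem: spclcaseextrho}, \ref{lem: soctimesrho} and \ref{lem: desclder} only cover special configurations, not the general bookkeeping you would need. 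In other words, the inductive scheme you outline is a plausible strategy, but carrying it out amounts to reproving the cited theorem of \cite{MR3573961}, and the proposal as written supplies neither the choice of $\rho$ nor the case analysis showing the matching condition is stable under the reduction, so it cannot be accepted as a proof.
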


\subsection{The Zelevinsky involution} \label{sec: zeleinvo}

The combinatorial analogue $\m\mapsto\m^\#$ of the Zelevinsky involution was defined by M\oe glin--Waldspurger \cite{MR863522}.
(See also \cite{MR1371654} for an alternative description.)
For $0\ne\m=\Delta_1+\dots+\Delta_k$ with $\Delta_1\ge_e\dots\ge_e\Delta_k$ define $l>0$ and indices $1=i_1<\dots<i_l$ recursively by
\[
i_{j+1}=\min\{i:\Delta_i\prec\Delta_{i_j}\text{ and }e(\Delta_i)=e(\lshft{\Delta}_{i_j})\}
\text{ if such an index exists, otherwise $l=j$.}
\]
Set $\del(\m)=[e(\Delta_{i_l}),e(\Delta_1)]$ and
\[
\m^-=\m+\sum_{j=1}^l(\Delta_{i_j}^--\Delta_{i_j}).
\]
We also write $\zele{\m}^-=\zele{\m^-}$ and $\del(\zele{\m})=\del(\m)$.

\begin{remark}
The multisegment $\m$ is uniquely determined by $\m^-$ and $\del(\m)$. Indeed, writing $\m^-=\Delta'_1+\dots+\Delta'_l$
with $\Delta'_1\ge_e\dots\ge_e\Delta'_l$ and $\del(\m)=(\rho_1,\dots,\rho_s)$, define $0\le r\le s$ and $j_1>\dots>j_r$ by
\begin{gather*}
j_1=\max\{j:e(\Delta'_j)=\lshft{\rho}_1\}\text{ if defined, otherwise }r=0,\\
j_{i+1}=\max\{j:\Delta'_{j_i}\prec\Delta'_j\text{ and }e(\Delta'_j)=\lshft{\rho}_{i+1}\}\text{ if defined, otherwise }r=i.
\end{gather*}
Then
\[
\m=\m^-+\sum_{i=1}^r({\Delta'_{j_i}}^+-\Delta'_{j_i})+\sum_{i=r+1}^s\{\rho_i\}.
\]
\end{remark}

The map $\m\mapsto\m^\#$ is defined recursively by $0^\#=0$ and
\[
\m^\#=(\m^-)^\#+\del(\m),\ \ \m\ne0.
\]
We may then define $\zele{\m}^t=\zele{\m^\#}$.
This definition extends by linearity to $\Gr$ and determines an involution of graded rings \cite{MR863522, MR1285969, MR1390967, MR2349436}.
In particular,
\begin{proposition} \label{prop: ZIred}
Suppose that $\pi\in\IrrS$. Then $\pi^t\in\IrrS$.
\end{proposition}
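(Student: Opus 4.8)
The plan is to deduce this formally from two ingredients recorded above: the characterization of $\IrrS$ in Corollary~\ref{cor: LMconds}, and the fact that $x\mapsto x^t$ is an involution of the graded ring $\Gr$ whose restriction to $\Irr$ is the bijection $\zele{\m}\mapsto\zele{\m^{\#}}$. Note that $\IrrS\subset\Irr$, so $\pi$ is irreducible and $\pi^t$ makes sense.

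First I would invoke Corollary~\ref{cor: LMconds}: since $\pi\in\IrrS$, the representation $\pi\times\pi$ is irreducible, so its class $[\pi\times\pi]=[\pi]\cdot[\pi]$ is the class of a single element of $\Irr$. Applying the ring involution $x\mapsto x^t$ to this identity in $\Gr$ gives
\[
[\pi^t\times\pi^t]=[\pi^t]\cdot[\pi^t]=\bigl([\pi]\cdot[\pi]\bigr)^t=[\pi\times\pi]^t,
\]
and since $x\mapsto x^t$ carries classes of irreducible representations to classes of irreducible representations (being the linear extension of a permutation of $\Irr$), the right-hand side is again the class of a single irreducible representation. A finite length representation whose image in $\Gr$ lies in $\Irr$ is itself irreducible; hence $\pi^t\times\pi^t$ is irreducible, and a second application of Corollary~\ref{cor: LMconds} yields $\pi^t\in\IrrS$.

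There is no real obstacle here beyond bookkeeping. The only points that need care are that $x\mapsto x^t$ is multiplicative — which is part of the cited assertion that it is an involution of graded rings — and that it preserves irreducibility, which holds by construction. If one prefers a completely explicit statement, writing $\m=\m(\pi)$ one has $\pi\times\pi=\zele{\m+\m}$ by the Zelevinsky classification, hence $[\pi^t\times\pi^t]=[\zele{(\m+\m)^{\#}}]$, so $\pi^t\times\pi^t=\zele{(\m+\m)^{\#}}$ is irreducible.
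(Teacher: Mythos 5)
Your argument is correct and is exactly the paper's: Proposition \ref{prop: ZIred} is stated there as an immediate consequence (``in particular'') of the fact that $\m\mapsto\m^\#$ extends to a graded ring involution of $\Gr$ permuting the classes of irreducibles, which combined with Corollary \ref{cor: LMconds} gives irreducibility of $\pi^t\times\pi^t$ just as you wrote. Nothing to add.
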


We refer the reader to \cite{1701.07329} for a recent, more categorical point of view of Zelevinsky involution.


\begin{lemma} (\cite[Lemma 4.13]{MR3573961}) \label{lem: suppn>suppm}
Let $\m,\n\in\Mult_\rho$. Assume that $\max\supp\m<\max\supp\n$ and that $\zele{\m}\in\IrrS$.
Then $\soc(\zele{\m}\times\zele{\n})$ is the irreducible representation $\pi$ satisfying
\[
\pi^-=\soc(\zele{\m}\times\zele{\n^-})\text{ and }\del(\pi)=\del(\n).
\]
\end{lemma}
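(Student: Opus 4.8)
\emph{Plan.} The plan is to pass to the Zelevinsky dual, where $\del(\cdot)$ and $(\cdot)^-$ are by construction the data produced by the first step of the algorithm $\m\mapsto\m^\#$, and then to read the assertion back off using the uniqueness recorded in the first remark of \S\ref{sec: zeleinvo} (that $\m$ is recovered from the pair $(\m^-,\del(\m))$). Write $\pi=\soc(\zele{\m}\times\zele{\n})$ and $\sigma=\soc(\zele{\m}\times\zele{\n^-})$; both are irreducible because $\zele{\m}\in\IrrS$ (Lemma \ref{lem: LM}), and by that remark it suffices to prove $\pi^-=\sigma$ and $\del(\pi)=\del(\n)$. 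I would begin by recording two support facts: $\supp\pi=\supp\m\cup\supp\n$, whence $\max\supp\pi=\max\supp\n=:N=e(\del(\n))$; and, since $\n^-$ is obtained from $\n$ by shortening segments from the top, $\max\supp\sigma\le\max(\max\supp\m,\max\supp\n^-)\le N$. Finally I would invoke the standard compatibility of the Zelevinsky involution with parabolic induction in the form $\soc(\pi_1\times\pi_2)^t=\coss(\pi_1^t\times\pi_2^t)$, which gives $\pi^t=\coss(\zele{\m^\#}\times\zele{\n^\#})$ and $\sigma^t=\coss(\zele{\m^\#}\times\zele{(\n^-)^\#})$.

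The core of the argument is the computation of $\coss(\zele{\m^\#}\times\zele{\n^\#})$. Since $\n^\#=(\n^-)^\#+\del(\n)$ and $\del(\n)$ ends at $N\ge\max\supp(\n^-)^\#$, no segment $\Delta$ of $(\n^-)^\#$ satisfies $\del(\n)\prec\Delta$ (this would force $e(\Delta)>N$); hence the cosocle counterpart of the $\LI$-criterion recalled in \S\ref{sec: classification} (obtained from it by applying the contragredient) gives $\coss(\zele{(\n^-)^\#}\times\zele{\del(\n)})=\zele{\n^\#}$, so that $\zele{\m^\#}\times\zele{(\n^-)^\#}\times\zele{\del(\n)}\twoheadrightarrow\zele{\m^\#}\times\zele{\n^\#}$. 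As $\zele{\m^\#}\in\IrrS$ by Proposition \ref{prop: ZIred}, the cosocle form of Lemma \ref{lem: LM} shows this triple product is co-\SI, and combining the two facts,
\[
\coss(\zele{\m^\#}\times\zele{\n^\#})=\coss\bigl(\zele{\m^\#}\times\zele{(\n^-)^\#}\times\zele{\del(\n)}\bigr)=\coss\bigl(\coss(\zele{\m^\#}\times\zele{(\n^-)^\#})\times\zele{\del(\n)}\bigr)=\coss(\sigma^t\times\zele{\del(\n)}).
\]
Applying the same property once more — legitimately, because $\max\supp\sigma^t=\max\supp\sigma\le N$ forces $\del(\n)\not\prec\Delta$ for every segment $\Delta$ of $\m(\sigma^t)$ — one obtains $\coss(\sigma^t\times\zele{\del(\n)})=\zele{\m(\sigma^t)+\del(\n)}=\zele{\m(\sigma)^\#+\del(\n)}$. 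Dualizing back, $\m(\pi)^\#=\m(\sigma)^\#+\del(\n)$.

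To conclude, I would feed this identity into the remark of \S\ref{sec: zeleinvo}: the map $\mathfrak{p}\mapsto(\mathfrak{p}^-,\del(\mathfrak{p}))$ is injective, and $(\m(\sigma),\del(\n))$ lies in its image precisely because $\max\supp\m(\sigma)\le N=e(\del(\n))$ — which is where the hypothesis $\max\supp\m<\max\supp\n$ is spent. If $\mathfrak{r}$ is the multisegment with $(\mathfrak{r}^-,\del(\mathfrak{r}))=(\m(\sigma),\del(\n))$, then $\mathfrak{r}^\#=\m(\sigma)^\#+\del(\n)=\m(\pi)^\#$, and since $\#$ is an involution $\mathfrak{r}=\m(\pi)$; thus $\pi^-=\sigma=\soc(\zele{\m}\times\zele{\n^-})$ and $\del(\pi)=\del(\n)$. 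The step that really uses the combinatorics of M\oe glin--Waldspurger, and the one I expect to be the main obstacle, is exactly the admissibility claim just invoked — equivalently, that $\del(\pi)$ equals $\del(\n)$ and not some strictly longer segment ending at $N$. A priori $\m(\sigma)^\#$ could carry segments ending at $N-1$ along which the M\oe glin--Waldspurger chain of $\m(\pi)$ would continue beyond $\del(\n)$; it is the strict inequality $\max\supp\m<\max\supp\n$ (via $\max\supp\sigma\le N$) that excludes this, and making it airtight requires a careful, though routine, inspection of the reconstruction formula in that remark, or alternatively an induction on $\deg\n$. A secondary point, worth stating cleanly at the outset, is the socle-to-cosocle behaviour of the Zelevinsky involution that was used above.
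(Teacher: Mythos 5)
Your overall route — dualize via $\m\mapsto\m^\#$, exploit the recursion $\n^\#=(\n^-)^\#+\del(\n)$, compute a cosocle on the dual side, and reconstruct from the pair $(\m(\sigma),\del(\n))$ — is coherent, but note that the paper itself does not prove this lemma (it is quoted from \cite{MR3573961}, where the argument is run directly on the socle side), and your proposal rests on three inputs that are not secured. First, the ``standard compatibility'' $\soc(\pi_1\times\pi_2)^t=\coss(\pi_1^t\times\pi_2^t)$ is a genuine theorem, not a formality: nothing in this paper gives it (only the Grothendieck-ring multiplicativity of $^t$, Proposition \ref{prop: ZIred}, and the special identity $\soc(\pi\times\sigma)\simeq\coss(\sigma\times\pi)$ of Corollary \ref{cor: maincor} for $\pi$ \LM\ are available), and without a precise citation or proof this step is a hole of roughly the same depth as the lemma you are proving. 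Second, the claim that $\zele{\m^\#}\times\zele{(\n^-)^\#}\times\zele{\del(\n)}$ is co-\SI\ ``by the cosocle form of Lemma \ref{lem: LM}'' is a misapplication: that lemma (and its contragredient version) requires the complementary factor to be \emph{irreducible}, whereas here the complementary factor $\zele{(\n^-)^\#}\times\zele{\del(\n)}$ is in general reducible, and neither bracketing of the triple product pairs a \LM\ representation with an irreducible one. Consequently the middle equality in your chain of cosocles, i.e.\ $\coss(\zele{\m^\#}\times\zele{\n^\#})=\coss\bigl(\coss(\zele{\m^\#}\times\zele{(\n^-)^\#})\times\zele{\del(\n)}\bigr)$, is not established; it needs a separate argument (e.g.\ a multiplicity-one/filtration argument showing the relevant constituent does not occur in $\zele{\m^\#}\times K$ for $K$ the kernel of $\zele{(\n^-)^\#}\times\zele{\del(\n)}\twoheadrightarrow\zele{\n^\#}$, in the spirit of Lemma \ref{lem: extractsegment}).

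Third, and most importantly, the final ``admissibility'' step — that $(\m(\sigma),\del(\n))$ is of the form $(\mathfrak{r}^-,\del(\mathfrak{r}))$ — is exactly where the content of the lemma sits, and the justification you offer (``because $\max\supp\m(\sigma)\le e(\del(\n))$'') is not a proof. The reconstruction recipe in the remark of \S\ref{sec: zeleinvo} is stated for pairs already known to come from some multisegment; for an arbitrary pair one must check that the M\oe glin--Waldspurger chain of the reconstructed multisegment neither stops above $b(\del(\n))$ nor continues below it, and whether it does depends delicately on which segments of $\m(\sigma)$ end at the intermediate levels $e(\del(\n))-1,\dots,b(\del(\n))-1$ and on their left endpoints — information that requires knowing the fine structure of $\soc(\zele{\m}\times\zele{\n^-})$ near the top of its support, i.e.\ essentially the inductive argument the lemma encapsulates. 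You flag this honestly as the main obstacle, but a proposal whose central step is deferred (and whose suggested sufficient condition is not shown to suffice) is incomplete; the two issues above compound this, since even the identity $\m(\pi)^\#=\m(\sigma)^\#+\del(\n)$ feeding into that step is not yet proved.
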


\subsection{Regular multisegments}
In the second part of the paper we will specialize to a certain class of multisegments.
Namely, we say that a multisegment $\m=\Delta_1+\dots+\Delta_k$ is \emph{regular} if $b(\Delta_1),\dots,b(\Delta_k)$ are distinct and
$e(\Delta_1),\dots,e(\Delta_k)$ are distinct. Note that if $\m$ is regular and $\n\obt\m$ then $\n$ is also regular.

A sub-multisegment of a multisegment $\m$ is a multisegment $\m_1$ for which there exists a multisegment $\m_2$ such that $\m=\m_1+\m_2$.
Clearly, a sub-multisegment of a regular multisegment is also regular.
The same is true for the $\rho$-contraction of a $\rho$-contractible regular multisegment. However, the Zelevinsky involution
does not preserve regularity.

\section{A variant of a conjecture of Geiss--Leclerc--Schr\"oer} \label{sec: GLSconj}
\subsection{}
There is a more geometric way, also due to Zelevinsky, to think about the Zelevinsky classification \cite{MR617466, MR783619, MR863522, MR1648174}.
Namely, consider pairs $(V,A)$ where $V$ is a finite-dimensional $\Cusp$-graded $\C$-vector space $V=\oplus_{\rho\in\Cusp}V_\rho$
and $A$ is a (necessarily nilpotent) $\C$-linear endomorphisms of $V$ such that $A(V_\rho)\subset V_{\rshft{\rho}}$ for all $\rho\in\Cusp$.
(We denote by $E_\rightarrow(V)$ the space of such endomorphisms.)
The isomorphism types of such pairs $(V,A)$ are parameterized by multisegments.
Namely, for any segment $\Delta$ let $V_\Delta$ be the $\Cusp$-graded vector space $\C^\Delta$ with basis $\{\xbasis_\rho:\rho\in\Delta\}$ and let
$\rshft A_\Delta\in E_\rightarrow(V_\Delta)$ be given by $\rshft A_\Delta \xbasis_\rho=\xbasis_{\rshft\rho}$
where by convention $\xbasis_\rho=0$ if $\rho\notin\Delta$.
To any multisegment $\m=\Delta_1+\dots+\Delta_k$ define $V_\m=\oplus_{i=1}^kV_{\Delta_i}$ with basis $\{x^i_\rho:i=1,\dots,k, \rho\in\Delta_i\}$, and
$\rshft A_\m=\oplus_{i=1}^k\rshft A_{\Delta_i}\in E_\rightarrow(V_\m)$.
Then $\{(V_\m,\rshft A_\m):\m\in\Mult\}$ is a set of representatives for the isomorphism types of pairs $(V,A)$ as above.

The previous discussion applies verbatim equally well if we change $\lshft{}$ with $\rshft{}$ throughout.


For any finite-dimensional $\Cusp$-graded vector space $V$,
the group $\GL(V)$ of grading preserving linear automorphisms of $V$ acts with finitely many orbits on each of the spaces $E_\leftrightarrows(V)$.
Note that these spaces are in duality with respect to the $\GL(V)$-invariant pairing $A,B\mapsto\tr AB=\tr BA$.
Consider the algebraic set
\[
\commvar(V)=\{(A,B)\in E_\rightarrow(V)\times E_\leftarrow(V):AB=BA\}
\]
with the canonical $\GL(V)$-equivariant projection maps $p_{\leftrightarrows}:\commvar(V)\rightarrow E_\leftrightarrows(V)$.
The following is a special case of a result of Pyasetskii.
\begin{theorem} \cite{MR0390138} (cf. \cite{MR1371654})
\begin{enumerate}
\item $\dim\commvar(V)=\dim E_\rightarrow(V)=\dim E_\leftarrow(V)$ and the irreducible components of $\commvar(V)$ are equi-dimensional.
\item If $C$ is an irreducible component of $\commvar(V)$ then $p_\rightarrow(C)$ admits a (unique) open $\GL(V)$-orbit which we denote by $p_\rightarrow(C)^{\gen}$.
\item The map $C\mapsto p_\rightarrow(C)^{\gen}$ is a bijection
between the set of irreducible components of $\commvar(V)$ and the set of $\GL(V)$-orbits in $E_\rightarrow(V)$.
\item The inverse map is given by $\OO\mapsto\overline{p_{\rightarrow}^{-1}(\OO)}$ (Zariski closure).
\item Similar statements hold for $p_\leftarrow$.
\item For any $\GL(V)$-orbit $\OO$ of $E_\rightarrow(V)$, $p_{\leftarrow}(p_\rightarrow^{-1}(\OO))$ contains a unique open $\GL(V)$-orbit
$\OO^\#$. Thus, $p_\rightarrow^{-1}(\OO)\cap p_\leftarrow^{-1}(\OO^\#)$ is non-empty and open in both $p_\rightarrow^{-1}(\OO)$
and $p_\leftarrow^{-1}(\OO^\#)$.
\item The map $\OO\rightarrow\OO^\#$ is a bijection between the sets of $\GL(V)$-orbits in $E_\leftrightarrows(V)$.
\end{enumerate}
\end{theorem}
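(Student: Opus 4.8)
The plan is to identify $\commvar(V)$ with the union, over the $\GL(V)$-orbits $\OO\subset E_\rightarrow(V)$, of the conormal bundles $p_\rightarrow^{-1}(\OO)$, and to read off all seven assertions from the resulting dimension count and from the orbit structure; this is the standard conormal-variety proof. The one input special to our situation is that there are only finitely many such orbits, which was recalled above.

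First I would pin down the conormal description. Let $\Lieg$ denote the Lie algebra of $\GL(V)$, i.e.\ the space of grading-preserving endomorphisms of $V$; the trace form $X,Y\mapsto\tr(XY)$ is non-degenerate on $\Lieg$, and the $\GL(V)$-invariant pairing $A,B\mapsto\tr(AB)$ identifies $E_\leftarrow(V)$ with the dual of $E_\rightarrow(V)$. For $A\in E_\rightarrow(V)$ with orbit $\OO=\GL(V)\cdot A$ one has $T_A\OO=\{XA-AX:X\in\Lieg\}$. Since $A$ maps $V_\rho$ into $V_{\rshft\rho}$ and every $B\in E_\leftarrow(V)$ maps $V_\rho$ into $V_{\lshft\rho}$, the commutator $AB-BA$ is grading-preserving, hence lies in $\Lieg$; therefore for $B\in E_\leftarrow(V)$ and all $X\in\Lieg$ we have $\tr((XA-AX)B)=\tr(X(AB-BA))$, and by non-degeneracy of the trace form on $\Lieg$ this vanishes for every $X$ if and only if $AB=BA$. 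Thus $p_\rightarrow^{-1}(A)$ is exactly the annihilator of $T_A\OO$ in $E_\leftarrow(V)$, and $p_\rightarrow^{-1}(\OO)$ is the total space of the conormal bundle $N^*_\OO$ of $\OO$.

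From this the first five assertions are essentially formal. The bundle $N^*_\OO\to\OO$ has fibres of the constant dimension $\dim E_\leftarrow(V)-\dim\OO$ over the irreducible base $\OO$, so $p_\rightarrow^{-1}(\OO)$ is irreducible of dimension $\dim E_\leftarrow(V)=\dim E_\rightarrow(V)=:d$ (both equal $\sum_\rho(\dim V_\rho)(\dim V_{\rshft\rho})$). Since $E_\rightarrow(V)$ is the disjoint union of its finitely many orbits, $\commvar(V)=\bigcup_\OO\overline{p_\rightarrow^{-1}(\OO)}$ is a finite union of $d$-dimensional irreducible closed subsets, which is (1). They are pairwise distinct: $p_\rightarrow$ maps $\overline{p_\rightarrow^{-1}(\OO)}$ into $\overline\OO$ while its image contains $\OO$, so equality of two of these closures forces equality of the corresponding orbit closures, hence of the orbits; being irreducible of dimension $d=\dim\commvar(V)$, each $\overline{p_\rightarrow^{-1}(\OO)}$ is an irreducible component, together they exhaust the components, and the unique dense orbit of $p_\rightarrow(\overline{p_\rightarrow^{-1}(\OO)})$ is $\OO$ itself, i.e.\ $p_\rightarrow(C)^{\gen}=\OO$. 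This gives (2), and then $C\mapsto p_\rightarrow(C)^{\gen}$ and $\OO\mapsto\overline{p_\rightarrow^{-1}(\OO)}$ are mutually inverse bijections, giving (3)--(4); statement (5) is the same argument with $p_\rightarrow$ and $p_\leftarrow$ interchanged.

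Finally, for (6)--(7) I would, given $\OO\subset E_\rightarrow(V)$, set $\OO^\#$ to be the unique dense $\GL(V)$-orbit of the irreducible closed set $\overline{p_\leftarrow(p_\rightarrow^{-1}(\OO))}$; since $p_\leftarrow(p_\rightarrow^{-1}(\OO))$ is constructible and $\GL(V)$-stable, $\OO^\#$ is contained in it and open in it. With $C:=\overline{p_\rightarrow^{-1}(\OO)}$, the density of $p_\rightarrow^{-1}(\OO)$ in $C$ shows $p_\leftarrow(p_\rightarrow^{-1}(\OO))$ is dense in $p_\leftarrow(C)$, whose dense orbit is the one attached to $C$ by (5); hence $C=\overline{p_\leftarrow^{-1}(\OO^\#)}$. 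As $p_\rightarrow^{-1}(\OO)\subseteq p_\leftarrow^{-1}(\overline{\OO^\#})$ and $\OO^\#$ is open in $\overline{\OO^\#}$ (orbits being locally closed), $p_\rightarrow^{-1}(\OO)\cap p_\leftarrow^{-1}(\OO^\#)$ is open in $p_\rightarrow^{-1}(\OO)$; it is non-empty since $\OO^\#\subseteq p_\leftarrow(p_\rightarrow^{-1}(\OO))$; and the mirror argument---which also identifies $\OO$ with the dense orbit of $p_\rightarrow(p_\leftarrow^{-1}(\OO^\#))$---shows it is open in $p_\leftarrow^{-1}(\OO^\#)$ as well, proving (6). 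For (7), $\OO\mapsto\OO^\#$ is the composite of the bijection of (3) with the inverse of the bijection of (5), hence a bijection between the $\GL(V)$-orbits of $E_\rightarrow(V)$ and those of $E_\leftarrow(V)$. The step requiring real care is the conormal identification: one has to track the grading shifts $\rho\mapsto\rshft\rho$ and $\rho\mapsto\lshft\rho$ so that $AB-BA$ really lands in $\Lieg$, and then invoke non-degeneracy of the trace form there; the remaining mild subtlety is the double-image bookkeeping in (6), where one must check that the two descriptions of $C$---as the closure of a $p_\rightarrow$-fibre over $\OO$ and of a $p_\leftarrow$-fibre over $\OO^\#$---coincide before concluding the ``open in both'' statement.
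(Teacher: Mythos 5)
Your argument is correct, and since the paper does not prove this theorem (it is quoted from Pyasetskii, cf.\ Knight--Zelevinsky), there is nothing to diverge from: your identification of $p_\rightarrow^{-1}(\OO)$ with the conormal bundle of $\OO$ via the trace pairing, combined with finiteness of the $\GL(V)$-orbits, is exactly the standard proof given in those references. The only (immaterial) slip is in (7), where the map $\OO\mapsto\OO^\#$ is the bijection of (5) composed with the \emph{inverse} of the bijection of (3), i.e.\ $\OO\mapsto\overline{p_\rightarrow^{-1}(\OO)}\mapsto p_\leftarrow(\overline{p_\rightarrow^{-1}(\OO)})^{\gen}$, rather than the composition as you phrased it.
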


We denote by $\rshft{\OO}_\m$ the $\GL(V_\m)$-orbit of $\rshft{A}_\m$ in $E_\rightarrow(V)$ and similarly for $\lshft{\OO}_\m$.
Then $(\rshft\OO_\m)^\#=\lshft{\OO}_{\m^\#}$ where $\m^\#$ is as in \S\ref{sec: zeleinvo}. (We can identify $V_\m$ and $V_{\m^\#}$.)

The following is a variant (of a special case) of a beautiful conjecture of Geiss--Leclerc--Schr\"oer.
\begin{conjecture}(cf. \cite[Conjecture 18.1]{MR2822235}, \cite{LecChev}) \label{conj: GLS}
An irreducible representation $\pi=Z(\m)$ is \LM\ if and only if $p_\rightarrow^{-1}(\rshft{\OO}_\m)$ admits an open $\GL(V_\m)$-orbit.
(Clearly, such an orbit would necessarily be contained in $p_\leftarrow^{-1}(\lshft\OO_{\m^\#})$.)
\end{conjecture}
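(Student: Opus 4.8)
The plan is to turn Conjecture~\ref{conj: GLS} into a combinatorial equivalence on a single cuspidal line and then prove the two implications by the recursive and the \emph{double socle} techniques assembled above. First I would reduce to $\m\in\Mult_\rho$: by \S\ref{sec: fixrho}, if $\pi=\pi_1\times\dots\times\pi_r$ with the $\pi_i=\zele{\m_i}$ supported on pairwise distinct cuspidal lines, then $\pi\in\IrrS$ iff every $\pi_i\in\IrrS$; and since the gradings of the $V_{\m_i}$ are disjoint we have $\commvar(V_\m)=\prod_i\commvar(V_{\m_i})$, $\GL(V_\m)=\prod_i\GL(V_{\m_i})$ and $\rshft\OO_\m=\prod_i\rshft\OO_{\m_i}$ compatibly with $p_\rightarrow$, so $p_\rightarrow^{-1}(\rshft\OO_\m)$ has a dense $\GL(V_\m)$-orbit iff each $p_\rightarrow^{-1}(\rshft\OO_{\m_i})$ has one. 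The parenthetical in the statement is then immediate from the Pyasetskii theorem recalled above, which also makes the equivalence visibly compatible with the Zelevinsky involution $\m\mapsto\m^\#$ of \S\ref{sec: zeleinvo}. Working over a fixed $\rho$, I would next extract from the explicit analysis of $p_\rightarrow^{-1}(\rshft\OO_\m)$ in \S\ref{sec: GLSconj} a purely combinatorial condition $(\star)$ on $\m$ equivalent to the existence of an open orbit --- in the regular case this is the pattern condition \eqref{cond: sseq} of Theorem~\ref{thm: maini} --- so that everything reduces to proving $\zele{\m}\in\IrrS\iff(\star)$.

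For the implication $(\star)\Rightarrow\zele{\m}\in\IrrS$ I would induct on the number of segments of $\m$ using Lemma~\ref{lem: albertoidea}: write $\zele{\m}\hookrightarrow\zele{\m_1}\times\zele{\m_2}$ with $\zele{\m_1}$ a suitably chosen ladder encoding how the endpoints of $\m$ are interleaved, check that $\zele{\m_2}$ inherits $(\star)$ and hence is \LM\ by induction, and verify combinatorially from $(\star)$ that $\zele{\m}\times\zele{\m_1}$ is irreducible via the matching criterion $\LC$ of Theorem~\ref{thm: laddercomb}; Lemma~\ref{lem: albertoidea} then yields $\zele{\m}\in\IrrS$. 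A parallel induction on $V_\m$ --- replacing Lemma~\ref{lem: albertoidea} by the corresponding statement about fibres of $p_\rightarrow$ over ladder orbits --- gives $(\star)\Rightarrow$ (open orbit). This is the content of \S\ref{sec: main} in the regular case.

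For the converse $\lnot(\star)\Rightarrow\zele{\m}\notin\IrrS$ I would first reduce $\lnot(\star)$ to a short list of \emph{basic} multisegments by repeatedly deleting detachable segments and $\rho$-contracting, noting that non-$\square$-irreducibility propagates upward along these operations (contrapositive of Lemma~\ref{lem: 1stred}, Proposition~\ref{prop: contract}). For each basic $\m$ I would run the double socle construction: build $\pi_1,\pi_2\in\IrrS$ with $\zele{\m}\hookrightarrow\pi_1\times\pi_2$; then $\Pi:=\soc(\pi_1\times\soc(\pi_2\times\zele{\m}))$ is irreducible by Corollary~\ref{cor: maincor} and Lemma~\ref{lem: LM}, and $\Pi\hookrightarrow\omega\times\zele{\m}$ for some $\omega\le\pi_1\times\pi_2$; a Jacquet-module computation with the recipes of \S\ref{sec: classification} and Lemma~\ref{lem: suppn>suppm} shows $\omega=\zele{\m}$ is forced, so $\Pi\hookrightarrow\zele{\m}\times\zele{\m}$, and checking $\Pi\neq\zele{\m+\m}$ gives $\zele{\m}\notin\IrrS$ by Corollary~\ref{cor: LMconds}. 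The same reductions turn $\lnot(\star)\Rightarrow\lnot$(open orbit) into a handful of explicit orbit-dimension computations on small $V_\m$, as in \S\ref{sec: basicases}--\S\ref{sec: comproof}.

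The step I expect to be the genuine obstacle is the very first one after the reduction: isolating the correct combinatorial condition $(\star)$ when $\m$ is not regular. For regular $\m$, openness of $p_\rightarrow^{-1}(\rshft\OO_\m)$ translates through Schubert geometry into smoothness of a type-$A$ Schubert variety and thence into $3412$/$4231$-type pattern avoidance; it is unclear what should replace the smoothness criterion once the endpoints of the segments of $\m$ are allowed to repeat. Compounding this, both the inductive step and the basic-case analysis above rely on the description of $\soc(\zele{\m}\times\sigma)$ for $\zele{\m}$ a ladder, and on its combinatorial shadow $\LC$, from \cite{MR3573961}; in full generality one needs such a description --- and a workable $\LC$-type criterion --- for a much larger class of $\zele{\m}$, and supplying it is precisely the ingredient currently missing.
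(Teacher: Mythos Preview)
The statement is a conjecture, and the paper does not prove it in full generality; it establishes it only for \emph{regular} multisegments (Theorem~\ref{thm: main}), and says explicitly in the introduction that ``it is not clear what would replace the smoothness condition \ref{cond: smlocus} of Theorem~\ref{thm: maini} in the non-regular case.'' Your proposal is not a proof either, and you are candid about this in your final paragraph.

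What you have written is an accurate reconstruction of the paper's strategy in the regular case. The reduction to a single cuspidal line, the inductive step via Lemma~\ref{lem: albertoidea} with a ladder $\pi_1$ and the $\LC$ criterion of Theorem~\ref{thm: laddercomb}, the reductions by detachable segments and $\rho$-contraction (Lemma~\ref{lem: 1stred}, Proposition~\ref{prop: contract}) down to basic cases, and the double-socle argument on those basic cases --- all of this is exactly what the paper does in \S\ref{sec: main}--\S\ref{sec: comproof}. One minor inaccuracy: for the implication $(\star)\Rightarrow$(open orbit) the paper does not run a geometric induction on fibres over ladder orbits; it works directly with the linear-algebra reformulation (Definition~\ref{def: GLS}) and constructs an explicit extra-strong $\rltn$-matching by the same induction that proves $\square$-irreducibility. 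Similarly, the basic cases for $\lnot(\star)\Rightarrow\lnot$(open orbit) are handled combinatorially via Remark~\ref{rem: kirred} and one ad hoc computation (Example~\ref{exam: 34*12}), not by orbit-dimension counts on $V_\m$.

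Your diagnosis of the obstacle is also the paper's: outside the regular case there is no candidate for the combinatorial condition $(\star)$, and the socle/$\LC$ machinery of \cite{MR3573961} is only available for ladders. So your proposal is a correct outline of the known partial result together with a correct identification of why it does not extend, but it is not --- and does not claim to be --- a proof of the conjecture.
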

We emphasize however that a counterexample to Conjecture \ref{conj: GLS} would not necessarily invalidate the conjecture made in [ibid.].

The pertinent openness condition admits a homological interpretation.
Alternatively, we can rephrase it by saying that the stabilizer $G_\m$ of $\rshft{A}_\m$ in $\GL(V_\m)$ admits an open orbit in
the space $C_\m=\{B\in E_{\leftarrow}(V_\m):B\rshft{A}_\m=\rshft{A}_\m B\}$.
The advantage is that this is a linear action and by passing to the Lie algebra
the condition becomes the existence of $\lambda\in C_\m$ such that $[\Lieg_\m,\lambda]=C_\m$ where $\Lieg_\m=\operatorname{Lie} G_\m$,
viewed as a subalgebra of the Lie algebra of $\Cusp$-grading preserving endomorphisms of $V_\m$.
It is easy to explicate $\Lieg_\m$ and its action on $C_\m$ (cf. \cite[Lemmas II.4 and II.5]{MR863522}).
Let $\X_\m=\{(i,j):\Delta_i\prec\Delta_j\}$ and $\Y_\m=\{(i,j):\lshft{\Delta}_i\prec\Delta_j\}$.
Then $C_\m$ has a basis $\alpha_{i,j}$, $(i,j)\in \X_\m$ given by
\[
\alpha_{i,j}(\xbasis_\rho^l)=\delta_{j,l}\xbasis_{\lshft{\rho}}^i,\ \ \rho\in\Cusp, l=1,\dots,k,
\]
while as a vector space, $\Lieg_\m$ has a basis $\beta_{i,j}$, $(i,j)\in \Y_\m$ given by
\[
\beta_{i,j}(\xbasis_\rho^l)=\delta_{j,l}\xbasis_\rho^i,\ \ \rho\in\Cusp, l=1,\dots,k.
\]
Moreover, we have
\[
[\beta_{i,j},\alpha_{l,m}]=\delta_{j,l}\alpha_{i,m}-\delta_{i,m}\alpha_{l,j},\ \ (i,j)\in \Y_\m,\ (l,m)\in \X_\m
\]
where for convenience we set $\alpha_{i,j}=0$ if $(i,j)\notin \X_\m$.

In other words, any $\lambda\in C_\m$ is determined by its coordinates $\lambda_{i,j}$, $(i,j)\in \X_\m$ satisfying
\[
\lambda(\xbasis_\rho^j)=\sum_{i:(i,j)\in \X_\m,\lshft\rho\in\Delta_i}\lambda_{i,j}\xbasis_{\lshft\rho}^i.
\]
Similarly, any $g\in G_\m$ is determined by its coordinates $g_{i,j}$, $(i,j)\in \Y_\m$ satisfying
\[
g(\xbasis_\rho^j)=\sum_{i:(i,j)\in \Y_\m,\rho\in\Delta_i}g_{i,j}\xbasis_\rho^i.
\]

Thus, we have the following characterization of the condition that $p_\rightarrow^{-1}(\rshft{\OO}_\m)$ admits an open $\GL(V_\m)$-orbit.
(The surjectivity of the map $g\in\Lieg_\m\mapsto [g,\xi]\in C_\m$ is rephrased by the injectivity of the dual map.)

\begin{definition} \label{def: GLS}
Let $\m=\Delta_1+\dots+\Delta_k$ be a multisegment.
Consider the $\C$-vector space $\C^{\Y_\m}$ with basis $\{e_{i,j}:(i,j)\in \Y_\m\}$).
We say that $\m$ satisfies the condition \GLS\ if there exists $\lambda\in C_\m$ such that the vectors
\begin{equation} \label{eq: cxcy}
\xvec_{i,j}(\lambda):=\sum_{r:(r,j)\in \X_\m,(i,r)\in \Y_\m}\lambda_{r,j}e_{i,r}-\sum_{s:(s,j)\in \Y_\m,(i,s)\in \X_\m}\lambda_{i,s}e_{s,j},\ \ (i,j)\in \X_\m
\end{equation}
are linearly independet in $\C^{\Y_\m}$.
\end{definition}

This condition is easy to check (at least probabilistically) on a computer.

All in all, we get the following equivalent reformulation of Conjecture \ref{conj: GLS}.

\begin{conjecture}\label{conj: GLS2}
$Z(\m)$ is \LM\ if and only if $\m$ satisfies (GLS).
\end{conjecture}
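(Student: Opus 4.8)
The plan is to observe that Conjecture~\ref{conj: GLS2} is not a genuinely new assertion but a repackaging of Conjecture~\ref{conj: GLS}: the representation-theoretic content — that $\square$-irreducibility of $Z(\m)$ is governed by a geometric openness property of the incidence variety — is imported verbatim from Conjecture~\ref{conj: GLS} and is not revisited. What actually has to be proved is the purely linear-algebraic equivalence
\[
p_\rightarrow^{-1}(\rshft{\OO}_\m)\text{ admits an open }\GL(V_\m)\text{-orbit}\iff\m\text{ satisfies }\GLS,
\]
after which one simply quotes Conjecture~\ref{conj: GLS}.

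First I would reduce the openness of an orbit in the incidence variety to the existence of an open orbit for a \emph{linear} group action. Since $\rshft{\OO}_\m=\GL(V_\m)\cdot\rshft A_\m$ is a single orbit, $p_\rightarrow^{-1}(\rshft{\OO}_\m)$ is the total space of a $\GL(V_\m)$-equivariant fibre bundle over $\rshft{\OO}_\m\simeq\GL(V_\m)/G_\m$ whose fibre over $\rshft A_\m$ is $\{\rshft A_\m\}\times C_\m$, with $G_\m$ (the stabiliser of $\rshft A_\m$) acting on $C_\m$ by conjugation; concretely $p_\rightarrow^{-1}(\rshft{\OO}_\m)\simeq\GL(V_\m)\times_{G_\m}C_\m$. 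A $\GL(V_\m)$-orbit meets this fibre in a $G_\m$-orbit, and it is open if and only if that $G_\m$-orbit is open in $C_\m$. Since $C_\m$ is a vector space — in particular smooth and irreducible — and $G_\m$ is an algebraic group (whose orbits have the same dimension as those of its identity component), a point $\lambda\in C_\m$ has an open $G_\m$-orbit exactly when that orbit is dense, equivalently when the differential of the orbit map is surjective onto $C_\m$; as the action is by conjugation this differential sends $X\in\Lieg_\m$ to $[X,\lambda]$, so the condition reads $[\Lieg_\m,\lambda]=C_\m$, where $\Lieg_\m=\operatorname{Lie} G_\m\subset\End(V_\m)$.

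Next I would make this condition explicit using the bases already recorded in \S\ref{sec: GLSconj}. Surjectivity of $X\mapsto[X,\lambda]\colon\Lieg_\m\to C_\m$ is equivalent to injectivity of its transpose $C_\m^*\to\Lieg_\m^*$. Writing $\lambda=\sum_{(r,j)\in\X_\m}\lambda_{r,j}\alpha_{r,j}$ and using the structure constants $[\beta_{i,j},\alpha_{l,m}]=\delta_{j,l}\alpha_{i,m}-\delta_{i,m}\alpha_{l,j}$, a direct computation shows that, under the identification $\Lieg_\m^*\simeq\C^{\Y_\m}$ dual to $\{\beta_{i,j}\}$, this transpose carries the dual basis vector $\alpha_{i,j}^*$ of $C_\m^*$ to exactly the vector $\xvec_{i,j}(\lambda)$ of Definition~\ref{def: GLS}. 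Hence the transpose is injective precisely when the vectors $\xvec_{i,j}(\lambda)$, $(i,j)\in\X_\m$, are linearly independent, i.e.\ precisely when $\m$ satisfies \GLS\ for this $\lambda$. Letting $\lambda$ range over $C_\m$ yields the displayed equivalence, and combining it with Conjecture~\ref{conj: GLS} gives Conjecture~\ref{conj: GLS2}.

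The routine character of the steps above should not obscure where the real difficulty lies: it is entirely in Conjecture~\ref{conj: GLS} itself — a variant, in the case at hand, of the conjecture of Geiss--Leclerc--Schr\"oer \cite{MR2822235, LecChev} — which remains open in general. The present paper establishes it only in the \emph{regular} case (Theorem~\ref{thm: maini}), by translating both $\square$-irreducibility and the openness property into the combinatorial pattern-avoidance condition~\ref{cond: sseq} and then running an induction on the number of segments via the ``double socle'' strategy together with the irreducibility criteria of \cite{MR3573961}; attacking the non-regular case along the same lines would require first extending those criteria to a broader class of representations. Thus the honest outcome of this plan is a complete proof of the \emph{reduction} of Conjecture~\ref{conj: GLS2} to Conjecture~\ref{conj: GLS}, the main obstacle to a full proof of either being precisely the resolution of the latter beyond the regular case.
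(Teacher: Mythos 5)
Your proposal is correct and follows essentially the same route as the paper: \S\ref{sec: GLSconj} rephrases the openness of a $\GL(V_\m)$-orbit in $p_\rightarrow^{-1}(\rshft{\OO}_\m)$ as the existence of an open $G_\m$-orbit in $C_\m$, passes to the Lie algebra condition $[\Lieg_\m,\lambda]=C_\m$, and explicates it via the bases $\alpha_{i,j}$, $\beta_{i,j}$ and the dual (injectivity of the transpose) to arrive at Definition~\ref{def: GLS}, so that Conjecture~\ref{conj: GLS2} is exactly the stated equivalent reformulation of Conjecture~\ref{conj: GLS}. You also correctly note that this yields only a reduction: the statement remains a conjecture in general, established in this paper only for regular multisegments (Theorem~\ref{thm: main}).
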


\begin{remark} \label{rem: Gmorb}
Clearly, the linear independence of $\{\xvec_{i,j}(\lambda):(i,j)\in\X_\m\}$ is a Zariski open $G_\m$-invariant condition on $\lambda\in C_\m$.
\end{remark}

\begin{remark} \label{rem: leftrightarrow}
In the formulation of Conjecture \ref{conj: GLS} we could have used $p_\leftarrow^{-1}(\lshft{\OO}_\m)$ instead of $p_\rightarrow^{-1}(\rshft{\OO}_\m)$.
Indeed, an analogous argument would yield the restatement made in Conjecture \ref{conj: GLS2}.
\end{remark}

\begin{remark} \label{rem: neighij}
Note that $\X_\m=\X_{\m;\m}$ and $\Y_\m=\Y_{\m;\m}$ in the notation of Definition \ref{def: XandY}.
We continue to write $\rltn$ for the relation defined there. Thus, we get a bipartite graph $\Graph_\m$ whose vertices are $\X_\m\coprod\Y_\m$
(disjoint union) and whose edges are given by $\rltn$.
For any $(i,j)\in \X_\m$ and $\lambda\in C_\m$ we denote by $N_{i,j}(\lambda)\subset\Y_\m$ the set of non-zero coordinates of $\xvec_{i,j}(\lambda)$
and define $N_{i,j}:=\{y\in \Y_\m:(i,j)\rltn y\}$ (the neighbors of $(i,j)$ in $\Graph_\m$). Then $N_{i,j}(\lambda)\subset N_{i,j}$ with equality if $\lambda_{i,j}\ne0$ for all $(i,j)\in \X_\m$.
Note that $N_{i,j}\supset\{(i,i),(j,j)\}$ for all $(i,j)\in \X_\m$.
\end{remark}

\begin{remark} \label{rem: matching}
Clearly, \GLS\ implies the existence of a $\rltn$-matching from $\X_\m$ to $\Y_\m$.
(An example of a multisegment without such a matching is $[4,6]+[1,5]+[2,4]+[3,3]+[0,2]$.)
However, the latter condition is not sufficient. (See Remark \ref{rem: kirred} below.)
\end{remark}

It will be convenient to attach labels to the edges of the graph $\Graph_\m$.
Namely, we write $(i,j)\lrltn{(j',j)}(i,j')$ if $(i,j),(j',j)\in\X_\m$ and $(i,j')\in\Y_\m$; similarly $(i,j)\lrltn{(i,i')}(i',j)$ if $(i,j),(i,i')\in\X_\m$, $(i',j)\in\Y_\m$.

\begin{remark} \label{rem: strongmatch}
Suppose that $f:\X_\m\rightarrow\Y_\m$ is a $\rltn$-matching and let
\[
L_f=\{r\in\X_\m:x\lrltn rf(x)\text{ for some }x\in\X_\m\}.
\]
We say that $f$ is strong (resp., extra-strong) if there exists an enumeration
$r_1,\dots,r_n$ of $\X_\m$ such that $r_i\not\lrltn rf(r_j)$ (resp., $r_i\not\rltn f(r_j)$) for any $i<j$ and $r\in L_f$.
Clearly, if there exists a strong $\rltn$-matching $f$ then $\m$ satisfies \GLS.
Indeed, taking $\lambda_{i,j}\ne0$ if $(i,j)\in L_f$ and $0$ otherwise, the $f(\X_\m)$-coordinates of $\{\xvec_{i,j}(\lambda):(i,j)\in\X_\m\}$
form a lower triangular matrix with non-zero diagonal entries.

We do not know whether \GLS\ implies the existence of a strong $\rltn$-matching.
\end{remark}

\subsection{Some examples}

\begin{example}
Let $\m=[2,2]+[2,2]+[1,1]+[1,1]$.
Here $\X_\m=\{(3,1),(3,2),(4,1),((4,2)\}$ and $\Y_\m=\{(1,1),(1,2),(2,1),(2,2),(3,3),(3,4),(4,3),(4,4)\}$.
The $\xvec_{i,j}(\lambda)$'s are given by the following table:
\begin{table}[h!]
\begin{tabular}{ c || c | c | c | c | c | c | c | c}
  & $1,1$ & $1,2$ & $2,1$ & $2,2$ & $3,3$ & $3,4$ & $4,3$ & $4,4$ \\
    \hline\hline
    $3,1$ & $-\lambda_{3,1}$ & & $-\lambda_{3,2}$ & & $\lambda_{3,1}$ & $\lambda_{4,1}$ & \\
    $3,2$ & & $-\lambda_{3,1}$ & & $-\lambda_{3,2}$ & $\lambda_{3,2}$ & $\lambda_{4,2}$ & \\
    $4,1$ & $-\lambda_{4,1}$ & & $-\lambda_{4,2}$ & & & & $\lambda_{3,1}$ & $\lambda_{4,1}$ \\
    $4,2$ & & $-\lambda_{4,1}$ &  & $-\lambda_{4,2}$ & & & $\lambda_{3,2}$ & $\lambda_{4,2}$
\end{tabular}
\end{table}

Thus, $(3,1)\mapsto (1,1)$, $(3,2)\mapsto (1,2)$, $(4,1)\mapsto (4,3)$, $(4,2)\mapsto (4,4)$ is a strong $\rltn$-matching, hence $\m$ is \GLS.
However, there does not exist an extra-strong $\rltn$-matching.
\end{example}

Next, we make a simple general observation.
\begin{remark} \label{rem: genrem}
Suppose for simplicity that $b(\Delta_i)\le e(\rshft\Delta)$ for all $i,j$. Denote by $E_{i,j}$ the $k\times k$-matrix whose $(i,j)$-th entry is $1$
and all other entries vanish. The linear span $\yy$ of
\[
\{E_{i,j}:b(\Delta_i)\le b(\Delta_j)\text{ and }e(\Delta_i)\le e(\Delta_j)\}
\]
is a Lie subalgebra of the Lie algebra of $k\times k$ matrices and
$\{E_{i,j}:b(\Delta_j)=e(\rshft\Delta_i)\}$ spans a Lie ideal $\zz$ of $\yy$.
We can identify $\Lieg_\m$ with the quotient $\yy/\zz$ and
$C_\m$ with the linear span $\xx$ of $\{E_{i,j}:(i,j)\in \X_\m\}$ which is a nilpotent Lie ideal of $\yy$ whose center contains $\zz$.
The condition \GLS\ is that there exists $h\in\xx$ such that $[\yy,h]=\xx$. Equivalently, if $G$ (resp., $H$) is the subgroup of $\GL_k(\C)$
corresponding to $\yy$ (resp., $\xx$) the condition is that $G$ acts (by conjugation) with an open orbit on $H$.
\end{remark}

\begin{example}
Suppose that $\m=\Delta_1+\dots+\Delta_k$ is a ladder. Assume for simplicity that $\Delta_k\prec\Delta_1$.
Then $\m$ satisfies \GLS. Indeed, in view of Remark \ref{rem: genrem} this reflects the fact that the Borel subgroup of $\GL_k(\C)$
acts (by conjugation) on its unipotent radical with an open orbit (given by $u_{1,2}\dots u_{k-1,k}\ne0$).
Alternatively, $\X_\m=\{(i,j):1\le j<i\le k\}$, $\Y_\m\cup\{(k,1)\}=\{(i,j):1\le j\le i\le k\}$ and the map $f((i,j))=(i-1,j)$ is a strong $\rltn$-matching.
(We enumerate $\X_\m$ as $(k,k-1),\dots,(2,1),(k,k-2),\dots,(3,1),\dots,(k,2),(k-1,1),(k,1)$.)
Note that in this case $p_\rightarrow^{-1}(\rshft\OO_\m)\cap p_\leftarrow^{-1}(\lshft\OO_{\m^\#})$ itself is a $\GL(V_\m)$-orbit.
\end{example}

\begin{example}
For $k>2$ consider
\begin{equation}
\m=[k-1,2k-2]+[k,2k-3]+\sum_{i=1}^{k-2}[k-1-i,2k-3-i].
\end{equation}
Here is a drawing for $k=7$:
\[
\xymatrix@=0.6em{
&&&&&\circ\ar@{-}[r]&\circ\ar@{-}[r]&\circ\ar@{-}[r]&\circ\ar@{-}[r]&\circ\ar@{-}[r]&\circ\ar@{-}[r]&\circ\\
&&&&&&\circ\ar@{-}[r]&\circ\ar@{-}[r]&\circ\ar@{-}[r]&\circ\ar@{-}[r]&\circ\\
&&&&\circ\ar@{-}[r]&\circ\ar@{-}[r]&\circ\ar@{-}[r]&\circ\ar@{-}[r]&\circ\ar@{-}[r]&\circ\\
&&&\circ\ar@{-}[r]&\circ\ar@{-}[r]&\circ\ar@{-}[r]&\circ\ar@{-}[r]&\circ\ar@{-}[r]&\circ\\
&&\circ\ar@{-}[r]&\circ\ar@{-}[r]&\circ\ar@{-}[r]&\circ\ar@{-}[r]&\circ\ar@{-}[r]&\circ\\
&\circ\ar@{-}[r]&\circ\ar@{-}[r]&\circ\ar@{-}[r]&\circ\ar@{-}[r]&\circ\ar@{-}[r]&\circ\\
\circ\ar@{-}[r]&\circ\ar@{-}[r]&\circ\ar@{-}[r]&\circ\ar@{-}[r]&\circ\ar@{-}[r]&\circ}
\]
We claim that $\m$ satisfies \GLS.
Let $P=M\ltimes U$ be the standard parabolic subgroup of $\GL_k(\C)$ of type $(2,1,\dots,1)$
and let $P'$ be the subgroup $T\ltimes U$ of $P$ of codimension $2$ where $T$ is the diagonal torus.
Thus, $P'$ is the inverse image of $T$ under the projection $\pr_M:P\rightarrow M$.
In view of Remark \ref{rem: genrem} we need to check that
\begin{equation} \label{eq: P'openorb}
\text{$P'$ has an open orbit (by conjuagtion) on $U$.}
\end{equation}
Indeed, the element $y=I_k+\sum_{i=2}^{k-1}E_{i,i+1}$ is a Richardson element with respect to $P$ with centralizer
\[
P_y=\{g\in P:g_{1,i}=0\text{ for }1<i<k,\ g_{i,j}=g_{i+1,j+1}\text{ if }1<i\le j<k\},
\]
a group of dimension $\dim M=k+2$.
For instance, for $k=7$
\[
P_y=\{\begin{pmatrix}
a&&&&&&b\\
c&d&e&f&g&h&i\\
&&d&e&f&g&h\\
&&&d&e&f&g\\
&&&&d&e&f\\
&&&&&d&e\\
&&&&&&d
\end{pmatrix}:a,b,c,d,e,f,g,h,i\in\C,ad\ne0\}.
\]
Note that the dimension of $\pr_M(P_y)$ is $3$. Suppose that $x=pyp^{-1}$ for $p\in P$.
Then
\begin{multline*}
\pr_M(P'_x)=\pr_M(P_x\cap P')=\pr_M(pP_yp^{-1}\cap P')=\pr_M(pP_yp^{-1})\cap T\\=\pr_M(p)\pr_M(P_y)\pr_M(p)^{-1}\cap T
\end{multline*}
which is the group of scalar matrices provided that $p_{1,2}p_{2,2}\ne0$.
Hence, under this condition, $P'_x$ is of codimension $2$ in $P_x$ and the assertion \eqref{eq: P'openorb} follows.

Note that the element of $C_\m$ corresponding to $E_{1,3}+\sum_{i=3}^{k-1}E_{i,i+1}$ belongs to $\OO_\m^\#$ but
its orbit under $G_\m$ is not open in $C_\m$. Hence, in this case $p_\rightarrow^{-1}(\rshft\OO_\m)\cap p_\leftarrow^{-1}(\lshft\OO_{\m^\#})$ is not a $\GL(V_\m)$-orbit.

Alternatively, we can deduce that $\m$ satisfies \GLS\ by observing that
\[
\X_\m=\{(i,j):1\le j<i\le k\}\setminus\{(2,1)\},\ \ \Y_\m=\{(i,j):1\le j\le i\le k\}\setminus\{(k,2),(2,1)\}
\]
and the map $f((j,i))=(j-1,i)$ if $(j,i)\ne (3,1)$ and $f((3,1))=(1,1)$ is a strong $\rltn$-matching.
(We enumerate $\X_\m$ as $(k,k-1),\dots,(3,2),(k,k-2),\dots,(3,1),\dots,(k,2),(k-1,1),(k,1)$.)

\end{example}

\begin{example} \label{exam: 34*12}
For $k>4$ consider
\begin{equation} \label{eq: mincase34*12}
\m=[k-1,2k-2]+[k,2k-3]+\sum_{i=1}^{k-4}[k-1-i,2k-3-i]+[1,k]+[2,k-1].
\end{equation}
Here is a drawing for $k=7$:
\[
\xymatrix@=0.6em{
&&&&&\circ\ar@{-}[r]&\circ\ar@{-}[r]&\circ\ar@{-}[r]&\circ\ar@{-}[r]&\circ\ar@{-}[r]&\circ\ar@{-}[r]&\circ\\
&&&&&&\circ\ar@{-}[r]&\circ\ar@{-}[r]&\circ\ar@{-}[r]&\circ\ar@{-}[r]&\circ\\
&&&&\circ\ar@{-}[r]&\circ\ar@{-}[r]&\circ\ar@{-}[r]&\circ\ar@{-}[r]&\circ\ar@{-}[r]&\circ\\
&&&\circ\ar@{-}[r]&\circ\ar@{-}[r]&\circ\ar@{-}[r]&\circ\ar@{-}[r]&\circ\ar@{-}[r]&\circ\\
&&\circ\ar@{-}[r]&\circ\ar@{-}[r]&\circ\ar@{-}[r]&\circ\ar@{-}[r]&\circ\ar@{-}[r]&\circ\\
\circ\ar@{-}[r]&\circ\ar@{-}[r]&\circ\ar@{-}[r]&\circ\ar@{-}[r]&\circ\ar@{-}[r]&\circ\ar@{-}[r]&\circ\\
&\circ\ar@{-}[r]&\circ\ar@{-}[r]&\circ\ar@{-}[r]&\circ\ar@{-}[r]&\circ}
\]
We show that $\m$ does not satisfy \GLS.
Let $P=M\ltimes U$ be the standard parabolic subgroup of $\GL_k(\C)$ of type $(2,1,\dots,1,2)$
and let $P'$ be the subgroup $T\ltimes U$ of $P$ of codimension $4$.
As before, $P'$ is the inverse image of $T$ under the projection $\pr_M:P\rightarrow M$.
In view of Remark \ref{rem: genrem} we need to show that
\begin{equation} \label{eq: P'nopenorb}
\text{$P'$ does not have an open orbit (by conjuagtion) on $U$.}
\end{equation}
Suppose on the contrary that $P'$ admits an open orbit $\OO$.
Clearly, $\OO$ is contained in the Richardson orbit of $P$. Fix a representative $x\in\OO$.
Then the centralizer $P'_x$ of $x$ in $P'$ is of codimension $4$ in the centralizer $P_x$ of $x$ in $P$
and hence (since $P'\supset U$) $\pr_M(P'_x)$ is of codimension $4$ in $\pr_M(P_x)$.
However, the element $y=I_k+\sum_{i=2}^{k-2}E_{i,i+1}$ is a Richardson element with respect to $P$ and
\begin{multline*}
P_y=\{g\in P:g_{1,i}=g_{k+1-i,k}=0\text{ for }1<i<k-2,\\g_{2,1}=g_{3,k},\ g_{k,k-1}=g_{1,k-2},\ g_{1,1}=g_{k,k},\
g_{i,j}=g_{i+1,j+1}\text{ for }1<i\le j<k-1\},
\end{multline*}
a group of dimension $\dim M=k+4$.
For instance, for $k=7$
\[
P_y=\{\begin{pmatrix}
a&&&&b&c&d\\
e&p&q&r&s&t&f\\
&&p&q&r&s&e\\
&&&p&q&r&\\
&&&&p&q&\\
&&&&&p&\\
&&&&&b&a
\end{pmatrix}:a,b,c,d,e,f,p,q,r,s,t\in\C,ap\ne0\}.
\]
Note that the dimension of $\pr_M(P_y)$ is $4$. Suppose that $x=pyp^{-1}$ for $p\in P$.
Then as before
\[
\pr_M(P'_x)=\pr_M(p)\pr_M(P_y)\pr_M(p)^{-1}\cap T
\]
and the latter contains the group of scalar matrices. In particular, $\pr_M(P'_x)$ cannot be $0$-dimensional.
The assertion \eqref{eq: P'nopenorb} follows.
\end{example}

We can also give a simple necessary condition for a multisegment to satisfy \GLS.
\begin{remark} \label{rem: kirred}
We say that a pair $(i,j)\in \X_\m$ is irreducible if $N_{i,j}=\{(i,i),(j,j)\}$.
In this case $\xvec_{i,j}(\lambda)=\lambda_{i,j}(e_{i,i}-e_{j,j})$, and in particular $\xvec_{i,j}(\lambda)$ belongs to the $k-1$-dimensional space
\[
\{\sum_{i=1}^k\alpha_ie_{i,i}:\sum_{i=1}^k\alpha_i=0\}.
\]
Thus, if the number of irreducible pairs in $\X_\m$ is at least $k$ then $\m$ does not satisfy \GLS.

For instance for Leclerc's example $\m=[3,4]+[1,3]+[2,2]+[0,1]$ the set $\X_\m=\{(2,1),(3,1),(4,2),(4,3)\}$ consists entirely
of irreducible pairs. Hence $\m$ does not satisfy \GLS\ even though there is a $\rltn$-matching from $\X_\m$ to $\Y_\m$.
More examples of this kind will be given later. (See Remark \ref{rem: types not GLS}.)

On the other hand, in Example \ref{exam: 34*12} above, the only irreducible pairs are $(3,1)$, $(3,2)$, $(k-1,k-2)$, $(k,k-2)$ so the argument
does not apply in this case.
\end{remark}

\subsection{}
Finally, we provide a few sanity checks for Conjecture \ref{conj: GLS}.
\begin{remark} \label{rem: contractGLS}
Under the assumptions of Proposition \ref{prop: contract} the conditions \GLS\ for $\m$ and $\m'$ are equivalent (in fact identical).
\end{remark}

\begin{lemma} \label{lem: lderivm}
If $\m$ is \GLS\ then so are $\lderiv_{\rho}(\m)$ and $\rderiv_{\rho}(\m)$ for any $\rho\in\Cusp$.
\end{lemma}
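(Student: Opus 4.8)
The plan is to prove the statement for $\lderiv_\rho$; the case of $\rderiv_\rho$ is entirely parallel --- it is the same argument run in the $p_\leftarrow$-picture permitted by Remark~\ref{rem: leftrightarrow}. We may assume $\rho\in\lnrset(\m)$, as otherwise $\lderiv_\rho(\m)=\m$. Write $\m'=\lderiv_\rho(\m)$. By Lemma~\ref{lem: desclder}, $\m'$ is obtained from $\m=\Delta_1+\dots+\Delta_k$ by replacing each segment in a canonically determined sub-multiset $\{\Delta_j:j\in J\}$ of the segments of $\m$ beginning with $\rho$ by ${}^-\Delta_j$, where $m:=\#J=\lmlt_\rho(\m)$. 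In the quiver picture of \S\ref{sec: GLSconj}, each ${}^-\Delta_j$ is the unique submodule of the segment module $M_{\Delta_j}$ with simple quotient the module $S_\rho$ supported at $\rho$; hence $M_{\m'}$ is a submodule of $M_\m$ with $M_\m/M_{\m'}\cong S_\rho^{\oplus m}$, so that $(V_{\m'})_{\rho'}=(V_\m)_{\rho'}$ for $\rho'\neq\rho$ and $(V_{\m'})_\rho$ has codimension $m$ in $(V_\m)_\rho$.

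The argument is cleanest when no segment of $\m$ begins with $\rshft\rho$; since $\rho\in\lnrset(\m)$, this forces $\{\Delta_j:j\in J\}$ to be \emph{all} the segments of $\m$ beginning with $\rho$, so that $M_{\m'}=\bigcap_{\varphi\in\Hom(M_\m,S_\rho)}\Ker\varphi$ is a characteristic submodule of $M_\m$. Every $g\in G_\m$ then preserves $M_{\m'}$, giving a homomorphism $G_\m\to G_{\m'}$; and every $B\in C_\m$ preserves $M_{\m'}$ as well: away from grade $\rho$ this is automatic, while at grade $\rshft\rho$ one uses that here $A_\m$ maps $(V_\m)_\rho$ onto $(V_\m)_{\rshft\rho}$, so that writing $v\in(V_\m)_{\rshft\rho}$ as $v=A_\m w$ gives $Bv=BA_\m w=A_\m B w\in A_\m\bigl((V_\m)_{\lshft\rho}\bigr)=(V_{\m'})_\rho$. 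Restriction $B\mapsto B|_{M_{\m'}}$ is thus a $G_\m$-equivariant linear map $C_\m\to C_{\m'}$, and a short direct check shows it is surjective. Hence an open $G_\m$-orbit in $C_\m$ has dense image in $C_{\m'}$, which lies in a single $G_{\m'}$-orbit; that orbit is then dense, hence open, so $\m'$ satisfies \GLS.

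In general I would transport the witness directly through the linear criterion of Definition~\ref{def: GLS}: using Remarks~\ref{rem: Gmorb} and~\ref{rem: neighij}, pick $\lambda\in C_\m$ with all coordinates nonzero and $\{\xvec_{i,j}(\lambda):(i,j)\in\X_\m\}$ linearly independent in $\C^{\Y_\m}$, and take $\lambda'\in C_{\m'}$ keeping the surviving coordinates and choosing the others generically. One then needs: (a) an explicit comparison of $\X_{\m'},\Y_{\m'}$ with $\X_\m,\Y_\m$ from the inequality description of $\prec$ for integer segments --- shrinking $\Delta_j\mapsto{}^-\Delta_j$ destroys only relations $\Delta_j\prec\Delta_i$ and creates only relations $\Delta_i\prec{}^-\Delta_j$ with $b(\Delta_i)=\rho$, while $\Y$ loses the pairs $(j,i)$ with $j\in J$, $b(\Delta_i)=\rho$ but \emph{gains} pairs $(i,j)$ with $j\in J$ and $\Delta_i$ beginning at $\rshft\rho$ and ending no later than $\Delta_j$ --- and (b) a verification that $\{\xvec_{i,j}(\lambda'):(i,j)\in\X_{\m'}\}$ still has full rank in $\C^{\Y_{\m'}}$. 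Step (b) is the main obstacle: the new family is not literally a sub-family of the old one because of the extra coordinate directions appearing in $\Y_{\m'}$. I would handle it by first reducing to shrinking one segment at a time (smallest first), so that only one segment changes per step, and then enumerating $\X_{\m'}$ so that these extra directions enter the vectors $\xvec_{i,j}(\lambda')$ in a triangular pattern that can be eliminated without lowering the rank --- the residual subtlety being that after one such step the remaining shrinks need not be the full set of $\rho$-segments of the new multisegment, so the conceptual argument of the previous paragraph does not simply iterate.
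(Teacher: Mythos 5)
Your second paragraph is correct as far as it goes, and it is a genuinely different (more module-theoretic) argument than the paper's in that special case: when no segment of $\m$ begins with $\rshft\rho$ (so that, in the notation of Lemma~\ref{lem: desclder}, $I=\emptyset$ and $J=I_\rho$), the subspace $V_{\m'}\subset V_\m$ is indeed preserved by $G_\m$ and by every $B\in C_\m$, restriction gives a surjective equivariant linear map $C_\m\to C_{\m'}$, and an open orbit upstairs forces a dense, hence open, orbit downstairs. But the content of the lemma lies in the general case $I\neq\emptyset$, and there your proposal stops at an announced plan: you yourself flag that the rank verification (your step (b)) is unresolved, and the reductions you suggest do not obviously help. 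Shrinking one segment at a time is problematic because the set $J$ of segments that get shortened is determined globally by the matching $f$ of Lemma~\ref{lem: desclder}; the intermediate multisegments are not derivatives, and there is no a priori reason that each one-segment shrink preserves \GLS. Your bookkeeping in (a) is also not quite right: passing from $\Delta_j$ to ${}^-\Delta_j$ in addition destroys pairs $(i,j)\in\X_\m$ with $e(\Delta_i)=\lshft\rho$ and pairs $(i,j)\in\Y_\m$ with $e(\Delta_i)=\rho$ (the sets called $B$ and $\tilde B$ in the paper's proof), and one must show these losses are harmless.

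What is missing from your sketch, and what the paper's proof supplies, is a normalization of the witness \emph{before} transporting it. Using the $G_\m$-invariance of the independence condition (Remark~\ref{rem: Gmorb}), the paper first replaces $\lambda$ by a conjugate whose coordinates $\lambda_{j,f(i)}$ equal $\delta_{i,j}$ for the relevant indices $j\in J\cup I$ (condition \eqref{eq: speclambda}); this is achieved by induction over $I$ with explicit elements of $G_\m$ built from the properties of $f$ in Lemma~\ref{lem: desclder}. Only after this normalization does the comparison acquire the needed block-triangular shape: the rows indexed by $A=\X_\m\cap(J\times f(I))$ have their nonzero entries confined to $\tilde A=\Y_\m\cap(J\times I)$ with $\#A=\#\tilde A$; the rows of $\X_\m\cap\X_{\m'}$ have no entries in the disappearing columns $\tilde B$; and, defining $\lambda'$ to agree with $\lambda$ on $\X_\m\cap\X_{\m'}$ and to \emph{vanish} on the new pairs $A'$ (not to be generic there), the new rows $A'$ are supported on the new columns $\widetilde{A'}$ and are unitriangular in a suitable enumeration. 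Choosing the new coordinates generically, as you propose, yields none of this structure for free, and without it the full rank of $\{\xvec'_{i,j}(\lambda'):(i,j)\in\X_{\m'}\}$ does not follow from that of $\{\xvec_{i,j}(\lambda):(i,j)\in\X_\m\}$. So the proposal has a genuine gap exactly at the step the paper's normalization is designed to handle; the $\rderiv_\rho$ case by symmetry is fine.
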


\begin{proof}
We prove it for $\m'=\lderiv_{\rho}(\m)$. (The argument for $\rderiv_{\rho}(\m)$ is similar.)
Write $\m=\Delta_1+\dots+\Delta_k$.
Let $I$, $J$ and $f$ be as in Lemma \ref{lem: desclder}. Then
\[
\m'=\Delta_1'+\dots+\Delta'_k\text{ where }\Delta'_j=\begin{cases}^-\Delta_j\text{ (possibly empty)}&\text{if }j\in J,\\
\Delta_j&\text{otherwise.}\end{cases}
\]
In the course of the proof we will freely use the properties of $f$ described in Lemma \ref{lem: desclder} without further notice. Let
\[
A=\X_\m\cap (J\times f(I)),\ \ \tilde A=\Y_\m\cap (J\times I).
\]
Then $(j,i)\mapsto (j,f(i))$ is a bijection between $\tilde A$ and $A$ and in particular $\#A=\#\tilde A$.
For simplicity we order the $\Delta_i$'s by $\ge_e$ and write $I_{\ge i}=\{j\in I:j\ge i\}$.
We first show that there exists $\lambda\in C_\m$ for which $\{\xvec_{i,j}(\lambda):(i,j)\in\X_\m\}$ (as in \eqref{eq: cxcy})
are linearly independent and for all $i\in I$ we have
\begin{equation} \label{eq: speclambda}
\lambda_{j,f(i)}=\delta_{i,j}\text{ for any $j\in J\cup I_{\ge i}$ such that }(j,f(i))\in \X_\m.
\end{equation}
To that end, we prove by induction on $l\ge0$ that there exists $\lambda\in C_\m$ for which $\{\xvec_{i,j}(\lambda):(i,j)\in\X_\m\}$
are linearly independent and \eqref{eq: speclambda} is satisfied for the first $l$ elements of $I$.
The base of the induction is the assumption that $\m$ satisfies \GLS.
For the induction step, let $r\in I$ and suppose that $\tilde\lambda\in C_\m$ is such that $\{\xvec_{i,j}(\tilde\lambda):(i,j)\in\X_\m\}$ are linearly independent
and \eqref{eq: speclambda} holds for $\tilde\lambda$ for all $i\in I$ with $i<r$.
We may assume in addition that $\tilde\lambda_{r,f(r)}\ne0$. 
Let $g\in G_\m$ be the element given by
\[
g\xbasis_{\rho'}^i=\begin{cases}\sum_{j\in J\cup I_{\ge r}:(j,r)\in \Y_\m\text{ and }\rho'\in\Delta_j}\tilde\lambda_{j,f(r)}\xbasis_{\rho'}^j&\text{if }i=r\\
\xbasis_{\rho'}^i&\text{otherwise.}\end{cases}
\]
Thus,
\[
g_{j,i}=\begin{cases}\tilde\lambda_{j,f(r)}&\text{if }i=r\text{ and }j\in J\cup I_{\ge r},\\\delta_{i,j}&\text{otherwise,}\end{cases}\ \ (j,i)\in \Y_\m.
\]
Clearly, $g$ is invertible since $\tilde\lambda_{r,f(r)}\ne0$.
Let $\lambda=g^{-1}\tilde\lambda g$. By Remark \ref{rem: Gmorb} $\{\xvec_{i,j}(\lambda):(i,j)\in\X_\m\}$ are linearly independent.
We show that \eqref{eq: speclambda} is satisfied for all $i\in I$ with $i\le r$.
Fix $i\in I$. Clearly, $g\xbasis_{\rshft{\rho}}^{f(i)}=\xbasis_{\rshft{\rho}}^{f(i)}$.
If $i<r$ then by induction hypothesis, the coordinate of $\tilde\lambda \xbasis_{\rshft{\rho}}^{f(i)}$ at $x_\rho^i$ is $1$
and the coordinates at $x_\rho^j$, $j\in J\cup I_{>i}$, and in particular at $j=r$, vanish. Thus,
$\lambda \xbasis_{\rshft{\rho}}^{f(i)}=\tilde\lambda\xbasis_{\rshft{\rho}}^{f(i)}$ and \eqref{eq: speclambda} is satisfied.
On the other hand, we can write $\tilde\lambda\xbasis_{\rshft\rho}^{f(r)}=\xi_1+\xi_2$ where
\[
\xi_1=\sum_{j\in J\cup I_{\ge r}:(j,f(r))\in \X_\m}\tilde\lambda_{j,f(r)}\xbasis_\rho^j\ \ \ \text{   and   }\ \ \
\xi_2=\sum_{j\notin J\cup I_{\ge r}:(j,f(r))\in \X_\m}\tilde\lambda_{j,f(r)}\xbasis_\rho^j=g\xi_2.
\]
Note that if $j\in J\cup I_{\ge r}$ then $(j,r)\in \Y_\m\iff(j,f(r))\in \X_\m$. Thus, $\xi_1=g\xbasis_\rho^i$ and
hence $\lambda\xbasis_{\rshft\rho}^{f(r)}=\xbasis_\rho^r+\xi_2$. It follows that $\lambda$ satisfies \eqref{eq: speclambda} for $i=r$ as well,
completing the induction step.

Now let
\begin{gather*}
A'=\X_{\m'}\cap (I\times J),\ \ \widetilde{A'}=\Y_{\m'}\cap(f(I)\times J),\\
B=\X_\m\cap(\{i:e(\Delta_i)=\lshft\rho\}\times J), \ \ \tilde B=\Y_\m\cap(\{i:e(\Delta_i)=\rho\}\times J).
\end{gather*}
As before, $(i,j)\mapsto (f(i),j)$ is a bijection between $A'$ and $\widetilde{A'}$.
In particular, $\#A'=\#\widetilde{A'}$. It is also easy to see that
\[
\X_{\m'}\setminus\X_\m=A',\ \X_\m\setminus\X_{\m'}=A\cup B,\ \Y_{\m'}\setminus\Y_\m=\widetilde{A'},\ \ \Y_\m\setminus\Y_{\m'}=\tilde A\cup\tilde B.
\]
Suppose that $\lambda$ satisfies \eqref{eq: speclambda} for all $i\in I$. We claim that
\begin{enumerate}
\item \label{part: BB'} $N_{i,j}(\lambda)\subset\tilde A$ for any $(i,j)\in A$.
\item $N_{i,j}(\lambda)\cap\tilde B=\emptyset$ for any $(i,j)\in \X_\m\setminus B$.
\item If $(i',j')\lrltn{(i,j)}(i'',j'')$ with $(i',j')\in \X_\m\setminus B$ and $(i,j)\in B$ then $i'=i$
and $(i'',j'')=(j,j')\in\tilde A$.
\end{enumerate}

The first part follows from \eqref{eq: speclambda}.

For the second part, assume on the contrary that $(i,j)\in \X_\m\setminus B$ and
$(i',j')\in N_{i,j}(\lambda)\cap\tilde B$. In particular, $(i,j)\rltn(i',j')$.
By \eqref{eq: speclambda} we cannot have $i'=i$. Hence $j'=j$ which is also impossible since $(i,j)\notin B$.

The third part is also easy.

Now let $\lambda'$  be the element of $C_{\m'}$ whose coordinates are given by
\[
\lambda'_{i,j}=\begin{cases}\lambda_{i,j}&(i,j)\in \X_\m\cap\X_{\m'},\\0&(i,j)\in A',\end{cases}\ \ \ (i,j)\in \X_{\m'}
\]
and let
\[
\xvec'_{i,j}(\lambda')=\sum_{r:(r,j)\in \X_{\m'},(i,r)\in \Y_{\m'}}\lambda'_{r,j}e'_{i,r}-\sum_{s:(s,j)\in \Y_{\m'},(i,s)\in \X_{\m'}}\lambda_{i,s}e'_{s,j},\ \ (i,j)\in \X_{\m'}
\]
where $\{e'_{i,j}:(i,j)\in\Y_{\m'}\}$ is the standard basis for $\C^{\Y_{\m'}}$.
We show that $\{\xvec'_{i,j}(\lambda'):(i,j)\in X_{\m'}\}$ are linearly independent in $\C^{\Y_{\m'}}$.
For any $(i,j)\in \X_\m\cap\X_{\m'}=X_\m\setminus(A\cup B)$ the coordinates of $\xvec_{i,j}(\lambda)$ at $\tilde B$ vanish while the coordinates
at $\Y_\m\cap\Y_{\m'}=\Y_\m\setminus (\tilde A\cup\tilde B)$ coincide with those of $\xvec'_{i,j}(\lambda')$.
On the other hand, the non-zero coordinates of $\xvec'_{i,j}(\lambda')$, $(i,j)\in A'$ are confined to $\widetilde{A'}$.
Moreover, by the assumption on $\lambda$, the square submatrix pertaining to the rows in $A'$ and the columns in $\widetilde{A'}$ is
lower unitriangular for a suitable enumeration of the rows and columns:
the entry of $\xvec'_{i,j}(\lambda')$, $(i,j)\in A'$ in the $(f(i'),j')$-column is $\lambda_{i,f(i')}$ if $j'=j$, $i\le i'$
and $(i,f(i'))\in X_\m$ and $0$ otherwise.
Thus, the linear independence of $\{\xvec'_{i,j}(\lambda'):(i,j)\in X_{\m'}\}$ follows from the fact that the non-zero coordinates of $\xvec_{i,j}(\lambda)$, $(i,j)\in A$
are confined to $\tilde A$ and $\#A=\#\tilde A$. The lemma follows.
\end{proof}

\begin{table}[h!]
\begin{tabular}{ c || c | c | c }
& $\Y_\m\cap\Y_{\m'}$ & $\tilde A$ & $\tilde B$ \\
\hline\hline
$\X_\m\cap\X_{\m'}$ & $M_1$ & $*$ & $0$ \\ \hline
$A$ & $0$ & $M_2$ & $0$ \\ \hline
$B$ & $*$ & $*$ & $*$ \\
\end{tabular}
\quad
\begin{tabular}{ c || c | c }
& $\Y_\m\cap\Y_{\m'}$ & $\tilde A'$ \\
\hline\hline
$\X_\m\cap\X_{\m'}$ & $M_1$ & $*$ \\ \hline
$A'$ & $0$ & $M_3$ \\
\end{tabular}
\caption{Comparing $\xvec_{i,j}(\lambda)$ (left) and $\xvec'_{i,j}(\lambda')$ (right) for $\lambda$, $\lambda'$ as above. The full-rank matrix $M_1$ is common to both
while $M_2$ and $M_3$ are unitriangular up to permutation}
\end{table}

\begin{remark} \label{rem: GLS^t}
The condition \GLS\ is clearly invariant under $\m\mapsto\m^\vee$. It is also invariant under $\m\mapsto\m^\#$.
(This follows from Remark \ref{rem: leftrightarrow} and the fact that $p_\rightarrow^{-1}(\rshft{\OO}_\m)\cap p_\leftarrow^{-1}(\lshft{\OO}_{\m^\#})$
is non-empty and open in $p_\rightarrow^{-1}(\rshft{\OO}_\m)$.)
\end{remark}

\begin{remark} \label{rem: GLScomb}
Suppose that $\Delta$ is a detachable segment of a multisegment $\m$. (See Definition \ref{def: detachable}.)
It is clear that if $\m$ satisfies \GLS\ then so does $\m-\Delta$. Indeed, suppose that $\m=\Delta_1+\dots+\Delta_k$ with $\Delta=\Delta_k$
and let $\m'=\m-\Delta$. Thus, $\X_{\m'}=\{(i,j)\in \X_\m:i,j\ne k\}$ and similarly for $\Y_{\m'}$.
Since $\Delta$ is detachable, $N_{i,j}\subset \Y_{\m'}$ for all $(i,j)\in \X_{\m'}$.
The claim follows.
\end{remark}

\part{}
In the second part of the paper we state and prove our main result, which is to characterize, for regular multisegments $\m$,
the condition that $\zele{\m}\in\IrrS$ and in particular to show that Conjecture \ref{conj: GLS} holds in this case.
This will involve both geometry and combinatorics. In the next couple of sections we recall the interplay between the two in the context
of Schubert varieties and interpret it for the case of regular multisegments.

\section{Smooth pairs} \label{sec: smth pairs}

In this section we recall some well-known facts about singularities of Schubert varieties of type A.

\subsection{}

Let $B_k$ be the Borel subgroup of upper triangular matrices in $\GL_k$ over $\C$ and consider the $B_k$ action on the flag
variety $B_k\bs\GL_k$. For any $\sigma\in S_k$
let $\cell_\sigma$ be the corresponding Schubert cell, i.e., the $B_k$-orbit of the permutation matrix corresponding to $\sigma$,
and let $X_\sigma$ be the corresponding Schubert variety (the Zariski closure of $\cell_\sigma$).
Recall that $\cell_\sigma$ is open in $X_\sigma$ and has dimension $\ell(\sigma)=\#\{i<j:\sigma(i)>\sigma(j)\}$.
Also, $\cell_{\sigma_0}\subset X_\sigma$ if and only if $\sigma_0\le\sigma$ in the Bruhat order. We write $[\sigma_0,\sigma]$ for the Bruhat interval
\[
[\sigma_0,\sigma]=\{\sigma_1\in S_k:\sigma_0\le\sigma_1\le\sigma\}.
\]
\begin{definition}
Let $\sigma,\sigma_0\in S_k$ with $\sigma\ge\sigma_0$.
We say that $(\sigma,\sigma_0)$ is a smooth pair if
$\cell_{\sigma_0}$ is contained in the smooth locus $X_\sigma^{\smth}$ of $X_\sigma$.
\end{definition}

In particular, if $\sigma_0=e$ then $(\sigma,\sigma_0)$ is a smooth pair if and only if $X_\sigma$ is smooth.
In this case we simply say that $\sigma$ is smooth.

Since $X_\sigma^{\smth}$ is open in $X_\sigma$,
\begin{equation} \label{eq: redrelsmth}
\text{if $(\sigma,\sigma_0)$ is a smooth pair and $\sigma_1\in [\sigma_0,\sigma]$ then $(\sigma,\sigma_1)$ is also a smooth pair.}
\end{equation}

It is known that $(\sigma,\sigma_0)$ is a smooth pair if and only if the Kazhdan--Lusztig polynomial
$P_{\sigma_0,\sigma}$ with respect to $S_k$ is $1$. (See \cite{MR1782635}.)
However, there is a much simpler well-known combinatorial criterion for smoothness which we recall next.

Denote by $\rflx$ the set of reflexions in the symmetric group $S_k$.
The elements of $\rflx$ are the transpositions $t_{i,j}$, $1\le i<j\le k$.
Recall that for any $\sigma\in S_k$ and $i<j$ we have $\sigma t_{i,j}>\sigma$ if and only if
$\sigma(i)<\sigma(j)$, otherwise $\sigma t_{i,j}<\sigma$. Thus,
\[
\#\{t\in\rflx:\sigma t<\sigma\}=\ell(\sigma).
\]

For $\sigma,\sigma_0\in S_k$ with $\sigma\ge\sigma_0$ we define
\[
\rsig(\sigma_0,\sigma)=\{t\in\rflx:\sigma_0t\in [\sigma_0,\sigma]\}
\]
and
\[
\asig(\sigma_0,\sigma)=\{t\in\rflx:\sigma_0t\le\sigma\}=\rsig(\sigma_0,\sigma)\cup\{t_{i,j}:i<j\text{ and }\sigma_0(i)>\sigma_0(j)\}.
\]

The following result due to Lakshmibai--Seshadri is well known and admits many generalizations.
(Cf.~\cite{MR1653040} or \cite{MR1782635} for more details.)

\begin{proposition}[\cite{MR752799}] \label{prop: LS}
The dimension of the tangent space of $X_\sigma$ at any point of $\cell_{\sigma_0}$ is equal to $\#\asig(\sigma_0,\sigma)$.
Thus, $\#\asig(\sigma_0,\sigma)\ge\ell(\sigma)$ (or equivalently, $\#\rsig(\sigma_0,\sigma)\ge\ell(\sigma)-\ell(\sigma_0)$
and equality holds if and only if $(\sigma,\sigma_0)$ is a smooth pair.
\end{proposition}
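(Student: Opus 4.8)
The plan is to compute the Zariski tangent space of $X_\sigma$ at a point of $\cell_{\sigma_0}$; the two remaining assertions are then formal. Since $\cell_{\sigma_0}$ is a single $B_k$-orbit and $B_k$ acts on $X_\sigma$ by automorphisms, the dimension of the tangent space is constant along $\cell_{\sigma_0}$, so it suffices to work at the $T$-fixed point $p_{\sigma_0}\in\cell_{\sigma_0}$, where $T\subset B_k$ is the diagonal torus. The tangent space $T_{p_{\sigma_0}}(B_k\bs\GL_k)$ is a multiplicity-free $T$-module of dimension $\binom k2=\#\rflx$; for each reflection $t\in\rflx$ there is a unique $T$-stable curve $C_t\cong\mathbb{P}^1$ through $p_{\sigma_0}$ and $p_{\sigma_0 t}$, and $t\mapsto L_t:=T_{p_{\sigma_0}}C_t$ is a bijection between $\rflx$ and the set of $1$-dimensional $T$-weight spaces of $T_{p_{\sigma_0}}(B_k\bs\GL_k)$. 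As $X_\sigma$ is $T$-stable, $T_{p_{\sigma_0}}X_\sigma=\bigoplus_{t\in\mathcal S}L_t$ for a unique subset $\mathcal S\subseteq\rflx$, so $\dim T_{p_{\sigma_0}}X_\sigma=\#\mathcal S$ and the first assertion reduces to the identity $\mathcal S=\asig(\sigma_0,\sigma)$, i.e.\ to showing that $L_t\subseteq T_{p_{\sigma_0}}X_\sigma$ if and only if $\sigma_0 t\le\sigma$.

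The inclusion $\asig(\sigma_0,\sigma)\subseteq\mathcal S$ is easy: if $\sigma_0 t\le\sigma$ then, since also $\sigma_0\le\sigma$, both $T$-fixed endpoints $p_{\sigma_0},p_{\sigma_0 t}$ of $C_t$ lie in the closed, $T$-stable variety $X_\sigma$; a $T$-stable curve of $B_k\bs\GL_k$ is contained in a Schubert variety as soon as its two $T$-fixed points are (a standard fact, seen by identifying $C_t$ with a Schubert curve inside a suitable minimal-parabolic Schubert variety, or by a direct argument in the Bruhat order), hence $C_t\subseteq X_\sigma$ and $L_t=T_{p_{\sigma_0}}C_t\subseteq T_{p_{\sigma_0}}X_\sigma$. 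This already yields $\#\asig(\sigma_0,\sigma)\le\dim T_{p_{\sigma_0}}X_\sigma$.

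The reverse inclusion $\mathcal S\subseteq\asig(\sigma_0,\sigma)$ is the substantial point, and here I would argue in an explicit chart. Let $\Omega\cong\mathbb{A}^{\binom k2}$ be the $T$-stable affine open neighbourhood of $p_{\sigma_0}$ obtained by translating the big cell, with linear coordinates $x_t$ ($t\in\rflx$) dual to the weight lines $L_t$. Realizing $X_\sigma$ as the locus of flags $V_\bullet$ with $\dim(V_i\cap E_j)\ge r_\sigma(i,j)$ for the standard flag $E_\bullet$, the defining conditions become, inside $\Omega$, explicit equations expressing the vanishing of certain minors, and $T_{p_{\sigma_0}}(X_\sigma\cap\Omega)$ is cut out of $\Omega$ by the differentials of these equations at the origin; equivalently it is the span of the coordinates $x_t$ that do not occur linearly in any of them. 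The remaining task is the combinatorial verification---using Fulton's essential set to isolate the minimal defining inequalities together with the explicit shape of the rank matrix $r_\sigma$---that the coordinate $x_t$ survives in $T_{p_{\sigma_0}}X_\sigma$ precisely when $\sigma_0 t\le\sigma$. This bookkeeping is the main obstacle; it is essentially the content of Lakshmibai--Seshadri's argument. As an organizing device one may first split $T_{p_{\sigma_0}}X_\sigma$ into positive and negative weight parts for a generic one-parameter subgroup $\lambda$: the $\lambda$-attracting set of $X_\sigma$ at $p_{\sigma_0}$ is the smooth cell $\cell_{\sigma_0}$, contributing $\ell(\sigma_0)$ to the positive part, and the negative part is the tangent space of the open Richardson variety $X_\sigma\cap\cell_{\sigma_0}^{-}$, of dimension $\ell(\sigma)-\ell(\sigma_0)$; this reproves $\dim T_{p_{\sigma_0}}X_\sigma\ge\ell(\sigma)$ cleanly but, the Richardson variety being possibly singular, the exact count still requires the minor computation above.

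Finally, granting $\dim T_p X_\sigma=\#\asig(\sigma_0,\sigma)$ for $p\in\cell_{\sigma_0}$: since $X_\sigma$ is irreducible of dimension $\ell(\sigma)$, every tangent space has dimension at least $\ell(\sigma)$, giving $\#\asig(\sigma_0,\sigma)\ge\ell(\sigma)$; and this is equivalent to $\#\rsig(\sigma_0,\sigma)\ge\ell(\sigma)-\ell(\sigma_0)$ because $\asig(\sigma_0,\sigma)$ is the disjoint union of $\rsig(\sigma_0,\sigma)$ and $\{t\in\rflx:\sigma_0 t<\sigma_0\}$, the latter having exactly $\ell(\sigma_0)$ elements. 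Likewise, a point of the irreducible variety $X_\sigma$ is smooth if and only if its tangent space has dimension $\ell(\sigma)$; since this dimension is constant on $\cell_{\sigma_0}$, we conclude that $\cell_{\sigma_0}\subseteq X_\sigma^\smth$---i.e.\ $(\sigma,\sigma_0)$ is a smooth pair---if and only if $\#\asig(\sigma_0,\sigma)=\ell(\sigma)$.
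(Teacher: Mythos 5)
Your framework is the standard equivariant one (Carrell--Peterson/Lakshmibai--Seshadri), and the parts you actually carry out are fine: reduction to the $T$-fixed point $p_{\sigma_0}$ by homogeneity of $\cell_{\sigma_0}$, the multiplicity-free weight decomposition of $T_{p_{\sigma_0}}(B_k\bs\GL_k)$ into lines $L_t$ indexed by $\rflx$, the fact that $T_{p_{\sigma_0}}X_\sigma$ is a sum of such lines, the inclusion $\asig(\sigma_0,\sigma)\subseteq\mathcal S$ via $T$-stable curves, and the formal deductions at the end (the decomposition $\asig(\sigma_0,\sigma)=\rsig(\sigma_0,\sigma)\sqcup\{t:\sigma_0t<\sigma_0\}$ and the smoothness criterion, \emph{granted} the dimension formula). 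But the substance of the proposition is precisely the equality $\dim T_{p_{\sigma_0}}X_\sigma=\#\asig(\sigma_0,\sigma)$, i.e.\ the reverse inclusion $\mathcal S\subseteq\asig(\sigma_0,\sigma)$, and this you do not prove: you set up a chart, invoke rank conditions, minors and Fulton's essential set, and then explicitly defer the verification as ``bookkeeping'' which ``is essentially the content of Lakshmibai--Seshadri's argument.'' That computation is not routine bookkeeping --- it \emph{is} the theorem. Note moreover that the half you do prove gives the bound in the unusable direction: from $\#\asig(\sigma_0,\sigma)\le\dim T_{p_{\sigma_0}}X_\sigma$ alone one cannot deduce the inequality $\#\asig(\sigma_0,\sigma)\ge\ell(\sigma)$, nor either implication of the smoothness criterion (both require the upper bound $\dim T_{p_{\sigma_0}}X_\sigma\le\#\asig(\sigma_0,\sigma)$). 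The Richardson-variety aside only reproves $\dim T_{p_{\sigma_0}}X_\sigma\ge\ell(\sigma)$, which already follows from irreducibility and does not help.

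For comparison: the paper does not prove this statement at all. It is quoted as a known result of Lakshmibai--Seshadri \cite{MR752799}, with \cite{MR1653040} and \cite{MR1782635} indicated for details, and with the remark that the inequality $\#\asig(\sigma_0,\sigma)\ge\ell(\sigma)$ (and its stability along $[\sigma_0,\sigma]$) also has a purely combinatorial proof via \cite[Lemma 2.2]{MR1827861} and induction. So your attempt has to stand on its own as a proof, and as written it is an outline of the cited geometric argument with its central computation omitted; to complete it you would have to actually show, in your chart at $p_{\sigma_0}$, that the weight line $L_t$ is killed by the differentials of the rank-condition minors whenever $\sigma_0t\not\le\sigma$ (or supply the combinatorial argument from \cite{MR1827861} for the inequality part).
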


This proposition provides an efficient algorithm for deciding whether a given pair is smooth
since the sets $\rsig(\sigma_0,\sigma)$ and $\asig(\sigma_0,\sigma)$ are easily computable.

We also remark that the inequality $\#\asig(\sigma_0,\sigma)\ge\ell(\sigma)$ and the fact that equality implies that
$\#\asig(\sigma_1,\sigma)=\ell(\sigma)$ for any $\sigma_1\in[\sigma_0,\sigma]$ can also be proved combinatorially
using \cite[Lemma 2.2]{MR1827861} and induction on $\ell(\sigma)-\ell(\sigma_0)$.

\subsection{} \label{sec: minsingBL}
In \cite{MR1051089} a combinatorial criterion for the smoothness of Schubert variety of type $A_n$ was given in terms of pattern avoidance.
Namely, $\sigma$ is smooth if and only if $\sigma$ avoids the pattern $4231$ and $3412$.
Moreover, in the non-smooth case, a conjectural description of the irreducible components of the smooth locus was given as well.
This conjecture was solved independently in \cite{MR1990570, MR1994224, MR2015302, MR1853139}
(with an important earlier contribution in \cite{MR1827861}).
In order to state the main result of these papers we first recall certain permutations introduced in \cite[\S9]{MR1990570}.
For $r,s\ge2$ and $t=1,2,3$, with $s=2$ if $t=3$, let $\tau_{r,s}^{(t)},\delta_{r,s}^{(t)}\in S_k$ with $k=r+s$ be the pairs of permutations given by
\begin{subequations}
\begin{equation} \label{eq: taurs1}
\tau_{r,s}^{(1)}(i)=\begin{cases}k&i=1,\\
r+2-i&1<i\le r,\\
r+k-i&r<i<k,\\
1&i=k.\end{cases}\ \
\delta_{r,s}^{(1)}(i)=\begin{cases}r+1-i&i\le r,\\r+k+1-i&i>r.\end{cases}
\end{equation}
\begin{equation} \label{eq: taurs2}
\tau_{r,s}^{(2)}(i)=\begin{cases}r+1&i=1,\\
r+1-i&1<i<r,\\
k&i=r,\\
1&i=r+1,\\
r+k+1-i&r+1<i<k,\\
r&i=k,
\end{cases}\ \
\delta_{r,s}^{(2)}(i)=\begin{cases}r-i&i<r,\\
r+1&i=r,\\
r&i=r+1,\\
r+k+2-i&i>r+1.\end{cases}
\end{equation}
\begin{equation} \label{eq: taurs3}
\tau_{r,2}^{(3)}(i)=\begin{cases}
r+1&i=1,\\
k&i=2,\\
k+1-i&2<i\le r,\\
1&i=r+1,\\
2&i=k.
\end{cases}\ \
\delta_{r,2}^{(3)}(i)=\begin{cases}
1&i=1,\\
k+1-i&1<i<k,\\
k&i=k.\end{cases}
\end{equation}
\end{subequations}
In the notation of \cite[\S9]{MR1990570} we have $\tau_{r,s}^{(1)}=w_{r,s}$, $\tau_{r,s}^{(2)}=w_{r-1,2,s-1}$,
$\tau_{r,2}^{(3)}=w_{1,r,1}$ and similarly $\delta_{r,s}^{(1)}=x_{r,s}$, $\delta_{r,s}^{(2)}=x_{r-1,2,s-1}$,
$\delta_{r,2}^{(3)}=x_{1,r,1}$.\footnote{Note the following typo in \cite[(9.2)]{MR1990570}: the last entry of $w_{k,m}$, which is $1$, is missing.}
In particular, $\tau_{2,2}^{(2)}=\tau_{2,2}^{(3)}$ and $\delta_{2,2}^{(2)}=\delta_{2,2}^{(3)}$.

It follows from \cite{MR1827861} (cf.~\cite[Theorem 37]{MR1990570}) that
\begin{equation} \label{eq: pairs not smooth}
\text{the pairs $(\tau_{r,s}^{(1)},\delta_{r,s}^{(1)})$, $(\tau_{r,s}^{(2)},\delta_{r,s}^{(2)})$, $(\tau_{r,2}^{(3)},\delta_{r,2}^{(3)})$ are not smooth.}
\end{equation}

For a subset $I\subset\{1,\dots,k\}$ of size $l$ we write $\rmv_I$ for the ``flattened'' permutation in $S_{k-l}$ obtained from $\sigma$ by removing the entries
$(i,\sigma(i))$, $i\in I$ and keeping the relative order of all other entries. In other words, $\rmv_I(\sigma)=\jmath\circ\sigma\circ\imath$ where
$\imath:\{1,\dots,k-l\}\rightarrow\{1,\dots,k\}\setminus I$
and $\jmath:\{1,\dots,k\}\setminus\sigma(I)\rightarrow\{1,\dots,k-l\}$ are the monotone bijections.

\begin{theorem}\cite{MR1990570, MR1994224, MR2015302, MR1853139} \label{thm: BW}
Suppose that $\sigma_0\le\sigma$ but $(\sigma,\sigma_0)$ is not a smooth pair. Then
there exist $\sigma_1\in[\sigma_0,\sigma]$, a subset $I\subset\{1,\dots,k\}$ and integers $r,s\ge2$ and $t=1,2,3$, with $s=2$ if $t=3$, such that
\begin{enumerate}
\item $\sigma_1(i)=\sigma(i)$ for all $i\in I$.
\item $\rmv_I(\sigma)=\tau_{r,s}^{(t)}$ and $\rmv_I(\sigma_1)=\delta_{r,s}^{(t)}$.
\item The Bruhat intervals $[\sigma_1,\sigma]$ and $[\delta_{r,s}^{(t)},\tau_{r,s}^{(t)}]$ are isomorphic as posets.
(Equivalently, $\ell(\sigma)-\ell(\sigma_1)=\ell(\tau_{r,s}^{(t)})-\ell(\delta_{r,s}^{(t)})$.)
\end{enumerate}
(In the terminology of \cite{MR2422304} this means that $[\delta_{r,s}^{(t)},\tau_{r,s}^{(t)}]$ interval pattern embeds into $[\sigma_1,\sigma]$.)
\end{theorem}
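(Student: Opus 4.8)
The plan is to reduce the statement to a description of the \emph{maximal singular points} of $X_\sigma$ and then invoke the known combinatorial classification of such points. First observe that the singular locus $X_\sigma^{\mathrm{sing}}=X_\sigma\setminus X_\sigma^{\smth}$ is a closed $B_k$-stable subset of $X_\sigma$, hence a finite union of Schubert varieties; in particular the set of $\sigma'\le\sigma$ with $\cell_{\sigma'}\subset X_\sigma^{\mathrm{sing}}$ is downward closed in the Bruhat order. Since $(\sigma,\sigma_0)$ is not a smooth pair we have $\cell_{\sigma_0}\subset X_\sigma^{\mathrm{sing}}$, so $\{\sigma'\in[\sigma_0,\sigma]:\cell_{\sigma'}\subset X_\sigma^{\mathrm{sing}}\}$ is non-empty; one checks, using that the maximal singular points are pairwise incomparable, that any Bruhat-maximal element $\sigma_1$ of this set is in fact a maximal singular point, i.e.\ $X_{\sigma_1}$ is an irreducible component of $X_\sigma^{\mathrm{sing}}$, and of course $\sigma_1\ge\sigma_0$. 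It therefore suffices to produce, for each maximal singular point $\sigma_1\le\sigma$, a subset $I$ and data $(r,s,t)$ as in the statement with $\sigma_1(i)=\sigma(i)$ for $i\in I$, $\rmv_I(\sigma)=\tau_{r,s}^{(t)}$, $\rmv_I(\sigma_1)=\delta_{r,s}^{(t)}$ and $[\sigma_1,\sigma]\cong[\delta_{r,s}^{(t)},\tau_{r,s}^{(t)}]$ as posets.

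The second step is to turn maximality into rigidity. By Proposition~\ref{prop: LS}, $(\sigma,\sigma_1)$ fails to be smooth precisely when $\#\asig(\sigma_1,\sigma)>\ell(\sigma)$, while maximality of $\sigma_1$ forces $\#\asig(\sigma',\sigma)=\ell(\sigma)$ for every $\sigma'$ with $\sigma_1<\sigma'\le\sigma$. A Bruhat-order analysis of the sets $\asig(\cdot,\sigma)$ --- in the spirit of \cite[Lemma 2.2]{MR1827861} and the remark following Proposition~\ref{prop: LS} --- shows that this pins down the defect $\sigma_1^{-1}\sigma$ up to a parabolic reduction and, in particular, yields the length identity $\ell(\sigma)-\ell(\sigma_1)=\ell(\tau_{r,s}^{(t)})-\ell(\delta_{r,s}^{(t)})$ once $(r,s,t)$ has been identified. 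On the other hand, by the Lakshmibai--Sandhya criterion \cite{MR1051089}, the mere singularity of $X_\sigma$ means that $\sigma$ contains an occurrence of the pattern $3412$ or of $4231$; one selects such an occurrence that is innermost relative to $\sigma_1$ and, using the rigidity just obtained, extends it to a maximal nested chain of interacting $3412$/$4231$ patterns. The positions of $\{1,\dots,k\}$ not taking part in this chain are then shown to be irrelevant: they may be deleted without affecting either $\asig(\sigma_1,\sigma)$ or the interval $[\sigma_1,\sigma]$, and this deletion defines the set $I$.

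The heart of the matter is the ensuing combinatorial classification: after removing $I$, the pair $(\sigma,\sigma_1)$ flattens to exactly one of the three families \eqref{eq: taurs1}--\eqref{eq: taurs3} --- a chain of interacting $4231$'s producing $(\tau_{r,s}^{(1)},\delta_{r,s}^{(1)})$, a $3412$-type configuration producing $(\tau_{r,s}^{(2)},\delta_{r,s}^{(2)})$, and the small exceptional family producing $(\tau_{r,2}^{(3)},\delta_{r,2}^{(3)})$. That these pairs are genuinely singular is recorded in \eqref{eq: pairs not smooth}; the real content is that no other configuration can occur at a maximal singular point. Once the set-theoretic pattern embedding $\rmv_I(\sigma)=\tau_{r,s}^{(t)}$, $\rmv_I(\sigma_1)=\delta_{r,s}^{(t)}$ is established, the length identity from the previous paragraph upgrades it to an interval pattern embedding in the sense of \cite{MR2422304}, which gives the poset isomorphism $[\sigma_1,\sigma]\cong[\delta_{r,s}^{(t)},\tau_{r,s}^{(t)}]$. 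The main obstacle is precisely this classification step --- bounding how several overlapping forbidden patterns may be nested around a single maximal singular point and ruling out everything outside the three listed families. This is exactly what was carried out, independently and by somewhat different bookkeeping, in \cite{MR1990570, MR1994224, MR2015302, MR1853139}, and for our purposes we simply appeal to their result.
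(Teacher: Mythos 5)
The paper offers no argument for Theorem \ref{thm: BW}: it is quoted as the description of the maximal singular loci of type $A$ Schubert varieties from \cite{MR1990570, MR1994224, MR2015302, MR1853139}, which is exactly where your proposal also lands, since the ``rigidity'' and classification steps you sketch are precisely the content of those papers and you ultimately appeal to them. The preliminary reduction you do argue (the singular locus is closed and $B_k$-stable, hence a union of Schubert varieties, so a Bruhat-maximal singular $\sigma_1\in[\sigma_0,\sigma]$ is automatically a maximal singular point of $X_\sigma$) is sound, so your treatment is essentially the same as the paper's, namely an appeal to the cited classification.
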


Note that the case $\sigma_0=e$ is essentially a reformulation of the original result of \cite{MR1051089}.

\subsection{} \label{sec: rmvi}
Given $\sigma\in S_k$ and an index $i$, we write for simplicity $\rmv_i(\sigma)=\rmv_{\{i\}}(\sigma)$.
The following result is probably well known. For convenience we include the proof.
\begin{lemma} \label{lem: rmvi}
Suppose that $(\sigma,\sigma_0)$ is a smooth pair in $S_k$ and $i$ is an index such that $\sigma(i)=\sigma_0(i)$.
Then $(\rmv_i(\sigma),\rmv_i(\sigma_0))$ is a smooth pair in $S_{k-1}$.
\end{lemma}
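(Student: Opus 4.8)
The plan is to work with the tangent space dimension formula of Proposition \ref{prop: LS}. Write $\sigma'=\rmv_i(\sigma)$ and $\sigma_0'=\rmv_i(\sigma_0)$, and let $m=\sigma(i)=\sigma_0(i)$. By Proposition \ref{prop: LS} it suffices to show that $\#\asig(\sigma_0',\sigma')\le\ell(\sigma')$, since the reverse inequality always holds and $\#\asig(\sigma_0,\sigma)=\ell(\sigma)$ because $(\sigma,\sigma_0)$ is a smooth pair.

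The key combinatorial input is a restriction property of the Bruhat order: if $\alpha,\beta\in S_k$ satisfy $\alpha(i)=\beta(i)$, then $\alpha\le\beta$ if and only if $\rmv_i(\alpha)\le\rmv_i(\beta)$. This is a routine check with the rank-function criterion $\alpha\le\beta\iff\#\{l\le p:\alpha(l)\le q\}\ge\#\{l\le p:\beta(l)\le q\}$ for all $p,q$: when $\alpha(i)=\beta(i)=m$, the rank matrices of $\rmv_i(\alpha)$ and $\rmv_i(\beta)$ are obtained from those of $\alpha$ and $\beta$ by deleting the $i$-th row and $m$-th column, and the entries in the deleted row and column impose no extra constraints precisely because $\alpha(i)=\beta(i)$. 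Granting this, let $\imath\colon\{1,\dots,k-1\}\to\{1,\dots,k\}\setminus\{i\}$ be the monotone bijection. For a transposition $t=t_{a,b}\in\rflx$ with $a\ne i\ne b$, the permutation $\sigma_0t$ agrees with $\sigma_0$ (hence with $\sigma$) at position $i$, and $\rmv_i(\sigma_0t)=\sigma_0' t'$, $\rmv_i(\sigma)=\sigma'$, where $t'=t_{\imath^{-1}(a),\imath^{-1}(b)}$; so the displayed equivalence gives $\sigma_0t\le\sigma\iff\sigma_0't'\le\sigma'$. Thus $t\mapsto t'$ is a bijection from $\{t\in\asig(\sigma_0,\sigma):t(i)=i\}$ onto $\asig(\sigma_0',\sigma')$, whence
\[
\#\asig(\sigma_0',\sigma')=\#\asig(\sigma_0,\sigma)-\#\{t\in\asig(\sigma_0,\sigma):t(i)\ne i\}=\ell(\sigma)-\#\{t\in\asig(\sigma_0,\sigma):t(i)\ne i\}.
\]

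Since $\ell(\sigma)-\ell(\sigma')$ is the number of inversions of $\sigma$ that involve the position $i$, the desired inequality $\#\asig(\sigma_0',\sigma')\le\ell(\sigma')$ is now equivalent to
\[
\#\{t\in\asig(\sigma_0,\sigma):t(i)\ne i\}\ \ge\ \ell(\sigma)-\ell(\sigma'),
\]
and the opposite inequality is automatic from the preceding display together with $\#\asig(\sigma_0',\sigma')\ge\ell(\sigma')$. So everything reduces to establishing this one inequality, i.e. to showing that there are at least $\ell(\sigma)-\ell(\sigma')$ transpositions $t$ moving $i$ with $\sigma_0t\le\sigma$.

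This last step is the heart of the matter, and the place where rational smoothness of $X_\sigma$ along $\cell_{\sigma_0}$ is genuinely used. For $\sigma_0=\sigma$ it is a triviality: both sides count the inversions of $\sigma$ at $i$. In general, since $(\sigma,\sigma_0)$ is a smooth pair, $(\sigma,\sigma_1)$ is a smooth pair for every $\sigma_1\in[\sigma_0,\sigma]$ by \eqref{eq: redrelsmth}, so $\#\asig(\sigma_1,\sigma)=\ell(\sigma)$ throughout the interval; the plan is to transport the count of transpositions $t$ with $t(i)\ne i$ and $\sigma_0t\le\sigma$ up the interval to $\sigma$, for which I would use that these transpositions index the $T$-stable $\mathbb{P}^1$'s through $e_{\sigma_0}$ in $X_\sigma$ that move the $i$-th coordinate, and rational smoothness of $X_\sigma$ along $\cell_{\sigma_0}$ pins down their number. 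Alternatively one can stay inside $S_k$: if $(\sigma',\sigma_0')$ were not a smooth pair, Theorem \ref{thm: BW} would supply $\sigma_1'\in[\sigma_0',\sigma']$ and an interval pattern of some type $(\tau_{r,s}^{(t)},\delta_{r,s}^{(t)})$ inside $[\sigma_1',\sigma']$; unflattening (reinserting the value $m$ at position $i$) produces $\sigma_1\in[\sigma_0,\sigma]$ and, after adjoining $i$ to the pattern set, a pattern of the same type inside $[\sigma_1,\sigma]$, which by \eqref{eq: pairs not smooth} and \eqref{eq: redrelsmth} would contradict smoothness of $(\sigma,\sigma_0)$. The main obstacle, in either route, is controlling how the inversions of $\sigma$ at position $i$ are distributed — equivalently, verifying that the unflattened pattern is still an \emph{interval} pattern embedding, i.e. that the relevant Bruhat-interval lengths match — and it is exactly here that the smoothness hypothesis must be invoked rather than formal bookkeeping.
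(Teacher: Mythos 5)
Your setup coincides with the paper's: you use Proposition \ref{prop: LS}, the flattening compatibility with Bruhat order (the paper's Lemma \ref{lem: elemi}), and the bijection $t\mapsto t'$ between the reflexions fixing $i$ that lie in $\asig(\sigma_0,\sigma)$ and $\asig(\rmv_i(\sigma_0),\rmv_i(\sigma))$, thereby reducing everything to the inequality
\[
\#\{t\in\rflx: t(i)\ne i,\ \sigma_0t\le\sigma\}\ \ge\ \ell(\sigma)-\ell(\rmv_i(\sigma)).
\]
But this is exactly the paper's Lemma \ref{lem: lcldeod}, and you do not prove it: you only offer two sketches (counting $T$-stable curves through a point of $\cell_{\sigma_0}$, or a contrapositive via Theorem \ref{thm: BW} and unflattening an interval pattern embedding), and you yourself identify the unresolved obstruction in the second route, namely that after reinserting the value at position $i$ the length condition $\ell(\sigma)-\ell(\sigma_1)=\ell(\tau_{r,s}^{(t)})-\ell(\delta_{r,s}^{(t)})$ need not persist, so the embedded pattern is no longer known to be an \emph{interval} pattern. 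As it stands this is a genuine gap: the heart of the lemma is missing.

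Moreover, your diagnosis of where the smoothness hypothesis enters is off, and this is why your two proposed routes are harder than necessary. The displayed inequality holds \emph{unconditionally} for any $\sigma_0\le\sigma$ with $\sigma_0(i)=\sigma(i)$; it is a local (position-$i$) refinement of Deodhar's inequality, and the paper proves it by a short induction on $\ell(\sigma)-\ell(\sigma_0)$: if $\sigma_0\ne\sigma$, then by \cite[Proposition 14]{MR1990570} together with $\sigma_0(i)=\sigma(i)$ there is a reflexion $t\in\rsig(\sigma_0,\sigma)$ with $t(i)=i$, and Gasharov's injection $\phi_t:\asig(\sigma_0t,\sigma)\rightarrow\asig(\sigma_0,\sigma)$ from \cite[Lemma 2.2]{MR1827861} sends reflexions moving $i$ to reflexions moving $i$, so the count at $\sigma_0$ dominates the count at $\sigma_0t$ and induction concludes. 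Smoothness of $(\sigma,\sigma_0)$ is used only for the equality $\#\asig(\sigma_0,\sigma)=\ell(\sigma)$, which you already invoked at the start; no rational smoothness, $\mathbb{P}^1$-counting, or pattern unflattening is needed for the remaining inequality. With that combinatorial lemma supplied, your argument closes and is the same as the paper's.
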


Before giving the proof, we first recall the following elementary fact.
\begin{lemma} (\cite[Lemma 17]{MR1990570}) \label{lem: elemi}
Suppose that $\sigma_0,\sigma\in S_k$ and $i$ is an index such that $\sigma(i)=\sigma_0(i)$. Then $\sigma_0\le\sigma$ if and only if
$\rmv_i(\sigma_0)\le\rmv_i(\sigma)$.
\end{lemma}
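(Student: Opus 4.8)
The plan is to deduce this from the standard description of the Bruhat order on $S_k$ in terms of rank (``dot'') matrices. Recall (see, e.g., \cite{MR1782635}) that for $w\in S_k$ and $0\le a,b\le k$ one sets $r_w(a,b)=\#\{c\le a:w(c)\le b\}$, and that $\sigma_0\le\sigma$ if and only if $r_{\sigma_0}(a,b)\ge r_\sigma(a,b)$ for all $a,b$. So it suffices to compare these rank functions before and after removing the common index $i$.

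Write $j=\sigma(i)=\sigma_0(i)$. Given $a,b\in\{0,1,\dots,k\}$, put $a^\ast=a$ if $a<i$ and $a^\ast=a-1$ if $a\ge i$, and define $b^\ast$ from $b$ and $j$ in the same way; note that $(a,b)\mapsto(a^\ast,b^\ast)$ maps $\{0,\dots,k\}^2$ onto $\{0,\dots,k-1\}^2$. The first step is to verify, for every $w\in S_k$ with $w(i)=j$, the identity
\[
r_w(a,b)=r_{\rmv_i(w)}(a^\ast,b^\ast)+\varepsilon_{a,b},
\]
where $\varepsilon_{a,b}=1$ if $i\le a$ and $j\le b$, and $\varepsilon_{a,b}=0$ otherwise. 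This is immediate once one observes that among the pairs $(c,w(c))$ with $c\le a$ and $w(c)\le b$, the pair $(i,j)$ is counted exactly when $\varepsilon_{a,b}=1$ (regardless of $w$, since $w(i)=j$), while the pairs with $c\ne i$ correspond bijectively, through the monotone relabellings $\imath,\jmath$ used to define $\rmv_i$, to the pairs counted by $r_{\rmv_i(w)}(a^\ast,b^\ast)$ --- indeed $a^\ast=\#\{m\le a:m\ne i\}$ and $b^\ast=\#\{m\le b:m\ne j\}$, so the defining conditions translate exactly. The only thing to check with care here is the case analysis according to whether $a$ and $b$ are $<$ or $\ge$ their thresholds $i$ and $j$, together with the extreme values $a,b\in\{0,k\}$.

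Granting this identity for both $\sigma$ and $\sigma_0$, the term $\varepsilon_{a,b}$ cancels in the difference, so
\[
r_{\sigma_0}(a,b)-r_\sigma(a,b)=r_{\rmv_i(\sigma_0)}(a^\ast,b^\ast)-r_{\rmv_i(\sigma)}(a^\ast,b^\ast)\qquad(0\le a,b\le k).
\]
Consequently the inequalities $r_{\sigma_0}(a,b)\ge r_\sigma(a,b)$ for all $(a,b)\in\{0,\dots,k\}^2$ are equivalent to the inequalities $r_{\rmv_i(\sigma_0)}(a',b')\ge r_{\rmv_i(\sigma)}(a',b')$ for all $(a',b')\in\{0,\dots,k-1\}^2$: one implication is trivial and the other uses the surjectivity of $(a,b)\mapsto(a^\ast,b^\ast)$. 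Invoking the rank-matrix criterion once more, the left-hand system says $\sigma_0\le\sigma$ and the right-hand one says $\rmv_i(\sigma_0)\le\rmv_i(\sigma)$, which is exactly the claim.

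There is no real obstacle; the one mildly delicate point is the bookkeeping in the displayed rank identity, in particular the two ``doubled'' values $a\in\{i-1,i\}$ and $b\in\{j-1,j\}$, all of which is routine. (One could instead argue via reduced words and the subword property, but tracking a reduced decomposition of $\sigma$ under the flattening $\rmv_i$ is noticeably more cumbersome than the rank-matrix computation, so I would use the approach above.)
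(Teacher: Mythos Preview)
Your proof is correct. The paper does not give its own proof of this lemma; it simply quotes it as \cite[Lemma~17]{MR1990570} and uses it as a black box. Your argument via the rank-function characterization of Bruhat order is exactly the natural one, and in fact the paper uses the same rank functions $\rk_\sigma$ later (see \S\ref{sec: KLid}, Proposition~\ref{prop: tight} and Corollary~\ref{cor: dblsame}), so your approach is entirely in keeping with the paper's toolkit.
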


Next, we introduce some notation.
For any index $i$ let
\[
\rflx_i=\{t\in\rflx:t(i)\ne i\}
\]
(so that $\#\rflx_i=k-1$).
For any $\sigma\in S_k$ let
\[
\ell_i(\sigma)=\#\{t\in\rflx_i:\sigma t<\sigma\}=\#\{r<i:\sigma(r)>\sigma(i)\}+\#\{r>i:\sigma(r)<\sigma(i)\}
\]
so that $2\ell(\sigma)=\sum_i\ell_i(\sigma)$.
For any $\sigma_0\le\sigma$ let $\rsig_i(\sigma_0,\sigma)=\rsig(\sigma_0,\sigma)\cap\rflx_i$
and $\asig_i(\sigma_0,\sigma)=\asig(\sigma_0,\sigma)\cap\rflx_i$.

We need another (probably well-known) result.
\begin{lemma} \label{lem: lcldeod}
Suppose that $\sigma_0\le\sigma$ and let $i$ be an index such that $\sigma(i)=\sigma_0(i)$. Then
\[
\#\asig_i(\sigma_0,\sigma)\ge\ell_i(\sigma).
\]
Equivalently, $\#\rsig_i(\sigma_0,\sigma)\ge\ell_i(\sigma)-\ell_i(\sigma_0)$.
\end{lemma}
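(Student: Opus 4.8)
The plan is to reduce the statement to the already-known global inequality of Proposition \ref{prop: LS} (applied in $S_{k-1}$) by means of the flattening map $\rmv_i$, together with a monotonicity property of the ``singularity defect''. Write $\bar\sigma=\rmv_i(\sigma)$ and $\bar\sigma_0=\rmv_i(\sigma_0)$ in $S_{k-1}$; by Lemma \ref{lem: elemi} we have $\bar\sigma_0\le\bar\sigma$, so Proposition \ref{prop: LS} applies to the pair $(\bar\sigma,\bar\sigma_0)$. First I would record the relevant bookkeeping. Splitting the inversions of $\sigma$ (resp.\ $\sigma_0$) into those involving position $i$ and those that do not gives $\ell(\sigma)=\ell(\bar\sigma)+\ell_i(\sigma)$ and $\ell(\sigma_0)=\ell(\bar\sigma_0)+\ell_i(\sigma_0)$. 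Next, every reflection $t\in\rflx\setminus\rflx_i$ fixes $i$, hence $(\sigma_0t)(i)=\sigma_0(i)=\sigma(i)$, and Lemma \ref{lem: elemi} gives $\sigma_0t\le\sigma\iff\rmv_i(\sigma_0t)\le\bar\sigma$; since $\rmv_i(\sigma_0t)=\bar\sigma_0\,\bar t$, where $t\mapsto\bar t$ is the natural bijection between $\rflx\setminus\rflx_i$ and the set of reflections of $S_{k-1}$, this identifies $\asig(\sigma_0,\sigma)\setminus\rflx_i$ with $\asig(\bar\sigma_0,\bar\sigma)$.

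Consequently
\[
\#\asig(\sigma_0,\sigma)=\#\asig_i(\sigma_0,\sigma)+\#\asig(\bar\sigma_0,\bar\sigma),
\]
and combining with the length identities,
\[
\#\asig_i(\sigma_0,\sigma)-\ell_i(\sigma)=\big(\#\asig(\sigma_0,\sigma)-\ell(\sigma)\big)-\big(\#\asig(\bar\sigma_0,\bar\sigma)-\ell(\bar\sigma)\big).
\]
So it suffices to prove the ``defect inequality'' $\#\asig(\sigma_0,\sigma)-\ell(\sigma)\ge\#\asig(\bar\sigma_0,\bar\sigma)-\ell(\bar\sigma)$, i.e.\ that flattening at a fixed position can only decrease the excess of the tangent-space dimension over $\dim X_\sigma$. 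When $(\bar\sigma,\bar\sigma_0)$ is a smooth pair the right-hand side is $0$ and the inequality is just Proposition \ref{prop: LS}; the content therefore lies entirely in the non-smooth case. (Note that the same flattening identity will also be the mechanism by which Lemma \ref{lem: rmvi} is deduced, once Lemma \ref{lem: lcldeod} is in hand.)

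The defect inequality is the heart of the matter and is the step I expect to be the main obstacle. I would establish it by the same combinatorial induction on $\ell(\sigma)-\ell(\sigma_0)$ that underlies the proof of Proposition \ref{prop: LS} sketched after its statement: the base case $\sigma_0=\sigma$ is trivial (both defects vanish), and for the inductive step one picks a cover $\sigma_0\lessdot\sigma_1\le\sigma$ and applies \cite[Lemma 2.2]{MR1827861} in a form that keeps track of the position $i$, controlling how $\#\rsig_i(\,\cdot\,,\sigma)$ changes under a single cover; concretely one wants $\#\rsig_i(\sigma_0,\sigma)\ge\#\rsig_i(\sigma_1,\sigma)+\big(\ell_i(\sigma_1)-\ell_i(\sigma_0)\big)$, where $\ell_i(\sigma_1)-\ell_i(\sigma_0)\in\{0,1\}$ according to whether the cover moves $i$. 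Equivalently, one can dispense with the flattening and run this refined induction directly on the quantity $\#\asig_i(\sigma_0,\sigma)-\ell_i(\sigma)$, using $\#\asig_i(\sigma,\sigma)=\ell_i(\sigma)$ as the base case and $\#\asig_i(\sigma_0,\sigma)=\ell_i(\sigma_0)+\#\rsig_i(\sigma_0,\sigma)$ throughout; the delicate point is in either case the same, namely upgrading the cover step of \cite[Lemma 2.2]{MR1827861} to its position-$i$ refined form, which I would verify by an explicit injection of the relevant reflection sets (or via the lifting/Z-property of the Bruhat order).
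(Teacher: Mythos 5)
Your flattening bookkeeping is correct (the identities $\ell(\sigma)=\ell(\rmv_i(\sigma))+\ell_i(\sigma)$ and $\asig(\sigma_0,\sigma)\setminus\rflx_i\simeq\asig(\rmv_i(\sigma_0),\rmv_i(\sigma))$ are exactly the computation the paper performs when it deduces Lemma \ref{lem: rmvi} from the present lemma), but the resulting ``defect inequality'' is just a restatement of the lemma, so everything rests on the inductive step you defer — and that step is false as stated. The cover-step inequality $\#\rsig_i(\sigma_0,\sigma)\ge\#\rsig_i(\sigma_1,\sigma)+\bigl(\ell_i(\sigma_1)-\ell_i(\sigma_0)\bigr)$ fails for arbitrary covers: in $S_3$ take $i=1$, $\sigma=(3\,1\,2)$ (one-line notation), $\sigma_0=e$ and the cover $\sigma_1=(2\,1\,3)\le\sigma$; then $\rsig_1(\sigma_0,\sigma)=\{t_{1,2}\}$ has one element (since $e\,t_{1,3}=(3\,2\,1)\not\le\sigma$), while $\rsig_1(\sigma_1,\sigma)=\{t_{1,3}\}$ and $\ell_1(\sigma_1)-\ell_1(\sigma_0)=1$, so your right-hand side is $2>1$. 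This failure is structural, not an unlucky choice: if the cover-step inequality held for all covers, telescoping it along a maximal chain from $\sigma_0$ to $\sigma$ would prove $\#\rsig_i(\sigma_0,\sigma)\ge\ell_i(\sigma)-\ell_i(\sigma_0)$ with no hypothesis on position $i$, and that unconstrained statement is false — the same example gives $\ell_1(\sigma)-\ell_1(e)=2>1=\#\rsig_1(e,\sigma)$. Your proposed induction never preserves or exploits the hypothesis $\sigma(i)=\sigma_0(i)$ (the intermediate $\sigma_1$ need not satisfy $\sigma_1(i)=\sigma(i)$), which is precisely where the argument breaks. (A smaller inaccuracy: $\ell_i(\sigma_1)-\ell_i(\sigma_0)$ need not lie in $\{0,1\}$ for a cover moving $i$; for $\sigma_0=(2\,3\,1\,4)\lessdot\sigma_1=(2\,3\,4\,1)$ and $i=3$ it equals $-1$.)

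The paper's proof runs the same induction on $\ell(\sigma)-\ell(\sigma_0)$, but chooses the step so that the positional constraint is preserved and no correction term ever appears: by \cite[Proposition 14]{MR1990570}, the hypothesis $\sigma_0(i)=\sigma(i)$ together with $\sigma_0<\sigma$ guarantees some $t\in\rsig(\sigma_0,\sigma)$ with $t(i)=i$ (it need not yield a cover, which is harmless). Then $(\sigma_0t)(i)=\sigma(i)$, so the induction hypothesis applies to the pair $(\sigma_0t,\sigma)$ with the \emph{same} target $\ell_i(\sigma)$, and Gasharov's injection $\phi_t:\asig(\sigma_0t,\sigma)\to\asig(\sigma_0,\sigma)$ of \cite[Lemma 2.2]{MR1827861} carries $\asig_i(\sigma_0t,\sigma)$ into $\asig_i(\sigma_0,\sigma)$ precisely because $t$ fixes $i$. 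This is the missing idea in your write-up: use the hypothesis to move up in the Bruhat order by an element of $\rflx\setminus\rflx_i$, rather than trying to quantify the effect of an arbitrary cover on $\ell_i$.
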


\begin{proof}
We prove the result by induction on $\ell(\sigma)-\ell(\sigma_0)$.
If $\sigma=\sigma_0$ the assertion is trivial.
Otherwise, it follows from \cite[Proposition 14]{MR1990570} and the fact that $\sigma_0(i)=\sigma(i)$
that $\rsig(\sigma_0,\sigma)\ne\rsig_i(\sigma_0,\sigma)$.
For the induction step, take any $t\in\rsig(\sigma_0,\sigma)\setminus\rflx_i$ and let
\[
\phi_t=\phi_t^{\sigma_0t,\sigma}:\asig(\sigma_0t,\sigma)\rightarrow\asig(\sigma_0,\sigma)
\]
be the injective map defined in \cite[Lemma 2.2]{MR1827861} (cf.~\cite[\S6]{MR1990570}).
It is easy to see from the definition that $\phi_t(\asig_i(\sigma_0t,\sigma))\subset\asig_i(\sigma_0,\sigma)$.
Thus, $\#\asig_i(\sigma_0,\sigma)\ge\#\asig_i(\sigma_0t,\sigma)$ and the assertion follows from the induction hypothesis.
\end{proof}

Finally, we can prove Lemma \ref{lem: rmvi}.

\begin{proof}[Proof of Lemma \ref{lem: rmvi}]
By Proposition \ref{prop: LS} and the assumption we have
\[
\#\{t\in\rflx:\sigma_0t\le\sigma\}=\ell(\sigma).
\]
For simplicity, write $\tilde\sigma_0=\rmv_i(\sigma_0)$ and $\tilde\sigma=\rmv_i(\sigma)$.
By Lemma \ref{lem: elemi} we have $\tilde\sigma_0\le\tilde\sigma$.
If $t\in\rflx\setminus\rflx_i$ then $\sigma_0t(i)=\sigma_0(i)=\sigma(i)$ and $\rmv_i(\sigma_0t)=
\rmv_i(\sigma_0)t'$ where $t'=\rmv_i(t)$ is a reflexion in $S_{k-1}$.
Thus, again by Lemma \ref{lem: elemi},
$\sigma_0 t\le\sigma$ if and only if $\tilde\sigma_0t'\le\tilde\sigma$.
Clearly $\rmv_i$ induces a bijection between $\rflx\setminus\rflx_i$ and the set $\rflx'$ of reflexions in $S_{k-1}$.
It follows from Lemma \ref{lem: lcldeod} that
\begin{multline*}
\#\{t'\in\rflx':\tilde\sigma_0t'\le\tilde\sigma\}=
\#\{t\in\rflx:\sigma_0t\le\sigma\}-\#\{t\in\rflx_i:\sigma_0t\le\sigma\}\\
=\ell(\sigma)-\#\{t\in\rflx_i:\sigma_0t\le\sigma\}\le\ell(\sigma)-\ell_i(\sigma)=\ell(\tilde\sigma).
\end{multline*}
By Proposition \ref{prop: LS} once again it follows that $(\tilde\sigma,\tilde\sigma_0)$ is a smooth pair as required.
\end{proof}

\section{Balanced multisegments} \label{sec: combi}
In this section we introduce the main combinatorial condition on multisegments for which our main result applies
and relate it to the results of the previous section.

\subsection{} \label{sec: biseq}
Henceforth we fix an integer $k\ge1$.
Let $\tseq$ be a pair of two sequences of integers $a_1\le\dots\le a_k$ and $b_1\ge\dots\ge b_k$ such that
$a_{k+1-i}\le b_i+1$ for all $i$.
We write $\tseq$ as $\bitmplt$ and refer to it simply as a \emph{\biseq}.
By \cite[Lemma 15]{MR3163355}, there exists a unique $\sigma_0=\sigma_0(\tseq)\in S_k$ such that for any $\sigma\in S_k$
we have
\begin{equation} \label{def: propsigma0}
a_{\sigma^{-1}(i)}\le b_i+1\text{ for all $i$ if and only if }\sigma\ge\sigma_0
\end{equation}
(in the Bruhat order).\footnote{This was stated in \cite{MR3163355} under the assumption that $a_1<\dots<a_k$
and $b_1>\dots>b_k$, but the proof, which is in any case elementary, works under the weaker assumption.}
Moreover, $\sigma_0$ is $213$-avoiding, i.e. there do not exist indices $a<b<c$ such that
$\sigma_0(b)<\sigma_0(a)<\sigma_0(c)$. In other words, $i\mapsto k+1-\sigma_0(i)$ is s stack-sortable permutation in the sense of Knuth \cite[\S2.2.1]{MR3077152}.
The permutation $\sigma_0$ is defined recursively as follows. Given $\sigma_0^{-1}(k),\dots,\sigma_0^{-1}(i+1)$ we set
\begin{equation} \label{def: sigma0}
\sigma_0^{-1}(i)=\max\{j\notin\sigma_0^{-1}(\{i+1,\dots,k\}):a_j\le b_i+1\}
\end{equation}
which is well-defined since $a_{k+1-i}\le b_i+1$.
It follows that  $\sigma_0(i)<\sigma_0(i+1)$ whenever $a_i=a_{i+1}$ and $\sigma_0^{-1}(i)<\sigma_0^{-1}(i+1)$ whenever $b_i=b_{i+1}$.

\begin{example}
For $l\ge0$ let
\begin{equation} \label{def: akl}
\tseq_{k,l}=\begin{pmatrix}1&2&\dots&k\\k+l-1&k+l-2&\dots&l\end{pmatrix}.
\end{equation}
Then
\[
\sigma_0(\tseq_{k,l})(i)=\begin{cases}i&i\le l,\\k+l+1-i&\text{otherwise.}\end{cases}
\]
The condition $\sigma\ge\sigma_0(\tseq)$ becomes $\sigma(i)\le k+l+1-i$ for all $i>l+1$.
\end{example}

For any $\sigma\in S_k$ let
\[
\m_\sigma=\m_\sigma(\tseq)=\sum_{i=1}^k[a_{\sigma^{-1}(i)},b_i]
\]
and $\pi_\sigma=\zele{\m_\sigma}$. Thus, $\m_\sigma\in\Mult$ (i.e., $\pi_\sigma\ne0$) if and only if $\sigma\ge\sigma_0$.
Note that $\m_{\sigma_0}$ is pairwise unlinked.
If we want to stress the dependence on $\rho$ we will write $\m_\sigma^{(\rho)}(\tseq)$.

\begin{remark}
Clearly, as we vary $\tseq$ and $\sigma\in S_k$ (with $\sigma\ge\sigma_0(\tseq)$), $\m_\sigma(\tseq)$ range over all
multisegments with $\le k$ segments.
More precisely, given $\m=\Delta_1+\dots+\Delta_k$, we may assume that $e(\Delta_1)\ge\dots\ge e(\Delta_k)$.
We take $\sigma\in S_k$ such that $b(\Delta_{\sigma(i)})\le b(\Delta_{\sigma(j)})$ whenever $i>j$ and set
$b_i=e(\Delta_i)$ and $a_i=b(\Delta_{\sigma(i)})$. Finally, if we sort the sequence $b(\Delta_1),\dots,b(\Delta_k),e(\Delta_1)+\frac32,\dots,e(\Delta_k)+\frac32$
as $c_1\le\dots\le c_{2k}$ and replace $c_i$ by the letter $X$ if $c_i\in\{b(\Delta_1),\dots,b(\Delta_k)\}$ and by the letter $Y$ otherwise
then we get a Dyck word $\word$ of length $2k$ and $\sigma_0(\tseq)$ is the $213$-avoiding permutation corresponding to $\word$ (see Lemma \ref{lem: comb12}
below).
\end{remark}

Note that
\begin{equation} \label{eq: contrasigma}
\m_\sigma^{(\rho)}(\tseq_{k,l})^\vee=\m_{\sigma^{-1}}^{(\rho^*)}(\tseq_{k,l})\text{ where }\rho^*=(\rho\nu_\rho^{k+l})^\vee.
\end{equation}

We say that a \biseq\ $\tseq=\bitmplt$ is regular if $a_1<\dots<a_k$ and $b_1>\dots>b_k$.
In this case the multisegments $\m_\sigma(\tseq)$, $\sigma \geq \sigma_0$, are regular and distinct.
Moreover, $\m_{\sigma_1}(\tseq)\obt\m_{\sigma_2}(\tseq)$ if and only if $\sigma_1\le\sigma_2$.
(In the non-regular case it is still true that $\m_{\sigma_1}(\tseq)\obt\m_{\sigma_2}(\tseq)$ if $\sigma_1\le\sigma_2$.)

For completeness we recall a standard combinatorial result (cf. \cite[Exercise 6.19]{MR1676282}).
A Dyck word of length $2k$ is a string composed of the letters $X$ and $Y$, each appearing $k$ times, such that in any initial segment,
the number of $Y$'s does not exceed the number of $X$'s. It is well-known that the number of Dyck words of length $2k$ is
the Catalan number $C_k={2k\choose k}-{2k\choose k+1}$.
\begin{lemma} \label{lem: comb12}
The following sets are in natural bijections:
\begin{enumerate}
\item Dyck words of length $2k$.
\item Regular \biseq s $\tseq=\bitmplt$ such that $a_1=2$ and $b_1=2k-1$.
\item $213$-avoiding permutations in $S_k$.
\end{enumerate}
\end{lemma}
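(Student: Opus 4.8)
The plan is to use Dyck words as the common intermediary: first I would construct an explicit bijection between the normalized regular \biseq s of (2) and the Dyck words of length $2k$ of (1), then invoke the classical bijection between Dyck words and $213$-avoiding permutations, and finally check that the resulting correspondence between \biseq s and permutations is precisely $\tseq\mapsto\sigma_0(\tseq)$ as defined by \eqref{def: sigma0}.

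For the bijection (1)$\leftrightarrow$(2): given a Dyck word $\word=w_1\cdots w_{2k}$, let $p_1<\dots<p_k$ (resp.\ $q_1<\dots<q_k$) be the positions of the letters $X$ (resp.\ $Y$). Since $\word$ is a Dyck word, $w_1=X$ and $w_{2k}=Y$, so $p_1=1$ and $q_k=2k$, and, more to the point, the Dyck (initial-segment) condition is equivalent to $p_j<q_j$ for every $j$. Set
\[
a_i=p_i+1,\qquad b_i=q_{k+1-i}-1\qquad(1\le i\le k).
\]
Then $a_1=2$, $b_1=2k-1$, the $a_i$ strictly increase, the $b_i$ strictly decrease, and the \biseq\ inequality $a_{k+1-i}\le b_i+1$ is exactly $p_{k+1-i}<q_{k+1-i}$; so $\tseq=\bitmplt$ is a regular \biseq\ of the required shape. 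Conversely, a regular \biseq\ with $a_1=2$ and $b_1=2k-1$ is sent to the word carrying $X$ in the positions $\{a_i-1\}_i$ and $Y$ in the positions $\{b_i+1\}_i$; here the one point to verify is that the $2k$ numbers $a_1-1,\dots,a_k-1,b_1+1,\dots,b_k+1$ are distinct and exhaust $\{1,\dots,2k\}$ (this is what the normalization in (2) amounts to, equivalently the absence of coincidences $a_j=b_i+2$), after which $a_{k+1-i}\le b_i+1$ again translates into the Dyck condition. The two maps are visibly mutually inverse.

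For (1)$\leftrightarrow$(3) one uses the classical Catalan correspondence: a non-empty Dyck word factors uniquely as $X\,\word'\,Y\,\word''$ with $\word',\word''$ Dyck words, and this recursion matches the standard recursive decomposition of $213$-avoiding permutations (equivalently, after taking complements, of stack-sortable permutations in the sense of Knuth, cf.\ \cite[\S2.2.1]{MR3077152}); see \cite[Exercise 6.19]{MR1676282}. To finish, one verifies that the composite bijection (2)$\to$(1)$\to$(3) sends $\tseq$ to $\sigma_0(\tseq)$: since $\sigma_0(\tseq)$ is known to be $213$-avoiding and is characterised by the property \eqref{def: propsigma0} (see the discussion following it, and \cite{MR3163355}), it suffices to match the greedy recursion \eqref{def: sigma0} for $\sigma_0^{-1}$ against the factorization $\word=X\word'Y\word''$, which is a direct induction on $k$.

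The part needing genuine care is this last compatibility check, together with making the normalization in (2) precise enough that (1)$\leftrightarrow$(2) is an honest bijection rather than merely a surjection; by contrast, (1)$\leftrightarrow$(3) is entirely classical, and, once the normalization is fixed, (1)$\leftrightarrow$(2) is routine bookkeeping.
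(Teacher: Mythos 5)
Your outline is broadly sound, and its first leg is literally the paper's: the proof in the text uses exactly your position map (the $i$-th $X$ from the left gives $a_i$, the $i$-th $Y$ from the right gives $b_i$, shifted by $\pm1$). Where you genuinely diverge is the permutation leg. The paper does not quote the classical Dyck/$213$ correspondence; it closes the triangle by writing an explicit map $\sigma\mapsto\word(\sigma)$ (the number of $X$'s to the left of the $i$-th $Y$ from the right is $\max_{j\ge i}\sigma^{-1}(j)$) and then checking that the three cyclic composites are identities. Your route buys a citation of a classical bijection, but the actual content of the lemma -- that the composite $(2)\to(1)\to(3)$ is precisely $\tseq\mapsto\sigma_0(\tseq)$ of \eqref{def: sigma0}, \eqref{def: propsigma0} -- still has to be proved, and your ``direct induction on $k$'' is at about the same level of detail as the paper's ``it is easy to see''. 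One concrete caution there: the recursion \eqref{def: sigma0} at $i=k$ computes $\#\{j:a_j\le b_k+1\}$, i.e.\ the number of $X$'s preceding the \emph{first} $Y$ of the word, which does not line up with the first-return factorization $X\word'Y\word''$ you propose; you will need a different peeling (or to dualize) to make the induction march in step with the greedy recursion. The paper's explicit formula for $\word(\sigma)$ makes this verification more mechanical.

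On the normalization you flag: your instinct is right, and in fact you are being more careful than the paper here, but the parenthetical should be sharpened. The set in item (2), read literally, is strictly larger than the set of Dyck words: for $k=2$ the pair $a=(2,3)$, $b=(3,1)$ is a regular \biseq\ with $a_1=2$, $b_1=3$, yet $\{a_i-1\}\cup\{b_j+1\}=\{1,2,4\}$ does not partition $\{1,\dots,4\}$, so it is not in the image of the position map (and the identity $\tseq(\word(\sigma_0(\tseq)))=\tseq$ asserted in the paper's proof fails for it). So ``$a_1=2$, $b_1=2k-1$, regular'' does not by itself give distinctness and exhaustion; the correct extra condition is exactly the one you name, absence of coincidences $a_j=b_i+2$ (equivalently, the $a_i-1$ and $b_j+1$ partition $\{1,\dots,2k\}$), and once it is imposed your inverse map and the rest of the argument go through.
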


\begin{proof}
Given a Dyck word $\word$ of length $2k$ let $a_i'$ (resp., $b_i'$) $i=1,\dots,k$ be the position of the $i$-th $X$ (resp., $Y$)
from the left (resp., from the right). Then $a'_i<b'_{k+1-i}$.
Letting $a_i=a_i'+1$ and $b_i=b'_i-1$, $i=1,\dots,k$, we get a regular  \biseq\ $\tseq(\word)=\bitmplt$ with $a_1=2$, $b_1=2k-1$.

To any \biseq\ $\tseq=\bitmplt$ 
we assign the $213$-avoiding permutation $\sigma_0(\tseq)$ defined by \eqref{def: propsigma0} and \eqref{def: sigma0}.

Finally, given any permutation $\sigma_0\in S_k$ we assign the Dyck word $\word(\sigma_0)$ such that for any $i$, the number of $X$'s to the left of the
$i$-th $Y$ from the right is $x_i=\max_{j\ge i}\sigma^{-1}(j)$. (Clearly, $x_1\ge\dots\ge x_k$ and $x_i\ge k+1-i$ for all $i$.)

It is easy to see that $\tseq(\word(\sigma_0(\tseq')))=\tseq'$, $\word(\sigma_0(\tseq(\word')))=\word'$ and
$\sigma_0(\tseq(\word(\sigma)))=\sigma$ for any regular \biseq\ $\tseq'=\bitmplt$ with $a_1=2$ and $b_1=2k-1$,
a Dyck word $\word'$ of length $2k$ and a $213$-avoiding permutation $\sigma\in S_k$.
\end{proof}

\begin{remark}
For any permutation $\sigma\in S_k$, the permutation $\sigma_0(\tseq(\word(\sigma)))$ is the unique
maximal (with respect to Bruhat order) $213$-avoiding permutation $\le\sigma$.
It is obtained from $\sigma$ by repeatedly interchanging $\sigma(i)$ and $\sigma(j)$ whenever $i<j<k$ and $\sigma(j)<\sigma(i)<\sigma(k)$.
\end{remark}

\subsection{}
We now introduce the key combinatorial property.

\begin{definition}
Let $\m$ be a multisegment.
\begin{enumerate}
\item We say that $\m$ is almost pairwise unlinked (\APU) if there exists a pairwise unlinked multisegment $\m'$ such that $\m'\adj\m$.
\item The complexity of $\m$ (denoted $\cmplx(\m)$) is the maximal integer $l\ge0$ for which there exists a chain of multisegments
$\m_l\adj\dots\adj\m_1\adj\m$.
\item The depth of $\m$ (denoted $\depth(\m)$) is the number of \APU\ multisegments $\obt\m$.
\item If $\m$ is regular, we say that $\m$ is balanced $\depth(\m)=\cmplx(\m)$.
\end{enumerate}
\end{definition}

Note that $\cmplx(\m)=0$ if and only if $\depth(\m)=0$ if and only if $\m$ is pairwise unlinked.

\begin{example} \label{ex: lecorig}
Let $\tseq=\tseq_{4,2}$. Here $\sigma_0(\tseq)=(1243)$ (where we use the notational convention $\sigma=(\sigma(1)\dots\sigma(k))$)
and $\m_{\sigma_0}(\tseq)=[1,5]+[2,4]$. In the following table we list the \APU\ multisegments $\m$ such that $\m_{\sigma_0}\adj\m$
and the corresponding permutation $\sigma$ such that $\m=\m_\sigma(\tseq)$.

\begin{tabular}{c|l}
 $\sigma$ & $\m_\sigma(\tseq)$\\
 \hline
 $(1342)$ & $[1,5]+[4,4]+[2,3]$\\
 $(3241)$ & $[4,5]+[2,4]+[1,3]$\\
 $(1423)$ & $[1,5]+[3,4]+[2,2]$\\
 $(4213)$ & $[3,5]+[2,4]+[1,2]$\\
 $(2143)$ & $[2,5]+[1,4]$
\end{tabular}

Let
\[
\m=\m_\sigma(\tseq)=[4,5]+[2,4]+[3,3]+[1,2]
\]
where $\sigma=(4231)$.
All the \APU\ multisegments in the table above are $\obt\m$ and therefore $\depth(\m)=5$.
On the other hand, the chain
\begin{align*}
[1,5]+[2,4]&\adj\\ [2,5]+[1,4]&\adj\\ [2,5]+[3,4]+[1,2]&\adj\\ [2,5]+[4,4]+[3,3]+[1,2]&\adj\m
\end{align*}
is of maximal length and therefore $\cmplx(\m)=4$. In conclusion, $\m$ is not balanced.
\end{example}

In general, it is easy to see by induction on the number of segments that if $\m=\Delta_1+\dots+\Delta_k$ is regular then
$\cmplx(\m)=\#\X_\m$. Indeed, if $\X_{\m}=\emptyset$ then $\m$ is pairwise unlinked and the assertion is clear.
Otherwise, let $(i,j)\in\X_\m$ and let $\m'\adj\m$ be the multisegment obtained from $\m$
by replacing the pair $(\Delta_i,\Delta_j)$ with its offspring. It is easy to see that $\# \X_{\m'}\le\# \X_{\m}-1$
with an equality if there does not exist an index $l$ such that $(i,l),(l,j)\in\X_\m$. The induction step follows.

Let $\tseq=\bitmplt$ be a regular \biseq\ and let $\sigma_0=\sigma_0(\tseq)$.
Then for any $\sigma\ge\sigma_0$ we have
\[
\depth(\m_\sigma(\tseq))=\#\{t\in\rflx:\sigma_0t\in [\sigma_0,\sigma]\}
\]
and
\[
\cmplx(\m_\sigma(\tseq))=\ell(\sigma)-\ell(\sigma_0).
\]
Thus, we get the following consequence of Proposition \ref{prop: LS}.
\begin{corollary}\label{cor: fgd and smth}
For any regular multisegment $\m$ we have $\depth(\m)\ge\cmplx(\m)$.
Moreover, if $\tseq=\bitmplt$ is a regular \biseq\ and $\sigma\ge\sigma_0(\tseq)$ then
$\m_\sigma(\tseq)$ is balanced if and only if $(\sigma,\sigma_0(\tseq))$ is a smooth pair
\end{corollary}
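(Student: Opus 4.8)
The plan is to derive Corollary \ref{cor: fgd and smth} directly from Proposition \ref{prop: LS} together with the two combinatorial identities relating $\depth$ and $\cmplx$ of $\m_\sigma(\tseq)$ to Bruhat-order data. First I would establish these two identities. For the complexity, I would use the observation (already sketched just above in the excerpt) that for a regular multisegment $\m$ one has $\cmplx(\m)=\#\X_\m$, and then verify combinatorially that for a regular \biseq\ $\tseq$ and $\sigma\ge\sigma_0(\tseq)$ one has $\#\X_{\m_\sigma(\tseq)}=\ell(\sigma)-\ell(\sigma_0(\tseq))$. This is because $(i,j)\in\X_{\m_\sigma(\tseq)}$, i.e. $[a_{\sigma^{-1}(i)},b_i]\prec[a_{\sigma^{-1}(j)},b_j]$, is controlled by comparing the pair $(a_{\sigma^{-1}(i)},b_i)$ with $(a_{\sigma^{-1}(j)},b_j)$; since $a_1<\dots<a_k$ and $b_1>\dots>b_k$, linkedness translates into an inversion condition for $\sigma$ relative to the base permutation $\sigma_0$, counted by $\ell(\sigma)-\ell(\sigma_0)$. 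For the depth, I would show that the \APU\ multisegments $\n$ with $\n\obt\m_\sigma(\tseq)$ correspond bijectively to reflections $t\in\rflx$ with $\sigma_0 t\in[\sigma_0,\sigma]$: indeed an \APU\ multisegment $\obt\m_\sigma(\tseq)$ is obtained from the pairwise unlinked $\m_{\sigma_0}(\tseq)$ by a single application of $\adj$ in reverse (splitting two unlinked segments and forming a linked pair), which amounts to swapping two rows of the \biseq\ assignment, hence to passing from $\sigma_0$ to $\sigma_0 t$ for a transposition $t$; such an \APU\ multisegment is $\obt\m_\sigma(\tseq)$ exactly when $\m_{\sigma_0 t}(\tseq)\obt\m_\sigma(\tseq)$, i.e. when $\sigma_0 t\le\sigma$, using that in the regular case $\m_{\sigma_1}(\tseq)\obt\m_{\sigma_2}(\tseq)\iff\sigma_1\le\sigma_2$.

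Granting these two identities, the inequality $\depth(\m)\ge\cmplx(\m)$ for regular $\m$ is immediate: writing $\m=\m_\sigma(\tseq)$ for a suitable regular \biseq\ $\tseq$ (as explained in the remark following the definition of $\m_\sigma(\tseq)$, every multisegment with $\le k$ segments arises this way, and regularity of $\m$ forces $\tseq$ regular), we get
\[
\depth(\m)=\#\rsig(\sigma_0,\sigma)\ge\ell(\sigma)-\ell(\sigma_0)=\cmplx(\m),
\]
where the middle inequality is exactly Proposition \ref{prop: LS} (in the equivalent form $\#\rsig(\sigma_0,\sigma)\ge\ell(\sigma)-\ell(\sigma_0)$). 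Moreover Proposition \ref{prop: LS} tells us that equality holds here if and only if $(\sigma,\sigma_0)$ is a smooth pair. Since $\m_\sigma(\tseq)$ balanced means precisely $\depth(\m_\sigma(\tseq))=\cmplx(\m_\sigma(\tseq))$, i.e. $\#\rsig(\sigma_0,\sigma)=\ell(\sigma)-\ell(\sigma_0)$, this yields the second assertion: $\m_\sigma(\tseq)$ is balanced $\iff$ $(\sigma,\sigma_0(\tseq))$ is a smooth pair. One should also double-check the degenerate edge cases ($\cmplx(\m)=0$, equivalently $\m$ pairwise unlinked, equivalently $\sigma=\sigma_0$) fit the statement trivially.

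The main obstacle I anticipate is verifying cleanly the depth identity $\depth(\m_\sigma(\tseq))=\#\{t\in\rflx:\sigma_0 t\in[\sigma_0,\sigma]\}$, specifically the claim that every \APU\ multisegment $\obt\m_\sigma(\tseq)$ is of the form $\m_{\sigma_0 t}(\tseq)$ for a transposition $t$, and that distinct admissible $t$ give distinct multisegments. The forward direction — that $\m_{\sigma_0 t}(\tseq)$ is \APU\ whenever $\sigma_0 t\ge\sigma_0$ — follows because $\m_{\sigma_0}(\tseq)$ is pairwise unlinked and swapping two of its rows creates at most one linked pair (here regularity is essential: with strictly increasing $a_i$ and strictly decreasing $b_i$, a single transposition of the $a$-assignment links at most one pair). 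Conversely, an \APU\ multisegment $\n$ has a pairwise unlinked $\n'$ with $\n'\adj\n$; one argues $\n'$ must be $\m_{\sigma_0}(\tseq)$ (the unique pairwise unlinked multisegment in the poset with the prescribed multiset of left and right endpoints) and that the unique $\adj$-step producing $\n$ is a row-swap, hence $\n=\m_{\sigma_0 t}(\tseq)$. Injectivity in $t$ is then clear since the map $t\mapsto\sigma_0 t\mapsto\m_{\sigma_0 t}(\tseq)$ is injective in the regular case. Once this bookkeeping is in place — most of which is implicit in \S\ref{sec: biseq} and \cite{MR3163355} — the corollary is a formal consequence of Proposition \ref{prop: LS}.
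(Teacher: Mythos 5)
Your proposal is correct and follows essentially the same route as the paper: the paper likewise reduces the corollary to the two identities $\depth(\m_\sigma(\tseq))=\#\{t\in\rflx:\sigma_0t\in[\sigma_0,\sigma]\}$ and $\cmplx(\m_\sigma(\tseq))=\ell(\sigma)-\ell(\sigma_0)$ (the latter via $\cmplx(\m)=\#\X_\m$ for regular $\m$), and then invokes Proposition \ref{prop: LS}. The combinatorial bookkeeping you flag as the main obstacle (APU multisegments $\obt\m_\sigma(\tseq)$ correspond to $\m_{\sigma_0t}(\tseq)$ with $\sigma_0t\in[\sigma_0,\sigma]$) is exactly what the paper leaves implicit, and your sketch of it is sound.
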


In Proposition \ref{prop: nonsmth} below we will give a simpler combinatorial characterization of balanced multisegments,
using the results of \cite{MR1990570, MR1994224, MR2015302, MR1853139}.

The following is an immediate consequence of \eqref{eq: redrelsmth}.

\begin{corollary} \label{cor: si0le}
Let $\m$ and $\m'$ be two regular multisegments. Write $\m=\Delta_1+\dots+\Delta_k$ and $\m'=\Delta_1'+\dots+\Delta'_{k'}$
with $e(\Delta_1)>\dots>e(\Delta_k)$, $e(\Delta'_1)>\dots>e(\Delta'_{k'})$. Assume that
\begin{enumerate}
\item $k'=k$.
\item $e(\Delta_i')=e(\Delta_i)$ for all $i$.
\item $b(\Delta'_i)\ge b(\Delta_i)$ for all $i$.
\item For all $i\ne j$ we have $b(\Delta_i)<b(\Delta_j)$ if and only if $b(\Delta'_i)<b(\Delta'_j)$.
\item $\m$ is balanced.
\end{enumerate}
Then $\m'$ is balanced.
\end{corollary}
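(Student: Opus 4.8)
The plan is to transport the statement to the Schubert side via the bi-sequence dictionary of \S\ref{sec: biseq} and then invoke \eqref{eq: redrelsmth}; this is the sense in which the corollary is "immediate".

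First I would choose a common labelling. Order $\m=\Delta_1+\dots+\Delta_k$ so that $e(\Delta_1)>\dots>e(\Delta_k)$; by hypotheses (1) and (2) I may order $\m'=\Delta'_1+\dots+\Delta'_k$ so that $e(\Delta'_i)=e(\Delta_i)=:b_i$, so that $b_1>\dots>b_k$ is a bottom row shared by both. Since $\m$ is regular, let $\sigma\in S_k$ be the unique permutation with $b(\Delta_{\sigma(1)})<\dots<b(\Delta_{\sigma(k)})$ and set $a_i=b(\Delta_{\sigma(i)})$; by hypothesis (4) the beginnings of $\m'$ are sorted by the \emph{same} $\sigma$, so putting $a'_i=b(\Delta'_{\sigma(i)})$ I get two regular \biseq s $\tseq=\bitmplt$ and $\tseq'$ (obtained from $\tseq$ by replacing each $a_i$ by $a'_i$, the bottom row being unchanged), with $\m=\m_\sigma(\tseq)$ and $\m'=\m_\sigma(\tseq')$ realized by one and the same permutation $\sigma$. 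Here I would record two easy points: that $\tseq$ and $\tseq'$ are legitimate \biseq s, i.e. $a_{k+1-i}\le b_i+1$ and likewise for the $a'_i$ — because the $i$-th largest beginning of a multisegment never exceeds its $i$-th largest ending (alternatively this is the content of the Remark in \S\ref{sec: biseq}); and that $a_i\le a'_i$ for all $i$, directly from hypothesis (3).

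The one step that actually needs an argument is the monotonicity $\sigma_0(\tseq)\le\sigma_0(\tseq')$ of the associated minimal permutations. This follows at once from the characterization \eqref{def: propsigma0}: if $\tau\in S_k$ satisfies $\tau\ge\sigma_0(\tseq')$, then $a'_{\tau^{-1}(i)}\le b_i+1$ for all $i$, hence $a_{\tau^{-1}(i)}\le a'_{\tau^{-1}(i)}\le b_i+1$ for all $i$, hence $\tau\ge\sigma_0(\tseq)$; applying this to $\tau=\sigma_0(\tseq')$ gives $\sigma_0(\tseq)\le\sigma_0(\tseq')$. Moreover $\m'=\m_\sigma(\tseq')\ne0$ forces $\sigma\ge\sigma_0(\tseq')$, so $\sigma_0(\tseq')\in[\sigma_0(\tseq),\sigma]$.

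To finish: by Corollary \ref{cor: fgd and smth} and hypothesis (5), $(\sigma,\sigma_0(\tseq))$ is a smooth pair; since $\sigma_0(\tseq')\in[\sigma_0(\tseq),\sigma]$, \eqref{eq: redrelsmth} shows $(\sigma,\sigma_0(\tseq'))$ is a smooth pair as well; and Corollary \ref{cor: fgd and smth} applied to $\tseq'$ then says precisely that $\m'=\m_\sigma(\tseq')$ is balanced. I expect the only real obstacle to be the bookkeeping that aligns $\m$ and $\m'$ to a single permutation $\sigma$ and verifies the monotonicity of $\sigma_0$ in the top row; there is no deeper difficulty, which is consistent with the corollary being phrased as an immediate consequence of \eqref{eq: redrelsmth}.
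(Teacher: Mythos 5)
Your proof is correct and is essentially identical to the paper's one-line argument: write $\m=\m_\sigma(\tseq)$ and $\m'=\m_\sigma(\tseq')$ with the same $\sigma$ and common bottom row, observe via \eqref{def: propsigma0} that $\sigma_0(\tseq)\le\sigma_0(\tseq')\le\sigma$, and conclude from Corollary \ref{cor: fgd and smth} together with \eqref{eq: redrelsmth}. Your write-up merely makes explicit the bookkeeping (validity of the \biseq s, monotonicity of $\sigma_0$) that the paper leaves to the reader.
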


Indeed, if we write $\m=\m_\sigma(\tseq)$ for a \biseq\ $\tseq=\bitmplt$ and $\sigma\in S_k$
then $\m'=\m_\sigma(\tseq)$ for some \biseq\ $\tseq'$ such that $\sigma_0(\tseq')\ge\sigma_0(\tseq)$.
Hence, the corollary follows from \eqref{eq: redrelsmth}.

Similarly, we can infer the following from the results of \S\ref{sec: rmvi}.

\begin{lemma} \label{lem: submlt}
A sub-multisegment of a balanced multisegment is balanced.
\end{lemma}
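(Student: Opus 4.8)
The plan is to reduce to the case of deleting a single segment, and then to translate the statement into the language of smooth pairs of permutations via Corollary \ref{cor: fgd and smth}, where Lemma \ref{lem: rmvi} can be brought to bear. Writing $\m=\m_1+\m_2$ and decomposing $\m_2$ into its constituent segments, it suffices by induction on the number of segments of $\m_2$ --- using that a sub-multisegment of a regular multisegment is again regular --- to prove: if $\m$ is balanced and $\Delta$ is a segment of $\m$, then $\m-\Delta$ is balanced. We may assume $\m\ne\Delta$, since otherwise $\m-\Delta=0$ is trivially balanced.

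First I would represent $\m=\m_\sigma(\tseq)$ for a regular \biseq\ $\tseq=\bitmplt$ and $\sigma\ge\sigma_0:=\sigma_0(\tseq)$; by Corollary \ref{cor: fgd and smth} the pair $(\sigma,\sigma_0)$ is then a smooth pair. Ordering the segments of $\m$ so that the $i$-th has endpoint $b_i$, we have $\Delta=[a_j,b_m]$, where $b_m=e(\Delta)$ and $j:=\sigma^{-1}(m)$. Let $\tseq'$ be the regular \biseq\ obtained from $\tseq$ by deleting $a_j$ and $b_m$ and relabelling, and set $\sigma':=\rmv_j(\sigma)\in S_{k-1}$, the flattening removing the entry $(j,m)$. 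Unwinding the definitions shows $\m-\Delta=\m_{\sigma'}(\tseq')$; in particular $\sigma'\ge\sigma_0(\tseq')$, since $\m-\Delta\ne0$. By Corollary \ref{cor: fgd and smth} it then remains to show that $(\sigma',\sigma_0(\tseq'))$ is a smooth pair.

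The key step is to produce an element $\sigma_1\in[\sigma_0,\sigma]$ with $\sigma_1(j)=m$ and $\rmv_j(\sigma_1)=\sigma_0(\tseq')$. I would take $\sigma_1\in S_k$ to be the unique permutation with $\sigma_1(j)=m$ and $\rmv_j(\sigma_1)=\sigma_0(\tseq')$. On the one hand, $\m_{\sigma_1}(\tseq)=\m_{\sigma_0(\tseq')}(\tseq')+\Delta$ is a non-zero multisegment (a sum of well-formed segments), so $\sigma_1\ge\sigma_0$. On the other hand $\sigma_1(j)=m=\sigma(j)$ while $\rmv_j(\sigma_1)=\sigma_0(\tseq')\le\sigma'=\rmv_j(\sigma)$, so Lemma \ref{lem: elemi} gives $\sigma_1\le\sigma$. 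Hence $\sigma_1\in[\sigma_0,\sigma]$, so $(\sigma,\sigma_1)$ is a smooth pair by \eqref{eq: redrelsmth}; and since $\sigma_1(j)=\sigma(j)$, Lemma \ref{lem: rmvi} shows that $(\rmv_j(\sigma),\rmv_j(\sigma_1))=(\sigma',\sigma_0(\tseq'))$ is a smooth pair, completing the induction.

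The bookkeeping here --- matching the segments of $\m$ and $\m-\Delta$ with the entries of the permutations $\sigma$, $\sigma'$, $\sigma_1$, and checking that $\tseq'$ is again a regular \biseq{} --- is routine. The one genuinely delicate point is the identification $\rmv_j(\sigma_1)=\sigma_0(\tseq')$ together with the bound $\sigma_1\le\sigma$: this is precisely why $\sigma_1$ has to be built from the \emph{pairwise unlinked} multisegment $\m_{\sigma_0(\tseq')}(\tseq')$ and not from $\m-\Delta$ itself, and it is the crux of the argument.
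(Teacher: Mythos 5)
Your proof is correct and follows essentially the same route as the paper: represent $\m=\m_\sigma(\tseq)$, pass to $\m'=\m_{\rmv_j(\sigma)}(\tseq')$, and produce the intermediate permutation $\sigma_1$ (the paper's $\tilde\sigma$, the unflattening of $\sigma_0(\tseq')$), verifying $\sigma_0\le\sigma_1\le\sigma$ exactly as in the paper (well-formedness of the segments of $\m_{\sigma_1}(\tseq)$ for the lower bound, Lemma \ref{lem: elemi} for the upper bound) before invoking \eqref{eq: redrelsmth} and Lemma \ref{lem: rmvi}. Nothing essential differs from the paper's argument.
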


\begin{proof}
By induction, it is enough to check that if $\m$ is balanced then $\m'=\m-\Delta$ is balanced for any segment $\Delta$ in $\m$.
We may assume that $\m=\m_\sigma(\tseq)$ for a \biseq\ $\tseq=\bitmplt$ and let $\sigma_0=\sigma_0(\tseq)$.
Then for some $i$, $\m'=\m_{\sigma'}(\tseq')$ where $\sigma'=\rmv_i(\sigma)$ and $\tseq'$ is obtained from $\tseq$ by removing $a_i$ and $b_{\sigma(i)}$.
Let $\sigma_0'=\sigma_0(\tseq')\in S_{k-1}$ and let $\tilde\sigma$ be ``unflattening'' of $\sigma_0'$, namely the (unique) permutation in $S_k$ such that
$\tilde\sigma(i)=\sigma(i)$ and $\rmv_i(\tilde\sigma)=\sigma_0'$.
By Lemma \ref{lem: elemi} $\tilde\sigma\le\sigma$. It is also easy to see that $\sigma_0\le\tilde\sigma$.
Indeed, (cf.~\eqref{def: propsigma0}) the relations $a_{\tilde\sigma^{-1}(j)}\le b_j+1$, $j\ne\sigma(i)$ amount to the property $\sigma_0'\le\sigma'$,
while the corresponding inequality for $j=\sigma(i)$ also holds because $\tilde\sigma(i)=\sigma(i)$.
Thus, $(\sigma,\tilde\sigma)$ is a smooth pair (by \eqref{eq: redrelsmth}), and hence $(\sigma',\sigma_0')$ is a smooth pair by Lemma \ref{lem: rmvi}.
The lemma follows.
\end{proof}

\subsection{}
Corollary \ref{cor: fgd and smth} gives an efficient way to detect whether a given regular multisegment is balanced.
However, it will be useful to have another combinatorial criterion for balanced (regular) multisegments.
\begin{definition}
We say that a regular multisegment $\m=\Delta_1+\dots+\Delta_k$, $k\ge4$
with $e(\Delta_1)>\dots>e(\Delta_k)$ is of type $4231$ (resp., $3412$) if
\[
\Delta_{i+1}\prec\Delta_i,\ i=3,\dots,k-1,\
\Delta_3\prec \Delta_1\text{ and }b(\Delta_k)<b(\Delta_2)<b(\Delta_{k-1})
\]
(resp.,
\[
\Delta_{i+1}\prec\Delta_i,\ i=4,\dots,k-1,\
\Delta_4\prec\Delta_2,\text{ and }b(\Delta_3)<b(\Delta_k)<b(\Delta_1)<b(\Delta_l)
\]
where $l=2$ if $k=4$ and $l=k-1$ otherwise).
\end{definition}

\begin{example} \label{ex: bexam}
The ``minimal'' examples of multisegments of type $4231$ and $3412$ for $k\ge4$ are given by
\begin{gather*}
\m=[k,k+1]+[2,k]+[k-1]+[k-2]+\dots+[3]+[1,2]=\m_\sigma(\tseq_{k,2})\\
\text{where }\sigma(i)=\tau_{2,k-2}^{(1)}(i)=\begin{cases}k&i=1,\\2&i=2,\\k-i+2&i=3,\dots,k-1,\\1&i=k,\end{cases}
\end{gather*}
and
\begin{gather*}
\m=[3,k+2]+[k,k+1]+[1,k]+[k-1]+[k-2]+\dots+[4]+[2,3]=\m_\sigma(\tseq_{k,3})\\
\text{ where }\sigma(i)=\tau_{2,k-2}^{(2)}(i)=\begin{cases}3&i=1,\\k&i=2,\\1&i=3,\\
k-i+3&i=4,\dots,k-1,\\2&i=k,\end{cases}
\end{gather*}
respectively.

The corresponding drawings for $k=6$ are
\[
\xymatrix@=0.6em{
&&&&&\circ\ar@{-}[r]&\circ\\
&\circ\ar@{-}[r]&\circ\ar@{-}[r]&\circ\ar@{-}[r]&\circ\ar@{-}[r]&\circ&\\
&&&&\circ&&&\\
&&&\circ&&&&\\
&&\circ&&&&&\\
\circ\ar@{-}[r]&\circ&&&&&}
\text{ and }
\xymatrix@=0.6em{
&&\circ\ar@{-}[r]&\circ\ar@{-}[r]&\circ\ar@{-}[r]&\circ\ar@{-}[r]&\circ\ar@{-}[r]&\circ\\
&&&&&\circ\ar@{-}[r]&\circ&\\
\circ\ar@{-}[r]&\circ\ar@{-}[r]&\circ\ar@{-}[r]&\circ\ar@{-}[r]&\circ\ar@{-}[r]&\circ&&&\\
&&&&\circ&&&\\
&&&\circ&&&&\\
&\circ\ar@{-}[r]&\circ&&&&&}
\]
\end{example}

More generally, using the notation of \S\ref{sec: minsingBL}, for any \biseq\ $\tseq=\bitmplt$ we have
\begin{multline} \label{eq: 4231 or 3412}
\text{$\m_\sigma(\tseq)$ is of type $4231$ (resp., $3412$) if and only if $\sigma=\tau_{2,k-2}^{(t)}$
and $\sigma_0(\tseq)\le\delta_{2,k-2}^{(t)}$}
\\\text{where $t=1$ (resp., $t=2$).}
\end{multline}
This easily follows from the defining property \eqref{def: propsigma0} of $\sigma_0(\tseq)$.
Note that since $\sigma_0(\tseq)$ is $213$-avoiding, $\sigma_0(\tseq)\le\delta_{2,k-2}^{(t)}$ if and only if $\sigma_0(\tseq)\le\sigma_0(\tseq_{k,t+1})$.

\begin{remark} \label{rem: types not GLS}
If $\m$ is of type $4231$ or $3412$ then $\m$ does not satisfy \GLS. Indeed, in the terminology of Remark \ref{rem: kirred},
$\X_\m$ contains the $k$ irreducible pairs $(2,1)$, $(3,1)$, $(i+1,i)$, $i=3,\dots,k-1$, $(k,2)$ (resp.,
$(3,1)$, $(3,2)$, $(4,2)$, $(i+1,i)$, $i=4,\dots,k-1$, $(k,1)$) in the $4231$ (resp., $3412$) case.
\end{remark}

\begin{proposition} \label{prop: nonsmth}
A regular multisegment $\m$ is balanced if and only if $\m$ does not admit a sub-multisegment of type $4231$ or $3412$.
\end{proposition}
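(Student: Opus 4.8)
The plan is to translate the statement into the language of Schubert varieties via Corollary \ref{cor: fgd and smth} and then invoke the combinatorial description of the maximal singular loci recalled in \S\ref{sec: minsingBL}. Write $\m=\m_\sigma(\tseq)$ for a regular \biseq\ $\tseq=\bitmplt$ and $\sigma\ge\sigma_0:=\sigma_0(\tseq)$; then by Corollary \ref{cor: fgd and smth}, $\m$ is balanced if and only if $(\sigma,\sigma_0)$ is a smooth pair. So the task reduces to showing that $(\sigma,\sigma_0)$ is \emph{not} a smooth pair if and only if some sub-multisegment of $\m$ is of type $4231$ or $3412$. By Lemma \ref{lem: submlt} a sub-multisegment of a balanced multisegment is balanced, which together with Remark \ref{rem: types not GLS} (or rather \eqref{eq: pairs not smooth} directly) handles one direction: if $\m$ admits a sub-multisegment $\n$ of type $4231$ or $3412$, then by \eqref{eq: 4231 or 3412} $\n=\m_{\sigma_1}(\tseq')$ with $\sigma_1=\tau_{2,k'-2}^{(t)}$ and $\sigma_0(\tseq')\le\delta_{2,k'-2}^{(t)}$; since by \eqref{eq: pairs not smooth} the pair $(\tau_{2,k'-2}^{(t)},\delta_{2,k'-2}^{(t)})$ is not smooth, and smoothness of a pair is inherited downward in the second coordinate along $[\sigma_0,\sigma]$ by \eqref{eq: redrelsmth}, the pair $(\sigma_1,\sigma_0(\tseq'))$ is not smooth, so $\n$ is not balanced, hence $\m$ is not balanced.

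For the converse, suppose $(\sigma,\sigma_0)$ is not a smooth pair. I would apply Theorem \ref{thm: BW}: there exist $\sigma_1\in[\sigma_0,\sigma]$, a subset $I\subset\{1,\dots,k\}$, and $r,s\ge2$, $t\in\{1,2,3\}$ (with $s=2$ if $t=3$), such that $\sigma_1(i)=\sigma(i)$ for $i\in I$, $\rmv_I(\sigma)=\tau_{r,s}^{(t)}$ and $\rmv_I(\sigma_1)=\delta_{r,s}^{(t)}$. Removing the rows in $I$ from $\tseq$ one index at a time produces a regular \biseq\ $\tseq'$ with $k'=k-\#I$ segments and, by repeated use of Lemma \ref{lem: elemi} together with the recursive definition \eqref{def: sigma0} of $\sigma_0$, one checks that $\sigma_0(\tseq')\le\rmv_I(\sigma_1)=\delta_{r,s}^{(t)}$. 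Thus $\m':=\m_{\rmv_I(\sigma)}(\tseq')$ is a sub-multisegment of $\m$ (it is obtained from $\m$ by deleting the segments indexed by $I$) with $\rmv_I(\sigma)=\tau_{r,s}^{(t)}$ and $\sigma_0(\tseq')\le\delta_{r,s}^{(t)}$.

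It remains to reduce from general $(r,s)$ to $(r,s)=(2,k'-2)$, i.e.\ to the patterns $4231$ and $3412$ in the sense of our definition, and this is where the main work lies. The permutations $\tau_{r,s}^{(t)}$ for $r>2$ are built from the minimal patterns by adjoining a "staircase"; concretely, $\tau_{r,s}^{(1)}$ contains the pattern $4231$ and $\tau_{r,s}^{(2)},\tau_{r,2}^{(3)}$ contain the pattern $3412$ as classical patterns, and one can choose the occurrence so that, after further flattening (removing more indices, which corresponds to passing to a smaller sub-multisegment and is legitimate by Lemma \ref{lem: submlt} and Lemma \ref{lem: elemi} controlling $\sigma_0$), the resulting \biseq\ is exactly of the minimal form in Example \ref{ex: bexam}. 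I would verify this by explicit bookkeeping on the formulas \eqref{eq: taurs1}, \eqref{eq: taurs2}, \eqref{eq: taurs3}, matching the inequalities $\Delta_{i+1}\prec\Delta_i$, $\Delta_3\prec\Delta_1$ (resp.\ $\Delta_4\prec\Delta_2$) and the order conditions on the $b(\Delta_i)$ against the definition of type $4231$ (resp.\ $3412$); the condition $\sigma_0(\tseq')\le\delta_{r,s}^{(t)}$ survives the further flattening because $\sigma_0$ is $213$-avoiding and, as noted after \eqref{eq: 4231 or 3412}, $\sigma_0(\tseq')\le\delta_{2,k''-2}^{(t)}$ is equivalent to $\sigma_0(\tseq')\le\sigma_0(\tseq_{k'',t+1})$, a condition visibly preserved under deleting staircase steps. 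The main obstacle is precisely this last combinatorial reduction: keeping careful track of which indices to remove from $\tau_{r,s}^{(t)}$ so that the flattened permutation is $\tau_{2,k''-2}^{(t')}$ \emph{and} the inductively defined $\sigma_0$ of the shrunken \biseq\ stays $\le\delta_{2,k''-2}^{(t')}$; everything else is a direct application of Theorem \ref{thm: BW}, Corollary \ref{cor: fgd and smth}, and Lemmas \ref{lem: elemi}--\ref{lem: submlt}.
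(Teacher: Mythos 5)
Your route is the same as the paper's: reduce everything to smoothness of the pair $(\sigma,\sigma_0)$ via Corollary \ref{cor: fgd and smth}, dispose of the ``only if'' direction with Lemma \ref{lem: submlt}, \eqref{eq: 4231 or 3412}, \eqref{eq: pairs not smooth} and \eqref{eq: redrelsmth} (this part of your argument is complete), and for the converse invoke Theorem \ref{thm: BW} and pass to sub-multisegments. The gap is that you stop exactly where the proposition still has content: the reduction from the pair $(\tau_{r,s}^{(t)},\delta_{r,s}^{(t)})$ to an actual sub-multisegment of type $4231$ or $3412$ is only announced (``I would verify this by explicit bookkeeping'') and even labelled ``the main obstacle'', so the converse is not proved. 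That verification is in fact short, but it must be done and it is case-dependent: once $I=\emptyset$, so $\sigma=\tau_{r,s}^{(t)}$ and $\sigma_0(\tseq)\le\delta_{r,s}^{(t)}$, one deletes the ``staircase'' segments --- $\Delta_3,\dots,\Delta_r$ when $t=1$ or $t=3$, whereas for $t=2$ the correct block is $\Delta_2,\dots,\Delta_{r-1}$ (keeping $\Delta_1,\Delta_r,\Delta_{r+1},\dots,\Delta_k$) --- and then every inequality of the form $a\le b+1$ demanded by the definition of type $4231$/$3412$ for the remaining segments is literally one of the inequalities $a_{(\delta_{r,s}^{(t)})^{-1}(i)}\le b_i+1$ encoded, via \eqref{def: propsigma0}, by $\sigma_0(\tseq)\le\delta_{r,s}^{(t)}$; this is read off directly from \eqref{eq: taurs1}--\eqref{eq: taurs3}. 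No appeal to $213$-avoidance of $\sigma_0$ is needed anywhere in this step.

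Two of the justifications you do give are also off target. First, $\sigma_0(\tseq')\le\rmv_I(\sigma_1)$ does not follow from Lemma \ref{lem: elemi}: that lemma compares two permutations agreeing at the removed index, but $\sigma_0(\tseq)$ need not agree with $\sigma$ (or $\sigma_1$) on $I$, and $\sigma_0(\tseq')$ is not a flattening of $\sigma_0(\tseq)$. The correct one-line argument is again \eqref{def: propsigma0}: since $\sigma_1\ge\sigma_0(\tseq)$, the multisegment $\m_{\sigma_1}(\tseq)$ has all segments nonempty, and the segments of $\m_{\rmv_I(\sigma_1)}(\tseq')$ form a subset of them, whence $\rmv_I(\sigma_1)\ge\sigma_0(\tseq')$. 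Second, your claim that after further flattening the resulting \biseq\ is ``exactly of the minimal form in Example \ref{ex: bexam}'' is neither needed nor true in general: the minimal examples fix the numerical values of the $a_i,b_i$, while all that \eqref{eq: 4231 or 3412} asks is that the flattened permutation be $\tau_{2,s}^{(t)}$ and that the $\sigma_0$ of the shrunken \biseq\ be $\le\delta_{2,s}^{(t)}$, which is what the computation sketched above delivers.
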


\begin{proof}
For the `only if' direction we may assume, by Lemma \ref{lem: submlt}, that $\m$ itself is of type  $4231$ or $3412$.
In this case the claim follows from \eqref{eq: 4231 or 3412}, \eqref{eq: redrelsmth}, \eqref{eq: pairs not smooth} and Corollary \ref{cor: fgd and smth}.

For the converse direction, assume that $\m$ is regular but not balanced. Write $\m=\m_\sigma(\tseq)$ for a \biseq\ $\tseq=\bitmplt$
and let $\sigma_0=\sigma_0(\tseq)$. By Corollary \ref{cor: fgd and smth}, $(\sigma,\sigma_0)$ is not a smooth pair.
Let $\sigma_1,I,r,s,t$ be as in Theorem \ref{thm: BW}.
By passing to the sub-multisegment determined by $I$ we may assume that $I=\emptyset$.
Removing the segments $\Delta_i$, $2<i\le r$ (if $r>2$) we obtain a sub-multisegment of type $4231$
if $t=1$ and of type $3412$ if $t$ is either $2$ or $3$.
\end{proof}

\begin{remark} \label{rem: contrblncd}
Suppose that $\m$ is a regular multisegment which is $\rho$-contractible for some $\rho\in\supp\m$.
(See Definition \ref{def: contractible}.) Then $\m$ is balanced if and only if the $\rho$-contraction of $\m$ is balanced.
\end{remark}

\section{The main result} \label{sec: main}

Our main result is the following.
\begin{theorem} \label{thm: main}
Suppose that $\m$ is a regular multisegment. Then the following conditions are equivalent:
\begin{enumerate}
\item \label{item: mbal} $\m$ is balanced.
\item \label{item: zmLM} $\zele{\m}$ is \LM.
\item \label{item: mGLS} $\m$ satisfies \GLS.
\end{enumerate}
\end{theorem}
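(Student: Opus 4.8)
The three conditions should be proved equivalent by establishing a cycle, and the natural choice (given what the paper has set up) is \eqref{item: mbal}$\Rightarrow$\eqref{item: zmLM}$\Rightarrow$\eqref{item: mGLS}$\Rightarrow$\eqref{item: mbal}, or possibly to split into \eqref{item: mbal}$\Leftrightarrow$\eqref{item: mGLS} by a direct combinatorial argument and then \eqref{item: mbal}$\Leftrightarrow$\eqref{item: zmLM}. I would first dispose of the easy negative implications. By Proposition~\ref{prop: nonsmth}, if $\m$ is not balanced it has a sub-multisegment of type $4231$ or $3412$; by Remark~\ref{rem: types not GLS} such a sub-multisegment does not satisfy \GLS, and by Remark~\ref{rem: GLScomb} (together with Lemma~\ref{lem: submlt}-type reductions, peeling off detachable segments) one should be able to propagate this up to conclude $\lnot$\eqref{item: mbal}$\Rightarrow\lnot$\eqref{item: mGLS}. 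The analogous statement $\lnot$\eqref{item: mbal}$\Rightarrow\lnot$\eqref{item: zmLM} is the hard ``non-$\square$-irreducibility'' direction: as outlined in the introduction, one reduces (via contraction — Proposition~\ref{prop: contract}, Remark~\ref{rem: contrblncd} — and via detaching segments — Lemma~\ref{lem: 1stred}) to a finite list of ``basic'' cases built around the minimal type $4231$/$3412$ examples of Example~\ref{ex: bexam}, and for those one runs the ``\wtness'' strategy: construct \LM\ representations $\pi_1,\pi_2$ with $\pi\hookrightarrow\pi_1\times\pi_2$, form $\Pi=\soc(\pi_1\times\soc(\pi_2\times\pi))$, which is irreducible by Lemma~\ref{lem: LM}, deduce $\Pi\hookrightarrow\omega\times\pi$ for some $\omega\le\pi_1\times\pi_2$, show $\omega=\pi$ using the combinatorial control over socles (Theorem~\ref{thm: laddercomb}, Lemmas~\ref{lem: extractsegment}, \ref{lem: suppn>suppm}, \ref{lem: chopladders}), hence $\Pi\hookrightarrow\pi\times\pi$, and finally verify $\Pi\ne\zele{\m+\m}$ so that $\pi\times\pi$ is not \SI.

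For the positive implications I would argue: \eqref{item: mbal}$\Rightarrow$\eqref{item: zmLM} by induction on $k=\#\m$ (equivalently on $\cmplx(\m)$), using Lemma~\ref{lem: albertoidea}: choose a ladder sub-multisegment $\m_1$ (a maximal chain of linked segments, which one can always extract in a balanced multisegment by the structure of $\sigma_0$ and Lemma~\ref{lem: chopladders}) so that $\pi\hookrightarrow\pi_1\times\pi_2$ with $\pi_1=\zele{\m_1}$ a ladder representation (hence \LM\ by \cite{MR3573961}), $\m_2=\m-\m_1$ balanced by Lemma~\ref{lem: submlt}, hence $\pi_2\in\IrrS$ by the inductive hypothesis, and then check that $\pi\times\pi_1$ is irreducible; the last check is exactly where the combinatorial criterion $\LC$ of Theorem~\ref{thm: laddercomb} enters — one must verify $\LC(\m,\m_1)$ and $\LC(\m_1,\m)$, which I expect to follow from the balancedness hypothesis after unwinding Definition~\ref{def: XandY}. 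The same induction, with $\LC$ replaced by the linear-independence condition \eqref{eq: cxcy}, gives \eqref{item: mbal}$\Rightarrow$\eqref{item: mGLS} directly (building the required $\lambda\in C_\m$ block by block along the ladder decomposition, as in the examples after Remark~\ref{rem: genrem}); alternatively one gets \eqref{item: zmLM}$\Rightarrow$\eqref{item: mGLS} via Conjecture~\ref{conj: GLS} once the other implications are in place, but I think the direct route is cleaner.

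\textbf{Main obstacle.} The serious work is the reduction step in $\lnot$\eqref{item: mbal}$\Rightarrow\lnot$\eqref{item: zmLM}: converting ``contains a sub-multisegment of type $4231$ or $3412$'' (a very weak, local pattern condition, possibly embedded inside a large multisegment with many irrelevant segments) into one of finitely many manageable basic configurations. The contraction and detachable-segment reductions (Propositions~\ref{prop: contract}, Lemma~\ref{lem: 1stred}, Remarks~\ref{rem: contractGLS}, \ref{rem: GLScomb}) are the right tools, but carrying them out requires a careful case analysis of how the offending $4231$/$3412$ pattern sits relative to the other segments, and then, for each surviving basic case, an explicit and somewhat delicate computation of the socles appearing in the \wtness\ argument — verifying $\omega=\pi$ and $\Pi\ne\zele{\m+\m}$ is not automatic and is where the bulk of \S\ref{sec: basicases} and \S\ref{sec: comproof} must go. A secondary, more routine obstacle is verifying the irreducibility of $\pi\times\pi_1$ in the induction step: translating balancedness (a smooth-pair / Bruhat-order statement via Corollary~\ref{cor: fgd and smth}) into the matching condition $\LC$ of Theorem~\ref{thm: laddercomb} requires a genuine combinatorial lemma relating $\X_{\m;\m_1},\Y_{\m;\m_1}$ to the absence of the forbidden patterns.
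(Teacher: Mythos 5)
Your skeleton does match the paper's: the positive direction \ref{item: mbal}$\Rightarrow$\ref{item: zmLM} by induction via Lemma \ref{lem: albertoidea} with a ladder factor and Theorem \ref{thm: laddercomb}, a parallel construction of a matching/independent vectors for \ref{item: mbal}$\Rightarrow$\ref{item: mGLS}, and the double-socle strategy plus reductions for the converses. But at the two places where the paper works hardest your plan has genuine gaps. In the induction step the paper does \emph{not} take $\m_1$ to be a sub-multisegment of $\m$ (a ``maximal chain of linked segments''): it takes the segments of $\m$ with the largest beginnings and \emph{truncates} them at carefully chosen points $b_{n_i}$, re-gluing afterwards with Lemma \ref{lem: chopladders}. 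Consequently $\m_2$ is not $\m-\m_1$, its balancedness needs Corollary \ref{cor: si0le} on top of Lemma \ref{lem: submlt}, and the verification of $\LC(\m,\m_1)$ and $\LC(\m_1,\m)$ --- which you defer with ``I expect to follow from balancedness'' --- is exactly where the absence of $4231$/$3412$ sub-multisegments is used, in a rather delicate case analysis (the proof of Lemma \ref{lem: indstep}). With your naive choice of $\m_1$ as an untouched sub-multisegment, $\pi\times\pi_1$ will in general be reducible and even $\pi\hookrightarrow\pi_1\times\pi_2$ is not guaranteed, so the design of $\m_1$ is a missing idea, not a routine check.

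For the negative directions the reduction toolkit you list (contraction and detachable segments) is insufficient, and for \GLS\ your propagation argument goes the wrong way: Remark \ref{rem: GLScomb} only says \GLS\ descends when removing a \emph{detachable} segment, and no statement ``a sub-multisegment of a \GLS\ multisegment is \GLS'' is available a priori, so knowing that the embedded $4231$/$3412$ pattern fails \GLS\ (Remark \ref{rem: types not GLS}) does not by itself disqualify $\m$. The paper instead runs a genuine classification: minimal unbalanced multisegments (Lemma \ref{lem: MU}), then \emph{left/right derivatives} (Corollary \ref{cor: derisLM}, Lemma \ref{lem: lderivm}) to pass to absolutely minimal unbalanced ones (Lemma \ref{lem: absmincls}), then contraction and the \emph{Zelevinsky involution} (Proposition \ref{prop: ZIred}, Remark \ref{rem: GLS^t}) to land on the three explicit families \eqref{eq: basic4231}, \eqref{eq: basic3412}, \eqref{eq: mincase34*12} (Corollary \ref{cor: absmin}); derivatives and the involution are absent from your plan and without them you do not reach these families. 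Moreover, for the $34{*}12$ family the failure of \GLS\ is not a pattern/irreducible-pair count (Remark \ref{rem: kirred} explicitly does not apply) but the separate Lie-theoretic computation of Example \ref{exam: 34*12}; and in the basic representation-theoretic cases the step ``$\Pi\hookrightarrow\omega\times\pi$ forces $\omega=\pi$'' is carried by the splitting/double-socle bookkeeping together with the permutation lemmas (Corollary \ref{cor: sigmarest}, Lemmas \ref{lem: sigma14231}, \ref{lem: sigma3412}), none of which is automatic. You correctly identify where the difficulty lies, but the concrete mechanisms that make the reduction and the basic cases work are not supplied.
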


In this section we will prove the implications \ref{item: mbal}$\implies$\ref{item: zmLM} and \ref{item: mbal}$\implies$\ref{item: mGLS}.

In view of Propositions \ref{prop: LS} and \ref{prop: nonsmth}, Corollary \ref{cor: fgd and smth} and formula \eqref{def: sigma0},
Theorem \ref{thm: main} implies Theorem \ref{thm: maini} of the introduction, except for conditions \ref{cond: Lreal} and \ref{cond: detform}
which will be dealt with in the remark below and in \S\ref{sec: KLid} respectively.
(Recall that conditions \ref{cond: smlocus}--\ref{cond: KLall} are equivalent by \cite{MR788771}.)

\begin{remark}
Using the quantum Schur--Weyl duality \cite{MR1405590} we may translate Theorem \ref{thm: main} to the language of representation theory
of the quantum affine algebra $U_q(\widehat{\sll}_N)$ when $q$ is not a root of unity.
Recall that the finite-dimensional irreducible representations of $U_q(\widehat{\sll}_N)$ are parameterized by Drinfeld polynomials,
or what amounts to the same, by monomials in the formal variables $Y_{i,a}$, $i=1,\dots,N-1$, $a\in\C^*$ (e.g., \cite{MR2642561}).
We write $L(M)$ for the irreducible representation corresponding to a monomial $M$.

\begin{corollary} \label{cor: mainq}
Suppose that $\m=\sum_{i=1}^k[a_i,b_i]$ is a regular multisegment and $N>b_i-a_i+1$ for all $i$.
Let $M=\prod_{i=1}^kY_{b_i-a_i+1,q^{a_i+b_i}}$.
Then $L(M)$ is real (i.e., $L(M)\otimes L(M)$ is irreducible) if $\m$ is balanced. The converse also holds provided that
$N>2\sum_{i=1}^k(b_i-a_i+1)$.\footnote{This lower bound is far from optimal. See Remark \ref{rem: lwrbndN}.}
\end{corollary}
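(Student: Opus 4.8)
The statement to prove is Corollary \ref{cor: mainq}, which transports Theorem \ref{thm: main} through the quantum Schur--Weyl duality of \cite{MR1405590}. The plan is to reduce it entirely to the equivalence \ref{item: mbal}$\iff$\ref{item: zmLM} of Theorem \ref{thm: main} together with a dictionary between the two representation-theoretic worlds. First I would recall precisely the functor supplied by quantum Schur--Weyl duality: for a suitable $F$ (a $p$-adic field with residue cardinality $q$, acting in place of the abstract parameter) there is a bijection between (a subset of) $\Irr$ supported on a fixed cuspidal line $\Z_\rho$ with segment lengths bounded by $N-1$ and the finite-dimensional simple $U_q(\widehat{\sll}_N)$-modules $L(M)$ whose monomial $M$ only involves $Y_{i,a}$ with $i\le N-1$. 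Under this dictionary the Zelevinsky segment $[a,b]$ of length $b-a+1$ corresponds to the fundamental-type factor $Y_{b-a+1,\,q^{a+b}}$ (this is exactly the normalization chosen in the statement), the multisegment $\m=\sum_i[a_i,b_i]$ corresponds to $M=\prod_i Y_{b_i-a_i+1,\,q^{a_i+b_i}}$, and — crucially — parabolic induction $\times$ on the $p$-adic side corresponds to the (normalized) tensor product of $U_q(\widehat{\sll}_N)$-modules, so that irreducibility of $\zele{\m}\times\zele{\m}$ matches irreducibility of $L(M)\otimes L(M)$. Hence $\zele{\m}\in\IrrS$ if and only if $L(M)$ is real, \emph{provided} all the segments and all the segments of $\m+\m$ fit inside the window available to the duality.

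The second step is to track the size constraints on $N$. The quantum Schur--Weyl functor identifies the $p$-adic picture with $U_q(\widehat{\sll}_N)$-modules only when the relevant representations sit inside $\GL_{<N}$-blocks, i.e. when every segment appearing has length $<N$; for this one needs $N>b_i-a_i+1$ for all $i$, which gives the first (easy) implication \ref{item: mbal}$\implies L(M)$ real. For the converse one additionally needs the tensor square $L(M)\otimes L(M)$, equivalently $\zele{\m}\times\zele{\m}=\zele{\m+\m}$ or its Jordan--Hölder constituents, to remain inside the window; since $\m+\m$ may, after linking, produce segments as long as $\deg\m=\sum_i(b_i-a_i+1)$ on each of two copies — and in the worst case a single concatenated segment of length up to $2\sum_i(b_i-a_i+1)$ — the safe bound is $N>2\sum_{i=1}^k(b_i-a_i+1)$. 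I would make this bookkeeping explicit, noting (as the footnote indicates) that it is far from sharp. Then: for $N$ in this range, $L(M)\otimes L(M)$ irreducible $\iff$ $\zele{\m}\times\zele{\m}$ irreducible $\iff$ $\zele{\m}\in\IrrS$ (Corollary \ref{cor: LMconds}) $\iff$ $\m$ balanced (Theorem \ref{thm: main}).

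The main obstacle is not any deep new argument but the careful statement of the Schur--Weyl dictionary in the exact normalization used here: one must verify that segments map to the variables $Y_{b-a+1,q^{a+b}}$ (and not some shift), that the monoidal structures match up (induction $\leftrightarrow$ tensor product, with the correct ordering conventions so that socles correspond to socles), and that the functor is exact and fully faithful on the relevant block so that ``irreducible $\times$'' transfers in both directions. This is essentially the content of \cite{MR1405590} (see also \cite{MR2642561} for the parametrization of simple $U_q(\widehat{\sll}_N)$-modules by monomials), and I would quote it, checking only that the length bound $N>\max_i(b_i-a_i+1)$ suffices to realize $\zele{\m}$ inside the image and $N>2\deg\m$ suffices to realize all constituents of $\zele{\m}\times\zele{\m}$. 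Once the dictionary is in place the corollary is immediate from Theorem \ref{thm: main}; the only genuine subtlety is convincing oneself that the crude bound $2\sum_i(b_i-a_i+1)$ is safe, for which it is enough to observe that every irreducible subquotient of $\zele{\m}\times\zele{\m}$ has supercuspidal support contained in $\supp(\m+\m)$, hence involves only $\rho\nu_\rho^n$ with $n$ in an interval of length $<2\sum_i(b_i-a_i+1)$, so all such representations live in $\GL_{<N}$-blocks and are captured by the duality.
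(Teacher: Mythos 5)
Your proposal is correct and follows essentially the same route as the paper, which obtains the corollary simply by transporting the equivalence (balanced $\iff$ $\square$-irreducible) of Theorem \ref{thm: main} through the Chari--Pressley quantum Schur--Weyl dictionary: the bound $N>b_i-a_i+1$ is what makes $M$ a legitimate (nonzero) Drinfeld monomial, and the bound $N>2\sum_i(b_i-a_i+1)$ puts the whole tensor square in the range (rank of the Hecke algebra $<N$) where the duality is an equivalence, so irreducibility transfers in both directions. Only your side heuristic is off --- linking inside $\m+\m$ cannot create a segment of length up to $2\deg\m$, nor need the support lie in an interval of that length (segments of any constituent are subsets of $\supp\m$, hence of length at most $\deg\m$) --- but this is harmless, since the operative condition is just $2\deg\m<N$, which you also invoke correctly.
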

\end{remark}

\begin{proof}[Proof of the implication \ref{item: mbal}$\implies$\ref{item: zmLM} of Theorem \ref{thm: main}]
We argue by induction on the number of segments $k$ in $\m$. The base of the induction is the trivial case $k=0$.
For the induction step we use Lemma \ref{lem: albertoidea}.
Write $\m=\m_\sigma(\tseq)$ where $\tseq=\bitmplt$ is a \biseq\ and $\sigma\in S_k$.
Write $\Delta_i=[a_{\sigma^{-1}(i)},b_i]$ so that $\m=\Delta_1+\dots+\Delta_k$ and let $\sigma_0=\sigma_0(\tseq)\in S_k$.
By assumption $(\sigma,\sigma_0)$ is smooth.
For convenience write $a'_i=a_{k+1-i}$ and set $\sigma'(i)=\sigma(k+1-i)$ so that $b(\Delta_{\sigma'(1)})=a'_1>\dots>b(\Delta_{\sigma'(k)})=a'_k$.
We construct $\pi_1$ and $\pi_2$ as follows. Let $m\ge1$ be the largest integer such that $\sigma'(1)<\dots<\sigma'(m)$.
We define indices $n_1<\dots<n_m$ with $n_i\ge\sigma'(i)$ for all $i$ recursively as follows.
We take $n_m=\sigma'(m)$ and given $n_{i+1}$, $1\le i<m$ we define $n_i$ to be the largest index $\sigma'(i)\le j<n_{i+1}$
such that $\lshft{\Delta}_j\prec\Delta_{\sigma'(i)}$.
Let $\Delta_i'=[a'_i,b_{n_i}]\subset\Delta_{\sigma'(i)}$, $i=1,\dots,m$ and let $l>1$ be the largest integer $\le m$ such that
$\Delta'_l\not\prec\Delta'_{l-1}$ (i.e., such that $a'_{l-1}>b_{n_l}+1$) if such an index exists; otherwise let $l=1$.
We take $\m_1=\sum_{i=l}^m\Delta_i'$ and $\m_2=\sum_{i=1}^k\Delta''_i$ where $\Delta''_{\sigma'(i)}=
\Delta_{\sigma'(i)}\setminus\Delta'_i=[b_{n_i}+1,b_{\sigma'(i)}]$, $i=l,\dots,m$ and $\Delta''_j=\Delta_j$
if $j\notin\sigma'(\{l,\dots,m\})$.
Let $\pi_i=\zele{\m_i}$, $i=1,2$. Clearly $\pi_1$ is a ladder and $\m_2$ is regular.
The induction step will follow from Lemma \ref{lem: albertoidea} and the lemma below.
\end{proof}

\begin{lemma} \label{lem: indstep}
We have
\begin{enumerate}
\item $\pi\hookrightarrow\pi_1\times\pi_2$.
\item $\m_2$ is balanced. Hence, by induction hypothesis $\pi_2$ is \LM.
\item $\pi\times\pi_1$ is irreducible.
\end{enumerate}
\end{lemma}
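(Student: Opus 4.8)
The plan is to verify the three claims of Lemma~\ref{lem: indstep} in the order given, using the combinatorics of ladders and the irreducibility criteria of \cite{MR3573961} recalled in \S\ref{sec: classification}. Throughout I keep the notation of the construction: $\m = \Delta_1 + \dots + \Delta_k$ with $e(\Delta_1) > \dots > e(\Delta_k)$, the permutation $\sigma$ and the auxiliary permutation $\sigma'$ with $\sigma'(i) = \sigma(k+1-i)$, the integer $m$ and indices $n_1 < \dots < n_m$, the sub-segments $\Delta'_i = [a'_i, b_{n_i}] \subset \Delta_{\sigma'(i)}$ for $i = 1,\dots,m$, the index $l$, and $\m_1 = \sum_{i=l}^m \Delta'_i$, $\m_2 = \sum_i \Delta''_i$. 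The point of the construction is that $\m_1$ is a ladder (since $a'_l > \dots > a'_m$ and $b_{n_l} > \dots > b_{n_m}$ by monotonicity of the $b$'s and the choice of indices), and $\m_2$ is obtained from $\m$ by shortening the segments $\Delta_{\sigma'(l)},\dots,\Delta_{\sigma'(m)}$ from the right.

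\textbf{Claim (1): $\pi \hookrightarrow \pi_1 \times \pi_2$.} First I would observe that by Lemma~\ref{lem: chopladders} applied to the ladder $\sum_{i=l}^m \Delta_{\sigma'(i)}$ (whose row-wise "cut points" are the $b_{n_i}+1$), we get $\zele{\sum_{i=l}^m \Delta_{\sigma'(i)}} \hookrightarrow \zele{\m_1} \times \zele{\sum_{i=l}^m \Delta''_{\sigma'(i)}}$. The remaining segments of $\m$ (those with index not in $\sigma'(\{l,\dots,m\})$) have to be brought in; I would order all segments of $\m$ so that the segments $\Delta_{\sigma'(l)},\dots,\Delta_{\sigma'(m)}$ come first in an order compatible with $\not\prec$ and show $\pi \hookrightarrow \zele{\sum_{i=l}^m \Delta_{\sigma'(i)}} \times \zele{\m - \sum_{i=l}^m \Delta_{\sigma'(i)}}$ using the defining property of $\soc$ of a standard module together with the fact that the segments $\Delta_{\sigma'(l)},\dots,\Delta_{\sigma'(m)}$ are mutually linked in a ladder pattern and the choice of $l,n_i$ guarantees no $\Delta_{\sigma'(j)} \prec \Delta_r$ for the remaining $r$. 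Composing the two embeddings and using that $\zele{\sum_{i=l}^m \Delta''_{\sigma'(i)}} \times \zele{\m - \sum \Delta_{\sigma'(i)}}$ surjects onto (or at least contains $\zele{\m_2}$ in its socle of a factor whose product is what we need) — more carefully, one checks $\m_2 = (\m - \sum_{i=l}^m\Delta_{\sigma'(i)}) + \sum_{i=l}^m\Delta''_{\sigma'(i)}$ and that the relevant standard module has $\zele{\m_2}$ as socle by a $\not\prec$-ordering argument — yields $\pi \hookrightarrow \pi_1 \times \pi_2$.

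\textbf{Claim (2): $\m_2$ is balanced.} Here I would apply Corollary~\ref{cor: si0le}. Writing $\m = \m_\sigma(\tseq)$, the multisegment $\m_2$ has the same number $k$ of segments, the same set of right endpoints $b_1 > \dots > b_k$ (shortening a segment from the right changes which $b_{n_i}$ is its endpoint, but the multiset of all right endpoints is $\{b_1,\dots,b_k\}$ — wait, this needs care: $\Delta''_{\sigma'(i)}$ has right endpoint $b_{\sigma'(i)}$, unchanged, and left endpoint $b_{n_i}+1 \ge a'_i = b(\Delta_{\sigma'(i)})$). So in fact $\m_2$ has the \emph{same right endpoints} as $\m$ and left endpoints that are \emph{weakly larger}, and the relative order of left endpoints is preserved by the monotonicity in the construction. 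Thus conditions (1)--(4) of Corollary~\ref{cor: si0le} hold with the roles of $\m,\m'$ being $\m,\m_2$, and condition (5) is the hypothesis that $\m$ is balanced; hence $\m_2$ is balanced. Then the induction hypothesis gives $\pi_2 \in \IrrS$.

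\textbf{Claim (3): $\pi \times \pi_1$ is irreducible.} This is the step I expect to be the main obstacle. Since $\pi_1 = \zele{\m_1}$ is a ladder and $\pi = \zele{\m}$, by Theorem~\ref{thm: laddercomb} the product $\pi_1 \times \pi$ is irreducible if and only if both $\LC(\m_1,\m)$ and $\LC(\m,\m_1)$ hold. I would unwind Definition~\ref{def: XandY}: compute $\X_{\m_1;\m}$, $\Y_{\m_1;\m}$ and the relation $\rltn$, and exhibit an explicit $\rltn$-matching, and likewise for $(\m,\m_1)$. The matching should come directly from the recursive choice of the $n_i$: the condition "$n_i$ is the largest index $\sigma'(i) \le j < n_{i+1}$ with $\lshft\Delta_j \prec \Delta_{\sigma'(i)}$" is precisely engineered so that each linking pair $\Delta'_i \prec \Delta_r$ (contributing to $\X_{\m_1;\m}$) is matched to the pair $\lshft\Delta'_i \prec \Delta_{r'}$ obtained by moving along the $\rltn$-relation, using maximality of $n_i$ to guarantee injectivity. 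I would carry out this bookkeeping carefully; the key sub-point is that because $\m$ is balanced (equivalently $(\sigma,\sigma_0)$ smooth), there are "enough" links in $\m$ to absorb all the links coming from $\m_1$, so the matching closes up — this is where the smoothness hypothesis is genuinely used, and verifying the matching exists in all configurations is the delicate part of the argument.
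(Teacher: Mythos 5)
Your overall strategy for part (1) hinges on a sub-claim that is false. You assert that the choice of $l$ and the $n_i$ guarantees $\Delta_{\sigma'(j)}\not\prec\Delta_r$ for $j\in\{l,\dots,m\}$ and the remaining indices $r$, and you use this unlinkedness criterion to embed $\pi$ into $\zele{\m_6}\times\zele{\m-\m_6}$, where $\m_6=\sum_{i=l}^m\Delta_{\sigma'(i)}$. But the definition of $l$ only controls the \emph{truncated} segments: it gives $a'_{l-1}>b_{n_l}+1$, hence each $\Delta'_j$ ($j\ge l$) is unlinked with every $\Delta_{\sigma'(i)}$, $i<l$; the full segments $\Delta_{\sigma'(j)}$ can perfectly well satisfy $\Delta_{\sigma'(j)}\prec\Delta_{\sigma'(i)}$ with $i<l$. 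In the worked example following the lemma ($\m=[12,14]+[9,13]+\cdots$, $l=2$) one has $\Delta_{\sigma'(2)}=[9,13]\prec[12,14]=\Delta_{\sigma'(1)}$, so the "no links" criterion for $\LI$ is simply unavailable, and with the ladder factor on the left the socle of $\zele{\m_6}\times\zele{\m-\m_6}$ is not $\zele{\m}$ for free. Moreover, even granting such an embedding, composing with $\zele{\m_6}\hookrightarrow\pi_1\times\pi_4$ (Lemma~\ref{lem: chopladders}) only lands you in $\pi_1\times\pi_4\times\zele{\m_3+\m_5}$, whose inner factors sit in the wrong order relative to $\pi_2=\soc(\pi_3\times\pi_4\times\pi_5)$ (note $\m_4<_b\m_3$), so it does not follow that the image lies in $\pi_1\times\pi_2$. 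The paper's argument runs in the opposite direction: it embeds $\pi_2\hookrightarrow\pi_3\times\pi_4\times\pi_5$, uses that $\pi_1\times\pi_3$ is irreducible (this is the unlinkedness that the definition of $l$ actually provides, between $\m_1$ and $\m_3$), hence a ladder and \LM, deduces that $\pi_1\times\pi_3\times\pi_4\times\pi_5$ is \SI, commutes $\pi_1$ past $\pi_3$, identifies $\soc(\pi_1\times\pi_4)=\zele{\m_6}$ by Lemma~\ref{lem: chopladders}, and then concludes $\soc(\pi_1\times\pi_2)=\zele{\m_3+\m_6+\m_5}=\pi$ from $\m_5<_b\m_6<_b\m_3$. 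This device of commuting the ladder across $\pi_3$ inside an \SI\ product is the idea your outline is missing.

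For part (2), your premise that $\m_2$ has the same number of segments and the same right endpoints as $\m$ is inaccurate: whenever $n_i=\sigma'(i)$ the segment $\Delta''_{\sigma'(i)}$ is empty (in the paper's example $\m$ has eight segments and $\m_2$ has six), so one must combine Corollary~\ref{cor: si0le} with Lemma~\ref{lem: submlt}, as the paper does; this is a fixable slip. For part (3), your plan coincides with the paper's (Theorem~\ref{thm: laddercomb}), and the easy matching $(i,j)\mapsto(i-1,j)$ for $\LC(\m_1,\m)$ is fine, but the entire substance of the lemma is the verification of $\LC(\m,\m_1)$: it requires the case split according to whether $l'=l$ or $l'>l$, the explicit matchings $f(i,j)=(h(i,j),j)$, respectively $(i,j)\mapsto(i,j+1)$, and injectivity arguments that use the balanced hypothesis precisely through the exclusion of sub-multisegments of types $4231$ and $3412$ (Proposition~\ref{prop: nonsmth}). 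Deferring this as "delicate bookkeeping" leaves the main point unproven, so as it stands the proposal is an outline with genuine gaps rather than a proof.
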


\begin{proof}
Let $\m_3=\sum_{i=1}^{l-1}\Delta_{\sigma'(i)}$, $\m_4=\sum_{i=l}^m\Delta''_{\sigma'(i)}$ and
$\m_5=\sum_{j\notin\sigma'(\{1,\dots,m\})}\Delta_j$ so that $\m_2=\m_3+\m_4+\m_5$ and $\m_5<_b\m_4<_b\m_3$.
Set $\pi_i=\zele{\m_i}$, $i=3,4,5$. Thus,
\[
\pi_2\hookrightarrow\pi_3\times\pi_4\times\pi_5
\]
and therefore
\[
\pi_1\times\pi_2\hookrightarrow\pi_1\times\pi_3\times\pi_4\times\pi_5.
\]
Note that $\pi_1\times\pi_3$ is irreducible since no segment in $\m_1$ in linked with any segment in $\m_3$
(by the definition of $l$). Thus, $\pi_1\times\pi_3\simeq\pi_3\times\pi_1$ is a ladder, and in particular, \LM.
Thus, $\pi_1\times\pi_3\times\pi_4$ is \SI. Since $\lnrset(\pi_5)\cap\supp(\pi_i)=\emptyset$, $i=1,3,4$, it follows that
$\pi_1\times\pi_3\times\pi_4\times\pi_5$ is \SI\ (\cite[Lemma 1.5]{MR3573961}). Thus,
\[
\soc(\pi_1\times\pi_2)=\soc(\pi_1\times\pi_3\times\pi_4\times\pi_5)
\simeq\soc(\pi_3\times\pi_1\times\pi_4\times\pi_5)=\soc(\soc(\pi_3\times\soc(\pi_1\times\pi_4))\times\pi_5).
\]
Let $\m_6=\sum_{i=l}^m\Delta_{\sigma'(i)}$ and $\pi_6=\zele{\m_6}$. Note that $\pi_6$ is a ladder.
Moreover, by Lemma \ref{lem: chopladders} we have $\pi_6=\soc(\pi_1\times\pi_4)$. (Note that $\pi_4$ is a  ladder.)
Since $\m_5<_b\m_6<_b\m_3$ it follows that
\[
\soc(\pi_1\times\pi_2)=\zele{\m_3+\m_6+\m_5}=\zele{\m}
\]
and the first part follows.

Next, we show that $\m_2$ is balanced.
Note that if $i>l$ is such that $\Delta'_i\ne\Delta_{\sigma'(i)}$, i.e., $n_i>\sigma'(i)$, then it follows from the definition of $n_{i-1}$
and $l$ that $n_{i-1}\ge\sigma'(i)>\sigma'(i-1)$ and therefore $\Delta'_{i-1}\ne\Delta_{\sigma'(i-1)}$.
Let $l' \leq m$ be the smallest index $\ge l$ such that $\Delta'_{l'}=\Delta_{\sigma'(l')}$, i.e., such that $n_{l'}=\sigma'(l')$.
By the above, $n_i=\sigma'(i)$ for all $i\ge l'$.
Let $a''_i=b(\Delta_i)$ if $i\notin\sigma'(\{l,\dots,m\})$ and $a''_{\sigma'(i)}=b_{n_i}+1$ for $l\le i\le m$.
Thus, $\Delta''_i=[a''_i,b_i]$ for all $i$. It is easy to see that $a''_i<a''_j$ if and only if
$b(\Delta_i)<b(\Delta_j)$ (i.e., if and only if $\sigma'^{-1}(i)>\sigma'^{-1}(j)$).
It follows from Lemma \ref{lem: submlt} and Corollary \ref{cor: si0le} that $\m_2$ is balanced.

Finally, we show the irreducibility of $\pi\times\pi_1$ using the combinatorial condition given by Theorem \ref{thm: laddercomb}.
The condition $\LI(\pi_1,\pi)$ is easy to check and does not depend on the condition on $\m$.
Indeed, recall that (cf.~Definition \ref{def: XandY})
\[
\X_{\m_1;\m}=\{(i,j)\in\{l,\dots,m\}\times\{1,\dots,k\}:\Delta'_i\prec\Delta_j\}
\]
and
\[
\Y_{\m_1;\m}=\{(i,j)\in\{l,\dots,m\}\times\{1,\dots,k\}:\lshft{\Delta'_i}\prec\Delta_j\}.
\]
If $(i,j)\in \X_{\m_1;\m}$ then $j=\sigma'(i')$ for some $l<i'<i$ and hence $(i-1,j)\in \Y_{\m_1;\m}$.
Thus $(i,j)\mapsto(i-1,j)$ is a matching from $\X_{\m_1;\m}$ to $\Y_{\m_1;\m}$.

The condition $\RI(\pi_1,\pi)$ is more delicate and relies on the assumption on $\m$.
Recall
\[
\X_{\m;\m_1}=\{(i,j)\in\{1,\dots,k\}\times\{l,\dots,m\}:\Delta_i\prec\Delta'_j\}
\]
and
\[
\Y_{\m;\m_1}=\{(i,j)\in\{1,\dots,k\}\times\{l,\dots,m\}:\lshft{\Delta}_i\prec\Delta'_j\}.
\]
Let $l'$ be as above.
Consider first the case where $l'=l$, i.e., $\Delta'_i=\Delta_{\sigma'(i)}$ for all $i=l,\dots,m$.
For any $(i,j)\in \X_{\m;\m_1}$ let $h(i,j)$ be the largest index $r$ such that
$\Delta_i\prec\Delta_r$ and $\lshft{\Delta}_r\prec\Delta'_j$. Since $\Delta'_j=\Delta_{\sigma'(j)}$,
$h(i,j)$ is well-defined and $\sigma'(j)\le h(i,j)<i$.
We claim that the function $f:\X_{\m;\m_1}\rightarrow \Y_{\m;\m_1}$ given by $f(i,j)=(h(i,j),j)$ is a matching.
The only issue is injectivity.
Suppose on the contrary that $i_1<i_2$, $(i_1,j),(i_2,j)\in \X_{\m;\m_1}$ and $h(i_1,j)=h(i_2,j)$.
We cannot have $\Delta_{i_2}\prec\Delta_{i_1}$ since otherwise $h(i_2,j)\ge i_1$ while $h(i_1,j)<i_1$.
On the other hand, $b(\Delta_{i_1})<b(\Delta'_j)\le e(\Delta_{i_2})+1$, hence $b(\Delta_{i_1})\le e(\Delta_{i_2})$.
Since $i_1<i_2$ we must therefore have $b(\Delta_{i_1})<b(\Delta_{i_2})$.
Let $j'$ be the largest index $\ge j$ such that $\Delta_{i_1}\prec\Delta'_{j'}$ and $\Delta_{i_2}\prec\Delta'_{j'}$.
If $j'=m$ then $\Delta_{\sigma'(m+1)}+\Delta_{\sigma'(m)}+\Delta_{i_1}+\Delta_{i_2}$ forms a submultisegment of type $3412$
in contradiction with the assumption that $\m$ is balanced (Proposition \ref{prop: nonsmth}).
Thus, $j'<m$. We cannot have $\sigma'(j'+1)<i_1$ since otherwise $\Delta_{i_1},\Delta_{i_2}\prec\Delta'_{j'+1}$,
rebutting the maximality of $j'$. Also, we cannot have $\sigma'(j'+1)=i_1$ since $b(\Delta_{i_1})<b(\Delta_{i_2})$
and $i_2>\sigma'(j')$. Thus, $\sigma'(j'+1)>i_1$ and the definition of $n_{j'}$ would give $n_{j'}\ge i_1$, controverting our assumption.

Suppose now that $l'>l$. We claim that $(i,j)\mapsto(i,j+1)$ is a matching from $\X_{\m;\m_1}$ to $\Y_{\m;\m_1}$.
That is, for any $(i,j)\in \X_{\m;\m_1}$ we have $j<m$ and $\lshft{\Delta}_i\prec\Delta'_{j+1}$.
Suppose on the contrary that $(i,j)\in \X_{\m;\m_1}$ and either $j=m$ or $\lshft{\Delta}_i\not\prec\Delta'_{j+1}$.

Assume first that $j<l'$, i.e., that $n_j>\sigma'(j)$. Since $\Delta_i\prec\Delta'_j$ we must have $n_j<i$.
In particular, $j<m$. Since $\lshft{\Delta}_i\not\prec\Delta'_{j+1}$ we also have $n_{j+1}>i$.
From the definition of $n_j$ and the fact that $n_j<i$ it follows that $\lshft{\Delta}_i\not\prec\Delta'_j$.
Therefore, $e(\Delta_i)+1=b(\Delta'_j)$. However, then
$\Delta'_{j+1}\not\prec\Delta'_j$ since $e(\Delta'_{j+1})<e(\Delta_i)=b(\Delta'_j)-1$, repudiating the assumption that $j'\ge l$.


Assume now that $j\ge l'$, so that $\Delta'_j=\Delta_{\sigma'(j)}$.
For simplicity write $i_0=n_{l'-1}$ so that $\sigma'(l'-1)<i_0<\sigma'(l')$ and $\lshft{\Delta}_{i_0}\prec\Delta_{\sigma'(l'-1)}$.
If $b(\Delta_{i_0})>b(\Delta_i)$ (i.e., if $\sigma'^{-1}(i_0)>\sigma'^{-1}(i)$) then
\[
\Delta_{\sigma'(l'-1)}+\Delta_{i_0}+\Delta_{\sigma'(l')}+\dots+\Delta_{\sigma'(j)}+\Delta_i
\]
is a submultisegment of type $4231$ which violates the assumption that $\m$ is balanced by Proposition \ref{prop: nonsmth}.
Therefore $b(\Delta_{i_0})<b(\Delta_i)$.
Assume first that $j=m$ and let $i_1=\sigma'(m+1)$. by the definition of $m$, $i_1<\sigma'(m)$. By the definition of $l'$
we also have $i_1<\sigma'(l')$. Suppose first that $i_1>\sigma'(l'-1)$. Then as before,
\[
\Delta_{\sigma'(l'-1)}+\Delta_{i_1}+\Delta_{\sigma'(l')}+\dots+\Delta_{\sigma'(j)}+\Delta_i
\]
is a submultisegment of type $4231$, denying the assumption that $\m$ is balanced.
On the other hand, if $i_1<\sigma'(l'-1)$ then
\[
\Delta_{i_1}+\Delta_{\sigma'(l'-1)}+\Delta_{i_0}+\Delta_{\sigma'(l')}+\dots+\Delta_{\sigma'(j)}+\Delta_i
\]
is a submultisegment of type $3412$,
which once again violates the assumption on $\m$.
Thus $j<m$. It is now clear that $\lshft{\Delta}_i\prec\Delta'_{j+1}$, that is, $\sigma'(j+1)\le i$
for otherwise $n_j\le i<\sigma'(j+1)$, gainsaying the assumption $j\ge l'$.

This finishes the proof of the lemma, and hence the implication \ref{item: mbal}$\implies$\ref{item: zmLM} of Theorem \ref{thm: main}.
\end{proof}

\begin{example}
Consider the balanced multisegment
\[
\m=[12,14]+[9,13]+[6,12]+[3,10]+[1,9]+[2,8]+[5,6]+[4,5]
\]
\[
\xymatrix@=0.6em{
&&&&&&&&&&&\bullet\ar@{-}[r]&\circ\ar@{-}[r]&\circ\\
&&&&&&&&\bullet\ar@{-}[r]&\circ\ar@{-}[r]&\circ\ar@{-}[r]&\circ\ar@{-}[r]&\circ\\
&&&&&\bullet\ar@{-}[r]&\bullet\ar@{-}[r]&\bullet\ar@{-}[r]&\circ\ar@{-}[r]&\circ\ar@{-}[r]&\circ\ar@{-}[r]&\circ\\
&&\circ\ar@{-}[r]&\circ\ar@{-}[r]&\circ\ar@{-}[r]&\circ\ar@{-}[r]&\circ\ar@{-}[r]&\circ\ar@{-}[r]&\circ\ar@{-}[r]&\circ\\
\circ\ar@{-}[r]&\circ\ar@{-}[r]&\circ\ar@{-}[r]&\circ\ar@{-}[r]&\circ\ar@{-}[r]&\circ\ar@{-}[r]&\circ\ar@{-}[r]&\circ\ar@{-}[r]&\circ\\
&\circ\ar@{-}[r]&\circ\ar@{-}[r]&\circ\ar@{-}[r]&\circ\ar@{-}[r]&\circ\ar@{-}[r]&\circ\ar@{-}[r]&\circ\\
&&&&\bullet\ar@{-}[r]&\bullet\\
&&&\bullet\ar@{-}[r]&\bullet}
\]
Here, in the notation of the proof of Theorem \ref{thm: main} we have
$\sigma'=(12378465)$, $m=5$, $n_1=3$, $n_2=5$, $n_3=6$, $n_4=7$, $n_5=8$, $l=2$, $l'=4$.
We marked by solid dots the part of $\Delta_{\sigma'(i)}$ which belongs to $\Delta'_i$.
Thus, $\m_1=[9,9]+[6,8]+[5,6]+[4,5]$ and
\[
\m_2=[12,14]+[10,13]+[9,12]+[3,10]+[1,9]+[2,8]
\]
\[
\xymatrix@=0.6em{
&&&&&&&&&&&\circ\ar@{-}[r]&\circ\ar@{-}[r]&\circ\\
&&&&&&&&&\circ\ar@{-}[r]&\circ\ar@{-}[r]&\circ\ar@{-}[r]&\circ\\
&&&&&&&&\circ\ar@{-}[r]&\circ\ar@{-}[r]&\circ\ar@{-}[r]&\circ\\
&&\circ\ar@{-}[r]&\circ\ar@{-}[r]&\circ\ar@{-}[r]&\circ\ar@{-}[r]&\circ\ar@{-}[r]&\circ\ar@{-}[r]&\circ\ar@{-}[r]&\circ\\
\circ\ar@{-}[r]&\circ\ar@{-}[r]&\circ\ar@{-}[r]&\circ\ar@{-}[r]&\circ\ar@{-}[r]&\circ\ar@{-}[r]&\circ\ar@{-}[r]&\circ\ar@{-}[r]&\circ\\
&\circ\ar@{-}[r]&\circ\ar@{-}[r]&\circ\ar@{-}[r]&\circ\ar@{-}[r]&\circ\ar@{-}[r]&\circ\ar@{-}[r]&\circ}
\]
\end{example}

\begin{proof}[Proof of the implication \ref{item: mbal}$\implies$\ref{item: mGLS} of Theorem \ref{thm: main}]
We will show in fact that there exists an extra-strong $\rltn$-matching from $\X_\m$ to $\Y_\m$.
(See Remark \ref{rem: strongmatch}.)
The argument parallels the one above for the implication \ref{item: mbal}$\implies$\ref{item: zmLM}.
In particular, we argue by induction on $k$.
We use the same notation as in the proof above.
Recall that $\Delta''_i=[a''_i,b_i]$ where $b(\Delta_i)\le a''_i$ for all $i$ and $a''_i<a''_j$ if and only if $b(\Delta_i)<b(\Delta_j)$.
It easily follows that
\begin{enumerate}
\item $\X_{\m_2}\subset \X_\m$; $\Y_{\m_2}\subset \Y_\m$.
\item If $(i,r)\in \X_\m$, $(r,j)\in \Y_\m$ and $(i,j)\in \X_{\m_2}$ then $(i,r)\in \X_{\m_2}$ and $(r,j)\in \Y_{\m_2}$.
\item Similarly, if $(r,j)\in \X_\m$, $(i,r)\in \Y_\m$ and $(i,j)\in \X_{\m_2}$ then necessarily $(r,j)\in \X_{\m_2}$ and $(i,r)\in \Y_{\m_2}$.
\end{enumerate}
Thus, the non-zero coordinates of $\{\xvec_{i,j}(\lambda):(i,j)\in \X_{\m_2}\}$ are confined to $\Y_{\m_2}$ and the entries coincide with those
of $\xvec_{i,j}(\lambda)$ with respect to $\m_2$ (and the same $\lambda_{i,j}$).
We have already shown in Lemma \ref{lem: indstep} that $\m_2$ is balanced. Hence, by induction hypothesis it suffices to check that
there exists a strong $\rltn$-matching $f:\X'\rightarrow\Y'$ where $\X'=\X_\m\setminus \X_{\m_2}$ and $\Y'=\Y_\m\setminus \Y_{\m_2}$.

Consider first the case $l'=l$. Thus, $\X'=\{(i,\sigma'(j))\in \X_\m:l\le j\le m\}$.
Let $f:\X'\rightarrow\Y'$ be defined by $f(i,\sigma'(j))=(h(i,j),\sigma'(j))$ where $h$ is as in Lemma \ref{lem: indstep}. Then $f$ is injective.
Let $\somele$ be the skewed lexicographic order on $\Y_\m$ given by $(i,j)\somele(i',j')$ if either $j'>j$ or ($j=j'$ and $i'\le i$).
It follows from the definition of $f$ that if $(i,j)\rltn(i',j')\in \Y_\m$ with $(i,j)\in\X'$ then $f(i,j)\somele(i',j')$.

Suppose now that $l'>l$. It is easy to see that
\[
\X'=\{(i,\sigma'(j))\in \X_\m:l\le j\le m,\Delta_i\prec\Delta'_j\}.
\]
Thus, the proof of Lemma \ref{lem: indstep} shows that the rule $(i,\sigma'(j))\mapsto (i,\sigma'(j+1))$ defines an injective function $f:\X'\rightarrow\Y'$.
Let $\somele$ be the skewed lexicographic order on $\Y_\m$ given by $(i,j)\somele(i',j')$ if either $i'<i$ or ($i'=i$ and $j'\ge j$).
Clearly, for any $(i,\sigma'(j))\in\X'$ and $(i,\sigma'(j))\rltn(i',\sigma'(j'))$ with $l\le j,j'\le m$ we have $(i,\sigma'(j+1))\somele(i',\sigma'(j'))$.

In both cases the proof is complete.
\end{proof}

\begin{remark}
Once proved, Theorem \ref{thm: main}, together with Lemma \ref{lem: submlt}, imply that if $\m$ is regular and $\zele{\m}\in\IrrS$ then
$\zele{\n}\in\IrrS$ for any sub-multisegment $\n$ of $\m$.
However, this is no longer true in the non-regular case. For instance, we can take $\n=[4,5]+[2,4]+[3,3]+[1,2]$ and $\m=\n+[2,3]$.
(It can be proved using Lemma \ref{lem: albertoidea} that $\zele{\m}$ is \LM, but in the next section we show that $\zele{\n}$ is not \LM.)
\end{remark}

\begin{remark}
Lemma \ref{lem: albertoidea}, though simple, provides a powerful tool for proving $\square$-irreducibility.
While we do not yet have sufficient evidence to make precise conjectures we may ask the following question.
Given $\pi\in\IrrS$ which is not supercuspidal, do there always exist $1\ne\pi_1,\pi_2\in\IrrS$ such that
$\pi\hookrightarrow\pi_1\times\pi_2$ and $\pi\times\pi_1$ is irreducible?
\end{remark}

\section{Basic cases} \label{sec: basicases}
It remains to prove the other implications of Theorem \ref{thm: main}.
In this section we carry out the main step by showing that for certain ``basic'' unbalanced regular multisegments $\m$
(generalizing Example \ref{ex: bexam}) which are introduced below, $\zele{\m}\notin\IrrS$ and $\m$ is not \GLS.

\subsection{}
The idea of the proof is the following.
Suppose that $\pi=\zele{\m}=\soc(\pi_1\times\pi_2)$ with $\pi_1,\pi_2\in\IrrS$. Then the \wtness\
$\Pi=\soc(\pi_1\times\soc(\pi_2\times\pi))\hookrightarrow\pi_1\times\pi_2\times\pi$ is irreducible, and hence
$\Pi\hookrightarrow\omega\times\pi$ for some $\omega\in\JH(\pi_1\times\pi_2)$.
If we can show that this is not possible unless $\omega=\pi$ then necessarily $\Pi\hookrightarrow\pi\times\pi$ and hence $\pi\notin\IrrS$
provided that $\Pi\ne\zele{\m+\m}$.

To facilitate the argument it is useful to introduce the following concept.
Let $\pi\in\Irr$ and $\pi_1,\pi_2,\Pi$ as before. We say that $(\pi_1,\pi_2)$ is a \emph{\spltng}\ for $\pi$ with \wtness\ $\Pi$ if in addition
\begin{enumerate}
\item $\m(\pi)=\m(\pi_1)+\m(\pi_2)$.
\item $\rnrset(\pi)=\rnrset(\pi_1)\cup\rnrset(\pi_2)$.
\item $\lnrset(\Pi)=\lnrset(\pi)$.
\end{enumerate}
In practice, determining $\Pi$ is the most technically involved step.

The properties above limit the possible $\omega$'s in $\JH(\pi_1\times\pi_2)$ such that $\Pi\hookrightarrow\omega\times\pi$.
Before making this more precise, we recall that the ascent set of a permutation $\sigma\in S_k$ is defined by
\[
\desl(\sigma):=\{1\le i<k:\sigma(i)<\sigma(i+1)\}\cup\{k\}.
\]
The following is an immediate consequence of Lemma \ref{lem: spclcaseextrho}. (Recall the notation of \S\ref{sec: biseq}.)
\begin{lemma} \label{lem: lnrdes}
Let $l\ge0$ and $\sigma\in S_k$ such that $\sigma(i)\le k+l+1-i$ for all $i>l+1$.
Let $\m=\m_\sigma(\tseq_{k,l})$ (see \eqref{def: akl}).
Then $\lnrset(\m)=\{\rho\nu_\rho^i:i\in\desl(\sigma)\}$ and $\rnrset(\m)=\{\rho\nu_\rho^{k+l-i}:i\in\desl(\sigma^{-1})\}$.
\end{lemma}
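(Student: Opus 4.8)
The plan is to read this off directly from Lemma \ref{lem: spclcaseextrho}, since for $\tseq_{k,l}$ the multisegment $\m_\sigma(\tseq_{k,l})$ has an explicit description: as $a_j=j$ and $b_i=k+l-i$, one has $\m_\sigma(\tseq_{k,l})=\sum_{j=1}^{k}[j,\,k+l-\sigma(j)]$, so, writing $\Delta_j=[j,\,k+l-\sigma(j)]$ (and recalling the convention \eqref{eq: convention}, under which $\Delta_j$ is simply omitted when $\sigma(j)=k+l+1-j$), the beginnings of the segments of $\m_\sigma(\tseq_{k,l})$ are exactly the $\rho\nu_\rho^j$ with $1\le j\le k$ and $\Delta_j\ne\emptyset$, each with multiplicity one. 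Hence, by Lemma \ref{lem: spclcaseextrho}(1), only the $\rho\nu_\rho^j$, $1\le j\le k$, can lie in $\lnrset(\m)$.

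For each such $j$ with $\Delta_j\ne\emptyset$ I would then apply Lemma \ref{lem: spclcaseextrho}(3): it gives $\rho\nu_\rho^j\in\lnrset(\m)$ if and only if $\m$ has no segment $\Delta'$ with $b(\Delta')=\rho\nu_\rho^{j+1}$ and $\Delta_j\prec\Delta'$. The unique possible such $\Delta'$ is $\Delta_{j+1}$, and there is none when $j=k$, since no segment of $\m$ begins at $\rho\nu_\rho^{k+1}$. Unwinding the definition of $\prec$ for the two integer segments $[j,k+l-\sigma(j)]$ and $[j+1,k+l-\sigma(j+1)]$ --- the three conditions $b(\Delta_j)\notin\Delta_{j+1}$, $b(\lshft{\Delta}_{j+1})\in\Delta_j$, $e(\Delta_{j+1})\notin\Delta_j$ --- one checks that $\Delta_j\prec\Delta_{j+1}$ holds precisely when $\sigma(j+1)<\sigma(j)$. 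Therefore $\rho\nu_\rho^j\in\lnrset(\m)$ if and only if $j=k$ or $\sigma(j)<\sigma(j+1)$, i.e.\ if and only if $j\in\desl(\sigma)$; this is the first equality.

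The second equality is obtained by the symmetric argument applied to the ends: the ends of the segments of $\m_\sigma(\tseq_{k,l})$ are the $\rho\nu_\rho^{k+l-i}$, $1\le i\le k$, each with multiplicity one, the segment with end $\rho\nu_\rho^{k+l-i}$ being $[\sigma^{-1}(i),k+l-i]$; applying the obvious right-handed analogue of Lemma \ref{lem: spclcaseextrho}(3) (with $\lderiv,\lnrset$ replaced by $\rderiv,\rnrset$ and beginnings by ends) and computing when $[\sigma^{-1}(i+1),k+l-i-1]\prec[\sigma^{-1}(i),k+l-i]$, which happens iff $\sigma^{-1}(i+1)<\sigma^{-1}(i)$, yields $\rnrset(\m)=\{\rho\nu_\rho^{k+l-i}:i\in\desl(\sigma^{-1})\}$. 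Alternatively the $\rnrset$-statement follows from the first equality via \eqref{eq: contrasigma}, $\m_\sigma^{(\rho)}(\tseq_{k,l})^\vee=\m_{\sigma^{-1}}^{(\rho^*)}(\tseq_{k,l})$ with $\rho^*=(\rho\nu_\rho^{k+l})^\vee$, together with the compatibility of $\lnrset$ and $\rnrset$ with the contragredient. The only genuinely computational point is the determination of $\Delta_j\prec\Delta_{j+1}$ in terms of $\sigma$; the remaining care goes into the bookkeeping with multiplicities, the boundary index $j=k$ (which is why $\desl$ is defined to contain $k$), and the omission convention for empty segments $\Delta_j$ (occurring exactly when $\sigma(j)=k+l+1-j$), where one invokes the hypothesis $\sigma\ge\sigma_0(\tseq_{k,l})$.
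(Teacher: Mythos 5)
Your proposal is correct and is exactly the argument the paper intends: the paper offers no proof of Lemma \ref{lem: lnrdes} beyond declaring it an immediate consequence of Lemma \ref{lem: spclcaseextrho}, and your unwinding of parts (1) and (3) of that lemma for $\m_\sigma(\tseq_{k,l})=\sum_j[j,k+l-\sigma(j)]$, with the computation that $\Delta_j\prec\Delta_{j+1}$ exactly when $\sigma(j+1)<\sigma(j)$, is precisely that derivation. The one caveat is the empty-segment bookkeeping you defer to the hypothesis $\sigma\ge\sigma_0(\tseq_{k,l})$: for $j<k$ it does the job (an empty $\Delta_j$ forces a descent at $j$, and an empty $\Delta_{j+1}$ forces an ascent, so everything is consistent), but if $\Delta_k=\emptyset$ (i.e.\ $\sigma(k)=l+1$) then $k\in\desl(\sigma)$ while $\rho\nu_\rho^k\notin\lnrset(\m)$ --- an edge case in which the statement itself degenerates and which the paper's ``immediate consequence'' glosses over equally, so it is not a defect of your argument relative to the paper's.
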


\begin{corollary} \label{cor: sigmarest}
Let $k,l\ge1$ and $\tau\in S_k$ be such that $\tau(i)\le k+l+1-i$ for all $i>l+1$.
Let $\pi=\zele{\m_\tau(\tseq_{k,l})}$ and assume that $(\pi_1,\pi_2)$ is a \spltng\ for $\pi$ with \wtness\ $\Pi\in\Irr$.
Let $\omega$ be an irreducible subquotient of $\pi_1\times\pi_2$ such that $\Pi\hookrightarrow\omega\times\pi$.
Then $\omega=\m_\sigma(\tseq_{k,l})$ where $\sigma\le\tau$, $\desl(\sigma)\subset\desl(\tau)$ and $\desl(\sigma^{-1})\subset\desl(\tau^{-1})$.
\end{corollary}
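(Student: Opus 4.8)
The plan is to exploit the three defining properties of a \spltng\ together with the combinatorial control over $\JH(\pi_1\times\pi_2)$ coming from Zelevinsky theory. First I would record what the hypotheses give. Since $\Pi\hookrightarrow\omega\times\pi$, Lemma \ref{lem: lnrprop}(5) yields $\lnrset(\omega)\subset\lnrset(\Pi)=\lnrset(\pi)$ (the last equality being property (3) of a \spltng). Next, $\omega\le\pi_1\times\pi_2$ gives, via Lemma \ref{lem: lnrprop}(1) and (2), that $\lnrset(\omega)\subset\lnrset(\pi_1)\cup\lnrset(\pi_2)$; but I will use the dual statement for $\rnrset$ more forcefully. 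The crucial input is property (2), $\rnrset(\pi)=\rnrset(\pi_1)\cup\rnrset(\pi_2)$. Combined with Lemma \ref{lem: lnrprop}(1),(2) applied to $\omega\le\pi_1\times\pi_2$, this gives $\rnrset(\omega)\subset\rnrset(\pi)$.

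Second I would pin down the supercuspidal support. Property (1), $\m(\pi)=\m(\pi_1)+\m(\pi_2)$, means $\supp\omega=\supp\pi$ with the same multiplicities on the cuspidal line $\Z_\rho$, and since $\pi=\zele{\m_\tau(\tseq_{k,l})}$ the support is $\{\rho\nu_\rho,\dots,\rho\nu_\rho^{k+l-1}\}$ with each point occurring with a prescribed multiplicity; any $\omega\in\JH(\pi_1\times\pi_2)$ shares this. Now I invoke the structural fact that $\omega\obt\m(\pi_1)+\m(\pi_2)$ (property of the Zelevinsky classification: $\zele{\m'}\le\std(\n)$ iff $\m'\obt\n$, applied after expressing $\pi_1\times\pi_2$ in terms of standard modules) together with property (1) to conclude $\m(\omega)\obt\m(\pi)$, i.e. $\m(\omega)\obt\m_\tau(\tseq_{k,l})$. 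Because $\tseq_{k,l}$ is regular, the remark in \S\ref{sec: biseq} says that $\m_{\sigma_1}(\tseq_{k,l})\obt\m_{\sigma_2}(\tseq_{k,l})$ iff $\sigma_1\le\sigma_2$, and moreover every multisegment with support on $\Z_\rho$ having exactly the right begin- and end-points (distinct, and equal to the fixed sets $\{\rho\nu_\rho^{1},\dots\}$ and $\{\rho\nu_\rho^{l},\dots,\rho\nu_\rho^{k+l-1}\}$ appropriately) is of the form $\m_\sigma(\tseq_{k,l})$ for a unique $\sigma$. This forces $\omega=\zele{\m_\sigma(\tseq_{k,l})}$ for some $\sigma\le\tau$; the regularity is what guarantees $\omega$ is again of this shape rather than some exotic multisegment with coincident endpoints.

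Third, I would translate the $\lnrset$ and $\rnrset$ containments into the ascent-set containments using Lemma \ref{lem: lnrdes}. That lemma computes $\lnrset(\m_\sigma(\tseq_{k,l}))=\{\rho\nu_\rho^i:i\in\desl(\sigma)\}$ and $\rnrset(\m_\sigma(\tseq_{k,l}))=\{\rho\nu_\rho^{k+l-i}:i\in\desl(\sigma^{-1})\}$, and the same for $\tau$. From $\lnrset(\omega)\subset\lnrset(\pi)$ and the explicit description, $\desl(\sigma)\subset\desl(\tau)$; from $\rnrset(\omega)\subset\rnrset(\pi)$, $\desl(\sigma^{-1})\subset\desl(\tau^{-1})$. (One must check the hypothesis of Lemma \ref{lem: lnrdes} is met, namely $\sigma(i)\le k+l+1-i$ for $i>l+1$; this holds because $\sigma\le\tau$ and $\tau$ satisfies it, the condition $\sigma\ge\sigma_0(\tseq_{k,l})$ being exactly $\sigma(i)\le k+l+1-i$ for $i>l+1$ as noted in the Example in \S\ref{sec: biseq}, and Bruhat-smaller permutations $\le\tau$ that still dominate $\sigma_0$ inherit it — though in fact we only need the upper bound, which is monotone under $\le$.)

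The main obstacle I anticipate is the second step: cleanly justifying that an arbitrary irreducible subquotient $\omega$ of $\pi_1\times\pi_2$ must itself be a standard-shaped $\zele{\m_\sigma(\tseq_{k,l})}$, i.e. that its multisegment has distinct begin-points and distinct end-points matching those of $\m(\pi)$. The support-with-multiplicity constraint from property (1) handles the multiset of cuspidals, but ruling out multisegments whose segments share an endpoint requires the regularity hypothesis and a careful use of the $\obt$-domination $\m(\omega)\obt\m(\pi)$ — since $\obt$ only merges linked segments into their offspring, and the remark in \S\ref{sec: biseq} notes that $\n\obt\m$ preserves regularity, so every $\m(\omega)\obt\m_\tau(\tseq_{k,l})$ is automatically regular with the same endpoint sets. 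That observation is precisely what makes the argument go through, and I would make sure to cite it explicitly. The remaining bookkeeping — matching up indices $i$ versus $k+l-i$ in the $\rnrset$ formula, and checking the Lemma \ref{lem: lnrdes} hypothesis for $\sigma$ — is routine.
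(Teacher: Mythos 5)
Your argument is correct and is essentially the paper's own proof: from $\m(\pi)=\m(\pi_1)+\m(\pi_2)$ one gets $\pi_1\times\pi_2\le\std(\m_\tau(\tseq_{k,l}))$, hence $\m(\omega)=\m_\sigma(\tseq_{k,l})$ for some $\sigma\le\tau$, while Lemma \ref{lem: lnrprop} together with the \spltng\ properties gives $\lnrset(\omega)\subset\lnrset(\Pi)=\lnrset(\pi)$ and $\rnrset(\omega)\subset\rnrset(\pi_1)\cup\rnrset(\pi_2)=\rnrset(\pi)$, and Lemma \ref{lem: lnrdes} converts these into $\desl(\sigma)\subset\desl(\tau)$ and $\desl(\sigma^{-1})\subset\desl(\tau^{-1})$, exactly as in the paper. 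Two of your side justifications are inaccurate but harmless: the condition $\sigma(i)\le k+l+1-i$ for $i>l+1$ is precisely $\sigma\ge\sigma_0(\tseq_{k,l})$, an upward-closed condition that is \emph{not} inherited by Bruhat-smaller permutations, and the endpoint sets are not literally preserved under $\obt$ when two juxtaposed linked segments merge (one offspring segment being empty); both points are moot, since $\m(\omega)$ is an honest multisegment and is therefore automatically of the form $\m_\sigma(\tseq_{k,l})$ with $\sigma\ge\sigma_0(\tseq_{k,l})$, the convention of omitting empty segments covering the degenerate mergers.
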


\begin{proof}
Since $\pi_1\times\pi_2\le\std(\m_\tau(\tseq_{k,l}))$ we have $\omega=\m_\sigma(\tseq_{k,l})$ for some $\sigma\le\tau$.
By Lemma \ref{lem: lnrprop} we have $\lnrset(\omega)\subset\lnrset(\Pi)=\lnrset(\pi)$ and $\rnrset(\omega)\subset\rnrset(\pi_1)\cup\rnrset(\pi_2)=\rnrset(\pi)$.
Thus, by Lemma \ref{lem: lnrdes}, $\desl(\sigma)\subset\desl(\tau)$ and $\desl(\sigma^{-1})\subset\desl(\tau^{-1})$ as required.
\end{proof}

It will be convenient to use the following notation: given a segment $\Delta$ and $k\ge0$ let
$\speh{\Delta}k=\Delta_1+\dots+\Delta_k$ where $\Delta_1=\Delta$ and $\Delta_{i+1}=\lshft{\Delta}_i$, $i=1,\dots,k-1$.

The ``basic'' multisegments come in three families which are introduced and analyzed in the following subsections.
\subsection{Basic multisegments of type $423{*}1$}

As in example \ref{ex: bexam} for $k\ge4$ let $\pi=\zele{\m}$ with
\begin{equation} \label{eq: basic4231}
\m=[k,k+1]+[2,k]+\speh{[k-1]}{k-3}+[1,2].
\end{equation}
By Remark \ref{rem: types not GLS}, $\m$ does not satisfy \GLS.

Let
\[
\Pi=\zele{\speh{[2,k+1]}2+\speh{[k,k+1]}k}=\zele{\speh{[2,k+1]}2}\times\zele{\speh{[k,k+1]}k}.
\]

\begin{proposition} \label{prop: basictype4231}
We have
\[
\Pi\hookrightarrow\pi\times\pi.
\]
In particular, $\pi\notin\IrrS$.
\end{proposition}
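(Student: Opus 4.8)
The plan is to follow the ``double socle'' strategy outlined in the paper. The key point is that $\Pi$ is built as an iterated socle: $\Pi=\soc(\pi_1\times\soc(\pi_2\times\pi))$ where $\pi_1=\zele{\speh{[2,k+1]}2}$ and $\pi_2=\zele{\speh{[k,k+1]}k}$. Both $\pi_1$ and $\pi_2$ are ladder representations, hence \LM\ by \cite{MR3573961}, so by Lemma~\ref{lem: LM} the three-fold product $\pi_1\times\pi_2\times\pi$ is \SI, and therefore $\Pi$ is irreducible and embeds in $\pi_1\times\pi_2\times\pi$. The first task is to verify that $(\pi_1,\pi_2)$ is a \spltng\ for $\pi$ with \wtness\ $\Pi$ in the sense of \S\ref{sec: basicases}: one checks $\m(\pi)=\m(\pi_1)+\m(\pi_2)$ (immediate by inspecting \eqref{eq: basic4231}, since $[2,k]$ is the socle-generating linked pair from $[2,k+1]$ and $[k,k+1]$, while $\speh{[k-1]}{k-3}+[1,2]$ unpacks from $\speh{[k,k+1]}k$ minus some pieces—this needs careful bookkeeping), that $\rnrset(\pi)=\rnrset(\pi_1)\cup\rnrset(\pi_2)$, and that $\lnrset(\Pi)=\lnrset(\pi)$. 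The last equality is where Lemma~\ref{lem: lnrdes} and Lemma~\ref{lem: spclcaseextrho} do the work: one computes both sides explicitly in terms of the relevant permutation's ascent/descent data.

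Next I would apply the mechanism of Corollary~\ref{cor: sigmarest}: since $\Pi\hookrightarrow\omega\times\pi$ for some $\omega\in\JH(\pi_1\times\pi_2)$, and since $\pi_1\times\pi_2\le\std(\m)$, we have $\omega=\zele{\m_\sigma}$ for some $\sigma\le\tau$ (where $\tau$ is the permutation with $\m=\m_\tau(\tseq_{k,2})$, namely $\tau=\tau_{2,k-2}^{(1)}$ as recorded in Example~\ref{ex: bexam}), subject to the constraints $\desl(\sigma)\subset\desl(\tau)$ and $\desl(\sigma^{-1})\subset\desl(\tau^{-1})$. The core combinatorial step is then to enumerate all $\sigma\le\tau_{2,k-2}^{(1)}$ satisfying both descent-set constraints and show that the only one is $\sigma=\tau$ itself, so $\omega=\pi$. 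Here I would use the explicit form of $\tau_{2,k-2}^{(1)}$ from \eqref{eq: taurs1}: $\tau(1)=k$, $\tau(2)=2$, $\tau(i)=k-i+2$ for $3\le i\le k-1$, $\tau(k)=1$; its descent set and that of its inverse are small and rigid, and any Bruhat-smaller permutation with compatible descents is forced to coincide. (One should also note $\omega$ must have the same supercuspidal support as $\pi$, which further restricts things.)

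Having shown $\omega=\pi$, we conclude $\Pi\hookrightarrow\pi\times\pi$ automatically. The final step is to rule out $\Pi=\zele{\m+\m}$, for which it suffices to exhibit a discrepancy: compute $\m(\Pi)=\speh{[2,k+1]}2+\speh{[k,k+1]}k$ and compare it to $\m+\m=2[k,k+1]+2[2,k]+2\speh{[k-1]}{k-3}+2[1,2]$; these are visibly different multisegments (for instance $\m+\m$ has the segment $[2,k]$ appearing, while $\m(\Pi)$ does not, once $k\ge4$), so $\Pi\ne\zele{\m+\m}$. Therefore $\pi\times\pi$ has an irreducible subrepresentation $\Pi$ strictly smaller than $\zele{\m+\m}$—but $\zele{\m+\m}$ occurs with multiplicity one in $\JH(\pi\times\pi)$ and is the socle if $\pi$ were \LM. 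Since $\Pi\hookrightarrow\pi\times\pi$ and $\Pi\ne\zele{\m+\m}$, $\pi\times\pi$ is not irreducible (indeed not even \SI), so $\pi\notin\IrrS$ by Corollary~\ref{cor: LMconds}.

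The main obstacle I anticipate is the combinatorial enumeration in the second paragraph: proving that $\sigma=\tau$ is the unique permutation below $\tau_{2,k-2}^{(1)}$ with the prescribed descent-set containments requires a genuine (if elementary) argument about the Bruhat order on $S_k$ together with the constraint from supercuspidal support, and the bookkeeping to identify $\m(\pi_1)$, $\m(\pi_2)$, $\m(\Pi)$ correctly and to check all three \spltng\ conditions is where errors are most likely to creep in. The embedding $\Pi\hookrightarrow\pi_1\times\pi_2\times\pi$ and the passage to $\Pi\hookrightarrow\omega\times\pi$ use Corollary~\ref{cor: threetensor} (applied with the middle factor $\pi_1\times\pi_2$ decomposed into its Jordan--Hölder constituents, picking out the one through which the nonzero map factors), so the structural part is routine given the earlier lemmas.
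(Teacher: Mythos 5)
Your overall skeleton is the paper's, but two of its load-bearing pieces are wrong as written. First, the \spltng\ pair: you take $\pi_1=\zele{\speh{[2,k+1]}2}$ and $\pi_2=\zele{\speh{[k,k+1]}k}$, i.e.\ the two factors of $\Pi$ itself. The double-socle strategy requires $\pi\hookrightarrow\pi_1\times\pi_2$ and the literal equality $\m(\pi)=\m(\pi_1)+\m(\pi_2)$; with your choice $\deg(\pi_1\times\pi_2)=4k\ne 2k=\deg\pi$, so $\pi$ does not embed in $\pi_1\times\pi_2$, the equality of multisegments is false (no ``unpacking'' can repair an identity of formal sums), and Corollary \ref{cor: sigmarest} no longer forces the constituents $\omega$ of $\pi_1\times\pi_2$ to be of the form $\zele{\m_\sigma(\tseq_{k,2})}$, so the whole reduction to $\Pi\hookrightarrow\omega\times\pi$ with $\deg\omega=\deg\pi$ collapses. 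The correct splitting is $\pi_1=\zele{[k,k+1]+[2,k]}$ and $\pi_2=\zele{\speh{[k-1]}{k-3}+[1,2]}$, two ladders whose multisegments sum to $\m$. Moreover the genuinely hard step, which your sketch omits entirely, is identifying the \wtness: one must \emph{compute} $\soc(\pi_1\times\soc(\pi_2\times\pi))$ and show it equals $\Pi=\zele{\speh{[2,k+1]}2+\speh{[k,k+1]}k}$ (Lemma \ref{lem: ws4231}); this is done by reducing via $\del$ and $(\cdot)^-$ (Lemma \ref{lem: suppn>suppm}) and Lemma \ref{lem: extractsegment}, whereas checking $\lnrset(\Pi)=\lnrset(\pi)$ and $\rnrset(\pi)=\rnrset(\pi_1)\cup\rnrset(\pi_2)$ is the easy bookkeeping, not ``where the work is.''

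Second, the enumeration in your middle step is not correct: for $\tau=\tau^{(1)}_{2,k-2}$ the conditions $\sigma\le\tau$, $\desl(\sigma)\subset\desl(\tau)$, $\desl(\sigma^{-1})\subset\desl(\tau^{-1})$ have exactly \emph{two} solutions (Lemma \ref{lem: sigma14231}): $\sigma=\tau$ and the permutation with $\sigma(1)=2$, $\sigma(2)=1$, $\sigma(i)=k+3-i$ for $i\ge3$, whose multisegment (after discarding the empty segments) is $\speh{[2,k+1]}2$. Your fallback, matching supercuspidal supports, does not exclude it, since all $\m_\sigma(\tseq_{k,2})$ have the same support. It must be ruled out by a separate argument: $\zele{\speh{[2,k+1]}2}\times\pi$ is irreducible (a Speh-times-irreducible criterion, \cite[Proposition 6.6]{MR3573961}) and cannot equal $\Pi$ because $\speh{[2,k+1]}2+\m\ne\m(\Pi)$. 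With these two repairs --- the correct $(\pi_1,\pi_2)$ together with the socle computation of Lemma \ref{lem: ws4231}, and the exclusion of the second $\omega$ --- your argument becomes the paper's; your final step, ruling out $\Pi=\zele{\m+\m}$ by comparing multisegments so that $\pi\times\pi$ is not irreducible, is fine.
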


\begin{remark}
The case $k=4$ (where $\m=[4,5]+[2,4]+[3]+[1,2]$, see Example \ref{ex: lecorig}) is the original example given by Leclerc for an ``imaginary'' representation \cite{MR1959765}.
\end{remark}

Following the above-mentioned strategy we first show the following.

\begin{lemma} \label{lem: ws4231}
Let
\[
\pi_1=\zele{[k,k+1]+[2,k]},\ \ \pi_2=\zele{\speh{[k-1]}{k-3}+[1,2]}.
\]
Then $(\pi_1,\pi_2)$ is a \spltng\ for $\pi$ with \wtness\ $\Pi$.
\end{lemma}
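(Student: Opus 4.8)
The goal is to verify the three numbered conditions defining a \spltng\ together with the identification of the \wtness. The plan is to follow the template sketched just before the lemma: first establish the embedding $\pi\hookrightarrow\pi_1\times\pi_2$, then compute $\soc(\pi_2\times\pi)$ and $\soc(\pi_1\times\soc(\pi_2\times\pi))$ explicitly, confirming the latter equals $\Pi$, and finally check the bookkeeping conditions on multisegments and on $\rnrset$, $\lnrset$. I would write $\m=\m_1+\m_2$ with $\m_1=[k,k+1]+[2,k]$, $\m_2=\speh{[k-1]}{k-3}+[1,2]$; condition (1), $\m(\pi)=\m(\pi_1)+\m(\pi_2)$, is then immediate by construction. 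The embedding $\pi\hookrightarrow\pi_1\times\pi_2$ should follow from the ordering of segments: no segment of $\m_2$ is $\prec$ any segment of $\m_1$ except possibly the ones that actually combine into $\m$, and one checks $\soc(\pi_1\times\pi_2)=\zele{\m}$ directly from $\m_1+\m_2=\m$ using property \LI\ — here I would invoke Theorem \ref{thm: laddercomb} since $\pi_1$ is a ladder (it is $[k,k+1]+[2,k]$, with $b$-values $k>2$ and $e$-values $k+1>k$), applied with $\sigma=\zele{\m_2}$, verifying $\LC(\m_1,\m_2)$ combinatorially.

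The computational heart is determining $\Pi$. First compute $\Pi':=\soc(\pi_2\times\pi)$. Since $\max\supp\m_2=k-1<k+1=\max\supp\m$, Lemma \ref{lem: suppn>suppm} applies with the roles of $\m,\n$ being $\m_2,\m$ respectively — but note $\pi_2=\zele{\m_2}$ must be shown to be \LM\ first; it is, because $\speh{[k-1]}{k-3}+[1,2]$ is a multisegment whose non-ladder structure is mild, and in fact $\pi_2$ is a product of a (trivial, length-one-segment) ladder $\speh{[k-1]}{k-3}$ with $\zele{[1,2]}$, both \LM, with $R_{\cdot,\cdot}$ an isomorphism since the supports interact simply, so Corollary \ref{cor: pi1pi2LM} gives $\pi_2\in\IrrS$. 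Then Lemma \ref{lem: suppn>suppm} computes $\Pi'$ recursively via $(\Pi')^-=\soc(\pi_2\times\pi^-)$ and $\del(\Pi')=\del(\pi)$. I would unwind this recursion to get $\m(\Pi')$ explicitly. Next, $\Pi=\soc(\pi_1\times\Pi')$: again use that $\pi_1$ is a ladder plus Lemma \ref{lem: suppn>suppm} or Lemma \ref{lem: extractsegment}/Lemma \ref{lem: extractsegment} to peel off the top segments $[k,k+1]$ and $[2,k]$ of $\m_1$, landing on $\zele{\speh{[2,k+1]}2+\speh{[k,k+1]}k}$. The key sanity check is that this multisegment is pairwise unlinked in the right way so that $\Pi=\zele{\speh{[2,k+1]}2}\times\zele{\speh{[k,k+1]}k}$ as asserted.

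For condition (2), $\rnrset(\pi)=\rnrset(\pi_1)\cup\rnrset(\pi_2)$, I would use Lemma \ref{lem: spclcaseextrho} (or \ref{lem: lnrdes}): $\rnrset(\pi_1)=\{\rho\nu_\rho^{k+1},\rho\nu_\rho^{k}\}$ (ends of both segments of $\m_1$), $\rnrset(\pi_2)=\{\rho\nu_\rho^{k-1},\rho\nu_\rho^2\}$, and one checks $\rnrset(\pi)$ is the union by reading off the $e$-values of $\m$ and applying the criterion of Lemma \ref{lem: spclcaseextrho}(3) to each multiplicity-one end. For condition (3), $\lnrset(\Pi)=\lnrset(\pi)$: from the explicit $\m(\Pi)=\speh{[2,k+1]}2+\speh{[k,k+1]}k$ one reads $\lnrset(\Pi)$ via Lemma \ref{lem: spclcaseextrho}; it should equal $\{\rho\nu_\rho^2,\rho\nu_\rho^k\}$, and separately $\lnrset(\pi)=\{\rho\nu_\rho^k,\rho\nu_\rho^2,\rho\nu_\rho^{k-1}\}$ — wait, I would need to recheck this against $\m=[k,k+1]+[2,k]+\speh{[k-1]}{k-3}+[1,2]$, whose $b$-values are $k,2,k-1,k-2,\dots,3,1$; the left-irreducibility criterion must be applied carefully to each. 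The main obstacle is precisely this last verification together with the $\Pi$ computation: getting the recursion in Lemma \ref{lem: suppn>suppm} to terminate at the claimed multisegment requires tracking $\del$ and $\m^-$ through roughly $k$ steps, and one must be vigilant about the convention \eqref{eq: convention} for degenerate segments. I expect no conceptual difficulty, only careful combinatorial bookkeeping; the final identity $\lnrset(\Pi)=\lnrset(\pi)$ is the one most likely to need the specific shape of $\m$ in \eqref{eq: basic4231} rather than a generic argument.
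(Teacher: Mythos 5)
Your high-level route coincides with the paper's (check the bookkeeping conditions, compute $\soc(\pi_2\times\pi)$ via Lemma \ref{lem: suppn>suppm}, then apply Lemma \ref{lem: extractsegment} for $\soc(\pi_1\times\cdot)$), but several of the concrete assertions you lean on are false. Your justification that $\pi_2\in\IrrS$ does not work: $\zele{\speh{[k-1]}{k-3}}\times\zele{[1,2]}$ is \emph{reducible}, since $[1,2]\prec[3]$ (check with Theorem \ref{thm: laddercomb}: $\LC$ fails in one direction), so $\pi_2$ is a proper subrepresentation of that product rather than equal to it, and Corollary \ref{cor: pi1pi2LM} does not apply. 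The observation you missed — and the one the paper uses — is that $\m_2=[k-1]+\dots+[3]+[1,2]$ is itself a ladder (its $b$'s are $k-1>\dots>3>1$ and its $e$'s are $k-1>\dots>3>2$), so $\pi_2\in\IrrS$ for free, just like $\pi_1$. Similarly, your ``sanity check'' that $\m(\Pi)=\speh{[2,k+1]}2+\speh{[k,k+1]}k$ is pairwise unlinked is false: $[1,2]\prec[2,k+1]$ and $[1,k]\prec[k,k+1]$. The displayed factorization of $\Pi$ holds because the product of these two ladder representations is irreducible (again Theorem \ref{thm: laddercomb}), not because of unlinkedness.

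The step that would actually fail as written is the verification of condition (2): for a ladder, $\rnrset$ is \emph{not} the set of all right end-points. Here $\rnrset(\pi_1)=\{[k]\}$ (the end $[k+1]$ is excluded because $[2,k]\prec[k,k+1]$) and $\rnrset(\pi_2)=\{[2]\}$ (each $[j-1]\prec[j]$, resp.\ $[1,2]\prec[3]$, excludes $[k-1],\dots,[3]$). With your claimed values the union would be $\{[2],[k-1],[k],[k+1]\}\neq\rnrset(\pi)=\{[2],[k]\}$ and condition (2) would appear to fail; with the correct values it holds. Likewise $\lnrset(\pi)=\{[2],[k]\}$ — your tentative inclusion of $[k-1]$ is wrong because $[k-1]\prec[k,k+1]$, and were it correct, condition (3) would fail. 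Finally, a smaller point on the computation of $\soc(\pi_2\times\pi)$: Lemma \ref{lem: suppn>suppm} can be applied only twice, since after two steps $\max\supp\m((\pi^-)^-)=k-1=\max\supp\m_2$; the paper then exhibits the candidate $\pi_3=\zele{[k-1,k+1]+[1,k]+\speh{[k-2,k-1]}{k-2}}$, matches $\del(\pi)=\del(\pi_3)$ and $\del(\pi^-)=\del(\pi_3^-)$, and finishes with the single easy identity $\soc(\pi_2\times(\pi^-)^-)=(\pi_3^-)^-$ via Lemma \ref{lem: extractsegment} — a two-step reduction plus one small direct computation, not a length-$k$ recursion.
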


\begin{proof}
Note that $\pi_1$ and $\pi_2$ are ladders and in particular $\pi_1,\pi_2\in\IrrS$.
Also, $\pi=\soc(\pi_1\times\pi_2)$, $\rnrset(\pi_1)\cup\rnrset(\pi_2)=\{[k]\}\cup\{[2]\}=\rnrset(\pi)$ and $\lnrset(\Pi)=\lnrset(\pi)=\{[2],[k]\}$.

Let
\[
\pi_3=\zele{[k-1,k+1]+[1,k]+\speh{[k-2,k-1]}{k-2}}.
\]
By Lemma \ref{lem: extractsegment} (applied repeatedly) we have
\[
\soc(\pi_1\times\pi_3)=\zele{[k,k+1]+[1,k]+\speh{[k-2,k-1]}{k-2}+\m(\soc(\zele{[2,k]}\times\zele{[k-1,k+1]}))}=\Pi.
\]
It remains to show that
\[
\soc(\pi_2\times\pi)=\pi_3.
\]
Note that $\del(\pi)=\del(\pi_3)=[k,k+1]$ and $\del(\pi^-)=\del(\pi_3^-)=[2,k]$.
Hence, by Lemma \ref{lem: suppn>suppm} (applied twice) it suffices to show that
\[
\soc(\pi_2\times(\pi^-)^-)=(\pi_3^-)^-.
\]
This is straightforward. Indeed,
\[
(\pi^-)^-=\zele{[2,k-1]+[1]}
\]
and by Lemma \ref{lem: extractsegment} we have
\begin{multline*}
\soc(\pi_2\times(\pi^-)^-)=\zele{\speh{[k-1]}{k-3}+[1]+\m(\soc(\zele{[1,2]}\times\zele{[2,k-1]}))}
\\=\zele{[1,k-1]+\speh{[k-1]}{k-1}}=(\pi_3^-)^-
\end{multline*}
as required.
\end{proof}

In order to conclude Proposition \ref{prop: 3412basic} it remains to show that
\[
\Pi\not\hookrightarrow\omega\times\pi
\]
for any irreducible subquotient $\omega$ of $\pi_1\times\pi_2$, other than $\pi$.

Recall that $\m=\m_{\sigma_1}(\tseq_{k,2})$ where
\[
\sigma_1(i)=\begin{cases}k&i=1,\\2&i=2,\\k+2-i&i=3,\dots,k-1,\\1&i=k.\end{cases}
\]
Note that $\sigma_1^{-1}=\sigma_1$ and $\desl(\sigma_1)=\{2,k\}$.

\begin{lemma} \label{lem: sigma14231}
Suppose that $\sigma\le\sigma_1$ and $\desl(\sigma)\cup\desl(\sigma^{-1})\subset\desl(\sigma_1)$.
Then either $\sigma=\sigma_1$ or
\[
\sigma(i)=\begin{cases}3-i&i=1,2,\\k+3-i&i=3,\dots,k.\end{cases}
\]
\end{lemma}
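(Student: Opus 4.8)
The plan is to analyze the combinatorial constraints imposed on a permutation $\sigma \le \sigma_1$ whose ascent set and inverse-ascent set are both contained in $\{2,k\}$, and show that these force $\sigma$ to be one of exactly two permutations. First I would unwind what the hypotheses say about the one-line notation $(\sigma(1),\dots,\sigma(k))$. Since $\desl(\sigma)\subset\{2,k\}$ and $k$ is always in $\desl$, the only possible ascent is at position $1$; hence $\sigma(2)>\sigma(3)>\dots>\sigma(k)$, so $\sigma$ is determined by the single value $\sigma(1)=:v$ together with the unique decreasing arrangement of the remaining $k-1$ values in positions $2,\dots,k$. Thus $\sigma$ is completely specified by $v\in\{1,\dots,k\}$: it is the permutation sending $1\mapsto v$ and listing $\{1,\dots,k\}\setminus\{v\}$ in decreasing order across positions $2,\dots,k$.

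Next I would impose the condition $\desl(\sigma^{-1})\subset\{2,k\}$, i.e. $\sigma^{-1}(2)>\sigma^{-1}(3)>\dots>\sigma^{-1}(k)$. This means the positions of the values $2,3,\dots,k$ in $\sigma$ are strictly decreasing; equivalently $\sigma^{-1}(2)$ is the largest of $\sigma^{-1}(2),\dots,\sigma^{-1}(k)$, and $\sigma^{-1}(1)$ is unconstrained by this. Combining with the previous description: the tail positions $2,\dots,k$ carry the values $\{1,\dots,k\}\setminus\{v\}$ in decreasing order, so the position of a value $w\ne v$ is an explicit function of $w$ and $v$. Working out $\sigma^{-1}$ for the two sub-cases $v=1$ (value $1$ is at position $1$) and $v\ne 1$ (value $1$ sits somewhere in the decreasing tail, necessarily at the very last position $k$), one checks which choices of $v$ actually yield $\desl(\sigma^{-1})\subset\{2,k\}$. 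For $v=1$: $\sigma=(1,k,k-1,\dots,2)$, whose inverse is again $(1,k,k-1,\dots,2)$, so $\desl(\sigma^{-1})=\{2,k\}$ — this is exactly the second permutation in the statement, $\sigma(i)=3-i$ for $i=1,2$ and $\sigma(i)=k+3-i$ for $i\ge 3$ (here the $i=2$ formula gives $\sigma(2)=1$; but wait, that contradicts $\sigma(2)=k$, so I should be careful — the second listed permutation actually has $\sigma(1)=2,\sigma(2)=1,\sigma(3)=k,\dots$; I would recompute and match it with the case $v=2$, where $\sigma=(2,k,k-1,\dots,3,1)$, $\sigma^{-1}=(k,1,k-1,k-2,\dots,2)$, and verify $\desl(\sigma^{-1})=\{2,k\}$ as well, excluding all other $v$). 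For $v=k$: $\sigma=(k,k-1,\dots,1)=\sigma_1$? No — $\sigma_1$ is not the longest element since it fixes no such pattern; I would instead directly check $v=k$ gives $\sigma_1$ by comparing the explicit one-line forms.

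Finally I would verify the Bruhat-order constraint $\sigma\le\sigma_1$ to eliminate any remaining candidate $v$, using the standard tableau (rank-matrix) criterion: $\sigma\le\sigma_1$ iff for all $p,q$ the number of $i\le p$ with $\sigma(i)\le q$ is at least the corresponding count for $\sigma_1$. Since $\sigma_1$ has a very rigid shape — $\sigma_1(1)=k$, $\sigma_1(k)=1$, and $\sigma_1$ decreasing in between except for the leading $k$ and the value $2$ sitting at position $2$ — only $v\in\{k,2\}$ survive, yielding precisely $\sigma=\sigma_1$ or the second permutation in the statement. I expect the main obstacle to be bookkeeping: correctly translating "ascent set contained in $\{2,k\}$" for both $\sigma$ and $\sigma^{-1}$ into the statement that $\sigma$ is "decreasing after position $1$" and "value-positions decreasing after value $1$", and then carefully case-splitting on $v=\sigma(1)$ without sign/index errors, especially reconciling the piecewise formula in the statement (whose $i=2$ clause looks at first glance inconsistent with $\sigma(2)=k$) with the actual permutation it names. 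Once the one-line forms are pinned down, checking $\sigma\le\sigma_1$ and the two inclusion conditions is a short finite verification.
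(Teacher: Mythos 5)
There is a genuine gap at your very first translation step, and it propagates through everything that follows. In the paper $\desl$ is the \emph{ascent} set, so the hypothesis $\desl(\sigma)\subset\desl(\sigma_1)=\{2,k\}$ says that every position in $\{1,3,4,\dots,k-1\}$ is a descent of $\sigma$: that is, $\sigma(1)>\sigma(2)$ and $\sigma(3)>\sigma(4)>\dots>\sigma(k)$, the one position where an ascent is permitted being $2$. You instead allow the ascent at position $1$ and conclude $\sigma(2)>\dots>\sigma(k)$, so that $\sigma$ would be determined by $v=\sigma(1)$ alone; with the correct reading, $\sigma$ is determined by the pair $(\sigma(1),\sigma(2))$ subject to $\sigma(1)>\sigma(2)$, with the remaining values decreasing along positions $3,\dots,k$. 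Your one-parameter family contains neither $\sigma_1=(k,2,k-1,\dots,3,1)$ nor the other permutation $(2,1,k,k-1,\dots,3)$ of the conclusion (each has its unique ascent at position $2$) --- which is exactly the inconsistency you notice in passing (``that contradicts $\sigma(2)=k$''; $v=k$ gives $(k,k-1,\dots,1)$, the longest element, not $\sigma_1$) but never resolve; and the candidate $(2,k,k-1,\dots,3,1)$ you propose as the match for the second permutation actually violates the hypothesis, having an ascent at position $1$. The same mistranslation is applied to $\sigma^{-1}$, so the ensuing case analysis and the final Bruhat check run over the wrong set of candidates and cannot establish the lemma.

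Once the translation is corrected the argument is short, and it is the paper's proof. Set $i=\sigma(1)$, $j=\sigma(2)$; then $i>j$ and $(i,j)$ determine $\sigma$, since positions $3,\dots,k$ carry the remaining values in decreasing order. If $i\ne k$, then $\sigma^{-1}(i)=1<\sigma^{-1}(i+1)$, so $i$ is an ascent of $\sigma^{-1}$, hence $i\in\{2,k\}$, forcing $i=2$ and $j=1$, i.e., the second permutation of the statement. If $i=k$, the rank-matrix criterion you mention, applied to $\sigma\le\sigma_1$ at $p=q=2$, gives $j\le\sigma_1(2)=2$; and $j=1$ is impossible, since then $\sigma^{-1}(1)=2$ and the no-ascent condition at position $1$ of $\sigma^{-1}$ would give $\sigma^{-1}(2)=1$, i.e., $\sigma(1)=2\ne k$. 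Hence $j=2$ and $\sigma=\sigma_1$.
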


\begin{proof}
Let $i=\sigma(1)$ and $j=\sigma(2)$. Note that $i>j$ and $i,j$ determine $\sigma$ uniquely since $\sigma(3)>\sigma(4)>\dots>\sigma(k)$.
Suppose first that $i\ne k$. Then since $\sigma^{-1}(i)=1<\sigma^{-1}(i+1)$, we have $i=2$, in which case $j=1$.
On the other hand, if $i=k$ then $j\le\sigma_1(2)=2$. If $j=1$ then $2=\sigma^{-1}(1)>\sigma^{-1}(2)$
and therefore $i=2$, a contradiction. Hence, $j=2$ and $\sigma=\sigma_1$.
\end{proof}

It follows from Corollary \ref{cor: sigmarest} and Lemma \ref{lem: sigma14231} that if $\omega$ is an irreducible subquotient of $\pi_1\times\pi_2$,
other than $\pi$, such that $\Pi\hookrightarrow\omega\times\pi$ then necessarily $\omega=\zele{\speh{[2,k+1]}2}$. However, in this case
$\omega\times\pi$ is irreducible (e.g., \cite[Proposition 6.6]{MR3573961}) and is not equal to $\Pi$ (since $\m(\omega)+\m(\pi)\ne\m(\Pi)$).
We get a contradiction. This finishes the proof of Proposition \ref{prop: basictype4231}.

\begin{remark}
In the notation of Proposition \ref{prop: basictype4231}, we expect that $\pi\times\pi$ decomposes as a direct sum of $\Pi$
and $\zele{\m+\m}$. We will not say more about that here.
\end{remark}

\subsection{Basic multisegments of type $3{*}41{*}2$} \label{sec: 3*41*2}

Next, for $k>l>2$ consider $\pi=\zele{\m}$ where
\begin{equation} \label{eq: basic3412}
\m=[l,l+k-1]+\speh{[l-2,l+k-2]}{l-3}+[k,k+1]+[1,k]+\speh{[k-1]}{k-l-1}+[l-1,l].
\end{equation}
This is a generalization of Example \ref{ex: bexam} (in which $l=3$). Here is a drawing for $k=8$, $l=5$:
\[
\xymatrix@=0.6em{
&&&&&\circ\ar@{-}[r]&\circ\ar@{-}[r]&\circ\ar@{-}[r]&\circ\ar@{-}[r]&\circ\ar@{-}[r]&\circ\ar@{-}[r]&\circ\ar@{-}[r]&\circ\\
&&&\circ\ar@{-}[r]&\circ\ar@{-}[r]&\circ\ar@{-}[r]&\circ\ar@{-}[r]&\circ\ar@{-}[r]&\circ\ar@{-}[r]&\circ\ar@{-}[r]&\circ\ar@{-}[r]&\circ\\
&&\circ\ar@{-}[r]&\circ\ar@{-}[r]&\circ\ar@{-}[r]&\circ\ar@{-}[r]&\circ\ar@{-}[r]&\circ\ar@{-}[r]&\circ\ar@{-}[r]&\circ\ar@{-}[r]&\circ\\
&&&&&&&&\circ\ar@{-}[r]&\circ\\
&\circ\ar@{-}[r]&\circ\ar@{-}[r]&\circ\ar@{-}[r]&\circ\ar@{-}[r]&\circ\ar@{-}[r]&\circ\ar@{-}[r]&\circ\ar@{-}[r]&\circ\\
&&&&&&&\circ\\
&&&&&&\circ\\
&&&&\circ\ar@{-}[r]&\circ}
\]
By Remark \ref{rem: kirred} $\m$ does not satisfy \GLS. Indeed, $\X_\m$ contains the $k$ irreducible pairs
$(i+1,i)$, $i=1,\dots,l-3$, $(l,l-2)$, $(l,l-1)$, $(l+1,l-1)$, $(i+1,i)$, $i=l+1,\dots,k-1$, $(k,1)$.

Note that $\lnrset(\pi)=\{[l-2],[l],[k]\}$ and $\rnrset(\pi)=\{[l],[k],[k+2]\}$.

\begin{proposition} \label{prop: 3412basic}
Let
\begin{multline*}
\Pi=\zele{\speh{[l-2,l+k-1]}{l-2}+\speh{[l,l+k-1]}{l}+\speh{[k,k+1]}{k+2-l}}=\\
\zele{\speh{[l-2,l+k-1]}{l-2}}\times\zele{\speh{[l,l+k-1]}{l}}\times\zele{\speh{[k,k+1]}{k+2-l}}.
\end{multline*}
Then
\[
\Pi\hookrightarrow\pi\times\pi.
\]
In particular, $\pi\notin\IrrS$.
\end{proposition}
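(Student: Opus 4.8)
The plan is to follow \emph{verbatim} the pattern of Proposition~\ref{prop: basictype4231} and the \emph{\spltng}\ method of \S\ref{sec: basicases}; the only extra cost is heavier combinatorial bookkeeping. First I would fix the splitting
\[
\pi_2=\zele{\speh{[k-1]}{k-l-1}+[l-1,l]},\qquad\pi_1=\zele{\m-\m(\pi_2)},
\]
so that $\pi_2$ is the ladder formed by the singleton staircase of $\m$ and its bottom segment, while $\pi_1$ is built from the ladder of long segments $[l,l+k-1],[l-2,l+k-2],\dots$ together with $[k,k+1]$ and $[1,k]$. One checks via Proposition~\ref{prop: nonsmth} that $\m(\pi_1)$ is balanced — it is a ladder of long segments together with $[k,k+1]$, which is contained in all of them, and $[1,k]$, so it is too ``flat'' to contain a sub-multisegment of type $4231$ or $3412$ — whence $\pi_1\in\IrrS$ by the implication \ref{item: mbal}$\implies$\ref{item: zmLM} of Theorem~\ref{thm: main} already established in \S\ref{sec: main}. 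Since no segment of $\m(\pi_1)$ is $\prec$ a segment of $\m(\pi_2)$, the standard properties of the Zelevinsky classification give $\pi=\soc(\pi_1\times\pi_2)$ and $\m(\pi)=\m(\pi_1)+\m(\pi_2)$; together with the routine identity $\rnrset(\pi)=\rnrset(\pi_1)\cup\rnrset(\pi_2)$ this shows $(\pi_1,\pi_2)$ is a candidate \spltng.

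The technical core, exactly as in Lemma~\ref{lem: ws4231}, is to identify the \wtness. I would first write down the expected value $\zele{\n}$ of $\soc(\pi_2\times\pi)$ and verify two socle computations: (i) $\soc(\pi_2\times\pi)=\zele{\n}$, proved by iterated application of Lemma~\ref{lem: suppn>suppm} (peeling off $\del(\pi)$ and matching $\del$'s, using that $\max\supp\pi_2<\max\supp\pi$) and reducing to an explicit base case handled by Lemma~\ref{lem: extractsegment}; and (ii) $\soc(\pi_1\times\zele{\n})=\Pi$, proved by applying Lemma~\ref{lem: extractsegment} repeatedly to extract the segments of $\m(\pi_1)$ one at a time. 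Here $\Pi$ is the representation of the statement; that the triple product defining $\Pi$ is actually irreducible is a separate, easier check using Theorem~\ref{thm: laddercomb}, the three factors being Speh-type ladder representations. These computations also yield the last \spltng\ requirement $\lnrset(\Pi)=\lnrset(\pi)=\{[l-2],[l],[k]\}$. I expect this double-socle computation to be the main obstacle: it is precisely the step the text flags as ``the most technically involved''.

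With a \spltng\ in hand the endgame is forced by \S\ref{sec: basicases}: $\Pi=\soc(\pi_1\times\soc(\pi_2\times\pi))$ is irreducible by Lemma~\ref{lem: LM} and embeds into $\pi_1\times\pi_2\times\pi$, hence $\Pi\hookrightarrow\omega\times\pi$ for some $\omega\in\JH(\pi_1\times\pi_2)$. Writing $\m=\m_{\sigma_1}(\tseq_{k,l})$ — one computes that $\sigma_1$ is the involution $(l,l-2,l-3,\dots,2,k,1,k-1,\dots,l+1,l-1)$, with $\desl(\sigma_1)=\desl(\sigma_1^{-1})=\{l-2,l,k\}$ — Corollary~\ref{cor: sigmarest} forces $\omega=\zele{\m_\sigma(\tseq_{k,l})}$ with $\sigma\le\sigma_1$ and $\desl(\sigma)\cup\desl(\sigma^{-1})\subset\desl(\sigma_1)$. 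I would then prove the analogue of Lemma~\ref{lem: sigma14231}: the only such $\sigma$ are $\sigma_1$ itself and a short explicit list, for each member of which (other than $\sigma_1$) the corresponding $\omega$ is a product of ladders with $\omega\times\pi$ irreducible. For such $\omega\ne\pi$, the embedding $\Pi\hookrightarrow\omega\times\pi$ into an irreducible representation would force $\Pi=\omega\times\pi=\zele{\m(\omega)+\m}$, contradicting $\m(\Pi)\ne\m(\omega)+\m$ (already the number of segments differs). Hence $\omega=\pi$, so $\Pi\hookrightarrow\pi\times\pi$; and since $\m(\Pi)$ has $k+l$ segments while $\m+\m$ has $2k>k+l$ segments, $\Pi\ne\zele{\m+\m}$, so $\pi\times\pi$ is reducible and $\pi\notin\IrrS$. (The assertion that $\m$ does not satisfy \GLS\ is already recorded in the statement via Remark~\ref{rem: kirred}.) Apart from the double-socle computation, the secondary difficulty is this permutation classification, which splits into more cases than Lemma~\ref{lem: sigma14231} because $\desl(\sigma_1)$ now has three elements rather than two.
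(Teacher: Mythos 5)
Your overall strategy is the paper's: the double-socle/\spltng\ method of \S\ref{sec: basicases}, the same endgame via Corollary \ref{cor: sigmarest} and the classification of the permutations $\sigma\le\sigma_1$ with $\desl(\sigma)\cup\desl(\sigma^{-1})\subset\{l-2,l,k\}$ (your $\sigma_1$ and its descent set are correct, and this classification is exactly the paper's Lemma \ref{lem: sigma3412}; your variant of ruling out $\omega\ne\pi$ by checking directly that $\omega\times\pi$ is irreducible, instead of the paper's self-duality trick via Lemma \ref{lem: selfdualcases}, is explicitly acknowledged in the paper as a legitimate alternative). Where you genuinely differ is the choice of \spltng: the paper takes $\pi_1=\zele{[l,l+k-1]+\speh{[l-2,l+k-2]}{l-3}}$ (the ladder of long segments) and $\pi_2=\zele{[k,k+1]+[1,k]+\speh{[k-1]}{k-l-1}+[l-1,l]}$, whereas you put the singleton staircase plus $[l-1,l]$ into $\pi_2$ and everything else into $\pi_1$. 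Your preliminary checks are fine ($\m(\pi_1)$ is indeed balanced, $\rnrset(\pi_1)\cup\rnrset(\pi_2)=\{[l],[k],[k+2]\}=\rnrset(\pi)$, and $\pi=\soc(\pi_1\times\pi_2)$), and in the smallest case $k=4$, $l=3$ one can check that your double socle does come out to the stated $\Pi$, so the splitting itself is not the problem.

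The genuine gap is that the heart of the proposition --- identifying the \wtness\ --- is not carried out, and the toolkit you propose for it is not adequate as described. You never write down the intermediate representation $\n=\m(\soc(\pi_2\times\pi))$, and without it nothing pins $\Pi$ down; if the double socle for your splitting were some other irreducible representation, you would at best get $\pi\notin\IrrS$ but not the stated embedding $\Pi\hookrightarrow\pi\times\pi$. Moreover, for step (i) the iterated use of Lemma \ref{lem: suppn>suppm} requires tracking a number of applications of $\m\mapsto\m^-$ that grows with $k$ and $l$, together with an explicit closed form for all intermediate multisegments and the base case --- a nontrivial induction you have not set up; and for step (ii) your $\pi_1$ is \emph{not} a ladder and has the same maximal cuspidal support as $\soc(\pi_2\times\pi)$, so neither Lemma \ref{lem: suppn>suppm} nor ``Lemma \ref{lem: extractsegment} repeatedly'' suffices: after extracting $[k,k+1]$ one is left with a ladder-times-general socle computation that needs the full derivative machinery (Corollary \ref{cor: extractrho}, Lemmas \ref{lem: spclcaseextrho} and \ref{lem: soctimesrho}), i.e.\ precisely the multi-page calculation that constitutes the paper's Lemma \ref{lem: weakerbasic3412} (which, not accidentally, chooses the \emph{ladder} of long segments as $\pi_1$ so that these tools apply directly). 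Likewise, the ``short explicit list'' of permutations and the irreducibility of $\omega\times\pi$ for the three exceptional $\omega$ are asserted rather than proved. In short: sound plan, plausible alternative splitting, but the proof's technical core is deferred, not done.
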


As before, the main step is the following.

\begin{lemma} \label{lem: weakerbasic3412}
Let
\begin{gather*}
\pi_1=\zele{[l,l+k-1]+\speh{[l-2,l+k-2]}{l-3}}\\
\pi_2=\zele{[k,k+1]+[1,k]+\speh{[k-1]}{k-l-1}+[l-1,l]}.
\end{gather*}
Then $(\pi_1,\pi_2)$ is a \spltng\ for $\pi$ with \wtness\ $\Pi$.
\end{lemma}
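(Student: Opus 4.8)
The plan is to verify the three defining properties of a \spltng\ in Definition preceding Corollary \ref{cor: sigmarest}, plus the identification of the \wtness\ $\Pi=\soc(\pi_1\times\soc(\pi_2\times\pi))$. First I would observe that $\pi_1$ is a ladder (the segments $[l,l+k-1]$, $[l-2,l+k-2]$, $[l-3,l+k-3]$, \dots\ have strictly decreasing left and right endpoints) and that $\pi_2$ is a ladder as well ($[k,k+1]\supsetneq[k-1,k]\supsetneq\dots$, then $[1,k]$, then the singletons $[k-1]$, $[k-2]$, \dots, then $[l-1,l]$ — one checks the endpoint inequalities directly). Hence $\pi_1,\pi_2\in\IrrS$ by \cite{MR3573961}. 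Property (1), $\m(\pi)=\m(\pi_1)+\m(\pi_2)$, is immediate from \eqref{eq: basic3412}: one just reads off that $\m_1+\m_2$ is exactly the listed multisegment, and $\pi=\soc(\pi_1\times\pi_2)$ follows because $\m_1<_b$-dominates no segment of $\m_2$ in the wrong direction — more precisely $\Delta\not\prec\Delta'$ whenever $\Delta$ is a segment of $\m_1$ and $\Delta'$ of $\m_2$, so $\LI(\pi_1,\pi_2)$ holds and $\soc(\pi_1\times\pi_2)=\zele{\m_1+\m_2}=\pi$. Property (2), $\rnrset(\pi)=\rnrset(\pi_1)\cup\rnrset(\pi_2)$, is a short computation with Lemma \ref{lem: spclcaseextrho} (or its right-handed analogue): the right endpoints appearing in $\m_1$ are $l+k-1$ (multiplicity $1$ at the top) and the $l+k-2,l+k-3,\dots$, giving $\rnrset(\pi_1)=\{[l+k-1]\}=\{[k]\}$ after the shift normalization; $\rnrset(\pi_2)$ picks up $[l]$ and $[k+2]$; and one checks these three cuspidals are precisely $\rnrset(\pi)=\{[l],[k],[k+2]\}$ by the same lemma applied to $\m$ itself. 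Property (3), $\lnrset(\Pi)=\lnrset(\pi)=\{[l-2],[l],[k]\}$, will follow once $\Pi$ is identified, since each of the three Speh-type factors $\zele{\speh{[l-2,l+k-1]}{l-2}}$, $\zele{\speh{[l,l+k-1]}{l}}$, $\zele{\speh{[k,k+1]}{k+2-l}}$ contributes exactly one left-beginning cuspidal ($[l-2]$, $[l]$, $[k]$ respectively), and Lemma \ref{lem: lnrprop}(2) gives $\lnrset$ of a product as the union.

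The technical heart is the identification $\soc(\pi_1\times\soc(\pi_2\times\pi))=\Pi$. I would mirror the strategy of Lemma \ref{lem: ws4231}. First compute $\soc(\pi_2\times\pi)$, call it $\pi_3$. Since $\max\supp\m<\max\supp\m(\pi_2)$ is \emph{not} quite the situation — rather one should arrange the segments so that Lemma \ref{lem: suppn>suppm} applies after peeling off top segments via the Zelevinsky involution data $(\del,{}^-)$. Concretely: one shows $\del(\pi)=\del(\pi_3)$ and $\del(\pi^-)=\del(\pi_3^-)$ (and possibly a further step), reducing to a comparison $\soc(\pi_2\times((\pi^-)^-\cdots))=(\pi_3^-\cdots)^-$ of representations with strictly smaller support, which is then handled by repeated application of Lemma \ref{lem: extractsegment} to extract the maximal segments one at a time and of the linked-pair offspring rule $\soc(\zele{\Delta_1}\times\zele{\Delta_2})=\zele{\Delta'_1}\times\zele{\Delta'_2}$. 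Then, with $\pi_3$ in hand, compute $\soc(\pi_1\times\pi_3)$ again by repeated use of Lemma \ref{lem: extractsegment}: the segments of $\pi_1$ are ``large'' (their right endpoints reach up to $l+k-1$ and $l+k-2$), so one peels off $[l,l+k-1]$ and the $[l-2,l+k-2],\dots$ one at a time, each time forming the offspring with the relevant segment of $\pi_3$, and the bookkeeping is designed precisely so that the cumulative offspring reassemble into the three Speh segments defining $\Pi$. Because $\Pi$ is of the form $A_1\times A_2\times A_3$ with the $A_i$ Speh representations whose segments are pairwise unlinked across factors, $\Pi$ is irreducible, and $\Pi\hookrightarrow\pi_1\times\pi_2\times\pi$ by construction, which gives $\Pi=\soc(\pi_1\times\soc(\pi_2\times\pi))$ as claimed.

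The hard part will be the explicit socle computations — keeping track of exactly which segments get linked at each peeling step and verifying that no spurious reducibility creeps in. The risk is that an intermediate $\soc$ is not what one expects because two segments that look unlinked are in fact linked (or vice versa) after a previous offspring operation has shifted an endpoint. I would manage this by doing the computation ``from the outside in'': always extract the $<_b$-maximal (or $<_e$-maximal) segment with multiplicity one, for which Lemma \ref{lem: extractsegment} applies cleanly, and induct on the number of segments, so that each step is a single linked-pair resolution governed by the $\prec$ relation recalled in Section \ref{sec: classification}. The drawing in Section \ref{sec: 3*41*2} for $k=8$, $l=5$ is a useful guide for the shape of the intermediate multisegments. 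Once Lemma \ref{lem: weakerbasic3412} is established, Proposition \ref{prop: 3412basic} follows exactly as in the $423{*}1$ case: Corollary \ref{cor: sigmarest} forces any irreducible subquotient $\omega$ of $\pi_1\times\pi_2$ with $\Pi\hookrightarrow\omega\times\pi$ to have $\sigma$-parameter $\le\sigma_1$ with descent sets contained in $\desl(\sigma_1)$, $\desl(\sigma_1^{-1})$; a combinatorial lemma analogous to Lemma \ref{lem: sigma14231} (which I would prove here) then lists the finitely many such $\sigma$, and for each $\omega\ne\pi$ one checks either that $\omega\times\pi$ is irreducible but $\m(\omega)+\m(\pi)\ne\m(\Pi)$ (using \cite[Proposition 6.6]{MR3573961} or Theorem \ref{thm: laddercomb}), a contradiction. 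Hence $\Pi\hookrightarrow\pi\times\pi$, and since $\m(\Pi)\ne\m(\pi)+\m(\pi)=\m+\m$ one concludes $\Pi\ne\zele{\m+\m}$ and therefore $\pi\notin\IrrS$.
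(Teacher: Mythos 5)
Your overall architecture is the paper's: verify $\pi=\soc(\pi_1\times\pi_2)$ and the $\rnrset$/$\lnrset$ conditions, then identify the \wtness\ by first computing $\pi_3=\soc(\pi_2\times\pi)$ and then $\soc(\pi_1\times\pi_3)$. But two of your supporting assertions are false. First, $\pi_2$ is \emph{not} a ladder: ordered by decreasing right endpoints its segments are $[k,k+1],[1,k],[k-1],\dots,[l+1],[l-1,l]$, whose left endpoints $k,1,k-1,\dots$ are not decreasing. So the ladder theorem does not give $\pi_2\in\IrrS$; the paper instead invokes the already-proved implication (balanced $\implies$ \LM) of Theorem \ref{thm: main} for $\m(\pi_2)$. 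Second, the segments of the three Speh factors of $\Pi$ are \emph{not} pairwise unlinked across factors: $[1,k]\prec[k,k+1]$ (second and third factors, for every $l$), and $[1,k+2]\prec[l,l+k-1]$ (first and second factors, when $l>3$); so your stated reason for the irreducibility of the product defining $\Pi$ fails. Relatedly, your $\rnrset$ bookkeeping is garbled: $\rnrset(\pi_1)=\{[k+2]\}$ and $\rnrset(\pi_2)=\{[l],[k]\}$, and there is no ``shift normalization'' identifying $[l+k-1]$ with $[k]$ (the union you need happens to come out right, but not for the reasons given).

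The decisive gap, however, is that the identification $\soc(\pi_1\times\soc(\pi_2\times\pi))=\Pi$ --- which is the entire content of the lemma --- is never carried out: you do not even produce a candidate for $\pi_3=\soc(\pi_2\times\pi)$. In the paper this is the long part: $\pi_3=\zele{\speh{[l-1,l+k-1]}{l-3}+\speh{[2,k+1]}2+[1,k+2]+\speh{[k,k+1]}{k+2-l}}$, and proving $\soc(\pi_2\times\pi)=\pi_3$ is not a matter of peeling $<_b$- or $<_e$-extremal segments via Lemma \ref{lem: extractsegment} alone; it requires repeated passage to left derivatives $\lderiv_\rho$ through Corollary \ref{cor: extractrho} (stripping single cuspidals such as $[l-1]$, $[1]$, $[2]$), Lemma \ref{lem: chopladders}, the ladder irreducibility criterion of Theorem \ref{thm: laddercomb}, and Lemma \ref{lem: soctimesrho} to reinsert cuspidals, with a genuinely different chain of auxiliary representations in the case $l=3$, which your sketch does not anticipate. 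Finally, your closing inference ``$\Pi$ is irreducible and $\Pi\hookrightarrow\pi_1\times\pi_2\times\pi$, hence $\Pi$ equals the double socle'' is circular: the double socle is automatically irreducible and embeds in $\pi_1\times\pi_2\times\pi$ (by Lemma \ref{lem: LM}, since $\pi_1,\pi_2\in\IrrS$); the point to be proved is that it coincides with the specific representation $\Pi$ of Proposition \ref{prop: 3412basic}, and only the explicit socle computation delivers that, as well as the property $\lnrset(\Pi)=\lnrset(\pi)$ you need for the \spltng.
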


\begin{proof}
By the ``if'' part of Theorem \ref{thm: main} $\pi_1,\pi_2\in\IrrS$.
It is clear that $\pi=\soc(\pi_1\times\pi_2)$, $\rnrset(\pi_1)\cup\rnrset(\pi_2)=\{[k+2]\}\cup\{[l],[k]\}=\rnrset(\pi)$
and $\lnrset(\Pi)=\lnrset(\pi)$. Let
\[
\pi_3=\zele{\speh{[l-1,l+k-1]}{l-3}+\speh{[2,k+1]}2+[1,k+2]+\speh{[k,k+1]}{k+2-l}}.
\]
We first show that $\soc(\pi_1\times\pi_3)=\Pi$.

Let $\pi_4=\zele{\speh{[l-2,l+k-2]}{l-3}}$ and $\pi_5=\soc(\pi_4\times\pi_3)$. By Lemma \ref{lem: extractsegment} we have
\[
\soc(\pi_1\times\pi_3)=\zele{[l,l+k-1]+\m(\pi_5)}.
\]
Note that $\del(\pi_3)=[k+2,k+l-1]$ and
\[
\pi_3^-=\zele{\speh{[l-1,l+k-2]}{l-1}+[1,k+1]+\speh{[k,k+1]}{k+2-l}}.
\]
Hence, by Lemma \ref{lem: suppn>suppm} $\del(\pi_5)=[k+2,k+l-1]$ and $\pi_5^-=\soc(\pi_4\times\pi_3^-)$.
By Theorem \ref{thm: laddercomb} we have $\LI(\pi_4,\pi_3^-)$ and therefore
\[
\pi_5^-=\zele{\speh{[l-2,l+k-2]}{l-2}+\speh{[l-1,l+k-2]}{l-1}+\speh{[k,k+1]}{k+2-l}}.
\]
Thus,
\[
\pi_5=\zele{\speh{[l-2,l+k-1]}{l-2}+\speh{[l-1,l+k-2]}{l-1}+\speh{[k,k+1]}{k+2-l}}.
\]
All in all, $\soc(\pi_1\times\pi_3)=\Pi$ as claimed.

Ir remains to show that
\begin{equation} \label{eq: tau2tausigma}
\soc(\pi_2\times\pi)=\pi_3.
\end{equation}

Assume first that $l>3$. Then, since $[1],[l-1]\notin\lnrset(\pi)$ we have by Corollary \ref{cor: extractrho}
\begin{equation} \label{eq: soctau2tau}
\soc(\pi_2\times\pi)=\soc([l-1]\times\soc([1]\times\soc(\pi_6\times\pi)))
\end{equation}
where (see Lemma \ref{lem: spclcaseextrho})
\[
\pi_6=\lderiv_{[l-1]}(\lderiv_{[1]}(\pi_2))=\zele{[k,k+1]+[2,k]+\speh{[k-1]}{k-l}}.
\]
Note that by Theorem \ref{thm: laddercomb} $\pi_6\times\pi_1$ is irreducible. Therefore,
\begin{multline*}
\soc(\pi_6\times\pi)=\soc(\pi_6\times\pi_1\times\pi_2)=
\soc(\pi_1\times\soc(\pi_6\times\pi_2))\\=
\zele{[l,l+k-1]+\speh{[l-2,l+k-2]}{l-3}+\m(\soc(\pi_6\times\pi_2))}.
\end{multline*}
Let $\pi_7=\zele{[k,k+1]+\speh{[k-1]}{k-l-1}}$. By Lemma \ref{lem: extractsegment} we have
\[
\soc(\pi_6\times\pi_2)=\zele{[l-1,l]+[1,k]+\m(\soc(\pi_6\times\pi_7))}
\]
and by Corollary \ref{cor: extractrho}
\[
\soc(\pi_6\times\pi_7)=\soc(\soc(\pi_6\times\zele{\speh{[k]}{k-l}})\times[k+1]).
\]
Moreover, by Lemma \ref{lem: extractsegment} and Lemma \ref{lem: chopladders}
\begin{multline*}
\soc(\pi_6\times\zele{\speh{[k]}{k-l}})=\zele{[k,k+1]+\m(\soc(\zele{[2,k]+\speh{[k-1]}{k-l}}\times \zele{\speh{[k]}{k-l}}))}
\\=\zele{[2,k]+\speh{[k,k+1]}{k+1-l}}.
\end{multline*}
Using Lemma \ref{lem: soctimesrho} it follows that
\[
\soc(\pi_6\times\pi_7)=\zele{[2,k+1]+\speh{[k,k+1]}{k+1-l}}
\]
and hence
\[
\soc(\pi_6\times\pi_2)=\zele{\speh{[2,k+1]}2+\speh{[k,k+1]}{k+2-l}}
\]
and
\[
\soc(\pi_6\times\pi)=\zele{[l,l+k-1]+\speh{[l-2,l+k-2]}{l-3}+\speh{[2,k+1]}2+\speh{[k,k+1]}{k+2-l}}.
\]
The relation \eqref{eq: tau2tausigma} now follows from \eqref{eq: soctau2tau} using Lemma \ref{lem: soctimesrho}.

Consider now the remaining case $l=3$. We first write using Corollary \ref{cor: extractrho}
\[
\soc(\pi_2\times\pi)=\soc([2]\times\soc(\pi_8\times\pi))
\]
where
\[
\pi_8=\lderiv_{[2]}(\pi_2)=\zele{[k,k+1]+[1,k]+\speh{[k-1]}{k-3}}.
\]
By Lemma \ref{lem: extractsegment} we have
\[
\soc(\pi_8\times\pi)=\zele{[2,3]+\m(\soc(\pi_8\times\pi_9))}
\]
where
\[
\pi_9=\zele{[3,k+2]+[k,k+1]+[1,k]+\speh{[k-1]}{k-4}}.
\]
Now, since $[1]\times\pi_8$ is irreducible we have by Corollary \ref{cor: extractrho}
\[
\soc(\pi_8\times\pi_9)=\soc([1]\times[1]\times\soc(\pi_{10}\times\pi_{11}))
\]
where
\[
\pi_{10}=\lderiv_{[1]}(\pi_8)=\zele{[k,k+1]+[2,k]+\speh{[k-1]}{k-3}}
\]
and
\[
\pi_{11}=\lderiv_{[1]}(\pi_9)=\zele{[3,k+2]+[k,k+1]+[2,k]+\speh{[k-1]}{k-4}}.
\]
Again by Corollary \ref{cor: extractrho}
\[
\soc(\pi_{10}\times\pi_{11})=\soc([2]\times\soc(\pi_{12}\times\pi_{11}))
\]
where (by Lemma \ref{lem: spclcaseextrho})
\[
\pi_{12}=\lderiv_{[2]}(\pi_{10})=\zele{[k,k+1]+[3,k]+\speh{[k-1]}{k-3}}.
\]
By Lemma \ref{lem: extractsegment} we have
\[
\soc(\pi_{12}\times\pi_{11})=\zele{[2,k]+\m(\soc(\pi_{12}\times\pi_{13}))}
\]
where
\[
\pi_{13}=\zele{[3,k+2]+[k,k+1]+\speh{[k-1]}{k-4}}=\zele{[3,k+2]}\times\zele{[k,k+1]+\speh{[k-1]}{k-4}}.
\]
Clearly,
\[
\soc(\pi_{12}\times\pi_{13})=\zele{[3,k+2]+\m(\soc(\pi_{12}\times\zele{[k,k+1]+\speh{[k-1]}{k-4}}))}
\]
while by Corollary \ref{cor: extractrho}
\[
\soc(\pi_{12}\times\zele{[k,k+1]+\speh{[k-1]}{k-4}})=\soc(\soc(\pi_{12}\times\zele{\speh{[k]}{k-3}})\times[k+1]).
\]
Since
\begin{multline*}
\soc(\pi_{12}\times\zele{\speh{[k]}{k-3}})=\zele{[k,k+1]+[3,k]+\m(\soc(\zele{\speh{[k-1]}{k-3}}\times\zele{\speh{[k]}{k-3}}))}\\
=\zele{[3,k]+\speh{[k,k+1]}{k-2}}
\end{multline*}
we obtain (using Lemma \ref{lem: soctimesrho})
\begin{gather*}
\soc(\pi_{12}\times\zele{[k,k+1]+\speh{[k-1]}{k-4}})=\zele{[3,k+1]+\speh{[k,k+1]}{k-2}},\\
\soc(\pi_{12}\times\pi_{13})=\zele{[3,k+2]+[3,k+1]+\speh{[k,k+1]}{k-2}},\\
\soc(\pi_{12}\times\pi_{11})=\zele{[2,k]+[3,k+2]+[3,k+1]+\speh{[k,k+1]}{k-2}},\\
\soc(\pi_{10}\times\pi_{11})=\zele{[2,k]+[2,k+2]+[3,k+1]+\speh{[k,k+1]}{k-2}},\\
\soc(\pi_8\times\pi_9)=\zele{[1,k]+[1,k+2]+[3,k+1]+\speh{[k,k+1]}{k-2}},\\
\soc(\pi_8\times\pi)=\zele{[1,k]+[1,k+2]+[3,k+1]+\speh{[k,k+1]}{k-1}},
\end{gather*}
and finally
\[
\soc(\pi_2\times\pi)=\zele{[1,k+2]+\speh{[2,k+1]}2+\speh{[k,k+1]}{k-1}}=\pi_3
\]
as required.
\end{proof}

In order to conclude Proposition \ref{prop: 3412basic} it remains to show that
\[
\Pi\not\hookrightarrow\omega\times\pi
\]
for any irreducible subquotient $\omega$ of $\pi_1\times\pi_2$, other than $\pi$.
Recall that $\m(\pi)=\m_{\sigma_1}(\tseq_{k,l})$ where
\[
\sigma_1(i)=\begin{cases}l&i=1,\\l-i&i=2,\dots,l-2,\\k&i=l-1,\\1&i=l,\\l+k-i&i=l+1,\dots,k-1,\\l-1&i=k.\end{cases}
\]
Note that $\sigma_1^{-1}=\sigma_1$ and $\desl(\sigma_1)=\{l-2,l,k\}$.

\begin{lemma} \label{lem: sigma3412}
Suppose that $\sigma\le\sigma_1$ and $\desl(\sigma)\cup\desl(\sigma^{-1})\subset\desl(\sigma_1)$.
Then $\sigma$ is one of the following four permutations:
\begin{gather*}
\sigma=\sigma_1,\\
\sigma(i)=\begin{cases}l&i=1,\\l-i&i=2,\dots,l-2,\\l-1&i=l-1,\\1&i=l,\\k+l+1-i&i=l+1,\dots,k,\end{cases}\\
\sigma(i)=\begin{cases}l-1-i&i=1,\dots,l-2,\\k&i=l-1,\\l&i=l,\\k+l-i&i=l+1,\dots,k-1,\\l-1&i=k,\end{cases}\\
\sigma(i)=\begin{cases}l-1-i&i=1,\dots,l-2,\\l&i=l-1,\\l-1&i=l,\\k+l+1-i&i=l+1,\dots,k.\end{cases}
\end{gather*}
In particular, $\sigma=\sigma^{-1}$ and if $\sigma\ne\sigma_1$ then $\sigma$ is smooth.
\end{lemma}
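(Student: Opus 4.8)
The plan is to argue by a direct combinatorial case analysis, parallel to (but longer than) the proof of Lemma~\ref{lem: sigma14231}. First I would unwind the descent hypothesis. Since $\desl(\sigma_1)=\{l-2,l,k\}$ and $\sigma_1=\sigma_1^{-1}$, the condition $\desl(\sigma)\cup\desl(\sigma^{-1})\subset\desl(\sigma_1)$ says that $\sigma$ has no ascent outside $\{l-2,l\}$, and likewise for $\sigma^{-1}$. Equivalently, $\sigma$ is strictly decreasing on each of the three positional blocks $[1,l-2]$, $\{l-1,l\}$, $[l+1,k]$, and (dually) within each of the value-intervals $[1,l-2]$, $\{l-1,l\}$, $[l+1,k]$ the occupied positions decrease as the value increases. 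In particular $\sigma$ is determined by the unordered partition of $\{1,\dots,k\}$ into the value-sets $A=\sigma(\{1,\dots,l-2\})$, $B=\sigma(\{l-1,l\})$, $C=\sigma(\{l+1,\dots,k\})$ (of sizes $l-2$, $2$, $k-l$) subject to the dual constraint, so the problem is reduced to counting the shuffles of three decreasing runs compatible with both block conditions and with $\sigma\le\sigma_1$.

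Next I would impose $\sigma\le\sigma_1$ through the sorted-prefix (tableau) criterion for the Bruhat order. Writing $\sigma_1$ in one-line notation as $(l,\,l-2,\,l-3,\dots,2,\;k,\;1,\;k-1,\,k-2,\dots,l+1,\;l-1)$ and reading off its sorted prefixes of lengths $l-2$, $l-1$, $l$, $k-1$ yields: the first $l-2$ entries of $\sigma$ lie in $\{1,\dots,l\}$ with sorted order dominated by $(l,l-2,l-3,\dots,2)$; at most one entry exceeding $l$ may occur among the first $l-1$ positions, and if so it must sit exactly at position $l-1$; and, pushing the same information through $\sigma^{-1}\le\sigma_1$, the value $k-1$ is confined to position $k-1$ or later and the value $l-1$ to a position near $k$. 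The only freedom left is the placement of the ``large'' values $l-1$, $l$, $k$ relative to the three decreasing runs, and a finite case split on this produces exactly four admissible configurations, which unwind precisely to $\sigma_1$ and the three permutations in the statement. Conversely, each of the four is checked directly to be $\le\sigma_1$ (comparing sorted prefixes, or exhibiting a length-raising chain to $\sigma_1$) and to satisfy the descent restriction, so the list is exhaustive.

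Finally, for each of the four permutations one reads off from its one-line form that it is an involution, whence $\sigma=\sigma^{-1}$; and for the three with $\sigma\ne\sigma_1$ one verifies the stated smoothness directly — e.g.\ via Corollary~\ref{cor: fgd and smth} and Proposition~\ref{prop: nonsmth}, by checking that $\m_\sigma(\tseq_{k,l})$ admits no sub-multisegment of type $4231$ or $3412$, equivalently (stripping all positions on which $\sigma$ and $\sigma_0(\tseq_{k,l})$ agree, cf.\ Lemma~\ref{lem: rmvi}) that one is left with a layered permutation, whose Schubert variety is smooth. I expect the main obstacle to be the bookkeeping of the second step: although the descent hypotheses already collapse the problem to a shuffle of three decreasing runs, extracting from $\sigma\le\sigma_1$ together with the mirrored condition on $\sigma^{-1}$ that precisely four shuffles survive requires a careful and somewhat delicate enumeration of where $l-1$, $l$ and $k$ can land — more involved than, though entirely analogous to, the two-case analysis in Lemma~\ref{lem: sigma14231}.
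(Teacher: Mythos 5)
Your overall strategy --- use the descent hypotheses to force $\sigma$ to be a shuffle of three decreasing runs and then cut the possibilities down by Bruhat comparisons with $\sigma_1$ --- is the same elementary route the paper takes, and your first two prefix deductions (the first $l-2$ entries lie in $\{1,\dots,l\}$, and any entry exceeding $l$ among the first $l-1$ positions must sit at position $l-1$) are correct. However, the two constraints you claim to extract from $\sigma^{-1}\le\sigma_1$ are false, and they are precisely what your case split is supposed to run on. The value $k-1$ is not confined to positions $\ge k-1$: already $\sigma_1$ itself, which certainly satisfies the hypotheses, has $\sigma_1(l+1)=k-1$, and $l+1<k-1$ as soon as $k\ge l+3$. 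Likewise the value $l-1$ need not land near position $k$: in the second permutation of the statement it sits at position $l-1$. Taken literally, these constraints would exclude two of the four admissible permutations (including $\sigma_1$), so the enumeration as you describe it would yield the wrong list. What the dual descent condition actually gives is, for instance, $\sigma^{-1}(l-1)>\sigma^{-1}(l)$ (hence $\sigma(1)\ne l-1$), and that the positions of the values $l+1,\dots,k$ decrease as the value increases, so the only value from that range that can occupy position $l-1$ is $k$; these are the facts the paper's argument runs on.

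Second, the decisive step --- that exactly four configurations survive --- is only asserted (``a finite case split \dots produces exactly four admissible configurations''), and this is the entire content of the lemma. It needs to be carried out, as in the paper: either the first block is $(l-2,l-3,\dots,1)$, or $\sigma(1)=l$ and the first block is $(l,l-2,\dots,2)$ (using $\sigma(1)\le l$, $\sigma(2)\le l-2$ when $\sigma(1)=l$, and $\sigma^{-1}(l-1)>\sigma^{-1}(l)$); dually, either the last block is $(k,k-1,\dots,l+1)$, or $\sigma(k)=l-1$ and it is $(k-1,\dots,l+1,l-1)$; the two leftover values then fill positions $l-1,l$ in decreasing order (since $l-1\notin\desl(\sigma)$), and the four combinations give precisely the stated list. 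Note also a small logical slip: verifying that each of the four permutations satisfies the hypotheses shows the list contains no spurious members, not that it is exhaustive --- exhaustiveness must come from the case analysis. Your concluding checks (each listed $\sigma$ is an involution, and the three with $\sigma\ne\sigma_1$ avoid the patterns $4231$ and $3412$, hence are smooth) are fine and match what the paper leaves implicit.
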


\begin{proof}
We have $\sigma(1)\le\sigma_1(1)=l$ and $\sigma(i)>\sigma(i+1)$ for all $1\le i<l-2$.
Also $\sigma(1)\ne l-1$ since $\sigma^{-1}(l-1)>\sigma^{-1}(l)$. Thus, either
$\sigma(i)=l-1-i$ for all $1\le i\le l-2$ or $\sigma(1)=l$. In the latter case, if $l>3$ then $\sigma(2)\le\sigma_1(2)=l-2$ and
therefore there exists $1<j<l$ such that $\sigma(i)=l-i$ for all $1<i<j$ and $\sigma(i)=l-1-i$ for all $j\le i\le l-2$.
In fact, $j=l-1$ for otherwise $\sigma^{-1}(l-j)\ge l-1>j=\sigma^{-1}(l-j-1)$ in contradiction with the assumption on $\sigma$.
Thus, either $\sigma(i)=l-1-i$ for all $1\le i\le l-2$ or $\sigma(1)=l$ and $\sigma(i)=l-i$ for all $1<i\le l-2$.
By a similar reasoning, either $\sigma(i)=k+l+1-i$ for all $l<i\le k$ or $\sigma(k)=l-1$ and $\sigma(i)=k+l-i$ for all $l<i<k$.
Taking into account the condition $\sigma(l-1)>\sigma(l)$, $\sigma$ must be one of the four possibilities listed in the statement of the lemma.
\end{proof}

Assume for simplicity that $\rho^\vee=\rho\nu_\rho^{k+l}$, or equivalently, that $\pi^\vee=\pi$.
(We may do so since by Theorem \ref{thm: indepcspline}, the validity of Theorem \ref{thm: main} is independent of the choice of $\rho$.)
It follows from Corollary \ref{cor: sigmarest}, Lemma \ref{lem: sigma3412}, the ``if'' direction of Theorem \ref{thm: main} and \eqref{eq: contrasigma}
that if $\omega$ is an irreducible subquotient of $\pi_1\times\pi_2$, other than $\pi$, such that $\Pi\hookrightarrow\omega\times\pi$
then $\omega\in\IrrS$ and $\omega=\omega^\vee$.
However, in this case, it would follow from Lemma \ref{lem: selfdualcases} below\footnote{Alternatively, we could compute
$\soc(\omega\times\pi)$ directly. This will unnecessitate the assumption that $\rho$ is essentially self-dual.}
that $\omega\times\pi$ is irreducible and we obtain a contradiction
(since $\m(\pi)$ is not a sub-multisegment of $\m(\Pi)$). This finishes the proof of Proposition \ref{prop: 3412basic}.

\begin{lemma} \label{lem: selfdualcases}
Suppose that $\pi_1$, $\pi_2$ and $\pi$ are irreducible and self-dual and $\pi\hookrightarrow\pi_1\times\pi_2$.
If at least one of $\pi_1$ and $\pi_2$ is \LM\ then $\pi_1\times\pi_2$ is irreducible. In particular, $\m(\pi)=\m(\pi_1)+\m(\pi_2)$.
\end{lemma}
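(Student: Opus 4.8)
The strategy is to reduce the statement to the single-socle property of Lemma \ref{lem: LM} and then exploit self-duality to upgrade "irreducible socle" to "irreducible". First I would invoke Lemma \ref{lem: LM}: since one of $\pi_1,\pi_2$ lies in $\IrrS$, the representation $\pi_1\times\pi_2$ is \SI, so $\soc(\pi_1\times\pi_2)$ is irreducible and occurs with multiplicity one in $\JH(\pi_1\times\pi_2)$. Because $\pi$ is irreducible and $\pi\hookrightarrow\pi_1\times\pi_2$, it follows that $\pi=\soc(\pi_1\times\pi_2)$ and that $\pi$ occurs with multiplicity one in $\JH(\pi_1\times\pi_2)$.

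Next I would bring in self-duality. Using the standard isomorphism $(\pi_1\times\pi_2)^\vee\cong\pi_1^\vee\times\pi_2^\vee$ together with the hypotheses $\pi_i^\vee\cong\pi_i$, we get $(\pi_1\times\pi_2)^\vee\cong\pi_1\times\pi_2$. Combining this with the general identity $\soc(M^\vee)\cong(\coss M)^\vee$ for a finite-length smooth representation $M$, we obtain $(\coss(\pi_1\times\pi_2))^\vee\cong\soc(\pi_1\times\pi_2)=\pi$, hence $\coss(\pi_1\times\pi_2)\cong\pi^\vee\cong\pi$. So $\pi_1\times\pi_2$ has irreducible socle $\pi$, irreducible cosocle $\pi$, and $\pi$ has multiplicity one in its Jordan--H\"older series.

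The remaining step is an elementary module-theoretic argument. Let $R$ be the radical of $\pi_1\times\pi_2$, i.e. the unique maximal proper subrepresentation, which exists since the cosocle is irreducible, so that $(\pi_1\times\pi_2)/R\cong\pi$. The socle $\pi$ of $\pi_1\times\pi_2$ cannot be contained in $R$: otherwise $\pi$ would appear both in $\JH(R)$ and in $\JH((\pi_1\times\pi_2)/R)$, contradicting multiplicity one. Since $R$ is the unique maximal proper subrepresentation and $\soc(\pi_1\times\pi_2)\not\subset R$, we must have $\soc(\pi_1\times\pi_2)+R=\pi_1\times\pi_2$; moreover $\soc(\pi_1\times\pi_2)\cap R$, being a subrepresentation of the irreducible $\pi$, is zero. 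Hence $\pi_1\times\pi_2=\pi\oplus R$, and taking socles forces $\soc R=0$, i.e. $R=0$. Therefore $\pi_1\times\pi_2=\pi$ is irreducible, and since $\zele{\m(\pi_1)}\times\zele{\m(\pi_2)}$ is irreducible the Zelevinsky classification gives $\pi=\zele{\m(\pi_1)+\m(\pi_2)}$, i.e. $\m(\pi)=\m(\pi_1)+\m(\pi_2)$.

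Honestly there is no serious obstacle here; the only points requiring care are the standard facts $(\pi_1\times\pi_2)^\vee\cong\pi_1^\vee\times\pi_2^\vee$ and $\soc(M^\vee)\cong(\coss M)^\vee$, and the observation that self-duality forces the cosocle to coincide with the socle. The one genuinely conceptual move is recognizing that, once socle and cosocle are both equal to $\pi$ and $\pi$ has multiplicity one, the representation must split off $\pi$ as a direct summand and hence be irreducible.
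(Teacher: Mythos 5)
Your proposal is correct and follows essentially the same route as the paper: both use Lemma \ref{lem: LM} to see that $\pi_1\times\pi_2$ is \SI, then use self-duality of $\pi,\pi_1,\pi_2$ (via $(\pi_1\times\pi_2)^\vee\cong\pi_1^\vee\times\pi_2^\vee$) to realize $\pi$ as a quotient as well as a subrepresentation, and conclude irreducibility from the multiplicity-one property. The only difference is that you spell out the final module-theoretic step (radical/splitting argument), which the paper leaves implicit.
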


\begin{proof}
Since $\pi$ is a subrepresentation of $\pi_1\times\pi_2$, $\pi^\vee$ is a quotient of $\pi_1^\vee\times\pi_2^\vee$.
Thus, by the self-duality assumption, $\pi$ is a quotient of $\pi_1\times\pi_2$ as well.
Since by assumption $\pi_1\times\pi_2$ is \SI, it must be irreducible.
\end{proof}

\subsection{Basic multisegments of type $34{*}12$}

Finally, as in Example \ref{exam: 34*12} consider for $k>4$
\[
\pi=\zele{\m},\ \m=[k-1,2k-2]+[k,2k-3]+\speh{[k-2,2k-4]}{k-4}+[1,k]+[2,k-1].
\]

\begin{proposition} \label{prop: lastcase3412}
We have $\Pi\hookrightarrow\pi\times\pi$ where
\begin{multline*}
\Pi=\zele{\speh{[k,2k-3]}{k-1}+[1,2k-2]+\speh{[k-1,2k-2]}{k-1}}=\\
\zele{\speh{[k,2k-3]}{k-1}}\times\zele{[1,2k-2]}\times\zele{\speh{[k-1,2k-2]}{k-1}}.
\end{multline*}
In particular, $\pi$ is not \LM.
\end{proposition}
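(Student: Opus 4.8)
The plan is to adapt the \wtness\ strategy used for Propositions \ref{prop: basictype4231} and \ref{prop: 3412basic}. First I would set up the \spltng: take
\[
\pi_1=\zele{[k-1,2k-2]+[k,2k-3]+\speh{[k-2,2k-4]}{k-4}},\qquad
\pi_2=\zele{[1,k]+[2,k-1]}.
\]
Both are \LM: the multisegment of $\pi_1$ is regular and, ordered by decreasing endpoint, is realized by a permutation of the form $(k-2,k-3,\dots,3,1,2)$ whose only ascent is in its last two positions, so it avoids the patterns $4231$ and $3412$ and $\m(\pi_1)$ is balanced (Corollary \ref{cor: fgd and smth}, Proposition \ref{prop: nonsmth}), whence $\pi_1\in\IrrS$ by the implication \ref{item: mbal}$\implies$\ref{item: zmLM} of Theorem \ref{thm: main} proved in \S\ref{sec: main}; and $\pi_2=\zele{[1,k]}\times\zele{[2,k-1]}$ is the product of two unlinked segment representations, hence \LM\ by Corollary \ref{cor: pi1pi2LM}. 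No segment of $\m(\pi_1)$ precedes one of $\m(\pi_2)$, so $\pi\hookrightarrow\pi_1\times\pi_2$ and $\m(\pi)=\m(\pi_1)+\m(\pi_2)$; a direct application of Lemma \ref{lem: spclcaseextrho} gives $\rnrset(\pi)=\rnrset(\pi_1)\cup\rnrset(\pi_2)=\{[k-1],[k],[2k-2]\}$ and $\lnrset(\pi)=\{[1],[k-1],[k]\}$.

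The crux, and the step I expect to be the main obstacle, is to identify the \wtness: I would prove that $\soc(\pi_2\times\pi)$ equals an explicit irreducible $\pi_3$ for which $\soc(\pi_1\times\pi_3)=\Pi$ (so that $(\pi_1,\pi_2)$ is a \spltng\ with \wtness\ $\Pi$, once $\lnrset(\Pi)=\lnrset(\pi)$ is verified, which follows at once from the shape of $\m(\Pi)$). As in Lemmas \ref{lem: ws4231} and \ref{lem: weakerbasic3412}, this comes down to a long but mechanical chain of iterated socle computations, removing one segment or one cuspidal character at a time. The ingredients are Lemma \ref{lem: extractsegment} (extracting an extremal segment), Corollary \ref{cor: extractrho} together with Lemma \ref{lem: spclcaseextrho} (extracting a cuspidal character outside the relevant $\lnrset$), Lemma \ref{lem: soctimesrho} (for $\soc(\rho\times\zele{\m})$), Lemma \ref{lem: chopladders} and Theorem \ref{thm: laddercomb} (for $\pi_1$ and all the ladders occurring along the way), and Lemma \ref{lem: suppn>suppm} (to reduce a socle to the operations $\m\mapsto\del(\m)$ and $\m\mapsto\m^-$). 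Normalizing $\rho$ so that $\rho^\vee=\rho\nu_\rho^{2k-1}$ — permissible by Theorem \ref{thm: indepcspline} — makes $\m(\pi)$ and $\m(\Pi)$ self-dual, a useful check on the computation.

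Granting the \spltng, the \wtness\ $\Pi$ is irreducible and $\Pi\hookrightarrow\omega\times\pi$ for some $\omega\in\JH(\pi_1\times\pi_2)$. Since $\pi_1\times\pi_2\le\std(\m)$ and $\m$ is regular, $\omega=\zele{\m_\sigma(\tseq_{k,k-1})}$ for some $\sigma\le\sigma_1$, where $\m=\m_{\sigma_1}(\tseq_{k,k-1})$ and
\[
\sigma_1(i)=\begin{cases}k-1&i=1,\\k&i=2,\\k+1-i&3\le i\le k-2,\\1&i=k-1,\\2&i=k,\end{cases}
\]
a self-inverse permutation with $\desl(\sigma_1)=\desl(\sigma_1^{-1})=\{1,k-1,k\}$. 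By Corollary \ref{cor: sigmarest} we have $\desl(\sigma)\cup\desl(\sigma^{-1})\subset\{1,k-1,k\}$. I would then prove, in the spirit of Lemmas \ref{lem: sigma14231} and \ref{lem: sigma3412}, that the $\sigma\le\sigma_1$ satisfying these descent conditions are $\sigma_1$ itself and a short list of self-inverse, balanced permutations obtained by ``untangling'' the initial pair $\sigma_1(1),\sigma_1(2)=k-1,k$ and/or the terminal pair $\sigma_1(k-1),\sigma_1(k)=1,2$.

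Finally, for each $\omega\ne\pi$ on this list one checks that $\omega\times\pi$ is irreducible: when $\omega$ is self-dual and \LM\ (true for the balanced permutations just described, given our normalization of $\rho$) this follows from Lemma \ref{lem: selfdualcases} applied to $\Pi\hookrightarrow\omega\times\pi$ with $\Pi$, $\omega$, $\pi$ all self-dual, and the remaining cases, if any, are checked directly. Then $\omega\times\pi=\zele{\m(\omega)+\m(\pi)}=\Pi$ would force $\m(\pi)$ to be a sub-multisegment of $\m(\Pi)$; but $[k-2,2k-4]$ is a segment of $\m(\pi)$ and, as one reads off from $\m(\Pi)=\speh{[k,2k-3]}{k-1}+[1,2k-2]+\speh{[k-1,2k-2]}{k-1}$, not of $\m(\Pi)$ — a contradiction. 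Hence $\omega=\pi$, so $\Pi\hookrightarrow\pi\times\pi$, and since $\m(\pi)$ is not a sub-multisegment of $\m(\Pi)$ we have $\Pi\ne\zele{\m+\m}$; therefore $\pi\times\pi$ is reducible and $\pi$ is not \LM, as claimed.
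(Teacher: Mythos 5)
Your overall strategy (the \wtness\ method, the descent-set constraints via Corollary \ref{cor: sigmarest}, the self-duality trick with Lemma \ref{lem: selfdualcases}, and the final contradiction from $[k-2,2k-4]\notin\m(\Pi)$) is the paper's, but your choice of \spltng\ is different and it breaks the bookkeeping you rely on. With
\[
\pi_1=\zele{[k-1,2k-2]+[k,2k-3]+\speh{[k-2,2k-4]}{k-4}},\qquad \pi_2=\zele{[1,k]+[2,k-1]},
\]
one has $[k+1]\in\rnrset(\pi_1)$: the unique segment of $\m(\pi_1)$ ending at $[k+1]$ is $[3,k+1]$, and the segment $[1,k]$, which in $\pi$ satisfies $[1,k]\prec[3,k+1]$ and blocks the right derivative at $[k+1]$, now sits in $\pi_2$. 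Hence $\rnrset(\pi_1)\cup\rnrset(\pi_2)=\{[k-1],[k],[k+1],[2k-2]\}\supsetneq\rnrset(\pi)$, so condition (2) of the definition of a \spltng\ fails, contrary to your claim that it follows from Lemma \ref{lem: spclcaseextrho}. Concretely, Corollary \ref{cor: sigmarest} then only yields $\desl(\sigma^{-1})\subset\{1,k-2,k-1,k\}$, so the list of candidate $\omega$'s is strictly larger than the four self-inverse permutations you anticipate, and since $\desl(\sigma)$ and $\desl(\sigma^{-1})$ are now allowed to differ, the extra $\omega$'s need not be self-dual, so the Lemma \ref{lem: selfdualcases} shortcut does not cover them. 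The paper avoids exactly this by taking $\pi_1=\zele{[k-1,2k-2]}$ and $\pi_2=\zele{[k,2k-3]+\speh{[k-2,2k-4]}{k-4}+[1,k]+[2,k-1]}$, for which the union of the $\rnrset$'s really is $\{[k-1],[k],[2k-2]\}$.

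The second, and more serious, gap is that the heart of the proposition is never carried out: you must exhibit the intermediate representation $\pi_3=\soc(\pi_2\times\pi)$ explicitly and verify $\soc(\pi_1\times\pi_3)=\Pi$ (and hence $\lnrset(\Pi)=\lnrset(\pi)$), but you neither identify $\pi_3$ for your decomposition nor give any reason why the \wtness\ of your \spltng\ should be $\Pi$ rather than some other irreducible (possibly even $\zele{\m+\m}$, which would make the argument collapse). In the paper this is precisely the content of Lemma \ref{lem: weakerbasic3412}'s analogue, Lemma \ref{lem: weakspec2}, whose chain of socle computations (via Lemma \ref{lem: suppn>suppm}, Corollary \ref{cor: extractrho}, Lemmas \ref{lem: spclcaseextrho}, \ref{lem: soctimesrho} and \ref{lem: extractsegment}) produces $\pi_3=\zele{\speh{[k,2k-3]}{k-1}+[1,2k-2]+\speh{[k-2,2k-3]}{k-2}}$ for the paper's \spltng; declaring such a computation ``mechanical'' does not discharge it, especially since its outcome depends on the decomposition chosen and your decomposition already fails one of the \spltng\ axioms.
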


To that end we first show

\begin{lemma} \label{lem: weakspec2}
Let
\[
\pi_1=\zele{[k-1,2k-2]},\ \ \pi_2=\zele{[k,2k-3]+\speh{[k-2,2k-4]}{k-4}+[1,k]+[2,k-1]}.
\]
Then $(\pi_1,\pi_2)$ is a \spltng\ for $\pi$ with \wtness\ $\Pi$.
\end{lemma}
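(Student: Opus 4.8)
The plan is to check, one by one, the items in the definition of a \spltng\ with \wtness, essentially repeating the scheme of the proofs of Lemmas~\ref{lem: ws4231} and~\ref{lem: weakerbasic3412}. First, $\pi_1=\zele{[k-1,2k-2]}$ is a segment representation, hence a ladder, so $\pi_1\in\IrrS$. For $\pi_2$: the multisegment $\m(\pi_2)=\m-[k-1,2k-2]$ is regular, and one checks that it has no sub-multisegment of type $4231$ or $3412$ --- the forbidden sub-multisegments of $\m$ itself all contain the segment $[k-1,2k-2]$ (indeed $[k-1,2k-2]+[k,2k-3]+[1,k]+[2,k-1]$, of type $3412$, is one of them). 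Hence $\m(\pi_2)$ is balanced by Proposition~\ref{prop: nonsmth}, so $\pi_2\in\IrrS$ by the already proved implication \ref{item: mbal}$\implies$\ref{item: zmLM} of Theorem~\ref{thm: main}. Moreover every segment of $\m(\pi_2)$ has right endpoint in $[k-1,2k-3]\subset[k-1,2k-2]$, so $[k-1,2k-2]\not\prec\Delta'$ for all such $\Delta'$, and the sufficient condition for $\LI$ recalled in \S\ref{sec: classification} yields $\soc(\pi_1\times\pi_2)=\zele{[k-1,2k-2]+\m(\pi_2)}=\pi$; in particular $\pi\hookrightarrow\pi_1\times\pi_2$ and $\m(\pi)=\m(\pi_1)+\m(\pi_2)$. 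The identity $\rnrset(\pi)=\rnrset(\pi_1)\cup\rnrset(\pi_2)$ is then a direct computation from Lemma~\ref{lem: spclcaseextrho} and its left--right mirror.

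The substantive step is the determination of $\pi_3:=\soc(\pi_2\times\pi)$; the claim is that $\m(\pi_3)=\m(\Pi)-[k-1,2k-2]$, that is, $\m(\pi_3)=\speh{[k,2k-3]}{k-1}+[1,2k-2]+\speh{[k-2,2k-3]}{k-2}$. The main input is Lemma~\ref{lem: suppn>suppm}: since $\pi_2\in\IrrS$ and $\max\supp\pi_2=2k-3<2k-2=\max\supp\pi$, the computation of $\soc(\pi_2\times\pi)$ reduces to the two identities $\del(\pi_3)=\del(\pi)$ and $\pi_3^-=\soc(\pi_2\times\pi^-)$; iterating this along the chain of Zelevinsky derivatives of $\pi$ (whose supports stay strictly above that of $\pi_2$) extracts $[k-1,2k-2]$ one cuspidal at a time. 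The socles of the intermediate products are evaluated exactly as in the proof of Lemma~\ref{lem: weakerbasic3412}: one pulls off extreme segments with Lemma~\ref{lem: extractsegment}, pulls off cuspidal factors with Corollary~\ref{cor: extractrho} and Lemma~\ref{lem: spclcaseextrho}, splits ladder factors with Lemma~\ref{lem: chopladders}, and recombines using Lemma~\ref{lem: soctimesrho} and the combinatorial irreducibility criterion Theorem~\ref{thm: laddercomb}.

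Granting the value of $\pi_3$: again every segment of $\m(\pi_3)$ has right endpoint in $[k-1,2k-3]\subset[k-1,2k-2]$, so $[k-1,2k-2]\not\prec\Delta'$ for all of them, and the sufficient condition for $\LI$ gives
\[
\soc\bigl(\pi_1\times\soc(\pi_2\times\pi)\bigr)=\soc(\pi_1\times\pi_3)=\zele{[k-1,2k-2]+\m(\pi_3)}=\Pi ,
\]
the stated product decomposition of $\Pi$ being immediate since the three ladders displayed in its definition are pairwise unlinked or in the correct order. Finally, the identity $\lnrset(\Pi)=\lnrset(\pi)$ is a direct (if somewhat delicate) combinatorial check from $\m(\Pi)$ and $\m(\pi)$ via Lemma~\ref{lem: desclder}. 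This verifies all the axioms of a \spltng\ with \wtness\ $\Pi$.

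The only genuinely laborious point is the explicit determination of $\soc(\pi_2\times\pi)$: each individual move is an application of the socle machinery of \S\ref{sec: classification} and \cite{MR3573961}, but the induction down the Zelevinsky derivatives and the bookkeeping of the (non-ladder) intermediate products require a careful and somewhat long case analysis, parallel to but more intricate than the corresponding computations in the proofs of Lemmas~\ref{lem: ws4231} and~\ref{lem: weakerbasic3412}. The remaining points --- $\square$-irreducibility of $\pi_1$ and $\pi_2$, the identities $\m(\pi)=\m(\pi_1)+\m(\pi_2)$ and $\rnrset(\pi)=\rnrset(\pi_1)\cup\rnrset(\pi_2)$, and the final extraction $\soc(\pi_1\times\pi_3)=\Pi$ --- are routine.
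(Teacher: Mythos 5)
You set up the right frame, and the routine items match the paper: $\pi_1$ is a ladder, $\m(\pi_2)$ is balanced so $\pi_2\in\IrrS$, $\soc(\pi_1\times\pi_2)=\pi$, the support conditions, and the final step $\soc(\pi_1\times\pi_3)=\Pi$ once $\pi_3:=\soc(\pi_2\times\pi)$ is known; your claimed value of $\pi_3$ is also the correct one. But the heart of the lemma is precisely the proof that $\soc(\pi_2\times\pi)$ equals this $\pi_3$, and you do not carry it out -- you only assert that it goes ``exactly as in'' Lemma~\ref{lem: weakerbasic3412}. Moreover, the reduction you sketch for it is not valid as stated. Lemma~\ref{lem: suppn>suppm} requires the strict inequality $\max\supp\m<\max\supp\n$, and here it holds exactly once: after the first application one is left with $\soc(\pi_2\times\pi^-)$, where $\pi^-=\zele{[k,2k-3]+\speh{[k-1,2k-3]}{k-3}+[1,k]+[2,k-1]}$, so $\max\supp\pi^-=2k-3=\max\supp\pi_2$ and the lemma cannot be iterated. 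There is no ``chain of Zelevinsky derivatives of $\pi$ whose supports stay strictly above that of $\pi_2$'', and $[k-1,2k-2]$ is not extracted ``one cuspidal at a time'' this way: the $^-$ operation strips only the single cuspidal $2k-2$. In the paper, after that single use of Lemma~\ref{lem: suppn>suppm}, the identity $\soc(\pi_2\times\pi^-)=\pi_3^-$ is proved by a different mechanism: left-derivative extraction on the $\pi_2$ side via Corollary~\ref{cor: extractrho} at the cuspidals $[k-2],\dots,[2]$ and then at $[1]$ (with Lemma~\ref{lem: spclcaseextrho}), removal of the segment $[2,k-1]$ via Lemma~\ref{lem: extractsegment}, ladder manipulations via Lemma~\ref{lem: chopladders} and Theorem~\ref{thm: laddercomb}, and re-insertion via Lemma~\ref{lem: soctimesrho}, all tracked through explicit intermediate representations $\pi_4,\dots,\pi_7$. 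That explicit chain is the substance of the lemma and is missing from your proposal.

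A small additional slip: it is not true that every segment of $\m(\pi_3)$ has right endpoint at most $2k-3$ (the segment $[1,2k-2]$ does not); the conclusion $\soc(\pi_1\times\pi_3)=\Pi$ still holds because $[k-1,2k-2]\not\prec[1,2k-2]$ for reasons of left endpoints, but the justification you give needs this correction.
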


\begin{proof}
Clearly $\pi_1$ is \LM\ and the same is true for $\pi_2$ by the ``if'' part of Theorem \ref{thm: main}.
It is also clear that $\pi=\soc(\pi_1\times\pi_2)$, $\rnrset(\pi_1)\cup\rnrset(\pi_2)=\{[2k-2]\}\cup\{[k-1],[k]\}=\rnrset(\pi)$
and $\lnrset(\Pi)=\lnrset(\pi)=\{[1],[k-1],[k]\}$.
Let
\[
\pi_3=\zele{\speh{[k,2k-3]}{k-1}+[1,2k-2]+\speh{[k-2,2k-3]}{k-2}}.
\]
Clearly, $\Pi=\soc(\pi_1\times\pi_3)$.
It remains to show that
\[
\soc(\pi_2\times\pi)=\pi_3.
\]
Since $\del(\pi_3)=\del(\pi)=[2k-2]$ it is enough to show by Lemma \ref{lem: suppn>suppm} that $\soc(\pi_2\times\pi^-)=\pi_3^-$.
For brevity set $\Delta=[k,2k-3]$.
We have
\[
\pi^-=\zele{\Delta+\speh{^+\Delta}{k-3}+[1,k]+[2,k-1]}.
\]
Since $\lnrset(\pi^-)=\{[1],[k-1],[k]\}$ we have by Corollary \ref{cor: extractrho}
\[
\soc(\pi_2\times\pi^-)=\soc([k-2]\times\soc([k-3]\times\dots\times\soc([2]\times\soc(\pi_4\times\pi^-))\dots))
\]
where (using Lemma \ref{lem: spclcaseextrho})
\[
\pi_4=\lderiv_{[2]}(\dots\lderiv_{[k-3]}(\lderiv_{[k-2]}(\pi_2))\dots)=\zele{\speh{\Delta}{k-3}+[1,k]+[3,k-1]}.
\]
Since $[1]\times\pi_4$ is irreducible and $\lmlt_{[1]}(\pi_4)=\lmlt_{[1]}(\pi^-)=1$ we have by Corollary \ref{cor: extractrho}
\[
\soc(\pi_4\times\pi^-)=\soc([1]\times[1]\times\soc(\pi_5\times\pi_6))
\]
where
\[
\pi_5=\lderiv_{[1]}(\pi_4)=\zele{\speh{\Delta}{k-3}+[2,k]+[3,k-1]}
\]
and
\[
\pi_6=\lderiv_{[1]}(\pi^-)=\zele{\Delta+\speh{^+\Delta}{k-2}+[2,k-1]}.
\]
By Lemma \ref{lem: extractsegment} we have
\[
\soc(\pi_5\times\pi_6)=\zele{[2,k-1]+\m(\soc(\pi_5\times\pi_7))}
\]
where
\[
\pi_7=\zele{\Delta+\speh{^+\Delta}{k-2}}=\zele{\Delta}\times\zele{\speh{^+\Delta}{k-2}}.
\]
We have
\[
\soc(\pi_5\times\pi_7)=\zele{\speh{^+\Delta}{k-2}}\times\soc(\pi_5\times\zele{\Delta})
\]
and since $^-\Delta\cap\rnrset(\pi_5)=\emptyset$,
\[
\soc(\pi_5\times\zele{\Delta})=\soc(\soc(\pi_5\times[k])\times\zele{^-\Delta}).
\]
Also,
\[
\soc(\pi_5\times[k])=\zele{\speh{\Delta}{k-2}+[2,k]}.
\]
Thus,
\[
\soc(\pi_5\times\zele{\Delta})=\zele{\speh{\Delta}{k-2}+[2,2k-3]}
\]
and hence by Lemma \ref{lem: soctimesrho}
\[
\soc(\pi_5\times\pi_7)=\zele{\speh{^+\Delta}{k-2}+\speh{\Delta}{k-2}+[2,2k-3]},
\]
\[
\soc(\pi_5\times\pi_6)=\zele{\speh{^+\Delta}{k-2}+\speh{\Delta}{k-1}+[2,2k-3]},
\]
\[
\soc(\pi_4\times\pi^-)=\zele{[1,2k-3]+[1,k]+\speh{^+\Delta}{k-3}+\speh{\Delta}{k-1}},
\]
and finally (again by Lemma \ref{lem: soctimesrho})
\[
\soc(\pi_2\times\pi^-)=\zele{[1,2k-3]+\speh{[k-2,2k-3]}{k-2}+\speh{\Delta}{k-1}}=\pi_3^-
\]
as required.
\end{proof}

We can write $\pi=\m_{\sigma_1}(\tseq_{k,k-1})$ where
\[
\sigma_1(i)=\begin{cases}i+k-2&i=1,2,\\k+1-i&i=3,\dots,k-2,\\i-k+2&i=k-1,k.\end{cases}
\]
Note that $\sigma_1^{-1}=\sigma_1$ and $\desl(\sigma_1)=\{1,k-1,k\}$.
Proposition \ref{prop: lastcase3412} is concluded from Lemma \ref{lem: weakspec2} exactly as before using the following elementary lemma.

\begin{lemma}
Suppose that $\sigma\le\sigma_1$ and $\desl(\sigma)\cup\desl(\sigma^{-1})\subset\desl(\sigma_1)$.
Then $\sigma$ is one of the following four permutations:
\begin{gather*}
\sigma=\sigma_1,\\
\sigma(i)=\begin{cases}1&i=1,\\k+2-i&i=2,\dots,k,\end{cases}\\
\sigma(i)=\begin{cases}k-i&i=1,\dots,k-1,\\k&i=k,\end{cases}\\
\sigma(i)=\begin{cases}i&i=1,k,\\k+1-i&i=2,\dots,k-1.\end{cases}
\end{gather*}
In particular, $\sigma=\sigma^{-1}$ and if $\sigma\ne\sigma_1$ then $\sigma$ is smooth.
\end{lemma}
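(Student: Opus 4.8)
The proof is a short finite case analysis, entirely parallel to that of Lemma~\ref{lem: sigma14231} and Lemma~\ref{lem: sigma3412}. The first step is to unwind the ascent-set hypotheses. Since $\desl(\sigma)\subset\{1,k-1,k\}=\desl(\sigma_1)$ says exactly that $\sigma$ has no ascent at any position in $\{2,\dots,k-2\}$, and likewise for $\sigma^{-1}$, the hypothesis is equivalent to
\[
\sigma(2)>\sigma(3)>\dots>\sigma(k-1)\qquad\text{and}\qquad \sigma^{-1}(2)>\sigma^{-1}(3)>\dots>\sigma^{-1}(k-1).
\]
In particular $\sigma$ is completely determined by the pair $(a,b):=(\sigma(1),\sigma(k))$, since the remaining $k-2$ values are then forced to occupy positions $2,\dots,k-1$ in decreasing order.

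The second step is to constrain $a$ and $b$ via the chain for $\sigma^{-1}$. The value $a=\sigma(1)$ occupies position $\sigma^{-1}(a)=1$; if $a\in\{2,\dots,k-1\}$ then $1$ is a term of the strictly decreasing chain $\sigma^{-1}(2)>\dots>\sigma^{-1}(k-1)$ of $k-2\ge 2$ distinct positive integers, and being the minimum it must be the last term, so $a=k-1$. Hence $a\in\{1,k-1,k\}$. Symmetrically, $b=\sigma(k)$ occupies position $\sigma^{-1}(b)=k$, the maximal possible value; so if $b\in\{2,\dots,k-1\}$ it must be the first term of the chain, forcing $b=2$, whence $b\in\{1,2,k\}$. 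This leaves the at most seven pairs $(a,b)$ with $a\in\{1,k-1,k\}$, $b\in\{1,2,k\}$, $a\ne b$, each determining a unique candidate $\sigma$; one checks directly that all seven candidates do satisfy both descent conditions.

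The third step is to impose $\sigma\le\sigma_1$. Writing out the seven candidates one finds that $(1,2)$, $(1,k)$, $(k-1,2)$, $(k-1,k)$ give, respectively, the second, fourth, first ($=\sigma_1$) and third permutation in the statement, while $(k-1,1)$, $(k,1)$, $(k,2)$ give $\sigma_1 t_{k-1,k}$, the longest element of $S_k$, and $\sigma_1 t_{1,2}$. Since $\sigma_1(k-1)<\sigma_1(k)$ and $\sigma_1(1)<\sigma_1(2)$ we have $\ell(\sigma_1 t_{k-1,k})=\ell(\sigma_1 t_{1,2})=\ell(\sigma_1)+1$, so neither of these lies below $\sigma_1$, and the longest element is $\le\sigma_1$ only if it equals $\sigma_1$, which fails for $k>4$. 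Thus exactly the four claimed permutations survive; that each really is $\le\sigma_1$ is verified from the explicit one-line forms by the monotone-subset criterion for Bruhat order (comparing, for all $i$ and $m$, $\#\{j\le i:\sigma(j)\le m\}$ with the same count for $\sigma_1$).

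Finally, each of the four permutations is visibly an involution, read off from the explicit description (for $\sigma_1$ this is noted before the lemma). For the three with $\sigma\ne\sigma_1$, the one-line notation is a single decreasing run with possibly the value $1$ prepended and possibly the value $k$ appended; in any length-four subsequence of such a word the smallest entry can only appear first and the largest only last, so the word contains no occurrence of the patterns $3412$ or $4231$, and hence $\sigma$ is smooth by the Lakshmibai--Sandhya criterion~\cite{MR1051089}. The only mildly technical point is the handful of Bruhat comparisons in the third step, but these are of exactly the elementary kind carried out in the two preceding lemmas and present no genuine obstacle.
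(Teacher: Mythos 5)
Your argument is correct and essentially reproduces the paper's proof: the paper likewise observes that $\desl(\sigma)\subset\{1,k-1,k\}$ forces $\sigma(2)>\dots>\sigma(k-1)$, so $\sigma$ is determined by $(\sigma(1),\sigma(k))$, and pins down $\sigma(1)\in\{1,k-1\}$, $\sigma(k)\in\{2,k\}$ using the ascent condition on $\sigma^{-1}$ together with $\sigma\le\sigma_1$; your only reorganization is to postpone the Bruhat constraint and eliminate the three extra candidates $\sigma_1t_{k-1,k}$, $w_0$, $\sigma_1t_{1,2}$ at the end, which amounts to the same thing. One small slip in the smoothness verification (a part the paper leaves to inspection): the claim that in any length-four subsequence the smallest entry can only appear first and the largest only last is false for subsequences lying inside the decreasing run, where the largest is first and the smallest last; the correct statement is that an occurrence of $3412$ or $4231$ must contain an ascent, hence must involve the prepended value $1$ or the appended value $k$, which would then occupy the first, resp.\ last, slot of the occurrence---impossible for either pattern---so the conclusion, and the lemma, stand.
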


\begin{proof}
Let $i=\sigma(1)$ and $j=\sigma(k)$.
Then $i\le\sigma_1(1)=k-1$ and $\sigma^{-1}(i)=1<\sigma^{-1}(i+1)$, hence $i\in\{1,k-1\}$.
Similarly $j\in\{2,k\}$. Since $\sigma(2)>\sigma(3)>\dots>\sigma(k-1)$, $\sigma$ must be one of the four
possibilities above.
\end{proof}

\section{End of proof of Theorem \ref{thm: main}} \label{sec: comproof}

In this section we complete the proof of the remaining parts of Theorem \ref{thm: main}.
Namely, we show that if $\m$ is a regular unbalanced multisegment then $\zele{\m}$ is not \LM\ and $\m$ does not satisfy \GLS.
We will achieve this by reducing the statement to the cases considered in the previous section.
The first reduction uses Lemma \ref{lem: 1stred}. It motivates the following definition.

\begin{definition}
Let $\m=\Delta_1+\dots+\Delta_k$ be a regular unbalanced multisegment.
We say that $\m$ is \emph{minimal unbalanced} if $\m-\Delta$ is balanced for every detachable segment $\Delta$ of $\m$. (See Definition \ref{def: detachable}.)
\end{definition}

We can explicate the minimal unbalanced multisegments as follows.

\begin{lemma} \label{lem: MU}
Suppose that $\m=\Delta_1+\dots+\Delta_k$ is a regular multisegment with $e(\Delta_1)>\dots>e(\Delta_k)$.
Then $\m$ is minimal unbalanced if and only if precisely one of the following three conditions holds.
\begin{enumerate}
\item (case $4{*}23{*}1$)
\begin{enumerate}
\item $b(\Delta_k)<b(\Delta_i)<b(\Delta_1)$ for all $1<i<k$.
\item There do not exist $1<i,j<k-1$ such that $b(\Delta_{i+1})<b(\Delta_j)<b(\Delta_i)$.
\item There exists $i$ such that $\Delta_{i+1}\not\prec\Delta_i$.
\item Let $r=\max\{i:\Delta_{i+1}\not\prec\Delta_i\}$.
Then $\Delta_{r+1}\prec\Delta_1$ and $r<k-1$.
\end{enumerate}

\item (case $3{*}41{*}2$) There exists $1<r<k-1$ such that if $\tau$ is the transposition $r\leftrightarrow r+1$ then
\begin{enumerate}
\item $\Delta_{\tau(i+1)}\prec\Delta_{\tau(i)}$, $i=1,\dots,r-1,r+1,\dots,k-1$.
\item $b(\Delta_{\tau(2)})<b(\Delta_k)<b(\Delta_1)<b(\Delta_{\tau(k-1)})$.
\end{enumerate}

\item (case $34{*}12$) $k>4$, $\Delta_2\subset\Delta_1$, $\Delta_3\prec\Delta_1$, $\Delta_{i+1}\prec\Delta_i$, $i=3,\dots,k-3$,
$\Delta_k\prec\Delta_{k-2}$, $\Delta_k\subset\Delta_{k-1}$, $\Delta_k\prec\Delta_2$.
\end{enumerate}
\end{lemma}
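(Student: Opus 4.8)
The plan is to prove Lemma \ref{lem: MU} by a careful case analysis, working throughout with the fixed enumeration $e(\Delta_1)>\dots>e(\Delta_k)$. The key tool is Proposition \ref{prop: nonsmth}: $\m$ is balanced if and only if it has no sub-multisegment of type $4231$ or $3412$. Since $\m$ is minimal unbalanced, it is unbalanced, hence by Proposition \ref{prop: nonsmth} it contains a sub-multisegment $\n$ of type $4231$ or $3412$; by Lemma \ref{lem: submlt} (or rather its contrapositive) no proper sub-multisegment obtained by deleting a detachable segment can still be unbalanced. First I would establish the elementary observation that if $\n = \Delta_{n_1}+\dots+\Delta_{n_r}$ ($n_1<\dots<n_r$) is a sub-multisegment of type $4231$ or $3412$, then $\{n_1,\dots,n_r\}$ must essentially exhaust $\{1,\dots,k\}$ in a strong sense: every $\Delta_i$ not in $\n$ must be ``trapped'' between segments of $\n$ so that it is not detachable. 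Indeed, if some $\Delta_i$ were detachable in $\m$, then $\m-\Delta_i$ would still contain $\n$ (provided $i\notin\{n_1,\dots,n_r\}$) and hence be unbalanced, contradicting minimality; and if $i\in\{n_1,\dots,n_r\}$ one has to check, segment by segment in the patterns, that removing it either destroys the pattern-forming property or else leaves a smaller bad pattern — this requires going through the explicit pattern definitions of type $4231$ and $3412$.

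Next I would split according to which pattern $\n$ realizes and which ``extremal'' segment $\Delta_1$ is. There are essentially three configurations, matching the three cases in the statement. In case $4{*}23{*}1$, the bad pattern is of type $4231$ and $\Delta_1$ (the segment with largest endpoint) plays the role of the leading ``$4$''; the conditions (a)–(d) are then exactly the requirements that (i) all interior $b(\Delta_i)$ lie strictly between $b(\Delta_k)$ and $b(\Delta_1)$ (so no other segment can play the ``$4$'' or the trailing ``$1$''), (ii) the $b$-values of consecutive segments are monotone except possibly at one place, i.e. there is at most one ``descent-ascent'' that could seed a second $4231$ or a $3412$ (this is the non-existence of $1<i,j<k-1$ with $b(\Delta_{i+1})<b(\Delta_j)<b(\Delta_i)$), (iii) the chain $\Delta_{i+1}\prec\Delta_i$ fails somewhere, and (iv) at the largest such failure index $r$ the segment $\Delta_{r+1}$ still links into $\Delta_1$ and $r<k-1$, which is what makes $\Delta_1,\Delta_2,\Delta_{r+1},\Delta_{r+2},\dots,\Delta_k$ (suitably chosen) a $4231$-pattern while every proper deletion kills it. Translating the pattern definition of type $4231$ into these four conditions, and conversely checking that (a)–(d) force $\m$ to be exactly minimal unbalanced, is the bulk of the argument in this case. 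Case $3{*}41{*}2$ is handled analogously: here the pattern is of type $3412$ realized through a single transposition $\tau=(r\ r{+}1)$ in the $b$-ordering, and conditions (a),(b) say precisely that $\m_\tau$ is a ladder-like arrangement except at the two swapped positions, forcing the $3412$-pattern minimally. Case $34{*}12$ is the ``doubled'' $3412$ of Example \ref{exam: 34*12}, where $\Delta_2\subset\Delta_1$ and $\Delta_k\subset\Delta_{k-1}$ create the nested pairs at the two ends; I would verify directly from the pattern definition that this is minimal unbalanced and that it is not covered by the first two cases (it requires $k>4$).

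To prove that these three cases are mutually exclusive and exhaustive, I would argue as follows. Exhaustiveness: given minimal unbalanced $\m$, take any bad pattern $\n$; if $\n$ is of type $4231$, a short analysis of where $\Delta_1$ sits and of the forced monotonicity of the remaining $b$-values (using minimality to rule out ``room'' for extra structure) lands in case $4{*}23{*}1$ or — when there is a nested pair forcing $\Delta_1$ out of the ``$4$'' role — in case $34{*}12$; if $\n$ is of type $3412$, similarly one lands in case $3{*}41{*}2$ or again in case $34{*}12$. Exclusivity is checked by noting that case $34{*}12$ is characterized by the two nesting relations $\Delta_2\subset\Delta_1$, $\Delta_k\subset\Delta_{k-1}$, which are incompatible with condition (1a) of case $4{*}23{*}1$ (that would force $b(\Delta_2)=b(\Delta_1)$, violating regularity) and with (2b) of case $3{*}41{*}2$ in the same way.

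The main obstacle I anticipate is the bookkeeping in the exhaustiveness direction: one must show that minimality is \emph{strong enough} to pin down the global shape of $\m$, not merely the existence of some bad $5$- or $6$-segment sub-pattern. Concretely, the delicate point is that a minimal unbalanced $\m$ may \emph{a priori} contain many bad patterns, and one must choose the right one and then argue that all ``extra'' segments are forced to be non-detachable, which by Definition \ref{def: detachable} means controlling the links $\Delta_i\prec\Delta_j$ and $\lshft\Delta_i\prec\Delta_j$ among \emph{all} pairs, not just those in the chosen pattern. I expect this to reduce, after using regularity ($b(\Delta_i)$ all distinct, $e(\Delta_i)$ all distinct), to a finite and somewhat tedious verification organized around the position $r=\max\{i:\Delta_{i+1}\not\prec\Delta_i\}$ and its interaction with $b(\Delta_1)$, $b(\Delta_k)$; the interval-pattern-embedding statement of Theorem \ref{thm: BW} can be invoked to keep this bounded, since it tells us the bad pattern lives in a flattening of $\sigma$ of a very restricted type ($\tau_{r,s}^{(t)}$), which translates directly into the three listed combinatorial shapes.
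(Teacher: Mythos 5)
Your skeleton is the same as the paper's: reduce, via Proposition \ref{prop: nonsmth}, to the criterion that $\m$ is minimal unbalanced exactly when some sub-multisegment of type $4231$ or $3412$ exists \emph{and} every such sub-multisegment contains every detachable segment, and then analyze the type of a bad pattern together with the positions of the extremal $b$-values. Two small corrections before the main point: the fact that deleting a detachable segment not lying in a bad pattern contradicts minimality is just the definition of minimal unbalanced (Lemma \ref{lem: submlt} is not needed, and its ``contrapositive'' is not the relevant statement); and your discussion of what happens when one deletes a segment that \emph{does} lie in the chosen pattern is beside the point --- the criterion only requires that no bad pattern avoid a detachable index, quantified over all bad patterns, not an analysis of one fixed pattern under deletions.

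The genuine gap is that the converse (exhaustiveness) direction --- which is the substance of the lemma --- is left entirely as ``a finite and somewhat tedious verification,'' with none of the constructions that actually carry it. In the paper, every structural conclusion is forced by exhibiting, for each hypothetical violation, an explicit bad sub-multisegment omitting some detachable segment: e.g.\ in the $4231$ case one first shows the minimal/maximal $b$-value occurs at $\Delta_k$, $\Delta_1$, then rules out $b(\Delta_l)<b(\Delta_j)<b(\Delta_i)$ for $1<i<l<j$ by building a chain $j_0=j,\dots,j_m=k$ with $\Delta_{j_{s+1}}\prec\Delta_{j_s}$ and extracting a pattern of the form $\m_{\{i,l,j_0,\dots,j_s\}}$; in the $3412$ case one must prove that the two ``swapped'' indices are adjacent ($s=r+1$) and split off the $34{*}12$ family by a separate dichotomy. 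None of these appear in your plan, and Theorem \ref{thm: BW} cannot substitute for them: its multisegment content is already Proposition \ref{prop: nonsmth}, and it constrains one embedded pattern, whereas minimality is a statement about \emph{all} bad patterns versus \emph{all} detachable segments (Definition \ref{def: detachable}). You also omit the forward direction's check that in each listed family every bad pattern contains the detachable set. Finally, a concrete inaccuracy: in case $4{*}23{*}1$ your proposed witness $\Delta_1+\Delta_2+\Delta_{r+1}+\dots+\Delta_k$ need not be of type $4231$, since (a)--(d) do not force $b(\Delta_2)<b(\Delta_{k-1})$ (one can have $b(\Delta_2)>b(\Delta_{r+1})$ with $r+1=k-1$); the correct witness is $\Delta_1+\Delta_r+\Delta_{r+1}+\dots+\Delta_k$.
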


Here is an example of the case $4{*}23{*}1$ with $k=8$ and $r=5$:
\[
\xymatrix@=0.6em{
&&&&&&&&&&\circ\ar@{-}[r]&\circ\ar@{-}[r]&\circ\ar@{-}[r]&\circ\ar@{-}[r]&\circ\ar@{-}[r]&\circ\ar@{-}[r]&\circ\ar@{-}[r]&\circ\ar@{-}[r]&\circ\ar@{-}[r]&\circ&\\
&&&&&\circ\ar@{-}[r]&\circ\ar@{-}[r]&\circ\ar@{-}[r]&\circ\ar@{-}[r]&\circ\ar@{-}[r]&\circ\ar@{-}[r]&\circ\ar@{-}[r]&\circ\ar@{-}[r]&
\circ\ar@{-}[r]&\circ\ar@{-}[r]&\circ\ar@{-}[r]&\circ&\\
&&&\circ\ar@{-}[r]&\circ\ar@{-}[r]&\circ\ar@{-}[r]&\circ\ar@{-}[r]&\circ\ar@{-}[r]&\circ\ar@{-}[r]&\circ\ar@{-}[r]&\circ\ar@{-}[r]&
\circ\ar@{-}[r]&\circ\ar@{-}[r]&\circ\ar@{-}[r]&\circ&\\
&&\circ\ar@{-}[r]&\circ\ar@{-}[r]&\circ\ar@{-}[r]&\circ\ar@{-}[r]&\circ\ar@{-}[r]&\circ\ar@{-}[r]&\circ\ar@{-}[r]&\circ\ar@{-}[r]&
\circ\ar@{-}[r]&\circ\ar@{-}[r]&\circ&\\
&&&&&&\circ\ar@{-}[r]&\circ\ar@{-}[r]&\circ\ar@{-}[r]&\circ\ar@{-}[r]&\circ\ar@{-}[r]&\circ&\\
&&&&&&&&\circ\ar@{-}[r]&\circ\ar@{-}[r]&\circ&\\
&&&&&&&\circ\ar@{-}[r]&\circ&\\
\circ\ar@{-}[r]&\circ\ar@{-}[r]&\circ\ar@{-}[r]&\circ\ar@{-}[r]&\circ\ar@{-}[r]&\circ\ar@{-}[r]&\circ\ar@{-}[r]&\circ&}
\]
Next is an example of the case $3{*}41{*}2$ with $k=8$ and $r=4$:
\[
\xymatrix@=0.6em{
&&&&&&&&\circ\ar@{-}[r]&\circ\ar@{-}[r]&\circ\ar@{-}[r]&\circ\ar@{-}[r]&\circ\ar@{-}[r]&\circ\ar@{-}[r]&\circ\ar@{-}[r]&\circ\ar@{-}[r]&\circ\ar@{-}[r]&
\circ\ar@{-}[r]&\circ\ar@{-}[r]&\circ\ar@{-}[r]&\circ\\
&&&&&\circ\ar@{-}[r]&\circ\ar@{-}[r]&\circ\ar@{-}[r]&\circ\ar@{-}[r]&\circ\ar@{-}[r]&\circ\ar@{-}[r]&\circ\ar@{-}[r]&\circ\ar@{-}[r]&\circ\ar@{-}[r]&
\circ\ar@{-}[r]&\circ\ar@{-}[r]&\circ\ar@{-}[r]&\circ\\
&&\circ\ar@{-}[r]&\circ\ar@{-}[r]&\circ\ar@{-}[r]&\circ\ar@{-}[r]&\circ\ar@{-}[r]&\circ\ar@{-}[r]&\circ\ar@{-}[r]&\circ\ar@{-}[r]&\circ\ar@{-}[r]&
\circ\ar@{-}[r]&\circ\ar@{-}[r]&\circ\ar@{-}[r]&\circ\ar@{-}[r]&\circ\\
&&&&&&&&&&&\circ\ar@{-}[r]&\circ\ar@{-}[r]&\circ\ar@{-}[r]&\circ\\
\circ\ar@{-}[r]&\circ\ar@{-}[r]&\circ\ar@{-}[r]&\circ\ar@{-}[r]&\circ\ar@{-}[r]&\circ\ar@{-}[r]&\circ\ar@{-}[r]&\circ\ar@{-}[r]&\circ\ar@{-}[r]&\circ\ar@{-}[r]&
\circ\ar@{-}[r]&\circ\ar@{-}[r]&\circ\ar@{-}[r]&\circ\\
&&&&&&&&&&\circ\ar@{-}[r]&\circ\ar@{-}[r]&\circ\\
&&&&&&&&&\circ\ar@{-}[r]&\circ\\
&&&&&&&\circ\ar@{-}[r]&\circ\ar@{-}[r]&\circ}
\]
Finally, here is an example of the case $34{*}12$ with $k=7$:
\[
\xymatrix@=0.6em{
&&&&&&&\circ\ar@{-}[r]&\circ\ar@{-}[r]&\circ\ar@{-}[r]&\circ\ar@{-}[r]&\circ\ar@{-}[r]&\circ\ar@{-}[r]&\circ\ar@{-}[r]&\circ\ar@{-}[r]&\circ\ar@{-}[r]&\circ\ar@{-}[r]&\circ\\
&&&&&&&&\circ\ar@{-}[r]&\circ\ar@{-}[r]&\circ\ar@{-}[r]&\circ\ar@{-}[r]&\circ\ar@{-}[r]&\circ\ar@{-}[r]&\circ\\
&&&&&&\circ\ar@{-}[r]&\circ\ar@{-}[r]&\circ\ar@{-}[r]&\circ\ar@{-}[r]&\circ\ar@{-}[r]&\circ\ar@{-}[r]&\circ\\
&&&&&\circ\ar@{-}[r]&\circ\ar@{-}[r]&\circ\ar@{-}[r]&\circ\ar@{-}[r]&\circ\ar@{-}[r]&\circ\\
&&&\circ\ar@{-}[r]&\circ\ar@{-}[r]&\circ\ar@{-}[r]&\circ\ar@{-}[r]&\circ\ar@{-}[r]&\circ\ar@{-}[r]&\circ\\
\circ\ar@{-}[r]&\circ\ar@{-}[r]&\circ\ar@{-}[r]&\circ\ar@{-}[r]&\circ\ar@{-}[r]&\circ\ar@{-}[r]&\circ\ar@{-}[r]&\circ\ar@{-}[r]&\circ\\
&&\circ\ar@{-}[r]&\circ\ar@{-}[r]&\circ\ar@{-}[r]&\circ\ar@{-}[r]&\circ\ar@{-}[r]&\circ}
\]

\begin{proof}
Let $S=\{i:\Delta_i\text{ is detachable in }\m\}$.
Note that since $\m$ is regular, $i\in S$ if and only if $\Delta_i\not\prec\Delta_j$ for all $j<i$
or $\Delta_j\not\prec\Delta_i$ for all $j>i$. In particular, $\{1,k\}\subset S$.
Moreover, by Proposition \ref{prop: nonsmth}, $\m$ is minimal unbalanced if and only if there exists a submultisegment
$\m_A=\sum_{i\in A}\Delta_i$, $A\subset\{1,\dots,k\}$ of $\m$ which is either of type $4231$ or $3412$ and
\begin{equation} \label{eq: equmin}
\text{for any $A$ such that $\m_A$ is of type $4231$ or $3412$ we have $A\supset S$}
\end{equation}
(and in particular, $1,k\in A$).

Consider the families above.
In the $4{*}23{*}1$ case $S=\{1,k\}$ and $\m_{\{1,r,r+1,\dots,k\}}$ forms a sub-multisegment of type $4231$.
On other hand, for any sub-multisegment $\m_A$ of type $4231$ we have $\{1,k\}\subset A$ and there is no sub-multisegment of type $3412$.
In the $3{*}41{*}2$ case $S=\{1,r,r+1,k\}$ and $\m_{\{1,r,r+1,\dots,k\}}$ is a sub-multisegment of type $3412$.
Any sub-multisegment of type $4231$ necessarily contains $S$ and there is no sub-multisegment of type $4231$.
In the $34{*}12$ case, $S=\{1,2,k-1,k\}$ and $\m_S$ is the unique sub-multisegment of type $3412$;
there is no sub-multisegment of type $4231$.
Thus in all cases $\m$ is a minimal unbalanced multisegment.
It is also clear that the three cases are disjoint.

Conversely, suppose that $\m$ is minimal unbalanced and let $\m_A$ be a sub-multisegment of type $4231$ or $3412$.
By the minimality assumption $1,k\in A$.
Let $j_{\min}$ (resp., $j_{\max}$) be the index $j$ for which $b(\Delta_j)$ is minimal (resp., maximal). Then $j_{\min},j_{\max}\in S\subset A$.

Assume first that $\m_A$ is of type $4231$. In this case $j_{\min}=k$ and $j_{\max}=1$.
In other words $b(\Delta_k)<b(\Delta_i)<b(\Delta_1)$ for all $1<i<k$.

Note that for any $i<k$ there exists $j>i$ such that $\Delta_j\prec\Delta_i$. Indeed, if $i\in A$, we can choose
$j\in A$ as well, while if $i\notin A$ then $i\notin S$ and the claim is clear.
It follows that for any $i$ there exists a sequence $i_0<\dots<i_m$, $m\ge0$ such that $i_0=i$, $i_m=k$
and $\Delta_{i_{j+1}}\prec\Delta_{i_j}$, $j=0,\dots,m-1$.
Similarly, for any $i$ there exists a sequence $i_0<\dots<i_m$, $m\ge0$ such that $i_0=1$, $i_m=i$
and $\Delta_{i_{j+1}}\prec\Delta_{i_j}$, $j=0,\dots,m-1$.

Next we show that we cannot have $b(\Delta_l)<b(\Delta_j)<b(\Delta_i)$ for any $1<i<l<j$.
Assume otherwise, and consider a counterexample with $j$ minimal and with $b(\Delta_i)$ minimal (with respect to $j$).
We first claim that $\Delta_j\prec\Delta_i$.
Indeed, by the above, there exists $s<j$ such that $\Delta_j\prec\Delta_s$.
If $b(\Delta_s)>b(\Delta_i)$ then $\Delta_j\prec\Delta_i$ as required.
Otherwise, $s>1$ and by the minimality of $j$ we have $s<l$ for otherwise we could replace $j$ by $s$.
However, this contradicts the minimality of $b(\Delta_i)$, since we can now replace $i$ by $s$.

Let $j_0,\dots,j_m$ be a sequence such that $j_0=j$, $j_m=k$ and $\Delta_{j_{l+1}}\prec\Delta_{j_l}$ for $l=0,\dots,m-1$.
Let $s>0$ be the first index such that $b(\Delta_{j_s})<b(\Delta_l)$.
Then $\m_{\{i,l,j_0,\dots,j_s\}}$ is a sub-multisegment of type $4231$, which repudiates \eqref{eq: equmin} and the assumption that $i>1$.

By passing to the contragredient we also conclude that we cannot have $b(\Delta_j)<b(\Delta_i)<b(\Delta_l)$ for any $i<l<j<k$.

Clearly, there exists some $i<k$ such that $\Delta_{i+1}\not\prec\Delta_i$. Let $r$ be the maximal such index.
We have $b(\Delta_{r+1})>b(\Delta_r)$ for otherwise $r\in S\setminus A$.
In particular $r<k-1$. Also by what we showed before we have $b(\Delta_{r+1})>b(\Delta_i)$ for all $1<i<r$.
Thus, $\Delta_{r+1}\prec\Delta_1$ for otherwise $r+1\in S\setminus A$. This concludes the case where $\m_A$ is of type $4231$.

Assume now that $\m_A$ is of type $3412$. Write $A=\{1,r,s,t,\dots,\}$ where $1<r<s<t<\dots$. In this case $j_{\min}=s$ and $j_{\max}=r$.
In particular, $S\supset\{1,r,s,k\}$.
We first claim that $\Delta_{i+1}\prec\Delta_i$ for all $i<r-1$. Assume on the contrary that $i$ is a minimal counterexample.
Since $i+1\notin A\supset S$ there exists $j<i$ such that $\Delta_{i+1}\prec\Delta_j$. Let $j$ be maximal with respect to this property.
Then $\m_{\{j,j+1,i+1,s\}}$ is of type $4231$, a contradiction to \eqref{eq: equmin}.

Moreover, if $r>2$ then $b(\Delta_2)<b(\Delta_k)$ for otherwise $\m_{A\cup\{2\}\setminus\{1\}}$ is a sub-multisegment of type $3412$
in violation of \eqref{eq: equmin}.

Consider first the case where $r=2$, $\#A=4$ (i.e., $t=k$) and $b(\Delta_k)<b(\Delta_3)<b(\Delta_1)$.
We claim that in this case we have $s=k-1$. Otherwise, $k-1\notin A\supset S$ and therefore $\Delta_k\prec\Delta_{k-1}$.
Necessarily $b(\Delta_{k-1})<b(\Delta_2)$ (since $j_{\max}=2$) and hence $\Delta_{k-1}\prec\Delta_2$ (since $\Delta_k\prec\Delta_2$).
If $b(\Delta_{k-1})>b(\Delta_3)$ then $\m_{\{2,3,k-1,k\}}$ is of type $4231$.
Otherwise $\m_{\{1,2,s,k-1\}}$ is of type $3412$. Both cases rebut \eqref{eq: equmin}. Hence $s=k-1$ as claimed.
Suppose that $i$ is such that $b(\Delta_1)<b(\Delta_i)<b(\Delta_2)$. Then $\m_{\{1,i,k-1,k\}}$
would be a sub-multisegment of type $3412$ which is excluded by \eqref{eq: equmin}.
By a similar reasoning we conclude that $b(\Delta_k)<b(\Delta_i)<b(\Delta_1)$ for all $2<i<k-1$.
Finally, we have $b(\Delta_{i+1})<b(\Delta_i)$ (and hence $\Delta_{i+1}\prec\Delta_i$) for all $2<i<k-2$ otherwise
$\Delta_{\{1,i,i+1,k-1\}}$ would be a sub-multisegment of type $4231$.
Thus, we are in the case $34{*}12$ of the lemma.

From now on we assume that $\#A>4$ or $r>2$ or $b(\Delta_k)>b(\Delta_3)$ or $b(\Delta_1)<b(\Delta_3)$.

We show that $r=s-1$. Assume on the contrary that $r<s-1$ and let $i$ be the index in $A\setminus\{1\}$ such that $b(\Delta_i)<b(\Delta_{r+1})$
and $b(\Delta_i)$ is maximal with respect to this property. If $i=s$ then $\m_{A\cup\{r+1\}\setminus\{s\}}$ is of type $3412$.
If $i=t$ and either $\#A>4$ or $\#A=4$ and $b(\Delta_1)<b(\Delta_{r+1})$ then $\m_{A\cup\{r+1\}\setminus\{r\}}$ is of type $3412$.
If $i=t$, $\#A=4$, $b(\Delta_{r+1})<b(\Delta_1)$ and $r>2$ then $\m_{\{1,2,r+1,s\}}$ is of type $4213$.
If $i>t$ then $\m_{\{r,r+1,t,\dots,i\}}$ is of type $4231$. All these cases lead to a contradiction to \eqref{eq: equmin}.
Thus $s=r+1$.

Assume that $A$ is a maximal subset of $\{1,\dots,k\}$ with respect to inclusion such that $\m_A$ is of type $3412$.
It remains to show that $A\supset\{s,\dots,k\}$. Assume on the contrary that this is not the case and let $s<j<k$ be the maximal element not in $A$.

Suppose first that $b(\Delta_j)<b(\Delta_{j+1})$. If $b(\Delta_j)<b(\Delta_k)$ then $j\in S\setminus A$ and we get a contradiction.
Otherwise, let $l$ be the first index $>j$ such that $b(\Delta_l)<b(\Delta_j)$. Let $j^-=r$ if $j=r+2$ and $j^-=j-1$ otherwise.
Then $\m_{\{j^-,j,j+1,\dots,l\}}$ is a multisegment of type $4231$, controverting \eqref{eq: equmin}.
Thus $b(\Delta_j)>b(\Delta_{j+1})$.

Assume now that $b(\Delta_j)>b(\Delta_{j^-})$. Since $j\notin A\supset S$ we must have $\Delta_{j+1}\prec\Delta_j$
and there exists $l$ such that $\Delta_j\prec\Delta_l$. Necessarily $r\le l<j^-$ and $l\ne s$. Now
$\m_{\{l,j^-,j,j+1\}}$ is of type $4231$, gainsaying \eqref{eq: equmin}.
Thus $b(\Delta_j)<b(\Delta_{j^-})$.

Since $\Delta_{j+1}\prec\Delta_{j^-}$ and $b(\Delta_{j+1})<b(\Delta_j)<b(\Delta_{j^-})$
we infer that $\Delta_j\prec\Delta_{j^-}$ and $\Delta_{j+1}\prec\Delta_j$.
By the maximality of $A$ we necessarily have $j=k-1$ and $b(\Delta_{k-1})<b(\Delta_1)$.
But then, $\m_{A\cup\{k-1\}\setminus\{k\}}$ is of type $3412$, denying \eqref{eq: equmin}.

This concludes the proof of the lemma.
\end{proof}

Next, we will use Corollary \ref{cor: derisLM} (and its terminology) to motivate the following definition.

\begin{definition}
Let $\m$ be a minimal unbalanced (regular) multisegment. We say that $\m$ is absolutely minimal unbalanced if no descendant $\m'$ of $\m$ is regular unbalanced.
\end{definition}

Recall that two segments $\Delta'\prec\Delta$ are juxtaposed if $e(\Delta')=b(\lshft{\Delta})$.
We say that the segments $\Delta_1,\dots,\Delta_k$ are back-to-back juxtaposed if $e(\Delta_{i+1})=b(\lshft\Delta_i)$ for all $i=1,\dots,k-1$.

\begin{lemma} \label{lem: absmincls}
Let $\m=\Delta_1+\dots+\Delta_k$ be an absolutely minimal unbalanced multisegment with $e(\Delta_1)>\dots>e(\Delta_k)$.
Then exactly one of the following conditions holds.
\begin{enumerate}
\item (case $4{*}23{*}1$) There exists $1<r<k-1$ such that
\begin{enumerate}
\item $\Delta_{i+1}=\lshft{\Delta}_i$ for all $1<i<r$.
\item $\Delta_1,\Delta_{r+1},\Delta_{r+2},\dots,\Delta_k$ are back-to-back juxtaposed.
\item $b(\Delta_k)=b(\lshft{\Delta}_r)$, $e(\Delta_2)=e(\lshft{\Delta}_1)$, $b(\Delta_2)=b(\lshft{\Delta}_{k-1})$, $e(\Delta_{r+1})=e(\lshft{\Delta}_r)$.
\end{enumerate}
\item (case $3{*}41{*}2$)
There exists $1<r<k-1$ such that if $\tau$ is the transposition $r\leftrightarrow r+1$ then
\begin{enumerate}
\item $\Delta_{\tau(r+1)},\Delta_{\tau(r+2)},\dots,\Delta_{\tau(k)}$ are back-to-back juxtaposed.
\item $b(\Delta_{\tau(2)})=b(\lshft{\Delta}_k)$, $b(\Delta_k)=b(\lshft{\Delta}_1)$, $b(\Delta_1)=b(\lshft{\Delta}_{\tau(k-1)})$
and $b(\Delta_{\tau(i+1)})=b(\lshft{\Delta}_{\tau(i)})$, $i=2,\dots,r-1$.
\item $e(\Delta_{i+1})=e(\lshft{\Delta}_i)$, $i=1,\dots,r+1$.
\end{enumerate}
\item (case $34{*}12$) $\m$ is of the form \eqref{eq: mincase34*12}.
\end{enumerate}
\end{lemma}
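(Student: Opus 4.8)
The plan is to deduce Lemma~\ref{lem: absmincls} from Lemma~\ref{lem: MU}. By Lemma~\ref{lem: MU}, an absolutely minimal unbalanced $\m$ already belongs to one of the three families displayed there; I would prove the stronger statement by contradiction, showing that if, \emph{within} its family, $\m$ failed the sharper (``tight'') conditions of Lemma~\ref{lem: absmincls}, then $\m$ would possess a proper descendant that is again regular and unbalanced, contradicting absolute minimality.

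The first ingredient is a combinatorial description of the useful one-step derivatives of a regular multisegment. Let $\m=\Delta_1+\dots+\Delta_k$ be regular, and let $[a]=b(\Delta_j)$ for the index $j$ (unique, by regularity) with that left endpoint. Unwinding Lemma~\ref{lem: spclcaseextrho}(3) one checks that $[a]\in\lnrset(\m)$ and $\lderiv_{[a]}(\m)$ is regular if and only if no segment of $\m$ starts at $a+1$, and in that case $\lderiv_{[a]}(\m)=\m-\Delta_j+{}^-\Delta_j$, i.e.\ $\m$ with $\Delta_j$ shortened by one box on the left (and deleted if $\deg\Delta_j=1$). Dually, $[b]\in\rnrset(\m)$ with $\rderiv_{[b]}(\m)$ regular if and only if $b=e(\Delta_j)$ for some $j$ and no segment of $\m$ ends at $b-1$, in which case $\rderiv_{[b]}(\m)$ shortens $\Delta_j$ by one box on the right. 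Each such trim is thus a descendant of $\m$ (a one-term chain) and is regular; so if $\m$ is absolutely minimal unbalanced, then by Proposition~\ref{prop: nonsmth} \emph{every} admissible one-box left or right trim of $\m$ must destroy all sub-multisegments of type $4231$ and $3412$.

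I would then play this dichotomy against each case of Lemma~\ref{lem: MU}. In the $4{*}23{*}1$ case the relevant $4231$-pattern sits on $A=\{1,r,r+1,\dots,k\}$ with $r=\max\{i:\Delta_{i+1}\not\prec\Delta_i\}$; in the $3{*}41{*}2$ case it is a $3412$-pattern on $\{1,r,r+1,\dots,k\}$; in the $34{*}12$ case it is the $3412$-pattern on $\{1,2,k-1,k\}$. The tight conditions of Lemma~\ref{lem: absmincls} in each case express exactly that $\m$ ``has no slack'': the interior segments form a left-descending staircase ($\Delta_{i+1}=\lshft{\Delta}_i$), the remaining segments are back-to-back juxtaposed (the linkages forced by Lemma~\ref{lem: MU} are the minimal, adjacent ones), and the endpoint equalities in part~(c) hold. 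Conversely, if one of these fails, then some $\Delta_i$ of degree $\ge 2$ can be shortened by one box (possibly after first shortening a ``blocking'' segment, using that Lemma~\ref{lem: MU} pins down the interlacing of the $b(\Delta_i)$'s and of the $e(\Delta_i)$'s precisely) at an endpoint that is \emph{not} used by the pattern's defining $\prec$-relations or endpoint inequalities; that trim leaves all of those relations and inequalities intact, so the trimmed multisegment still contains a sub-multisegment of type $4231$ or $3412$, i.e.\ is unbalanced by Proposition~\ref{prop: nonsmth}. This produces the sought proper regular unbalanced descendant, so $\m$ must be tight, which is the assertion of Lemma~\ref{lem: absmincls} in its case.

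Finally, ``exactly one of the conditions holds'' is inherited from the disjointness already established in the proof of Lemma~\ref{lem: MU}, since the three families in Lemma~\ref{lem: absmincls} are sub-families of the three mutually exclusive families there. I expect the main obstacle to be the bookkeeping in the previous step: for each of the three cases one must treat separately the locations where a tightness condition can fail (near $\Delta_1$, near $\Delta_r$ and $\Delta_{r+1}$, near $\Delta_k$, or in the interior staircase $\Delta_2,\dots,\Delta_{r-1}$), check in each instance that the chosen trim really is an $\lderiv_\rho$ or $\rderiv_\rho$ of the kind described in the second paragraph, and verify via Proposition~\ref{prop: nonsmth} that unbalancedness persists; the $34{*}12$ case is the most rigid — the tight family there is the single family \eqref{eq: mincase34*12} — and carrying the clause $\Delta_k\prec\Delta_2$ through the trimming argument needs particular care.
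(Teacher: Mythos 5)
Your overall strategy is the paper's: argue case by case from Lemma \ref{lem: MU}, and punish any failure of the sharper conditions by exhibiting a proper regular unbalanced descendant, computing derivatives via Lemma \ref{lem: spclcaseextrho} and dualizing for the left endpoints. The endpoint equalities in parts (c) of cases $4{*}23{*}1$ and $3{*}41{*}2$, the back-to-back juxtapositions, and the whole $34{*}12$ case are indeed handled exactly by your one-box trims, so most of your plan matches the paper.

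However, there is a genuine gap at the hardest step, which your proposal does not see because it mischaracterizes what Lemma \ref{lem: MU} gives. You write that in case $4{*}23{*}1$ ``the linkages forced by Lemma \ref{lem: MU}'' reduce the problem to tightening them; but Lemma \ref{lem: MU} does \emph{not} force $\Delta_{i+1}\prec\Delta_i$ for $1<i<r$ -- it allows several indices $i<r$ with $\Delta_{i+1}\not\prec\Delta_i$, whereas the target condition (a), $\Delta_{i+1}=\lshft{\Delta}_i$ for all $1<i<r$, requires the set $\{i:\Delta_{i+1}\not\prec\Delta_i\}$ to be the singleton $\{r\}$. Ruling out a second gap $s<r$ cannot be done by a single admissible one-box trim ``leaving the witnessing pattern intact'': once the other tightness conditions are in force, the paper first shows (by one-box trims) that all $\Delta_i$, $r<i<k$, must be singletons $\{\rho_i\}$ and that $b(\Delta_{s+1})=\rho_k$, and at that point no single trim produces a regular unbalanced multisegment. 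The paper instead constructs a descendant of length $k-r$: setting $\m_k=\m$ and $\m_{i-1}=\lderiv_{\rho_i}(\m_i)$ for $i=k,\dots,r+1$, it verifies $\rho_i\in\lnrset(\m_i)$ at each stage and computes that $\m_r$ is $\m$ with $\Delta_{r+1}$ removed and $\Delta_{s+1}$ replaced by $[\rho_{r+1},e(\Delta_{s+1})]$; this $\m_r$ is regular and still contains a pattern of type $4231$, contradicting absolute minimality. Your mechanism (one trim, or a trim after removing one ``blocking'' segment) does not produce this descendant, and without it you cannot conclude condition (a); an analogous multi-step verification is also what justifies the net effect claimed, since the intermediate derivatives are not one-box trims of a fixed segment. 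To repair the proposal you would have to add precisely this chain construction (or an equivalent argument), which is the key idea missing from your outline.
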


An example of the case $4{*}23{*}1$ with $k=8$ and $r=4$ is:
\[
\xymatrix@=0.6em{
&&&&&&&&&&&&&&\circ\ar@{-}[r]&\circ\ar@{-}[r]&\circ\ar@{-}[r]&\circ&\\
&&&&&\circ\ar@{-}[r]&\circ\ar@{-}[r]&\circ\ar@{-}[r]&\circ\ar@{-}[r]&\circ\ar@{-}[r]&\circ\ar@{-}[r]&\circ\ar@{-}[r]&\circ\ar@{-}[r]&
\circ\ar@{-}[r]&\circ\ar@{-}[r]&\circ\ar@{-}[r]&\circ&\\
&&&&\circ\ar@{-}[r]&\circ\ar@{-}[r]&\circ\ar@{-}[r]&\circ\ar@{-}[r]&\circ\ar@{-}[r]&\circ\ar@{-}[r]&\circ\ar@{-}[r]&\circ\ar@{-}[r]&\circ\ar@{-}[r]&\circ\ar@{-}[r]&
\circ\ar@{-}[r]&\circ&\\
&&&\circ\ar@{-}[r]&\circ\ar@{-}[r]&\circ\ar@{-}[r]&\circ\ar@{-}[r]&\circ\ar@{-}[r]&\circ\ar@{-}[r]&\circ\ar@{-}[r]&\circ\ar@{-}[r]&\circ\ar@{-}[r]&
\circ\ar@{-}[r]&\circ\ar@{-}[r]&\circ&\\
&&&&&&&&&&\circ\ar@{-}[r]&\circ\ar@{-}[r]&\circ\ar@{-}[r]&\circ&\\
&&&&&&&&\circ\ar@{-}[r]&\circ&\\
&&&&&&\circ\ar@{-}[r]&\circ&\\
&&\circ\ar@{-}[r]&\circ\ar@{-}[r]&\circ\ar@{-}[r]&\circ&}
\]
An example of the case $3{*}41{*}2$ with $k=8$ and $r=4$ is:
\[
\xymatrix@=0.6em{
&&&&\circ\ar@{-}[r]&\circ\ar@{-}[r]&\circ\ar@{-}[r]&\circ\ar@{-}[r]&\circ\ar@{-}[r]&\circ\ar@{-}[r]&\circ\ar@{-}[r]&\circ\ar@{-}[r]&\circ\ar@{-}[r]&\circ\\
&&\circ\ar@{-}[r]&\circ\ar@{-}[r]&\circ\ar@{-}[r]&\circ\ar@{-}[r]&\circ\ar@{-}[r]&\circ\ar@{-}[r]&\circ\ar@{-}[r]&\circ\ar@{-}[r]&\circ\ar@{-}[r]&\circ\ar@{-}[r]&\circ\\
&\circ\ar@{-}[r]&\circ\ar@{-}[r]&\circ\ar@{-}[r]&\circ\ar@{-}[r]&\circ\ar@{-}[r]&\circ\ar@{-}[r]&\circ\ar@{-}[r]&\circ\ar@{-}[r]&\circ\ar@{-}[r]&\circ\ar@{-}[r]&\circ\\
&&&&&&&&&\circ\ar@{-}[r]&\circ\\
\circ\ar@{-}[r]&\circ\ar@{-}[r]&\circ\ar@{-}[r]&\circ\ar@{-}[r]&\circ\ar@{-}[r]&\circ\ar@{-}[r]&\circ\ar@{-}[r]&\circ\ar@{-}[r]&\circ\ar@{-}[r]&\circ\\
&&&&&&\circ\ar@{-}[r]&\circ\ar@{-}[r]&\circ\\
&&&&&\circ\\
&&&\circ\ar@{-}[r]&\circ}
\]

\begin{remark}
One can show that the converse to the lemma holds as well, but we will not need this fact.
\end{remark}

\begin{proof}
We separate into the cases provided by Lemma \ref{lem: MU}.

Consider first the $4{*}23{*}1$ case. As before, let $r>1$ be the maximal index such that $\Delta_{r+1}\not\prec\Delta_r$.
If $\Delta_{i+1}\prec\Delta_i$ for some $1<i<r$ then $b(\Delta_{i+1})=b(\lshft{\Delta}_i)$, for otherwise $\lderiv_{b(\Delta_{i+1})}(\m)$
is regular unbalanced, in contradiction to the assumption on $\m$. Similarly, $e(\Delta_{i+1})=e(\lshft{\Delta}_i)$. Thus,
$\Delta_{i+1}=\lshft{\Delta}_i$ for all $1<i<r$ such that $\Delta_{i+1}\prec\Delta_i$.

Next, we show that $\Delta_1,\Delta_{r+1},\Delta_{r+2},\dots,\Delta_k$ are back-to-back juxtaposed.
If $e(\Delta_{r+1})\ne b(\lshft{\Delta}_1)$ then $\lderiv_{b(\Delta_1)}(\m)$ is regular unbalanced and we get a contradiction.
Suppose on the contrary that $e(\Delta_{i+1})\ne b(\lshft{\Delta}_i)$ for some $r<i<k$ and let $i$ be the minimal such index.
Then $\Delta_i$ is not a singleton, i.e. $b(\Delta_i)\ne e(\Delta_i)$ and by the minimality of $i$, this amounts to
$b(\Delta_i)\ne b(\lshft{\Delta}_j)$ where $j=1$ if $i=r+1$ and $j=i-1$ otherwise. Thus, $b(\Delta_i)\in\lnrset(\m)$ and
$\lderiv_{b(\Delta_i)}(\m)$ is regular unbalanced, a contradiction.

Suppose now that the set $\{i:\Delta_{i+1}\not\prec\Delta_i\}$ is not a singleton and let $s>1$ be the
penultimate element of this set. Then $\Delta_1+\Delta_s+\Delta_r+\Delta_k$ is a sub-multisegment of type $4231$.
For every $r<i<k$ $\Delta_i$ is a singleton for otherwise $b(\Delta_i)\in\lnrset(\m)$ and $\lderiv_{b(\Delta_i)}$ is regular
unbalanced, a contradiction. Write $\Delta_i=\{\rho_i\}$, $i=r+1,\dots,k-1$ and set $\rho_k=\lshft{\rho}_{k-1}$.
We have $b(\Delta_{s+1})=\rho_k$ since otherwise $b(\Delta_{s+1})\in\lnrset(\m)$ and $\lderiv_{b(\Delta_{s+1})}$ is regular unbalanced.
Let $\m_k=\m$ and define inductively $\m_{i-1}=\lderiv_{\rho_i}(\m_i)$, $i=k,\dots,r+1$.
It easily follows from Lemma \ref{lem: spclcaseextrho} that $\rho_i\in\lnrset(\m_i)$, $i=r+1,\dots,k$
and $\m_r$ is obtained from $\m$ by removing $\Delta_{r+1}$ and replacing $\Delta_{s+1}$ by $[\rho_{r+1},e(\Delta_{s+1})]$.
Thus, $\m_r$ is regular unbalanced and we get a contradiction. In conclusion $\Delta_{i+1}\prec\Delta_i$ for all $i<r$.

Finally, $e(\Delta_{j+1})=e(\lshft{\Delta}_j)$, $j=1,r$ for otherwise $\rderiv_{e(\Delta_j)}(\m)$ is regular unbalanced.
Similarly, $b(\Delta_k)=b(\lshft{\Delta}_r)$ and $b(\Delta_2)=b(\lshft{\Delta}_{k-1})$

In the $3{*}41{*}2$ case, we have $e(\Delta_{i+1})=e(\lshft{\Delta}_i)$ for all $i\le r+1$, otherwise $\rderiv_{e(\Delta_i)}(\m)$ is unbalanced.
By passing to the contragredient we get the analogous relations for the $b(\Delta_i)$'s.
Also, $e(\Delta_{r+2})=b(\lshft{\Delta}_r)$ for otherwise $\lderiv_{b(\Delta_r)}(\m)$ would be a regular unbalanced multisegment.
Suppose on the contrary that $e(\Delta_{i+1})\ne b(\lshft{\Delta}_i)$ for some $i>r+1$ and let $i$ be the minimal such index.
Then $\Delta_i$ is not a singleton, that is $b(\Delta_i)\ne e(\Delta_i)$. Equivalently, by the minimality of $i$,
$b(\Delta_i)\ne b(\lshft{\Delta}_{\tau(i-1)})$. Hence, $\lderiv_{b(\Delta_i)}(\m)$ is regular unbalanced
in contradiction with the absolute minimality assumption. In conclusion, $\Delta_{\tau(i)}$, $i=r+1,\dots,k$ are back-to-back juxtaposed.

In the $34{*}12$ case, we have $e(\Delta_{i+1})=e(\lshft{\Delta}_i)$ for all $1\le i\le k-1$ since otherwise $\rderiv_{e(\Delta_i)}(\m)$ is regular unbalanced.
Analogously, by passing to the contragredient, we have $b(\Delta_{i+1})=b(\lshft{\Delta_i})$ for all $2<i<k-2$,
$b(\rshft{\Delta}_{k-1})=b(\Delta_k)=b(\lshft{\Delta}_{k-2})$ and $b(\rshft{\Delta}_3)=b(\Delta_1)=b(\lshft{\Delta}_2)$.
Finally, $e(\Delta_k)=b(\lshft{\Delta}_2)$, for otherwise $\lderiv_{b(\Delta_2)}(\m)$ is regular unbalanced.
Thus, $\m$ is of the form \eqref{eq: mincase34*12}.
\end{proof}

\begin{corollary} \label{cor: absmin}
Suppose that $\m$ is absolutely minimal unbalanced multisegment. Then at least one of the following conditions holds.
\begin{enumerate}
\item $\m$ is of the form \eqref{eq: basic4231}, \eqref{eq: basic3412} or \eqref{eq: mincase34*12}.
\item $\m$ is contractible.
\item $\m^\#$ is regular unbalanced but not minimal unbalanced.
\end{enumerate}
\end{corollary}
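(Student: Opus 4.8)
The goal is to show that an absolutely minimal unbalanced multisegment $\m$ either is one of the three explicit ``basic'' shapes \eqref{eq: basic4231}, \eqref{eq: basic3412}, \eqref{eq: mincase34*12}, or is contractible, or has $\m^\#$ regular unbalanced but not minimal unbalanced. The natural approach is to invoke Lemma \ref{lem: absmincls}, which already pins $\m$ down to one of three families ($4{*}23{*}1$, $3{*}41{*}2$, $34{*}12$), and then analyze each family. The $34{*}12$ case is immediate: Lemma \ref{lem: absmincls}(3) says $\m$ is exactly of the form \eqref{eq: mincase34*12}, so the first alternative of the corollary holds verbatim. So the real work is in the first two families.

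First I would handle the $4{*}23{*}1$ case. By Lemma \ref{lem: absmincls}(1) there is an index $1<r<k-1$ with $\Delta_{i+1}=\lshft\Delta_i$ for $1<i<r$, the segments $\Delta_1,\Delta_{r+1},\dots,\Delta_k$ back-to-back juxtaposed, and the four boundary relations on $b$ and $e$. I would introduce $\rho_0=b(\Delta_1)$ (or the appropriate base point) and express all the $a_i$ and $b_i$ in terms of $r$, $k$, and the lengths of $\Delta_1$ and of the ``juxtaposed tail''. The claim to prove is then: if $\m$ is \emph{not} of the shape \eqref{eq: basic4231}, then either $\m$ is contractible or $\m^\#$ is regular unbalanced but not minimal unbalanced. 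The plan is to show that \eqref{eq: basic4231} is precisely the case where all the ``slack'' parameters take their minimal values, i.e.\ where the lengths of $\Delta_1$, $\Delta_{r+1},\dots,\Delta_{k-1}$, and the gap between the two ``rows'' are as small as Lemma \ref{lem: MU}(1) forces. If any of these parameters exceeds its minimum, I would exhibit a $\rho$ with respect to which $\m$ is $\rho$-contractible: concretely, if some segment $\Delta_i$ (for $r<i<k$) is not a singleton, or if the overlaps $\Delta_{r+1}\cap\Delta_1$, etc., are ``too deep,'' there is a cuspidal $\rho$ lying in the interior of every segment it meets and whose $\rho$-shift also does, so Definition \ref{def: contractible} applies. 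The residual case is when all the lengths are minimal except possibly the length $\deg\Delta_1$ (the long ``top'' segment) or the tail length; here $\m$ is not contractible, and I would compute $\m^\#$ via the Mœglin--Waldspurger recipe of \S\ref{sec: zeleinvo} and check directly that $\m^\#$ is regular (since the involution of a regular multisegment of this special shape stays regular here), unbalanced (by Proposition \ref{prop: nonsmth}, exhibiting a type-$4231$ or type-$3412$ sub-multisegment in $\m^\#$), but admits a \emph{proper} detachable segment whose removal leaves something still unbalanced (so $\m^\#$ is not minimal unbalanced). The $3{*}41{*}2$ case is handled by the same strategy, using Lemma \ref{lem: absmincls}(2): identify the minimal-parameter configuration with \eqref{eq: basic3412}, and for every other configuration produce either a contraction or the statement about $\m^\#$.

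The main obstacle I anticipate is the bookkeeping in the ``$\m^\#$ not minimal unbalanced'' alternative: one must compute the Zelevinsky dual of a fairly intricate multisegment and then verify two things about it simultaneously (it is still unbalanced, but a detachable segment can be removed leaving it unbalanced). This requires care with the recursive definition of $\m\mapsto\m^\#$ and with the detachability criterion of Definition \ref{def: detachable}; there may be several sub-cases depending on which boundary relations are equalities. A secondary subtlety is making sure the three alternatives are genuinely exhaustive --- i.e.\ that whenever $\m$ is not basic and not contractible, the dual really does fall into the third bucket; this is where I would lean on the explicit parametrizations from Lemma \ref{lem: absmincls} to rule out gaps. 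I expect no single step to be deep, but the case analysis is the price of the explicitness, and the cleanest writeup will isolate the "minimal-parameter = basic" characterization as a small combinatorial lemma before splitting into the contractible vs.\ dual dichotomy.
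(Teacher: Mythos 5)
Your plan follows the paper's proof essentially verbatim: Lemma \ref{lem: absmincls} reduces to the three families, the $34{*}12$ case is immediate, non-singleton ``middle'' segments force contractibility, and in the residual non-contractible case one computes $\m^\#$ by the M\oe glin--Waldspurger recipe and checks it is regular unbalanced but not minimal unbalanced. The only points you would discover in carrying it out are that in the $3{*}41{*}2$ family non-contractibility already forces the basic form \eqref{eq: basic3412} (so no dual is needed there), and that in the $4{*}23{*}1$ family the boundary relations of Lemma \ref{lem: absmincls} leave no free ``overlap'' or top-segment-length parameters: the non-contractible case is exactly the one-parameter family \eqref{eq: minproof}, whose dual the paper computes explicitly, with $r=2$ giving \eqref{eq: basic4231}.
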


\begin{proof}
This is trivial if $\m$ is of type $34{*}12$.

If $\m$ is of type $3{*}41{*}2$ then $\m$ is contractible unless every $\Delta_i$, $r+1<i<k$ is a singleton,
in which case $\m$ is of the form \eqref{eq: basic3412}.

Finally, if $\m$ is of type $4{*}23{*}1$ then once again, $\m$ is contractible unless every $\Delta_i$, $r<i<k$ is a singleton, in which case
\begin{equation} \label{eq: minproof}
\m=[k,k+r-1]+\speh{[r,k+r-2]}{r-1}+\speh{[k-1]}{k-r-1}+[1,r].
\end{equation}
It is then easy to see from the recipe of $\m^\#$ (\S\ref{sec: zeleinvo}) that
\[
\m^\#=\speh{[k,k+r-1]}{r-1}+[r,k]+\speh{[k-r+1,k-1]}{k-2r+1}+\speh{[r-1,2r-2]}{r-1}
\]
if $k\ge 2r$ and
\[
\m^\#=\speh{[k,k+r-1]}{k-r-1}+\speh{[r,2r]}{2r-k}+[r+1,k]+\speh{[k-r,k-1]}{k-r}
\]
otherwise. Thus, $\m^\#$ is regular unbalanced but upon removing its last segment we remain with an unbalanced multisegment unless $r=2$
in which case $\m$ is of the form \eqref{eq: basic4231}.
The result follows.
\end{proof}

Here is a drawing for $\m$ as in \eqref{eq: minproof} for $k=8$ and $r=4$:
\[
\xymatrix@=0.6em{
&&&&&&&\circ\ar@{-}[r]&\circ\ar@{-}[r]&\circ\ar@{-}[r]&\circ&\\
&&&\circ\ar@{-}[r]&\circ\ar@{-}[r]&\circ\ar@{-}[r]&\circ\ar@{-}[r]&\circ\ar@{-}[r]&\circ\ar@{-}[r]&\circ&\\
&&\circ\ar@{-}[r]&\circ\ar@{-}[r]&\circ\ar@{-}[r]&\circ\ar@{-}[r]&\circ\ar@{-}[r]&\circ\ar@{-}[r]&\circ&\\
&\circ\ar@{-}[r]&\circ\ar@{-}[r]&\circ\ar@{-}[r]&\circ\ar@{-}[r]&\circ\ar@{-}[r]&\circ\ar@{-}[r]&\circ&\\
&&&&&&\circ&\\
&&&&&\circ&\\
&&&&\circ&\\
\circ\ar@{-}[r]&\circ\ar@{-}[r]&\circ\ar@{-}[r]&\circ&}
\]
$\m^\#$ is given by
\[
\xymatrix@=0.6em{
&&&&&&&\circ\ar@{-}[r]&\circ\ar@{-}[r]&\circ\ar@{-}[r]&\circ&\\
&&&&&&\circ\ar@{-}[r]&\circ\ar@{-}[r]&\circ\ar@{-}[r]&\circ&\\
&&&&&\circ\ar@{-}[r]&\circ\ar@{-}[r]&\circ\ar@{-}[r]&\circ&\\
&&&\circ\ar@{-}[r]&\circ\ar@{-}[r]&\circ\ar@{-}[r]&\circ\ar@{-}[r]&\circ&\\
&&&&\circ\ar@{-}[r]&\circ\ar@{-}[r]&\circ&\\
&&\circ\ar@{-}[r]&\circ\ar@{-}[r]&\circ\ar@{-}[r]&\circ&\\
&\circ\ar@{-}[r]&\circ\ar@{-}[r]&\circ\ar@{-}[r]&\circ&\\
\circ\ar@{-}[r]&\circ\ar@{-}[r]&\circ\ar@{-}[r]&\circ&}
\]

Finally, we can prove the remaining parts of Theorem \ref{thm: main}, namely that if $\m$ is an unbalanced multisegment then $\zele{\m}$ is not \LM\ and $\m$ is not \GLS.

Indeed, assume on the contrary that $\m$ is an unbalanced multisegment with minimal $\deg\m$ such that $\pi=\zele{\m}$ is \LM.
In view of Lemma \ref{lem: 1stred}, Corollary \ref{cor: derisLM}, Proposition \ref{prop: contract} and Remark \ref{rem: contrblncd},
the minimality of $\deg\m$ implies that $\m$ is absolutely minimal unbalanced and not contractible.
Moreover, by Proposition \ref{prop: ZIred} if $\m^\#$ is regular then it is necessarily minimal unbalanced.
By Corollary \ref{cor: absmin} $\pi$ is therefore one of the representations considered in Propositions
\ref{prop: basictype4231}, \ref{prop: 3412basic} and \ref{prop: lastcase3412} of the last section.
These propositions yield the required contradiction.

By a similar reasoning, using Lemma \ref{lem: lderivm} and Remarks \ref{rem: contractGLS}, \ref{rem: GLS^t} and \ref{rem: GLScomb}
no unbalanced multisegment can be \GLS.

\begin{remark}
The use of Propositions \ref{prop: contract} (whose proof depends on the material of the next section) is not indispensable.
The ideas of \S\ref{sec: basicases} work slightly more generally for all multisegments listed in Lemma \ref{lem: absmincls}.
However, the additional reduction alleviates the bookkeeping. Similarly, the use of the Zelevinsky involution is not essential.
\end{remark}

\begin{remark} \label{rem: lwrbndN}
In principle, one can explicate the argument of this section to weaken the lower bound on $N$ stated in the converse part of Corollary \ref{cor: mainq}.
However, we will not pursue this matter here.
\end{remark}

\section{An identity of Kazhdan--Lusztig polynomials} \label{sec: KLid}
Using the Arakawa--Suzuki equivalence \cite{MR1652134} we may reinterpret Theorem \ref{thm: main} in terms of the
Kazhdan--Lusztig polynomials for the symmetric group $S_{2k}$ (Corollary \ref{cor: KLidnt}).

\subsection{The  Arakawa--Suzuki functor}
We sketch the setup, referring the reader to \cite{MR2320806} and \cite{MR3495794} and the references therein for more details.
Consider the category $\CO$ with respect to $\mathfrak{gl}_k$.
For any $\mu\in\Z^k$ let $M(\mu)$ (resp., $L(\mu)$) be the Verma (resp., simple) module with highest weight $\mu$.
Suppose that $\mu=(\mu_1,\dots,\mu_k)\in\Z^k$ with $\mu_1\ge\dots\ge\mu_k$ and let $S_\mu$ be the stabilizer of $\mu$ in $S_k$,
a parabolic subgroup of $S_k$.
As is well-known, for any $\mu'\in\Z^k$ and $w\in S_k$, $L(\mu')$ occurs in $\JH(M(w\mu))$ if and only if $\mu'$
is of the form $w'\mu$ with $w'\ge w$ in the Bruhat order of $S_k$.
In the latter case, if we take $w'$ to be of maximal length in its coset $w'S_\mu$ then the multiplicity of $L(\mu)$ in $\JH(M(w\mu))$
is $P_{w,w'}(1)$ where $P_{w,w'}(q)$ denotes the Kazhdan--Lusztig polynomial with respect to $S_k$ (\cite{MR560412, MR610137, MR632980, MR1237825, MR1802178}).
In other words, denoting by $\grimg{\cdot}$ the image of an object of a locally finite abelian category in its Grothendieck group, we have
\begin{equation} \label{eq: KLconj}
\grimg{M(w\mu)}=\sum_{w'\in S_k:w'\text{ of maximal length in }w'S_\mu}P_{w,w'}(1)\grimg{L(w'\mu)}.
\end{equation}
Equivalently, for any $w\in S_k$ of maximal length in $wS_\mu$ we have
\[
\grimg{L(w\mu)}=\sum_{w'\in S_k}\sgn ww'\ P_{w'w_0,ww_0}(1)\grimg{M(w'\mu)}
\]
where $w_0$ is the longest element of $S_k$. Recall that $P_{w,w'}\equiv0$ unless $w\le w'$.

Fix $\lambda=(\lambda_1,\dots,\lambda_k)\in\Z^k$ with $\lambda_1\ge\dots\ge\lambda_k$.
For any integer $l\ge0$ let $F_{\lambda,l}$ be the exact functor of Arakawa--Suzuki from category $\CO$ to the category of
finite-dimensional representations of the graded affine Hecke algebra $\GH_l$ of $\GL_l$ (\cite{MR1652134}).
Let $\chi$ be an integral infinitesimal character of the center $\mathfrak{z}$ of the universal enveloping algebra of $\mathfrak{gl}_k$
and let $\CO_\chi$ be the full subcategory of $\CO$ on which $\mathfrak{z}$ acts by $\chi$.
There is at most one $l$ for which $F_{\lambda,l}$ is non-zero on $\CO_{\chi}$.
For this $l$ (if exists) $Z(\GH_l)$ acts by an integral character $\chi'$ (depending on $\chi$) on the image of $F_{\lambda,l}$.
Denote by $J_\chi$ the maximal ideal of $Z(\GH_l)$ corresponding to $\chi'$ (i.e., which annihilates the image of $F_{\lambda,l}$).
Let now $\IH_l$ be the Iwahori--Hecke algebra of $\GL_l(F)$.
The category of finite-dimensional representations of $\IH_l$ is equivalent to the category $\Reps_I(\GL_l(F))$ of finite-length representations of $\GL_l(F)$
which are generated by the vectors which are fixed under the Iwahori subgroup.
To $\chi'$ corresponds a character $\tilde\chi$ of $Z(\IH_l)$.
Let $J_{\tilde\chi}$ be the corresponding maximal ideal of $Z(\IH_l)$.
Then the algebras $\GH_l/J_\chi\GH_l$ and $\IH_l/J_{\tilde\chi}\IH_l$ are isomorphic \cite{MR991016}.
Thus, we may view $F_{\lambda,l}$ as an exact functor from $\CO_{\chi}$ to the full subcategory of $\Reps_I(\GL_l(F))$ on which $Z(\IH_l)$ acts by $\tilde\chi$.
We will omit $\chi$ from the notation since it will be generally clear from the context.

Taking $D=F$ and $\rho$ to be the trivial one-dimensional character of $\GL_1(F)=F^*$, the functor $F_{\lambda,l}$ satisfies
\[
F_{\lambda,l}(M(\mu))=\std(\m_{\mu,\lambda})\text{ and }
F_{\lambda,l}(L(\mu))=\begin{cases}\zele{\m_{\mu,\lambda}}&
\text{if $\mu_i\le\mu_{i+1}$ whenever $\lambda_i=\lambda_{i+1}$,}\\0&\text{otherwise,}\end{cases}
\]
where $\m_{\mu,\lambda}=\sum_{i=1}^k[\mu_i,\lambda_i]$ and $l=\sum_{i=1}^k(\lambda_i-\mu_i+1)$.
(Recall the notational convention \eqref{eq: convention}.)

Let $\mu=(\mu_1,\dots,\mu_k)\in\Z^k$ with $\mu_1\ge\dots\ge\mu_k$.
Note that $\m_{w\mu,\lambda}$ depends only on the double coset $S_\lambda wS_\mu$ of $w$.
Moreover, $\m_{w\mu,\lambda}\obt\m_{w'\mu,\lambda}$
(i.e., $\zele{\m_{w'\mu,\lambda}}$ occurs in $\JH(\std(\m_{w\mu,\lambda}))$) if and only if
$S_\lambda wS_\mu\le S_\lambda w'S_\mu$ with respect to the partial order on the double coset set $S_\lambda\bs S_k/S_\mu$ induced by
the Bruhat order of $S_k$ (by passing to the representatives of minimal length).

Applying $F_{\lambda,l}$ to \eqref{eq: KLconj} we get that for any $\mu=(\mu_1,\dots,\mu_k)\in\Z^k$ with $\mu_1\ge\dots\ge\mu_k$ and $w\in S_k$ we have
\[
\grimg{\std(\m_{w\mu,\lambda})}=\sum_{w'\in S_k:w'\text{ is of maximal length in }S_\lambda w'S_\mu}P_{w,w'}(1)\grimg{\zele{\m_{w'\mu,\lambda}}}.
\]
Equivalently, for $w$ of maximal length in $S_\lambda wS_\mu$ we have
\begin{equation} \label{eq: invgen}
\grimg{\zele{\m_{w\mu,\lambda}}}=\sum_{w'\in S_k}\sgn w'w\ P_{w'w_0,ww_0}(1)\ \grimg{\std(\m_{w'\mu,\lambda})}.
\end{equation}
In other words, the coefficients of $\zele{\m_{w\mu,\lambda}}$ in the basis
$\std(\m_{w'\mu,\lambda})$, $w'\in S_\lambda\bs S_k/S_\mu$ (ignoring $0$ terms) are
\[
\sgn w\sum_{x\in S_\lambda w'S_\mu}\sgn x\ P_{xw_0,ww_0}(1),\ \ w'\in S_\lambda\bs S_k/S_\mu.
\]
By Theorem \ref{thm: indepcspline}, this relation holds for arbitrary $\rho\in\Cusp$ and $D$.

Going back to the setup of \S\ref{sec: smth pairs} and \S\ref{sec: combi} we infer:
\begin{corollary} \label{cor: detsmth}
Let $\tseq=\bitmplt$ be a \biseq, $\sigma_0=\sigma_0(\tseq)$ and let $\sigma\in S_k$ be such that $\sigma(i)<\sigma(i+1)$ whenever $a_i=a_{i+1}$
and $\sigma^{-1}(i)<\sigma^{-1}(i+1)$ whenever $b_i=b_{i+1}$. Then
\begin{equation} \label{eq: geninv}
\grimg{\zele{\m_{\sigma}(\tseq)}}=\sum_{\sigma'\in [\sigma_0,\sigma]}\sgn \sigma'\sigma\ P_{\sigma',\sigma}(1)\ \grimg{\std(\m_{\sigma'}(\tseq))}.
\end{equation}
In particular, if $(\sigma,\sigma_0)$ is a smooth pair then
\[
\grimg{\zele{\m_{\sigma}(\tseq)}}=\sum_{\sigma'\in [\sigma_0,\sigma]}\sgn \sigma'\sigma\ \grimg{\std(\m_{\sigma'}(\tseq))}.
\]
The converse also holds in the case where $\tseq$ is regular.
\end{corollary}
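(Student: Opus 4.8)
The plan is to read off \eqref{eq: geninv} from the ``master formula'' \eqref{eq: invgen} by a change of variables, and then to delete the terms that vanish. First I would fix the dictionary between bi-sequences and the data $(\mu,\lambda,w)$ entering \eqref{eq: invgen}. Given $\tseq=\bitmplt$, set $\lambda=(b_1,\dots,b_k)$, which is dominant since $b_1\ge\dots\ge b_k$, and $\mu=(a_k,a_{k-1},\dots,a_1)$, which is dominant since $a_1\le\dots\le a_k$; thus $S_\lambda$ is the stabilizer of $(b_1,\dots,b_k)$ in $S_k$ and $\tilde S_\mu:=w_0S_\mu w_0$ is the stabilizer of $(a_1,\dots,a_k)$, where $w_0$ denotes the longest element. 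A direct computation gives $\m_{w\mu,\lambda}=\sum_i[a_{k+1-w^{-1}(i)},b_i]$ for any $w\in S_k$; taking $w=\sigma w_0$ (equivalently $w^{-1}(i)=k+1-\sigma^{-1}(i)$ for all $i$) yields $\m_{w\mu,\lambda}=\m_\sigma(\tseq)$ and $ww_0=\sigma$. We may assume $\sigma\ge\sigma_0$: otherwise $\m_\sigma(\tseq)\notin\Mult$ by the defining property of $\sigma_0(\tseq)$ recalled in \S\ref{sec: biseq}, so $\grimg{\zele{\m_\sigma(\tseq)}}=0$ by \eqref{eq: convention}, while $[\sigma_0,\sigma]=\emptyset$, so both sides of \eqref{eq: geninv} vanish.

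The one combinatorial fact that needs proof is that the two hypotheses on $\sigma$ are exactly what makes $w=\sigma w_0$ the \emph{maximal-length} element of the double coset $S_\lambda wS_\mu$, which is the standing assumption in \eqref{eq: invgen}. The first hypothesis, $\sigma(i)<\sigma(i+1)$ whenever $a_i=a_{i+1}$, says that $\sigma$ has minimal length in the right coset $\sigma\tilde S_\mu$; the second, $\sigma^{-1}(i)<\sigma^{-1}(i+1)$ whenever $b_i=b_{i+1}$, says that $\sigma$ has minimal length in the left coset $S_\lambda\sigma$. By the standard characterization (via simple reflections) of the minimal-length element of a double coset, the two together say that $\sigma$ is the minimal-length element of $S_\lambda\sigma\tilde S_\mu$. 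Since $\ell(xw_0)=\ell(w_0)-\ell(x)$ for all $x$, and since right multiplication by $w_0$ is a bijection $S_\lambda\sigma\tilde S_\mu\to S_\lambda(\sigma w_0)S_\mu$ (using $w_0\tilde S_\mu w_0=S_\mu$), it follows that $\sigma w_0$ is the maximal-length element of $S_\lambda(\sigma w_0)S_\mu$, as needed.

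Granting this, \eqref{eq: invgen} with this $w$ reads $\grimg{\zele{\m_\sigma(\tseq)}}=\sum_{w'\in S_k}\sgn(w'w)\,P_{w'w_0,\sigma}(1)\,\grimg{\std(\m_{w'\mu,\lambda})}$. Substituting $w'=\sigma'w_0$, which ranges over $S_k$, we have $w'w_0=\sigma'$, $\m_{w'\mu,\lambda}=\m_{\sigma'}(\tseq)$, and $\sgn(w'w)=\sgn(\sigma'w_0\sigma w_0)=\sgn(\sigma'\sigma)$, so
\[
\grimg{\zele{\m_\sigma(\tseq)}}=\sum_{\sigma'\in S_k}\sgn(\sigma'\sigma)\,P_{\sigma',\sigma}(1)\,\grimg{\std(\m_{\sigma'}(\tseq))}.
\]
Here $P_{\sigma',\sigma}\equiv0$ unless $\sigma'\le\sigma$, and $\grimg{\std(\m_{\sigma'}(\tseq))}=0$ unless $\sigma'\ge\sigma_0$ (again by \eqref{eq: convention} and the characterization of when $\m_{\sigma'}(\tseq)\in\Mult$); hence only $\sigma'\in[\sigma_0,\sigma]$ contribute, which is \eqref{eq: geninv}. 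If moreover $(\sigma,\sigma_0)$ is a smooth pair, then by \eqref{eq: redrelsmth} so is $(\sigma,\sigma')$ for every $\sigma'\in[\sigma_0,\sigma]$, whence $P_{\sigma',\sigma}\equiv1$ and \eqref{eq: geninv} collapses to the displayed simplified formula.

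For the converse in the regular case, recall from \S\ref{sec: biseq} that when $\tseq$ is regular the multisegments $\m_{\sigma'}(\tseq)$, $\sigma'\in[\sigma_0,\sigma]$, are pairwise distinct, so the elements $\grimg{\std(\m_{\sigma'}(\tseq))}$ are distinct members of the $\Z$-basis $\{\grimg{\std(\m)}:\m\in\Mult\}$ of $\Gr$ and hence linearly independent. Subtracting the assumed simplified formula from \eqref{eq: geninv} and comparing the coefficient of $\grimg{\std(\m_{\sigma_0}(\tseq))}$ forces $P_{\sigma_0,\sigma}(1)=1$; since $P_{\sigma_0,\sigma}$ has non-negative integer coefficients and constant term $1$, this gives $P_{\sigma_0,\sigma}=1$, i.e.\ $(\sigma,\sigma_0)$ is a smooth pair. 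I expect the main obstacle to be the second paragraph, namely identifying the hypotheses on $\sigma$ with the statement that $\sigma w_0$ is a maximal-length double-coset representative; everything else is bookkeeping with the dictionary and the vanishing conventions.
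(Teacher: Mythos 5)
Your proposal is correct and follows essentially the same route as the paper, whose proof is exactly the substitution $\lambda=(b_1,\dots,b_k)$, $\mu=(a_k,\dots,a_1)$, $w=\sigma w_0$ into \eqref{eq: invgen} together with $\m_\sigma(\tseq)=\m_{\sigma w_0\mu,\lambda}$. The extra steps you supply (the identification of the hypotheses on $\sigma$ with $w=\sigma w_0$ being the maximal-length element of $S_\lambda wS_\mu$, the vanishing of terms outside $[\sigma_0,\sigma]$, and the converse via linear independence of the $\std(\m_{\sigma'}(\tseq))$ in the regular case) are the routine verifications the paper leaves implicit, and they are carried out correctly.
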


This follows from \eqref{eq: invgen} by taking $\lambda=(b_1,\dots,b_k)$, $\mu=(a_k,\dots,a_1)$, $w=\sigma w_0$ and noting that
$\m_{\sigma}(\tseq)=\m_{\sigma w_0\mu,\lambda}$.

\begin{remark}
The corollary suggests that if $(\sigma,\sigma_0)$ is a smooth pair then the semisimplification of the Jacquet module of $\zele{\m_{\sigma}(\tseq)}$
is relatively easy to compute. (See \cite{MR2996769} for a special case.)
In view of Theorem \ref{thm: main} this is in accordance with Conjectures 1 and 2 of \cite{LecChev}.
\end{remark}

Next, we go back to Proposition \ref{prop: contract}.
Let $f_b,f_e:\Z\rightarrow\Z$ be the strictly monotone maps
\[
f_b(n)=\begin{cases}n+1&\text{if }n>0,\\n&\text{otherwise,}\end{cases}\ \ \
f_e(n)=\begin{cases}n+1&\text{if }n\ge0,\\n&\text{otherwise.}\end{cases}
\]
Note that $f_b(x)=f_e(y)+1$ if and only if $x=y+1$ so that
\begin{equation} \label{eq: aleb+1}
\text{$x\le y+1$ if and only if $f_b(x)\le f_e(y)+1$.}
\end{equation}
Define an injective endofunction $f$ on the set of segments by $f([x,y])=[f_b(x),f_e(y)]$.
We extend $f$ by additivity to an injective endomorphism (also denoted by $f$) of $\Mult_\rho$.
On the other hand, $f$ also defines an injective (non-graded) ring endomorphism $\phi$ of $\Gr_\rho$
determined by $\phi(\grimg{\zele{[a,b]}})=\grimg{\zele{f([a,b])}}$ for any segment $[a,b]$.
Thus $\phi(\grimg{\std(\m)})=\grimg{\std(f(\m))}$ for any $\m\in\Mult_\rho$.

\begin{corollary} \label{cor: fzele}
Under the notation above we have $\phi(\grimg{\zele{\m}})=\grimg{\zele{f(\m)}}$ for any $\m\in\Mult_\rho$, i.e., $\phi$ preserves irreducibles.
In particular, if $\m_1,\m_2\in\Mult_\rho$ then $\zele{\m_1}\times\zele{\m_2}$ is irreducible if and only if $\zele{f(\m_1)}\times\zele{f(\m_2)}$ is irreducible.
\end{corollary}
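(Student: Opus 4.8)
\textbf{Proof proposal for Corollary \ref{cor: fzele}.}

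The plan is to deduce this from the smooth-pair/Kazhdan--Lusztig machinery of Corollary \ref{cor: detsmth}, more precisely from formula \eqref{eq: geninv}, combined with the observation \eqref{eq: aleb+1} that the maps $f_b,f_e$ are compatible with the basic inequality $x\le y+1$ that governs whether a multisegment is ``legal'' (i.e., whether $\std$ of it is nonzero) and with the construction of $\sigma_0(\tseq)$. First I would fix $\m\in\Mult_\rho$ and write $\m=\m_\sigma(\tseq)$ for a suitable \biseq\ $\tseq=\bitmplt$ and $\sigma\in S_k$ with $\sigma(i)<\sigma(i+1)$ whenever $a_i=a_{i+1}$ and $\sigma^{-1}(i)<\sigma^{-1}(i+1)$ whenever $b_i=b_{i+1}$ (possible by the discussion in \S\ref{sec: biseq}). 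Let $\tseq'=\begin{pmatrix}f_b(a_1)&\dots&f_b(a_k)\\f_e(b_1)&\dots&f_e(b_k)\end{pmatrix}$. Since $f_b$ and $f_e$ are strictly monotone, $\tseq'$ is again a \biseq, the sequences $f_b(a_i)$, $f_e(b_i)$ have the same pattern of equalities and strict inequalities as the $a_i$, $b_i$, and by \eqref{eq: aleb+1} we have $a_{\tau^{-1}(i)}\le b_i+1$ if and only if $f_b(a_{\tau^{-1}(i)})\le f_e(b_i)+1$ for every $\tau\in S_k$. By the characterization \eqref{def: propsigma0} of $\sigma_0$ this gives $\sigma_0(\tseq')=\sigma_0(\tseq)=:\sigma_0$, and likewise $\m_\tau(\tseq)$ is legal if and only if $\m_\tau(\tseq')$ is, for all $\tau$. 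Moreover $f(\m_\tau(\tseq))=\m_\tau(\tseq')$ by construction, and the side conditions on $\sigma$ relative to $\tseq$ are identical to those relative to $\tseq'$.

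Next I would apply Corollary \ref{cor: detsmth}, formula \eqref{eq: geninv}, to both $\tseq$ and $\tseq'$ with the \emph{same} permutation $\sigma$ and the \emph{same} Bruhat interval $[\sigma_0,\sigma]$:
\[
\grimg{\zele{\m_\sigma(\tseq)}}=\sum_{\sigma'\in[\sigma_0,\sigma]}\sgn\sigma'\sigma\ P_{\sigma',\sigma}(1)\ \grimg{\std(\m_{\sigma'}(\tseq))},
\]
\[
\grimg{\zele{\m_\sigma(\tseq')}}=\sum_{\sigma'\in[\sigma_0,\sigma]}\sgn\sigma'\sigma\ P_{\sigma',\sigma}(1)\ \grimg{\std(\m_{\sigma'}(\tseq'))}.
\]
The Kazhdan--Lusztig polynomials $P_{\sigma',\sigma}$ depend only on $\sigma_0,\sigma$ and $S_k$, hence are the same in both displays. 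Now apply the ring endomorphism $\phi$ to the first display: since $\phi(\grimg{\std(\m_{\sigma'}(\tseq))})=\grimg{\std(f(\m_{\sigma'}(\tseq)))}=\grimg{\std(\m_{\sigma'}(\tseq'))}$ (and zero terms are sent to zero consistently, because legality is preserved), the right-hand side becomes exactly the right-hand side of the second display. Therefore $\phi(\grimg{\zele{\m}})=\phi(\grimg{\zele{\m_\sigma(\tseq)}})=\grimg{\zele{\m_\sigma(\tseq')}}=\grimg{\zele{f(\m)}}$, which is the first assertion. The ``in particular'' clause follows at once: $\zele{\m_1}\times\zele{\m_2}$ is irreducible if and only if $\grimg{\zele{\m_1}}\cdot\grimg{\zele{\m_2}}=\grimg{\zele{\m_1+\m_2}}$ in $\Gr_\rho$; applying the injective ring homomorphism $\phi$ and using $f(\m_1+\m_2)=f(\m_1)+f(\m_2)$ together with the first assertion, this holds if and only if $\grimg{\zele{f(\m_1)}}\cdot\grimg{\zele{f(\m_2)}}=\grimg{\zele{f(\m_1)+f(\m_2)}}$, i.e., if and only if $\zele{f(\m_1)}\times\zele{f(\m_2)}$ is irreducible.

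The main obstacle I anticipate is purely bookkeeping: one must be careful that the representation $\m=\m_\sigma(\tseq)$ is written with a \biseq\ and permutation satisfying exactly the hypotheses of Corollary \ref{cor: detsmth} (the ascent conditions tied to repeated $a_i$'s and $b_i$'s), and that these conditions transfer verbatim under $f$ — which they do because $f_b,f_e$ are \emph{strictly} increasing, so they neither create nor destroy equalities among the endpoints. A secondary point to check carefully is the handling of the convention \eqref{eq: convention}: some terms $\std(\m_{\sigma'}(\tseq))$ in \eqref{eq: geninv} may vanish, and one needs \eqref{eq: aleb+1} to guarantee that the same indices $\sigma'$ produce vanishing terms for $\tseq'$, so that $\phi$ applied termwise genuinely matches the two expansions. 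Once these compatibilities are in place, the argument is a one-line transport of \eqref{eq: geninv} across $\phi$.
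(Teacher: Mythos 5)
Your proposal is correct and follows essentially the same route as the paper: apply $f$ to the bi-sequence, use \eqref{eq: aleb+1} to see that $\sigma_0(f(\tseq))=\sigma_0(\tseq)$ and $f(\m_\sigma(\tseq))=\m_\sigma(f(\tseq))$, transport the expansion \eqref{eq: geninv} term by term across $\phi$, and deduce the irreducibility statement from injectivity of $\phi$. The extra care you take with the ascent conditions, the vanishing terms under convention \eqref{eq: convention}, and the ``in particular'' clause only makes explicit what the paper leaves implicit.
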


\begin{proof}
Given $\tseq=\bitmplt$ let $f(\tseq)=\begin{pmatrix}f_b(a_1)&\dots&f_b(a_k)\\f_e(b_1)&\dots&f_e(b_k)\end{pmatrix}$.
By \eqref{eq: aleb+1} $\sigma_0(f(\tseq))=\sigma_0(\tseq)$ and $f(\m_\sigma(\tseq))=\m_\sigma(f(\tseq))$ for any $\sigma\in S_k$.
Therefore, $\phi(\grimg{\std(\m_\sigma(\tseq))})=\grimg{\std(\m_\sigma(f(\tseq)))}$ for any $\sigma\in S_k$.
It follows from \eqref{eq: geninv} that $\phi(\grimg{\zele{\m_\sigma(\tseq)}})=\grimg{\zele{\m_\sigma(f(\tseq))}}=\grimg{\zele{f(\m_\sigma(\tseq))}}$.
The corollary follows.
\end{proof}

\begin{remark}
It would be interesting to have a more functorial proof of Corollary \ref{cor: fzele}.
\end{remark}



\begin{remark} \label{rem: maini}
Now that Proposition \ref{prop: contract} is proved,
Corollary \ref{cor: detsmth}, together with the statement \eqref{eq: redrelsmth} provides the last missing part of
Theorem \ref{thm: maini} of the introduction. (See the discussion following Theorem \ref{thm: main}.)
\end{remark}

\subsection{}
Let $H$ be the parabolic subgroup of $S_{2k}$
\[
H=\{w\in S_{2k}:\{w(2i-1),w(2i)\}=\{2i-1,2i\}\ \forall i\}\simeq S_2^k.
\]
As is well known, the map
\[
w\mapsto M_w=\#(w(\{2i-1,2i\})\cap\{2j-1,2j\})_{i,j=1,\dots,k}
\]
is bi-$H$-invariant and defines a bijection between $H\bs S_{2k}/H$
and the set $\Matk$ of $k\times k$ matrices with entries in $\{0,1,2\}$ such that the sum of the entries in each row and each column is $2$.
In turn, by the Birkhoff--von-Neumann theorem, these are precisely the matrices that can be written as the sum of two
$k\times k$ permutation matrices. (We will write $\permat{\sigma}$ for the permutation matrix corresponding to $\sigma\in S_k$.)
The corresponding permutations in $S_k$ (say $\sigma_1$, $\sigma_2$) are not uniquely determined (even up to interchanging).
However, the conjugacy class of $\sigma_2^{-1}\sigma_1$ in $S_k$, which will be denote by $[w]$, is uniquely determined by the double coset.
More precisely, we have the following.

\begin{lemma} \label{lem: RHS}
For any $M\in\Matk$ let $C_1,\dots,C_s$ be the equivalence classes for the equivalence relation
generated by $i\sim j$ if there exists $l$ such that $M_{i,l}=M_{j,l}=1$.
Then the set
\[
\{(\sigma_1,\sigma_2)\in S_k\times S_k:\permat{\sigma_1}+\permat{\sigma_2}=M\}
\]
has cardinality $2^r$ where $r=\{i:\abs{C_i}>1\}$.
Moreover, for any $(\sigma_1,\sigma_2)\in S_k\times S_k$ such that $\permat{\sigma_1}+\permat{\sigma_2}=M$,
the cycles of $\sigma_2^{-1}\sigma_1$ are the $C_i$'s. In particular,
the conjugacy class of $\sigma_2^{-1}\sigma_1$ in $S_k$ is determined by $M$ only.
\end{lemma}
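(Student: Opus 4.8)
The plan is to decode the combinatorics of the double coset $HwH$ directly from the matrix $M = M_w \in \Matk$. Fix $M\in\Matk$ and choose $(\sigma_1,\sigma_2)$ with $\permat{\sigma_1}+\permat{\sigma_2}=M$; such a pair exists by Birkhoff--von Neumann. First I would record the dichotomy on entries: $M_{i,l}=2$ forces $\sigma_1(i)=\sigma_2(i)=l$, while $M_{i,l}=1$ means exactly one of $\sigma_1,\sigma_2$ sends $i$ to $l$. Thus in each row $i$ either there is a unique column with a $2$ (and $i$ is ``frozen''), or there are exactly two columns $l,l'$ with a $1$, and $\{\sigma_1(i),\sigma_2(i)\}=\{l,l'\}$. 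The same holds columnwise by symmetry. So the off-$2$ part of $M$ is a disjoint union of cycles in the bipartite ``row/column'' sense, and the equivalence classes $C_1,\dots,C_s$ of the relation $i\sim j$ (there exists $l$ with $M_{i,l}=M_{j,l}=1$) are exactly the sets of row-indices appearing in each such bipartite cycle; singleton classes correspond to frozen rows.

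Next I would establish the counting statement. Given $M$, the pair $(\sigma_1,\sigma_2)$ is determined once we decide, within each non-trivial bipartite cycle, which of the two ``$1$''-columns in a chosen starting row goes to $\sigma_1$; the rest of the cycle is then forced by alternating around it, and the two choices are genuinely distinct and both valid (they produce the two ``orientations'' of the cycle). Frozen rows contribute no choice. Hence the number of admissible pairs $(\sigma_1,\sigma_2)$ is $2^r$ with $r=\#\{i:\abs{C_i}>1\}$, which is the first claim.

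For the last assertion I would compute $\sigma_2^{-1}\sigma_1$. On a frozen row $i$ we have $\sigma_1(i)=\sigma_2(i)$, so $\sigma_2^{-1}\sigma_1$ fixes $i$, matching the singleton class $\{i\}$. On a non-trivial bipartite cycle with row-indices $C_t=\{i_1,\dots,i_m\}$ (listed in cyclic order as one traverses the cycle), alternation shows $\sigma_1(i_p)$ and $\sigma_2(i_{p+1})$ are the same column, so $\sigma_2^{-1}\sigma_1$ cyclically permutes $i_1\to i_2\to\cdots\to i_m\to i_1$ (up to the direction fixed by the orientation, which only conjugates within the cycle). Therefore the cycles of $\sigma_2^{-1}\sigma_1$ are precisely the $C_t$'s, so its cycle type — hence its conjugacy class $[w]$ in $S_k$ — depends only on $M$, and since $M$ depends only on the double coset $HwH$, so does $[w]$. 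This also immediately gives that $HwH$ is bi-$H$-invariantly encoded by $M$ via the stated bijection with $\Matk$, which I would note in passing (it is the standard parabolic double-coset description).

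The main obstacle is purely bookkeeping: making the ``alternate around the bipartite cycle'' argument precise and orientation-independent, i.e.\ checking that the two choices per cycle give distinct pairs and that no admissible pair is missed, and that conjugating a cycle by a rotation does not change the conjugacy class. None of this is deep, but one must be careful that ``cycle'' here refers to the bipartite row-column alternating cycle of the non-$2$ part of $M$, not to a cycle of a single permutation; once that is set up cleanly the three statements fall out in parallel.
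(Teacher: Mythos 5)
Your proof is correct and is essentially the paper's argument: the paper contracts the column vertices and works with the $2$-regular graph on row indices whose adjacency matrix is $MM^t-2I_k$ (loops for $2$-entries), identifying presentations $M=\permat{\sigma_1}+\permat{\sigma_2}$ with orientations of its cycles, whereas you keep the bipartite row--column incidence graph of the $1$-entries and its alternating cycles, which is the same combinatorics in different clothing. In both versions the key points coincide: the classes $C_i$ are the (row) vertex sets of the cycles, each non-trivial cycle admits exactly two admissible orientations/colorings giving the count $2^r$, and alternation around a cycle shows the cycles of $\sigma_2^{-1}\sigma_1$ are precisely the $C_i$'s.
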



\begin{proof}
The symmetric $k\times k$ matrix $MM^t-2I_k$ has non-negative integer entries and the sum along each row and column is two.
Therefore, it is the adjacency matrix of an 
undirected $2$-regular graph $G$ (possibly containing loops and double edges), with vertex set $\{1,\dots,k\}$.\footnote{As usual,
a loop counts twice for the degree of a vertex.}
Hence, the connected components of $G$ are cycles (including loops and $2$-cycles) and their underlying vertex sets are the $C_i$'s.
Note that the loops in $G$ correspond to the indices $i$ for which there exists $l$ such that $M_{i,l}=2$, while the $2$-cycles in $G$
(i.e., the double edges) correspond to the pairs of indices $i\ne j$ for which there exist $l\ne m$ such that $M_{i,l}=M_{j,l}=M_{i,m}=M_{j,m}=1$.
If $M=\permat{\sigma_1}+\permat{\sigma_2}$ then the edges of $G$ (counted with multiplicities) are given by $\{\sigma_1(i),\sigma_2(i)\}$, $i=1,\dots,k$.
Moreover, any such presentation gives rise to an orientation of $G$, given by $\sigma_1(i)\rightarrow\sigma_2(i)$ such that $G$ is the union
of directed cycles, i.e., such that the indegree and the outdegree of each vertex is one. Conversely, any such orientation
arises from a presentation $M=\permat{\sigma_1}+\permat{\sigma_2}$ where $\sigma_1$ and $\sigma_2$ are uniquely determined
and the cycles of $\sigma_2^{-1}\sigma_1$ are the $C_i$'s.
Clearly, the number of such orientations is $2^r$ where $r$ is the number of non-trivial connected components of $G$.
\end{proof}

Given a \biseq\ $\tseq=\bitmplt$ we write $\tilde{\tseq}$ for the duplicated \biseq\
$\begin{pmatrix}a_1&a_1&\dots&a_k&a_k\\b_1&b_1&\dots&b_k&b_k\end{pmatrix}$ of length $2k$.
Similarly, for any $\sigma\in S_k$ we write $\tilde\sigma$ for the permutation in $S_{2k}$ given by
$\tilde\sigma(2i-j)=2\sigma(i)-j$, $i=1,\dots,k$, $j=0,1$.
Clearly, $\tilde\sigma$ normalizes the subgroup $H$ of $S_{2k}$.
It easily follows from \eqref{def: sigma0} that
\begin{equation} \label{eq: dbltseq}
\sigma_0(\tilde\tseq)=\widetilde{\sigma_0(\tseq)}.
\end{equation}

Let $\iota:S_k\times S_k\rightarrow S_{2k}$ be the embedding
\[
\iota(\sigma_1,\sigma_2)(2(i-1)+j)=2(\sigma_j(i)-1)+j,\ \ i=1,\dots,k,\ j=1,2.
\]
In particular, $\iota(\sigma,\sigma)=\tilde\sigma$.
Clearly, if $\sigma_1'\le\sigma_1$ and $\sigma_2'\le\sigma_2$ then $\iota(\sigma_1',\sigma_2')\le\iota(\sigma_1,\sigma_2)$.
Also, for any $w\in S_{2k}$ and $\sigma_1,\sigma_2\in S_k$
\begin{equation} \label{eq: iotasum}
\iota(\sigma_1,\sigma_2)\in HwH\text{ if and only if }\permat{\sigma_1}+\permat{\sigma_2}=M_w.
\end{equation}

For any $\sigma\in S_k$ let $\rk_\sigma:\{1,\dots,k\}^2\rightarrow\Z_{\ge0}$ be the rank function
\[
\rk_\sigma(i,j)=\#\{u=1,\dots,i:\sigma(u)\le j\}.
\]

It is well known that for any $\sigma,\tau\in S_k$ we have $\tau\le\sigma$ if and only if $\rk_\sigma\le\rk_\tau$ on $\{1,\dots,k\}^2$.
We will use the following combinatorial result.

\begin{proposition}[\cite{1710.06115}] \label{prop: tight}
Let $(\sigma,\sigma_0)$ be a smooth pair and let $\tau\in S_k$.
Suppose that $\rk_\tau(i,j)=\rk_\sigma(i,j)$ for all $(i,j)\in\{1,\dots,k\}^2$ such that $\rk_{\sigma_0}(i,j)=\rk_\sigma(i,j)$.
Then $\tau\le\sigma$.
\end{proposition}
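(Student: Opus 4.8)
The plan is to prove, by induction on $k$, the slightly stronger assertion that under the hypotheses one has $\sigma_0\le\tau\le\sigma$. Write $E=\{(i,j):\rk_{\sigma_0}(i,j)=\rk_\sigma(i,j)\}$. Since $\sigma_0\le\sigma$ we have $\rk_{\sigma_0}\ge\rk_\sigma$ pointwise, so on $E$ the function $\rk_\tau$ agrees with both $\rk_\sigma$ and $\rk_{\sigma_0}$, and we must show $\rk_\tau\ge\rk_\sigma$ and $\rk_\tau\le\rk_{\sigma_0}$ everywhere. The case $k=1$ is trivial. Throughout I will use freely that transposition $\sigma\mapsto\sigma^{-1}$ preserves smooth pairs (e.g.\ via $P_{x,y}=P_{x^{-1},y^{-1}}$) and interchanges the roles of positions and values, so that any statement about a ``corner'' of $\sigma$ has three siblings.

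\emph{The corner case.} Suppose first that $\sigma$ and $\sigma_0$ agree at one of the four corners, say $\sigma(k)=\sigma_0(k)=:v$. Then $\rk_\mu(k,j)=j$ for all $\mu$, and $\rk_\mu(k-1,j)=j-[\mu(k)\le j]$, so rows $k-1$ and $k$ lie entirely in $E$. Computing the second difference of $\rk_\tau$ at $(k,v)$ from the four entries $(k-1,v-1),(k-1,v),(k,v-1),(k,v)$, all of which are in $E$, we get $[\tau(k)=v]=[\sigma(k)=v]=1$, hence $\tau(k)=v$. Now apply the flattening $\rmv_k$ of \S\ref{sec: rmvi}: by Lemma \ref{lem: rmvi} the pair $(\rmv_k\sigma,\rmv_k\sigma_0)$ is smooth, and a direct computation shows $\rk_{\rmv_k\mu}$ is obtained from $\rk_\mu$ by deleting the last row and the $v$-th column (i.e.\ $\rk_{\rmv_k\mu}(i,j)=\rk_\mu(i,j)$ for $j<v$ and $=\rk_\mu(i,j+1)$ for $j\ge v$), so that $E(\rmv_k\sigma_0,\rmv_k\sigma)$ maps into $E$; since also $\tau(k)=v$, the triple $(\rmv_k\sigma,\rmv_k\sigma_0,\rmv_k\tau)$ satisfies the hypotheses, and the induction gives $\rmv_k\sigma_0\le\rmv_k\tau\le\rmv_k\sigma$. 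As $\tau(k)=\sigma_0(k)=\sigma(k)$, Lemma \ref{lem: elemi} yields $\sigma_0\le\tau\le\sigma$.

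\emph{The general case.} When $\sigma$ and $\sigma_0$ share no corner, the plan is to enlarge $\sigma_0$. Fix the corner $k$ and let $\tilde\sigma$ be the minimal element of $\{w\in[\sigma_0,\sigma]:w(k)=\sigma(k)\}$ (its existence is a standard property of Bruhat intervals, and is exactly the element constructed in the proof of Lemma \ref{lem: submlt}); by \eqref{eq: redrelsmth} the pair $(\sigma,\tilde\sigma)$ is smooth and shares the corner $k$ with $\sigma$. The crux is to show that the hypothesis transfers, namely $\rk_\tau=\rk_\sigma$ on the a priori larger set $E(\tilde\sigma,\sigma)\supseteq E(\sigma_0,\sigma)$: here one must describe the new positions $E(\tilde\sigma,\sigma)\setminus E(\sigma_0,\sigma)$ using the minimality of $\tilde\sigma$ together with the dimension count of Proposition \ref{prop: LS}, and check that $\rk_\tau$ is already pinned at them by the equalities in the boundary rows and columns already forced into $E(\sigma_0,\sigma)$. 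Once this is established, the corner case applied to $(\sigma,\tilde\sigma)$ gives $\tilde\sigma\le\tau\le\sigma$, and $\tilde\sigma\ge\sigma_0$ concludes. (Alternatively, one can argue by contradiction: a defect of $\tau$ at a position outside $E(\sigma_0,\sigma)$ would, via the rank inequalities, propagate to an interval pattern embedding of one of the pairs $(\delta_{r,s}^{(t)},\tau_{r,s}^{(t)})$ into $[\sigma_0,\sigma]$, contradicting smoothness by Theorem \ref{thm: BW}.)

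The main obstacle is precisely this transfer step: bookkeeping exactly which positions enter $E(\tilde\sigma,\sigma)$ upon replacing $\sigma_0$ by $\tilde\sigma$, and verifying that $\rk_\tau$ is constrained there. The rest — the corner reductions, the behaviour of rank functions under $\rmv_k$, and the invocations of Lemmas \ref{lem: rmvi} and \ref{lem: elemi} and Proposition \ref{prop: LS} — is routine once that combinatorial analysis is in hand.
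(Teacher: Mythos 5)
Note first that the paper gives no argument for Proposition \ref{prop: tight}: it is imported wholesale from \cite{1710.06115}, so your proof has to stand entirely on its own, and as written it does not. The corner case is fine: when $\sigma(k)=\sigma_0(k)$ the last two rows of the rank matrices lie in the agreement set, the second difference pins $\tau(k)=\sigma(k)$, and the flattening $\rmv_k$ together with Lemmas \ref{lem: rmvi} and \ref{lem: elemi} correctly reduces $k$ (and the other corners follow by the symmetries you invoke). But that is the easy part; essentially all of the content of the proposition is concentrated in the general case, and there your argument stops exactly where the work begins.

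Two concrete problems. The lesser one: you take $\tilde\sigma$ to be ``the minimal element'' of $\{w\in[\sigma_0,\sigma]:w(k)=\sigma(k)\}$, but the existence of a unique minimal element of such a set is not a standard property of Bruhat intervals -- already in $S_4$ the set $\{w\ge 2143:\ w(2)=3\}$ has the two incomparable minimal elements $2341$ and $4312$, so at the very least a corner-specific argument is needed -- and the element built in the proof of Lemma \ref{lem: submlt} is constructed from a \biseq\ (so $\sigma_0$ there is $213$-avoiding, which an arbitrary smooth pair's $\sigma_0$ need not be) and is not characterized by minimality. The decisive one: the ``transfer step'' -- that the hypothesis, which constrains $\rk_\tau$ only on the agreement set of $(\sigma_0,\sigma)$, already forces $\rk_\tau=\rk_\sigma$ on the strictly larger agreement set of $(\tilde\sigma,\sigma)$ -- is a statement of exactly the same nature and depth as Proposition \ref{prop: tight} itself: it asserts that the prescribed equalities rigidify the rank function at new positions, and a priori nothing prevents a $\tau$ satisfying the hypothesis from violating them. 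You give no proof of this: the ``bookkeeping'' is not carried out, and the alternative route (a failure should produce an interval pattern embedding of one of the pairs of Theorem \ref{thm: BW}, contradicting smoothness) is only gestured at, with no indication of how the rank defect of $\tau$ yields such an embedding. A further soft spot is that your induction runs on the strengthened statement $\sigma_0\le\tau\le\sigma$, which you never establish outside the corner case and which is not an obvious consequence of the hypothesis. As it stands, the proposal is a plan whose central combinatorial step -- the actual tightness mechanism -- is missing, not a proof.
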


Note that for $\sigma_0=\id$, i.e., when $\sigma$ itself is smooth, this is a classical result. (See \cite{MR1934291} and the references therein.)
We obtain the following consequence.

\begin{corollary} \label{cor: dblsame}
Suppose that $(\sigma,\sigma_0)$ is a smooth pair. Let $\sigma_1,\sigma_2\in S_k$ be such that
$\sigma_0\le\sigma_1,\sigma_2$ and $H\iota(\sigma_1,\sigma_2)H\le H\tilde\sigma$. Then $\sigma_1,\sigma_2\le\sigma$.
\end{corollary}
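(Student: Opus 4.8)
\textbf{Proof proposal for Corollary \ref{cor: dblsame}.}

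The plan is to translate the hypothesis $H\iota(\sigma_1,\sigma_2)H\le H\tilde\sigma$ into the language of rank functions, using \eqref{eq: iotasum} and Lemma \ref{lem: RHS}, and then to invoke Proposition \ref{prop: tight}. First I would unwind what $H\iota(\sigma_1,\sigma_2)H\le H\tilde\sigma H$ means at the level of the $k\times k$ matrices $M_{\iota(\sigma_1,\sigma_2)}=\permat{\sigma_1}+\permat{\sigma_2}$ and $M_{\tilde\sigma}=2\permat{\sigma}$. The Bruhat order on $S_{2k}$ descends to the order on double cosets $H\backslash S_{2k}/H$, which in the matrix model $\Matk$ is expressed by the rank inequalities: writing $\rk_M(I,J)=\sum_{i\le I,j\le J}M_{i,j}$ for $M\in\Matk$ blocked according to the $2$-element groups, one has $HwH\le Hw'H$ iff the corresponding $2k\times 2k$ rank functions satisfy the reverse inequality, and on the ``coarse'' $k\times k$ grid this reads $\rk_{\permat{\sigma_1}+\permat{\sigma_2}}(i,j)\ge \rk_{2\permat{\sigma}}(i,j)=2\rk_\sigma(i,j)$ for all $(i,j)$. (I will need to be slightly careful here: the full $2k\times 2k$ Bruhat condition is stronger than the coarse one, but the coarse one — equivalently $\rk_{\sigma_1}(i,j)+\rk_{\sigma_2}(i,j)\ge 2\rk_\sigma(i,j)$ — is all I will use, and it follows immediately.) Symmetrically, $\sigma_0\le\sigma_1$ and $\sigma_0\le\sigma_2$ give $\rk_{\sigma_1}\le\rk_{\sigma_0}$ and $\rk_{\sigma_2}\le\rk_{\sigma_0}$.

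Now fix a point $(i,j)$ with $\rk_{\sigma_0}(i,j)=\rk_\sigma(i,j)$. From $2\rk_\sigma(i,j)\le \rk_{\sigma_1}(i,j)+\rk_{\sigma_2}(i,j)\le \rk_{\sigma_0}(i,j)+\rk_{\sigma_0}(i,j)=2\rk_\sigma(i,j)$ we are forced to have equality throughout, hence $\rk_{\sigma_1}(i,j)=\rk_{\sigma_2}(i,j)=\rk_\sigma(i,j)$. Thus each of $\sigma_1$ and $\sigma_2$ agrees with $\sigma$ in rank at every point where $\sigma_0$ does. Applying Proposition \ref{prop: tight} to the smooth pair $(\sigma,\sigma_0)$ and $\tau=\sigma_1$ (resp. $\tau=\sigma_2$) yields $\sigma_1\le\sigma$ and $\sigma_2\le\sigma$, which is the assertion.

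The main obstacle I anticipate is the bookkeeping in the first paragraph: making precise the relation between Bruhat order on $S_{2k}$, the double-coset order on $H\backslash S_{2k}/H\cong\Matk$, and the coarse $k\times k$ rank inequality $\rk_{\permat{\sigma_1}+\permat{\sigma_2}}\ge 2\rk_\sigma$. One clean way to sidestep the finer $2k\times 2k$ combinatorics is to note that $H\tilde\sigma H=\{w:M_w=2\permat\sigma\}$ consists of a single double coset whose matrix has all entries in $\{0,2\}$, so the condition $H\iota(\sigma_1,\sigma_2)H\le H\tilde\sigma H$ is equivalent, via the standard description of the order on $\Matk$ by the partial sums $\rk_M(I,J)$, to $\rk_{M_{\iota(\sigma_1,\sigma_2)}}(I,J)\ge \rk_{2\permat\sigma}(I,J)$ for all $I,J$; restricting to $I,J$ of the form $2i,2j$ gives exactly $\rk_{\sigma_1}(i,j)+\rk_{\sigma_2}(i,j)\ge 2\rk_\sigma(i,j)$. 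Everything else is the short squeeze argument above together with a direct appeal to Propositions \ref{prop: tight}.
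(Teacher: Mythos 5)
Your proposal is correct and follows essentially the same route as the paper: translate $H\iota(\sigma_1,\sigma_2)H\le H\tilde\sigma$ into the coarse rank inequality $\rk_{\sigma_1}(i,j)+\rk_{\sigma_2}(i,j)\ge 2\rk_\sigma(i,j)$, squeeze against $\rk_{\sigma_1},\rk_{\sigma_2}\le\rk_{\sigma_0}$ at points where $\rk_{\sigma_0}=\rk_\sigma$, and conclude by Proposition \ref{prop: tight}. The extra care you take in justifying the passage from the double-coset order to the coarse $k\times k$ rank inequality is sound (the rank values at the block-corner points are constant on $H$-double cosets), and the paper simply asserts this step.
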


\begin{proof}
Indeed, the condition $H\iota(\sigma_1,\sigma_2)H\le H\tilde\sigma$ means that
\[
\rk_{\sigma_1}(i,j)+\rk_{\sigma_2}(i,j)\ge 2\rk_{\sigma}(i,j),\ \ i,j=1,\dots,k.
\]
On the other hand, $\rk_{\sigma_1}(i,j),\rk_{\sigma_2}(i,j)\le\rk_{\sigma_0}(i,j)$ since $\sigma_0\le\sigma_1,\sigma_2$.
Hence, whenever $\rk_{\sigma_0}(i,j)=\rk_\sigma(i,j)$ we also have $\rk_{\sigma_1}(i,j)=\rk_{\sigma_2}(i,j)=\rk_{\sigma}(i,j)$.
By Proposition \ref{prop: tight} we conclude that $\sigma_1,\sigma_2\le\sigma$ as required.
\end{proof}

Let $\clsf$ be the class function on $S_k$ given by $\clsf(\sigma)=\sgn\sigma\ 2^r$ where $r$ is the number of non-trivial cycles of $\sigma$.
We now interpret Theorem \ref{thm: main} in terms of an identity of Kazhdan--Lusztig polynomials.

\begin{corollary} (of Theorem \ref{thm: main}) \label{cor: KLidnt}
Suppose that $(\sigma,\sigma_0)$ is a smooth pair with $\sigma_0$ $213$-avoiding. Then for any $x\in [\widetilde{\sigma_0},\tilde\sigma]$ we have
\begin{equation} \label{eq: allxrltn}
\sum_{w\in HxH}\sgn w\ P_{w,\widetilde{\sigma}}(1)=\clsf([x]).
\end{equation}
In particular,
\begin{equation} \label{eq: parkl}
\sum_{w\in H}\sgn w\ P_{\widetilde{\sigma'}w,\widetilde{\sigma}}(1)=1.
\end{equation}
for any $\sigma'\in[\sigma_0,\sigma]$.
\end{corollary}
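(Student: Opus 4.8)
The plan is to express $\pi:=\zele{\m}$ with $\m$ \emph{balanced}, exploit $\square$-irreducibility to identify $\pi\times\pi$ with a single irreducible, and then compute $\grimg{\pi\times\pi}$ in the Grothendieck ring $\Gr$ in two different ways, comparing the coefficients in the (linearly independent) basis of classes of standard modules. Concretely: since $\sigma_0$ is $213$-avoiding, Lemma \ref{lem: comb12} provides a regular \biseq\ $\tseq=\bitmplt$ with $\sigma_0(\tseq)=\sigma_0$; as $\sigma\ge\sigma_0$ the regular multisegment $\m:=\m_\sigma(\tseq)$ is defined, and it is balanced because $(\sigma,\sigma_0)$ is a smooth pair (Corollary \ref{cor: fgd and smth}). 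Hence $\pi:=\zele{\m}\in\IrrS$ by Theorem \ref{thm: main}, so $\pi\times\pi=\zele{\m+\m}$. A direct computation from the definition of $\m_\bullet(\cdot)$ gives $\m_{\sigma_1}(\tseq)+\m_{\sigma_2}(\tseq)=\m_{\iota(\sigma_1,\sigma_2)}(\tilde\tseq)$ for all $\sigma_1,\sigma_2\in S_k$; in particular $\m+\m=\m_{\tilde\sigma}(\tilde\tseq)$. Moreover $\m_w(\tilde\tseq)=\sum_j[\tilde a_{w^{-1}(j)},\tilde b_j]$ is visibly unchanged when $w$ is replaced by $hwh'$, $h,h'\in H$ (the sequences $\tilde a,\tilde b$ being constant on each pair $\{2i-1,2i\}$), and the resulting multiset of segments determines and is determined by the matrix $M_w\in\Matk$; so by the bijection $H\bs S_{2k}/H\leftrightarrow\Matk$, $\m_w(\tilde\tseq)$ depends only on $HwH$ and $\m_w(\tilde\tseq)=\m_{w'}(\tilde\tseq)\iff HwH=Hw'H$. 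Finally $\m_w(\tilde\tseq)\ne0$ iff $w\ge\widetilde{\sigma_0}$, by \eqref{def: propsigma0} applied to $\tilde\tseq$ together with \eqref{eq: dbltseq}.

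For the first computation, square the smooth-pair expansion $\grimg{\pi}=\sum_{\sigma'\in[\sigma_0,\sigma]}\sgn(\sigma'\sigma)\,\grimg{\std(\m_{\sigma'}(\tseq))}$ of Corollary \ref{cor: detsmth}, using $\grimg{\std(\n_1)}\grimg{\std(\n_2)}=\grimg{\std(\n_1+\n_2)}$ (free generation of $\Gr$) and the identity above:
\[
\grimg{\pi\times\pi}=\sum_{\sigma',\sigma''\in[\sigma_0,\sigma]}\sgn(\sigma'\sigma'')\,\grimg{\std(\m_{\iota(\sigma',\sigma'')}(\tilde\tseq))}.
\]
Grouping the terms by double coset and invoking \eqref{eq: iotasum}, the coefficient of $\grimg{\std(\m_x(\tilde\tseq))}$ is $\sum\sgn(\sigma'\sigma'')$ over pairs $(\sigma',\sigma'')\in[\sigma_0,\sigma]^2$ with $\permat{\sigma'}+\permat{\sigma''}=M_x$. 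The key point is that when $\m_x(\tilde\tseq)\ne0$ and $HxH\le H\tilde\sigma$, \emph{every} pair with $\permat{\sigma'}+\permat{\sigma''}=M_x$ automatically lies in $[\sigma_0,\sigma]^2$: $\sigma',\sigma''\ge\sigma_0$ follows from $\m_{\sigma'}(\tseq)+\m_{\sigma''}(\tseq)=\m_x(\tilde\tseq)\ne0$, while $\sigma',\sigma''\le\sigma$ is exactly Corollary \ref{cor: dblsame}. By Lemma \ref{lem: RHS} there are $2^r$ such pairs, all with $\sigma''^{-1}\sigma'$ in one conjugacy class $[x]$; since $\sgn(\sigma'\sigma'')=\sgn(\sigma''^{-1}\sigma')$, this coefficient equals $2^r\sgn([x])=\clsf([x])$.

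For the second computation, apply the general (Kazhdan--Lusztig) form of Corollary \ref{cor: detsmth} to $\tilde\tseq$ and $\tilde\sigma$ — its hypotheses hold because $\tilde\sigma(2i-1)<\tilde\sigma(2i)$ and $\tilde\sigma^{-1}(2i-1)<\tilde\sigma^{-1}(2i)$ — obtaining $\grimg{\pi\times\pi}=\grimg{\zele{\m_{\tilde\sigma}(\tilde\tseq)}}=\sum_w\sgn(w\tilde\sigma)P_{w,\tilde\sigma}(1)\grimg{\std(\m_w(\tilde\tseq))}$. Grouping by double coset and using $\sgn(\tilde\sigma)=1$ (terms with $w\not\le\tilde\sigma$ contribute $0$), the coefficient of $\grimg{\std(\m_x(\tilde\tseq))}$ is $\sum_{w\in HxH}\sgn(w)P_{w,\tilde\sigma}(1)$. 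As the classes $\grimg{\std(\n)}$ are linearly independent in $\Gr$, equating the two coefficients for each $x\in[\widetilde{\sigma_0},\tilde\sigma]$ (so that $\m_x(\tilde\tseq)\ne0$ and $HxH\le H\tilde\sigma$) yields \eqref{eq: allxrltn}. Finally, take $x=\widetilde{\sigma'}$ for $\sigma'\in[\sigma_0,\sigma]$: then $M_{\widetilde{\sigma'}}=2\permat{\sigma'}$ forces $\sigma'=\sigma'$ as the only decomposition, so $[\widetilde{\sigma'}]$ is the trivial class and $\clsf([\widetilde{\sigma'}])=1$; since $\widetilde{\sigma'}$ normalizes $H$ we have $H\widetilde{\sigma'}H=\widetilde{\sigma'}H$, and reindexing $w=\widetilde{\sigma'}h$ with $\sgn(\widetilde{\sigma'})=1$ turns \eqref{eq: allxrltn} into \eqref{eq: parkl}.

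The hard part is the step in the first computation where the restricted double sum over $[\sigma_0,\sigma]^2$ must be recognized as the full set of matrix decompositions $\permat{\sigma'}+\permat{\sigma''}=M_x$; this is precisely where Corollary \ref{cor: dblsame} (hence Proposition \ref{prop: tight}), and with it the smoothness of $(\sigma,\sigma_0)$, is genuinely needed. Everything else — the identity $\m_{\sigma_1}(\tseq)+\m_{\sigma_2}(\tseq)=\m_{\iota(\sigma_1,\sigma_2)}(\tilde\tseq)$, the correspondence of double cosets with $\Matk$, verifying the hypotheses of Corollary \ref{cor: detsmth} for $\tilde\tseq$, and the sign bookkeeping (including the $w_0$-twists implicit in \eqref{eq: geninv}) — is routine.
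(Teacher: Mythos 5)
Your proposal is correct and follows essentially the same route as the paper: choose a regular bi-sequence with $\sigma_0(\tseq)=\sigma_0$, use Theorem \ref{thm: main} to get $\zele{\m}\times\zele{\m}=\zele{\m+\m}$ for $\m=\m_\sigma(\tseq)$, expand this class in standard modules both via the squared smooth-pair expansion of Corollary \ref{cor: detsmth} and via \eqref{eq: geninv} for $(\tilde\tseq,\tilde\sigma)$, and compare coefficients per double coset, with Corollary \ref{cor: dblsame} removing the constraint $\sigma_1,\sigma_2\le\sigma$ and Lemma \ref{lem: RHS} evaluating the resulting signed count as $\clsf([x])$. The extra details you supply (well-definedness of $\m_w(\tilde\tseq)$ on double cosets, $\sgn\tilde\sigma=1$, and the specialization $x=\widetilde{\sigma'}$ giving \eqref{eq: parkl}) are exactly the points the paper treats implicitly.
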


\begin{proof}
Let $\tseq$ be a regular \biseq\ such that $\sigma_0=\sigma_0(\tseq)$ (see Lemma \ref{lem: comb12}) and let $\m=\m_\sigma(\tseq)$.
By Theorem \ref{thm: main} $\zele{\m}\times\zele{\m}$ is irreducible, i.e., $\zele{\m}\times\zele{\m}=\zele{\m+\m}$.
We will deduce the corollary from Corollary \ref{cor: detsmth} by computing the coefficient of $\std(\m_x(\tilde\tseq))$ in the expansion of
$\zele{\m}\times\zele{\m}=\zele{\m+\m}$ in terms of standard modules in two different ways.

On the one hand,
\[
\grimg{\zele{\m}}=\sum_{\sigma'\in S_k:\sigma'\le\sigma}\sgn\sigma\sigma'\ \grimg{\std(\m_{\sigma'}(\tseq))}
\]
where of course only the terms $\sigma'\ge\sigma_0$ give a non-zero contribution.
Note that for any $\sigma_1,\sigma_2\in S_k$ we have
\[
\std(\m_{\sigma_1}(\tseq))\times\std(\m_{\sigma_2}(\tseq))=\std(\m_{\sigma_1}(\tseq)+\m_{\sigma_2}(\tseq))=
\std(\m_{\iota(\sigma_1,\sigma_2)}(\tilde\tseq))
\]
and this is non-zero (i.e., by \eqref{eq: dbltseq}, $\iota(\sigma_1,\sigma_2)\ge\widetilde{\sigma_0}$) if and only if $\sigma_0\le\sigma_1,\sigma_2$.
Thus,
\begin{multline*}
\grimg{\zele{\m}\times\zele{\m}}=\sum_{\sigma_1,\sigma_2\in S_k:\sigma_1,\sigma_2\le\sigma}\sgn\sigma_1\sigma_2\ \grimg{\std(\m_{\sigma_1}(\tseq)+\m_{\sigma_2}(\tseq))}\\=
\sum_{\sigma_1,\sigma_2\in S_k:\sigma_1,\sigma_2\le\sigma}\sgn\sigma_1\sigma_2\ \grimg{\std(\m_{\iota(\sigma_1,\sigma_2)}(\tilde\tseq))}.
\end{multline*}
On the other hand, by \eqref{eq: geninv} we have
\[
\grimg{\zele{\m+\m}}=\sum_{w\in S_{2k}}\sgn w\ P_{w,\tilde\sigma}(1)\ \grimg{\std(\m_w(\tilde\tseq))}.
\]
Comparing coefficients, for any $x\in H\bs S_{2k}/H$ such that $x\ge\widetilde{\sigma_0}$ we get
\begin{equation} \label{eq: intprtirred}
\sum_{w\in HxH}\sgn w\ P_{w,\tilde\sigma}(1)=\sum_{\sigma_1,\sigma_2\in S_k:\sigma_1,\sigma_2\le\sigma\text{ and }\iota(\sigma_1,\sigma_2)\in HxH}\sgn\sigma_1\sigma_2.
\end{equation}
Recall that $\iota(\sigma_1,\sigma_2)\ge\widetilde{\sigma_0}$ (or equivalently, $H\iota(\sigma_1,\sigma_2)H\ge\widetilde{\sigma_0}H$)
if and only if $\sigma_0\le\sigma_1,\sigma_2$.
Thus, by Corollary \ref{cor: dblsame}, if $x\in [\widetilde{\sigma_0},\tilde\sigma]$ then the condition $\sigma_1,\sigma_2\le\sigma$ on the
right-hand side of \eqref{eq: intprtirred} is superfluous.
Hence, by Lemma \ref{lem: RHS} and \eqref{eq: iotasum} the right-hand side of \eqref{eq: intprtirred} is $\clsf([x])$, proving our claim.
\end{proof}

\begin{remark}
The (computer-assisted) example $\sigma=(4231)$, $\sigma_0=(1324)$ shows that the condition that $\sigma_0$ is $213$-avoiding is essential
for the relation \eqref{eq: allxrltn}.
On the other hand, in \cite{1705.06517} we conjecture among other things that for any smooth pair $(\sigma,\sigma_0)$ we have
\[
\sum_{w\in H}\sgn w\ P_{\widetilde{\sigma_0}w,\widetilde{\sigma}}(q)=q^{\ell(\sigma)-\ell(\sigma_0)}
\]
(and in particular \eqref{eq: parkl} holds)
and prove it in the case where $\sigma$ is a product of distinct simple reflexions (i.e., a Coxeter element in a parabolic subgroup of $S_n$).
We also remark that for the relation \eqref{eq: parkl} (assuming $\sigma_0$ is $213$-avoiding) we haven't used the result of \cite{1710.06115}
since Corollary \ref{cor: dblsame} is trivial if $H\iota(\sigma_1,\sigma_2)H=H\widetilde{\sigma'}$
(in which case $\sigma_1=\sigma_2=\sigma'$).
\end{remark}



\subsection{}
More generally, let $m>1$ and consider the parabolic subgroup $H\simeq S_m\times\dots\times S_m$ of $S_{mk}$ of type $(m,\dots,m)$
($k$ times) and the subgroup $K\simeq S_k\times\dots\times S_k$ ($m$ times) of $S_{mk}$ given by
\[
K=\{\sigma\in S_{mk}:\sigma(i)\equiv i\pmod m\text{ for all }i\}.
\]
Thus, $H\cap K=1$ and the normalizer of $H$ is $H\rtimes\{\tilde\sigma:\sigma\in S_k\}$ where as before
\[
\tilde\sigma(mi-j)=m\sigma(i)-j,\ i=1,\dots,k,\ j=0,\dots,m-1.
\]
We have
\begin{theorem} \label{thm: higherKL}
Suppose that $(\sigma,\sigma_0)$ is a smooth pair with $\sigma_0$ $213$-avoiding. Then
\begin{equation} \label{eq: gencaseirredcoseq}
\sum_{w\in HxH}\sgn w\ P_{w,\tilde\sigma}(1)=\sum_{\tau\in HxH\cap K}\sgn\tau
\end{equation}
for any $x\in[\widetilde{\sigma_0},\tilde\sigma]$. In particular,
\begin{equation} \label{eq: spclcasesigma'}
\sum_{w\in H}\sgn w\ P_{\widetilde{\sigma'}w,\widetilde{\sigma}}(1)=1
\end{equation}
for any $\sigma'\in[\sigma_0,\sigma]$.
\end{theorem}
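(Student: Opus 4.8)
The plan is to mimic the proof of Corollary \ref{cor: KLidnt}, replacing the square $\zele{\m}\times\zele{\m}$ by the $m$-th power $\zele{\m}^{\times m}$ and invoking the second part of Corollary \ref{cor: pi1pi2LM}. First choose, using Lemma \ref{lem: comb12} (legitimate since $\sigma_0$ is $213$-avoiding), a regular \biseq\ $\tseq=\bitmplt$ with $\sigma_0=\sigma_0(\tseq)$, and set $\m=\m_\sigma(\tseq)$. Since $(\sigma,\sigma_0)$ is a smooth pair, $\m$ is balanced by Corollary \ref{cor: fgd and smth}, so $\zele{\m}\in\IrrS$ by Theorem \ref{thm: main}, hence $\zele{\m}^{\times m}\in\IrrS$ by Corollary \ref{cor: pi1pi2LM}; in particular $\zele{\m}^{\times m}=\zele{m\cdot\m}$ where $m\cdot\m=\m+\dots+\m$. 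Let $\tilde\tseq$ be the $m$-fold duplicated \biseq\ and let $\iota\colon S_k^m\to S_{mk}$ be the evident embedding with image $K$, so that $\iota(\sigma,\dots,\sigma)=\tilde\sigma$, $\m_{\sigma_1}(\tseq)+\dots+\m_{\sigma_m}(\tseq)=\m_{\iota(\sigma_1,\dots,\sigma_m)}(\tilde\tseq)$, and (as in \eqref{eq: dbltseq}) $\sigma_0(\tilde\tseq)=\widetilde{\sigma_0}$. I would then compute the coefficient of $\grimg{\std(\m_x(\tilde\tseq))}$ in $\grimg{\zele{m\cdot\m}}$ in two ways, for each double coset representative $x\ge\widetilde{\sigma_0}$.

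On the one hand, expanding each of the $m$ factors of $\zele{\m}^{\times m}$ via \eqref{eq: geninv} and multiplying in $\Gr$ (using $\grimg{\std(\m_1)}\,\grimg{\std(\m_2)}=\grimg{\std(\m_1+\m_2)}$ and commutativity of $\Gr$), the coefficient in question is $(\sgn\sigma)^m\sum\prod_i\sgn\sigma_i$, the sum being over tuples $(\sigma_1,\dots,\sigma_m)$ with $\sigma_i\le\sigma$ for all $i$ and $\iota(\sigma_1,\dots,\sigma_m)\in HxH$. On the other hand, Corollary \ref{cor: detsmth}---i.e.\ the general formula \eqref{eq: geninv} applied to $\tilde\tseq$ and $\tilde\sigma$, whose restriction to each block of $m$ consecutive indices is increasing, as required---shows the same coefficient equals $\sgn\tilde\sigma\sum_{w\in HxH}\sgn w\,P_{w,\tilde\sigma}(1)$, using that $\m_w(\tilde\tseq)$ depends only on $HwH$. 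Since $\sgn\tilde\sigma=(\sgn\sigma)^m$ and $\sgn\iota(\sigma_1,\dots,\sigma_m)=\prod_i\sgn\sigma_i$ (the embedding $\iota$ preserves the partition of $\{1,\dots,mk\}$ into residue classes modulo $m$ and acts as $\sigma_j$ on the $j$-th one), comparing coefficients yields
\[
\sum_{w\in HxH}\sgn w\,P_{w,\tilde\sigma}(1)=\sum_{(\sigma_1,\dots,\sigma_m)\colon\ \sigma_i\le\sigma\ \forall i,\ \iota(\sigma_1,\dots,\sigma_m)\in HxH}\ \prod_i\sgn\sigma_i
\]
for every double coset $x\ge\widetilde{\sigma_0}$. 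Here the condition $\sigma_i\ge\sigma_0$ is automatic: all elements of $HxH$ give the same standard module $\std(\m_x(\tilde\tseq))$, which is non-zero when $x\ge\widetilde{\sigma_0}$, so $\iota(\sigma_1,\dots,\sigma_m)\ge\widetilde{\sigma_0}$ and hence each $\std(\m_{\sigma_i}(\tseq))\ne0$.

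Finally, for $x\in[\widetilde{\sigma_0},\tilde\sigma]$ I would remove the constraint $\sigma_i\le\sigma$, via the $m$-fold analogue of Corollary \ref{cor: dblsame}: if $\sigma_0\le\sigma_i$ for all $i$ and $H\iota(\sigma_1,\dots,\sigma_m)H\le H\tilde\sigma$, i.e.\ $\sum_i\rk_{\sigma_i}\ge m\,\rk_\sigma$ pointwise, then at every point where $\rk_{\sigma_0}=\rk_\sigma$ one gets $\rk_{\sigma_i}=\rk_\sigma$ (since $\rk_{\sigma_i}\le\rk_{\sigma_0}$), whence $\sigma_i\le\sigma$ for all $i$ by Proposition \ref{prop: tight}. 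With the constraint dropped, the right-hand side of the displayed identity becomes $\sum_{\tau\in HxH\cap K}\sgn\tau$, since by the Birkhoff--von Neumann theorem $HxH\cap K=\{\iota(\sigma_1,\dots,\sigma_m)\colon \permat{\sigma_1}+\dots+\permat{\sigma_m}=M_x\}$ and $\sgn\iota(\sigma_1,\dots,\sigma_m)=\prod_i\sgn\sigma_i$; this is \eqref{eq: gencaseirredcoseq}. For \eqref{eq: spclcasesigma'} take $x=\widetilde{\sigma'}$ with $\sigma'\in[\sigma_0,\sigma]$: then $M_x=m\,\permat{\sigma'}$ forces $\sigma_1=\dots=\sigma_m=\sigma'$, so $HxH\cap K=\{\widetilde{\sigma'}\}$, and since $\widetilde{\sigma'}$ normalizes $H$ we have $H\widetilde{\sigma'}H=\widetilde{\sigma'}H$, so the identity reduces exactly as in the $m=2$ case; note this special case uses only the triviality of the analogue of Corollary \ref{cor: dblsame}, not Proposition \ref{prop: tight}. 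The main obstacle is bookkeeping rather than a new idea: organizing the double-coset combinatorics of $H\bs S_{mk}/H$ and its interaction with $K$ and with $\tilde\sigma$, and checking that the vanishing/non-vanishing of standard modules makes the two ranges of summation agree; the only genuinely non-elementary ingredient, Proposition \ref{prop: tight}, is imported from \cite{1710.06115}.
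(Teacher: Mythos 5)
Your argument is correct and is essentially the paper's own proof, which is only sketched there: reduce to $\zele{\m}^{\times m}=\zele{m\cdot\m}$ via Theorem \ref{thm: main} and Corollary \ref{cor: pi1pi2LM}, compare coefficients of $\std(\m_x(\tilde\tseq))$ in the two expansions into standard modules, and use the $m$-fold analogue of Corollary \ref{cor: dblsame} (via Proposition \ref{prop: tight}) to drop the constraints $\sigma_i\le\sigma$, with the observation that the special case \eqref{eq: spclcasesigma'} does not need \cite{1710.06115}. One cosmetic point: when you expand each factor $\grimg{\zele{\m}}$ with coefficients $\pm1$ (no Kazhdan--Lusztig factors) you should invoke the smooth-pair clause of Corollary \ref{cor: detsmth}, i.e.\ $P_{\sigma',\sigma}(1)=1$ for $\sigma'\in[\sigma_0,\sigma]$, rather than the general formula \eqref{eq: geninv} alone.
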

Indeed, if $\tseq$ is a regular \biseq\ such that $\sigma_0=\sigma_0(\tseq)$ and $\m=\m_\sigma(\tseq)$ then as in the proof of Corollary \ref{cor: KLidnt}
(using an obvious analog of Corollary \ref{cor: dblsame}), the left-hand side (resp., right-hand side) of
\eqref{eq: gencaseirredcoseq} is $\sgn\tilde\sigma$ times the coefficient of $\std(\m_x(\tilde\tseq))$ in the expansion of
$\zele{m\cdot\m}$ (resp., $\zele{\m}^{\times m}$) in terms of standard modules.
The theorem therefore follows from Theorem \ref{thm: main} and Corollary \ref{cor: pi1pi2LM} which imply that $\zele{\m}^{\times m}=\zele{m\cdot\m}$.
(Once again, for \eqref{eq: spclcasesigma'} we do not need to use \cite{1710.06115}.)

Note that as before, the double cosets $H\bs S_{mk}/H$ correspond to matrices of size $k\times k$ with non-negative integer entries,
whose sums along each row and each column are all equal to $m$. Each such matrix can be written as a sum of $m$ permutation matrices.
Thus, $HxH\cap K\ne\emptyset$ for all $x\in S_{mk}$.
However, for $m>2$ it is no longer true that $\sgn$ is constant on $HxH\cap K$.
In fact, the right-hand side of \eqref{eq: gencaseirredcoseq} is much more mysterious for $m>2$.
For instance, in the case where $m=k$ and the double coset $HxH$ corresponds to the matrix all of whose entries are 1,
the right-hand side of \eqref{eq: gencaseirredcoseq} is $(-1)^{m\choose 2}$ times the difference $\partial_m$ between the number
of even and odd Latin squares of size $m\times m$. Clearly $\partial_m=0$ if $m$ is odd but it is still an open question,
known as the Alon--Tarsi conjecture, whether $\partial_m\ne0$ for all even $m$ \cite{MR1179249}.
This conjecture is related to other problems in linear algebra. Some progress on it was made by Janssen, Drisko, Zappa and others \cite{MR1309160, MR1451417, MR1624999, MR1453404}.
In particular, $\partial_m\ge0$. An upper bound for $\partial_m$ was given by Alpoge \cite{MR3638338}.

\def\cprime{$'$}
\providecommand{\bysame}{\leavevmode\hbox to3em{\hrulefill}\thinspace}
\providecommand{\MR}{\relax\ifhmode\unskip\space\fi MR }

\end{document}